\newtheorem{theorem}{Theorem}[section]
\newtheorem{lemma}[theorem]{Lemma}
\newtheorem{corollary}[theorem]{Corollary}
\newtheorem{proposition}[theorem]{Proposition}
\newtheorem{definition}[theorem]{Definition}
\theoremstyle{definition}
\newtheorem{remark}[theorem]{Remark}
\newcommand{\xysquare}[8]{
\[\xymatrix{
#1 \ar@{#5}[r] \ar@{#6}[d] & #2 \ar@{#7}[d]\\
#3 \ar@{#8}[r] & #4
}\]
}
\newcommand{\bb}{\mathbb}
\newcommand{\comment}[1]{}
\newcommand{\into}{\hookrightarrow}
\newcommand{\onto}{\twoheadrightarrow}
\renewcommand{\phi}{\varphi}
\newcommand{\cal}{\mathcal}
\renewcommand{\hat}{\widehat}
\renewcommand{\frak}{\mathfrak}
\renewcommand{\tilde}{\widetilde}
\renewcommand{\projlim}{\varprojlim}
\renewcommand{\inf}{{\mathrm{inf}}}
\begin{document}
\font\myfont=cmr12 at 12pt
\title{Integral $p$-adic Hodge theory of formal schemes in low ramification}

\begin{abstract}
We prove that for any proper smooth formal scheme $\frak X$ over $\cal O_K$, where $\cal O_K$ is the ring of integers in a complete discretely valued non-archimedean extension $K$ of $\bb Q_p$ with perfect residue field $k$ and ramification degree $e$, the $i$-th Breuil--Kisin cohomology group and its Hodge--Tate specialization admit nice decompositions when $ie<p-1$. Thanks to the comparison theorems in the recent works of Bhatt, Morrow and Scholze \cite{BMS1}, \cite{BMS2}, we can then get an integral comparison theorem for formal schemes when the cohomological degree $i$ satisfies $ie<p-1$, which generalises the case of schemes under the condition $(i+1)e<p-1$ proven by Fontaine and Messing in \cite{Fontaine--Messing} and Caruso in \cite{caruso2008conjecture}.\\

\end{abstract}
\author{Yu Min}
\maketitle
\tableofcontents

\section{Introduction}
In this paper, we study the $A_{\inf}$-cohomology theory and the Breuil--Kisin cohomology theory constructed respectively in \cite{BMS1}, \cite{BMS2}, now unified as prismatic cohomology in \cite{bhatt2019prisms}. 

Let $\cal O_K$ always be the ring of integers in a complete discretely valued non-archimedean extension $K$ of $\bb Q_p$ with perfect residue field $k$ and ramification degree $e$. Our first main result is the following:

\begin{theorem}[Theorem \ref{HT}, Theorem \ref{comp}]\label{main}
Let $\frak X$ be a proper smooth formal scheme over $\cal O_K$. Let $\cal O_C$ be the ring of integers in a complete algebraically closed non-archimedean extension $C$ of $K$ and $X$ be the adic generic fibre of $\bar{\frak X}:=\frak X\times_{{\rm Spf}(\cal O_K)}{\rm Spf}(\cal O_C)$. Assume $ie<p-1$. Then there is an isomorphism of $\frak S=W(k)[[u]]$-modules
\[
H^i_{\frak S}(\frak X_{})\cong H^i_{\rm \acute et}(X, \bb Z_p)\otimes_{\bb Z_p}\frak S
\]
where $H^i_{\frak S}(\frak X_{}):=H^i(R\Gamma_{\frak S}(\frak X))$ is the Breuil--Kisin cohomology of $\frak X$. Consequently, we also have 
\[
H^i_{A_{\inf}}(\bar{\frak X})\cong H^i_{\rm \acute et}(X, \bb Z_p)\otimes_{\bb Z_p}A_{\inf},
\]
where $H^i_{A_{\inf}}(\bar{\frak X}):=H^i(R\Gamma_{A_{\inf}}(\bar{\frak X}))$ is the $A_{\inf}$-cohomology of $\bar{\frak X}$. Similarly under the same assumption $ie<p-1$, there is an isomorphism of $\cal O_K$-modules
\[
H^i_{\rm HT}(\frak X_{})\cong H^i_{\rm \acute et}(X, \bb Z_p)\otimes_{\bb Z_p}\cal O_K,
\]
and an isomorphism of $\cal O_C$-modules
\[
H^i_{\rm HT}(\bar{\frak X})\cong H^i_{\rm \acute et}(X, \bb Z_p)\otimes_{\bb Z_p}\cal O_C,
\]
where $H^i_{\rm HT}(\frak X):=H^i(R\Gamma_{\rm HT}(\frak X))$(resp. $H^i_{\rm HT}(\bar{\frak X}):=H^i(R\Gamma_{\rm HT}(\bar{\frak X}))$) is the Hodge--Tate cohomology\footnote{
The Hodge--Tate cohomology of $\bar{\frak X}$ satisfies: $R\Gamma_{\rm HT}(\bar{\frak X})\simeq R\Gamma_{A_{\inf}}(\bar{\frak X})\otimes_{A_{\inf},\theta}^{\bb L}\cal O_C$. We also call $R\Gamma_{\rm HT}(\frak X):=R\Gamma_{\frak S }(\frak X)\otimes_{\frak S}^{\bb L}\cal O_K$ the Hodge--Tate cohomology of $\frak X$, which may not be a standard notion.} of $\frak X$ (resp. $\bar {\frak X}$).

\end{theorem}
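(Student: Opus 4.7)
The plan is to reduce the theorem to establishing freeness of the Hodge--Tate cohomology $H^i_{\rm HT}(\frak X)$ over $\cal O_K$, then propagate that freeness up to the Breuil--Kisin cohomology; the étale comparisons then drop out of the BMS machinery. Precisely, once $M := H^i_{\frak S}(\frak X)$ is known to be a free $\frak S$-module of rank $r_i$, flat base change along $\frak S \to A_{\inf}$ together with the $A_{\inf}$-comparison of \cite{BMS2} gives $H^i_{A_{\inf}}(\bar{\frak X}) \cong M \otimes_{\frak S} A_{\inf}$, and the étale comparison $H^i_{A_{\inf}}(\bar{\frak X})[1/\mu] \cong H^i_{\rm \acute et}(X,\bb Z_p) \otimes_{\bb Z_p} A_{\inf}[1/\mu]$ from \cite{BMS1} upgrades to the integral level by freeness on both sides. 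Flat base change $\cal O_K \to \cal O_C$ then handles the $\bar{\frak X}$-variants of the theorem.

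The core step is to show $H^i_{\rm HT}(\frak X)$ is a free $\cal O_K$-module of rank $r_i := \dim_{\bb Q_p} H^i_{\rm \acute et}(X, \bb Q_p)$. I would attack this through the conjugate (Hodge--Tate) filtration on $R\Gamma_{\rm HT}(\frak X)$, whose graded pieces are the twisted Hodge sheaves $H^{i-j}(\frak X, \Omega^j_{\frak X/\cal O_K})\{-j\}$. The ramification bound $ie<p-1$ should enable a torsion analysis in which each graded piece is $p$-torsion free and the filtration splits integrally: the denominators introduced by the Breuil--Kisin twists $\{-j\}$ for $0 \le j \le i$ are precisely controlled by the quantity $ie/(p-1)<1$. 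Summing the ranks of the graded pieces then recovers $r_i$.

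Once Hodge--Tate freeness is in place, the second step applies the exact triangle $\frak S \xrightarrow{E(u)} \frak S \to \cal O_K$ to $R\Gamma_{\frak S}(\frak X)$, yielding short exact sequences
\[
0 \to H^{i-1}_{\frak S}(\frak X)/E(u) \to H^{i-1}_{\rm HT}(\frak X) \to H^{i}_{\frak S}(\frak X)[E(u)] \to 0.
\]
Freeness of $H^{i-1}_{\rm HT}$ together with a count of generic $\frak S[1/p]$-ranks (supplied by the $A_{\inf}$-comparison and étale cohomology) eliminates the $E(u)$-torsion in $H^{i}_{\frak S}(\frak X)$. A parallel use of $u$-multiplication and $W(k)$-freeness of the crystalline cohomology of $\frak X_k$ (a standard low-ramification input of Fontaine--Messing/Caruso type) eliminates $u$-torsion. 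Over the two-dimensional regular local ring $\frak S$, a finitely generated module with these torsion-freeness properties and matching fibre ranks is free, so $M$ has the desired structure.

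The main obstacle is the first step, pushing Hodge--Tate freeness to the sharp bound $ie<p-1$ rather than the classical $(i+1)e<p-1$ of Fontaine and Messing \cite{Fontaine--Messing} and Caruso \cite{caruso2008conjecture}. This is precisely where the refined structure of the $A_{\inf}$- and Breuil--Kisin cohomologies of \cite{BMS1, BMS2} becomes essential: the integral splitting of the conjugate filtration under the sharpened bound requires careful handling of the Breuil--Kisin twists, going beyond what the classical Fontaine--Messing framework allows. Once this is achieved, the remaining deductions are largely formal module-theoretic and base-change arguments.
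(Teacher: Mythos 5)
Your proposal rests on a misreading of what the theorem claims. You set out to prove that $H^i_{\rm HT}(\frak X)$ is a \emph{free} $\cal O_K$-module and that $H^i_{\frak S}(\frak X)$ is a \emph{free} $\frak S$-module. But the theorem asserts the isomorphisms $H^i_{\rm HT}(\frak X)\cong H^i_{\rm \acute et}(X,\bb Z_p)\otimes_{\bb Z_p}\cal O_K$ and $H^i_{\frak S}(\frak X)\cong H^i_{\rm \acute et}(X,\bb Z_p)\otimes_{\bb Z_p}\frak S$, and $H^i_{\rm \acute et}(X,\bb Z_p)$ is in general a $\bb Z_p$-module with torsion; the content of the theorem is precisely that the torsion in Breuil--Kisin and Hodge--Tate cohomology matches that of $p$-adic \'etale cohomology. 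Thus the intermediate claims you propose to prove (``each graded piece is $p$-torsion free and the filtration splits integrally,'' ``$M$ is a free $\frak S$-module,'' ``$W(k)$-freeness of the crystalline cohomology'') are false in general and cannot serve as the engine of the argument. Any route that tries to manufacture freeness is attacking the wrong statement.

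Beyond that, the concrete mechanism you suggest for the Hodge--Tate step --- an integral splitting of the conjugate filtration with graded pieces $\Omega^j_{\frak X/\cal O_K}\{-j\}$ --- is exactly the thing the paper identifies as breaking down in the ramified case. The paper's Theorem \ref{dht} establishes such a splitting only when $e=1$, and the remark following it explains that the map $\cal O_{\bar{\frak X}}\to\tau^{\leq1}\tilde\Omega_{\bar{\frak X}}$ splits if and only if $\bar{\frak X}$ lifts to $A_{\inf}/\tilde\xi^2$, which is obstructed in the ramified case by the non-vanishing of $\bb L_{\cal O_K/W(k)}$. What the paper actually does for general $e$ is entirely different: it exploits the $L\eta$-construction (Lemma \ref{bhattkey}) to produce a pair of maps $f,g$ between $H^i_{\rm HT}(\bar{\frak X})$ and $H^i_{\rm \acute et}(X,\bb Z_p)\otimes\cal O_C$ whose composition in either direction is $(\zeta_p-1)^i$, and then invokes the purely module-theoretic Lemma \ref{torsionlemma}: two finitely presented $\cal O_C$-modules whose torsion is supported at $\pi$ and which admit such an ``up-to-$\alpha$ inverse'' pair with $v(\alpha)<v(\pi)$ must be isomorphic. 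The condition $ie<p-1$ enters exactly as $v((\zeta_p-1)^i)<v(\pi)$. For the Breuil--Kisin step, a similar argument (Lemma \ref{torsionfree}, Lemma \ref{vanish}, Lemma \ref{key}) shows that $H^{i+1}_{\frak S}(\frak X)$ is $u$-torsion-free, that $H^i_{\frak S}(\frak X)$ is the direct sum of a free $\frak S$-module and its torsion, and that the torsion is of the special form $\bigoplus\frak S/p^{m_t}$ via Lemma \ref{2mod}; the \'etale specialization then identifies the invariant factors. Neither of these two steps reduces to a freeness statement, and the $L\eta$-maps are the crucial input your proposal is missing.
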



\begin{remark}
Note that the definition of Breuil--Kisin modules (see Definition \ref{BK}) in \cite{BMS1}, \cite{BMS2} is slightly more general than the original definition given by Kisin in \cite{kisin2006crystalline}. The difference lies in the existence of $u$-torsion (note that $\frak S=W(k)[[u]]$ is a two dimensional regular local ring). However, the theorem above shows that the Breuil--Kisin cohomology theory constructed by Bhatt, Morrow and Schloze does take values in the category of Breuil--Kisin modules in a traditional sense, at least when $ie<p-1$. 
\end{remark}

Unfortunately, we can not give any canonical isomorphisms between these modules. Our method only enables us to compare the module structure. The proof of this theorem relies essentially on the existence of the Breuil--Kisin cohomology and the construction of the $A_{\inf}$-cohomology in \cite{BMS1} by using the $L{\eta}$-functor and the pro-$\rm \acute e$tale site, which presents a close relation between $A_{\inf}$-cohomology and $p$-adic $\rm \acute e$tale cohomology. In fact, the $L{\eta}$-functor provides us with two morphisms between $H^i_{A_{\inf}}(\bar{\frak X})$ (resp. $H^i_{\rm HT}(\bar{\frak X})$) and $H^i_{\rm \acute et}(X, \bb Z_p)\otimes_{\bb Z_p}A_{\inf}$ (resp. $H^i_{\rm \acute et}(X, \bb Z_p)\otimes_{\bb Z_p}\cal O_C$), whose composition in both direction is $\mu^i$ (resp. $(\zeta_p-1)^i$). For the definitions of $\mu$ and $\zeta_p$, see Definition \ref{mu}. 

Note that $H^i_{\rm HT}(\bar{\frak X})$ is just the base change of $H^i_{\rm HT}({\frak X})$ along the natural injection $\cal O_K\to \cal O_C$. We can then directly verify the statement about the Hodge--Tate cohomology groups in Theorem \ref{main} by studying the two morphisms provided by the $L\eta$-functor.

For the part concerning the Breuil--Kisin cohomology groups, we need to prove some torsion-free results. Namely, when $ie<p-1$, the Breuil--Kisin cohomology group $H^{i+1}_{\frak S}(\frak X)$ is $E(u)$-torsion-free (equivalently, $u$-torsion-free), where $E(u)\in \frak S$ is the Eisenstein polynomial for a fixed uniformizer $\pi$ in $\cal O_K$. Moreover for any positive integer $n$, we have $H^i_{\frak S}(\frak X_{})/p^n$ is also $E(u)$-torsion-free.
~\\




As a consequence of Theorem \ref{main}, we can get an integral comparison theorem about $p$-adic $\rm \acute e$tale cohomology and crystalline cohomology both in the unramified case and ramified case, which generalises the case of schemes studied by Fontaine and Messing in \cite{Fontaine--Messing} and Caruso in \cite{caruso2008conjecture}. This is actually the main motivation of this work. Before we state our result, we give some background about integral comparison theorems.
~\\

\noindent \textbf{Integral $p$-adic Hodge theory.} For a proper smooth (formal) scheme $\frak X$ over $\cal O_K$, we can consider the de Rham cohomology $H^i_{\rm dR}(\frak X/\cal O_K)$ of $\frak X$, the $p$-adic \'etale cohomology $H^i_{\rm {\acute et}}(X, \bb Z_p)$ of the geometric (adic) generic fiber $X$ and the crystalline cohomology $H^i_{\rm crys}(\frak X_k/W(k))$ of the special fiber $\frak X_k$. Integral $p$-adic Hodge theory then studies the relations of these cohomology theories. 

The first result concerning integral comparison was given by Fontaine and Messing in \cite{Fontaine--Messing}. 
\begin{theorem}[\hspace{1sp}\cite{Fontaine--Messing}]
Let $X$ be a proper smooth scheme over $W(k)$ and $X_n=X\times_{{\rm Spec}(W(k))}{\rm Spec}(W_n(k))$, where $k$ is a perfect field of characteristic $p$. Let $G_{K_0}$ denote the absolute Galois group of $K_0=W(k)[\frac {1}{p}]$. Then for any integer $i$ such that $0\leq i\leq p-2$,  there exists a natural isomorphism of $G_{K_0}$-modules
\[
T_{\rm crys}(H^i_{\rm dR}(X_n))\simeq H^i_{\rm \acute et}(X_{\bar K_0}, \bb Z_p/p^n)
\]
where $T_{\rm crys}$ is a functor from the category of torsion Fontane--Laffaille modules to the category of $\bb Z_p[G_{K_0}]$-modules, which preserves invariant factors.
\end{theorem}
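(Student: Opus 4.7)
The plan is to interpolate between crystalline cohomology with its Fontaine--Laffaille structure and $p$-adic \'etale cohomology via the Fontaine--Messing syntomic complex, which is the strategy of the cited paper.

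First, for each $0 \le i \le p-2$, I would construct a syntomic complex $\mathcal{S}_n(i)$ on a suitable $p$-adic site of the special fibre $X_k$, defined as the homotopy fibre of
\[
1 - \phi/p^i \colon \mathrm{Fil}^i R\Gamma_{\crys}(X_n/W_n(k)) \longrightarrow R\Gamma_{\crys}(X_n/W_n(k)).
\]
In the range $i \le p-2$, Frobenius is integrally divisible by $p^i$ on $\mathrm{Fil}^i$ (the strong-divisibility phenomenon), so $\phi/p^i$ makes sense at the integral level, and the resulting complex encodes exactly the torsion Fontaine--Laffaille datum carried by $H^i_{\dR}(X_n)$. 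Using the definition of $T_{\crys}$ as $\Hom$ into an appropriate torsion quotient of $A_{\crys}$, one then identifies
\[
T_{\crys}(H^i_{\dR}(X_n)) \;\simeq\; H^i(X_k, \mathcal{S}_n(i))
\]
up to a Tate twist by $\bb Z_p(i)$, which is harmless as $T_{\crys}$ commutes with Tate twists.

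Second, I would produce the Fontaine--Messing period map from $\mathcal{S}_n(i)$ to the nearby-cycle complex of $\bb Z/p^n(i)$ from the generic fibre, and prove it is a quasi-isomorphism in degrees $\le i$. The argument is local: one reduces to a small affine admitting an \'etale coordinate chart over $W(k)$, where both sides become explicitly computable. The \'etale side is continuous Galois cohomology of the local pro-\'etale tower, and the syntomic side is computed using the PD-envelope; the match is carried out through the ring $A_{\crys}$ together with its $\phi$-action. Taking $i$-th global cohomology then yields
\[
H^i_{\rm \acute et}(X_{\bar K_0}, \bb Z/p^n(i)) \;\simeq\; H^i(X_k, \mathcal{S}_n(i)),
\]
which combined with the first step produces the claimed $G_{K_0}$-equivariant isomorphism.

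The hardest part is that the bound $i \le p-2$ is essential in at least three independent ways and must be threaded through the whole argument simultaneously: it is needed (a) for $\phi/p^i$ to exist integrally on $\mathrm{Fil}^i$, (b) for the full faithfulness of the torsion Fontaine--Laffaille functor $T_{\crys}$, and (c) for the local syntomic-to-\'etale comparison to be an honest quasi-isomorphism rather than a map with nontrivial error on cohomology. The local computation on the coordinate chart, where one must control the torsion in the divided-power envelope and match it with continuous Galois cohomology of $\bb Z_p(i)$, is the technical core of the argument and the main obstacle.
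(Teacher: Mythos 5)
This theorem is not actually proved in the paper; it is cited from Fontaine--Messing, and the paper's only comment on the argument is that ``the proof of Fontaine--Messing's theorem relies on syntomic cohomology which acts as a bridge connecting $p$-adic \'etale cohomology and crystalline cohomology,'' together with the observation that one of the main difficulties is showing $H^i_{\mathrm{dR}}(X_n)$ lies in the category of torsion Fontaine--Laffaille modules in the first place. Your sketch is precisely the syntomic-cohomology route the paper alludes to, so at the level of strategy you are matching the paper exactly.

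That said, a couple of points deserve tightening. First, the syntomic complex in Fontaine--Messing lives in the syntomic topology on $X_n$ (or on $X$ with its $p$-adic formal structure), not literally on the special fibre $X_k$; the special fibre enters because crystalline cohomology of $X_k$ agrees with de Rham cohomology of $X_n$, but the site on which $\mathcal{S}_n(i)$ is defined is finer. Second, your step $T_{\crys}(H^i_{\dR}(X_n)) \simeq H^i(X_k,\mathcal{S}_n(i))$ quietly assumes the hard input that the paper explicitly flags: one must first prove that $H^i_{\dR}(X_n)$, with its filtration and divided Frobenius, is a strongly divisible torsion Fontaine--Laffaille module (in particular that the Hodge-to-de Rham spectral sequence degenerates integrally in this range and that $\phi(\mathrm{Fil}^j) \subseteq p^jS$ holds at the level of cohomology, not just at the level of complexes). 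This is not a formal consequence of the definition of $\mathcal{S}_n(i)$ as a homotopy fibre; it requires its own local and spectral-sequence argument. Your observation about the three independent roles of the bound $i \le p-2$ is accurate, but the difficulty the paper singles out --- membership of $H^i_{\dR}(X_n)$ in the Fontaine--Laffaille category --- should be added to that list as a fourth and arguably the most delicate place the bound intervenes.
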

Note that $H^i_{\rm dR}(X_n)\cong H^i_{\rm crys}(X_k/W_n(k))$. Here we have used implicitly that $H^i_{\rm dR}(X_n)$ is in the category of torsion Fontane--Laffaille modules, which is actually one of the main difficulties. The proof of Fontaine--Messing's theorem relies on syntomic cohomology which acts as a bridge connecting $p$-adic $\rm \acute e$tale cohomology and crystalline cohomology.  

Recall that rational $p$-adic Hodge theory provides an equivalence between the category of crystalline representations and the category of (weakly) admissible filtered $\phi$-modules.  The idea of Fontane--Laffaille's theory is to try to classify $G_{K_0}$-stable $\bb Z_p$-lattices in a crystalline representation $V$ by $\phi$-stable $W(k)$-lattices in $D$ satisfying some conditions, where $D$ is the corresponding admissible filtered $\phi$-module. 

To generalize Fontane--Laffaille's theory to the semi-stable case, Breuil introduced the ring $S$ and related categories of $S$-modules in order to add a monodromy operator. He has also obtained an integral comparison result in the unramified case when $i<p-1$ in \cite{breuil1998cohomologie}. Later, this result was generalized to the case that $e(i+1)<p-1$ by Caruso in \cite{caruso2008conjecture}.

\begin{theorem}[\hspace{1sp}\cite{breuil1998cohomologie} \cite{caruso2008conjecture}]
Let $X$ be a proper and semi-stable scheme over $\cal O_K$. Let $X_n$ be $X\times_{{\rm Spec}(\cal O_K)}{\rm Spec}(\cal O_K/p^n)$. Fix a non-negative integer $r$ such that $er<p-1$. Then there exists a canonical isomorphism of Galois modules
\[
H^i_{{\rm \acute e}t}(X_{\bar K}, \bb Z/p^n\bb Z)(r)\cong T_{\rm st*}(H^i_{\rm log-crys}(X_n/(S/p^nS)))
\]
for any $i<r$.
\end{theorem}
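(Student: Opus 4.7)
The plan is to replay the Fontaine--Messing strategy in the ramified semi-stable setting, with syntomic cohomology serving as the connective tissue between the log-crystalline and \'etale sides. The entire argument hinges on the numerical condition $er<p-1$, which keeps one inside the range where torsion Breuil $S$-modules do not suffer from divided-power denominator pathologies.

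First I would equip $M^i_n := H^i_{\rm log\text{-}crys}(X_n/(S/p^nS))$ with the structure of a torsion Breuil module: the Frobenius $\phi$ and monodromy operator $N$ descend from the log-crystalline site, and the filtration is the Nygaard-type filtration induced by the powers of $E(u)$. The technical heart here is verifying strong divisibility, i.e.\ that $\phi(\operatorname{Fil}^r M^i_n) \subset p^r M^i_n$ and that the resulting $\phi_r$-image generates $M^i_n$ over $S/p^nS$. This is exactly where the assumption $er<p-1$ enters: outside this range the divided powers on $E(u)$ introduce denominators in the log-crystalline complex that would obstruct strong divisibility.

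Next, I would introduce the mod-$p^n$ log-syntomic sheaf $\cal S_n(r)$ as the homotopy fiber of $1-\phi_r$ on the $r$-th step of the log-crystalline filtration. By construction, its hypercohomology is tautologically $T_{\rm st*}(M^i_n)$ via the period ring $A_{\rm st}/p^n A_{\rm st}$. On the other side, the Fontaine--Messing comparison (extended to the log case by Kato and Tsuji) supplies a natural map $H^i(X_n,\cal S_n(r)) \to H^i_{\rm \acute et}(X_{\bar K},\bb Z/p^n\bb Z)(r)$ which is an isomorphism in the range $i<r$ precisely under $er<p-1$. Splicing the two identifications yields the desired canonical Galois-equivariant isomorphism, with functoriality giving independence of choices.

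The hardest part is the first step: proving that log-crystalline cohomology genuinely lives in the strict Breuil-module category. One must control not only the filtration and Frobenius but also the finer $S$-module structure of the individual cohomology groups (in particular some torsion-vanishing is needed for strong divisibility to make sense), and this is where Caruso's sharpening of Breuil's original argument exploits the full strength of $er<p-1$ rather than a stronger unramified-type condition. An attractive modern alternative --- essentially the viewpoint developed in this paper --- is to bypass syntomic cohomology altogether and obtain a $T_{\rm st*}$-style functor directly from the Breuil--Kisin module structure provided by prismatic cohomology, which would also naturally suggest the generalization from $e(i+1)<p-1$ to $ie<p-1$ targeted in the main theorem.
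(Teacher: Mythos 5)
This theorem is cited, not proved, in the paper: it appears in the Introduction as background, attributed to \cite{breuil1998cohomologie} and \cite{caruso2008conjecture}, with only a one-sentence gloss (``The proof also relies on the use of syntomic cohomology. One of the main difficulties in their proof is to show that $H^i_{\rm log\text{-}crys}(X_n/(S/p^nS))$ is in the category ${\rm Mod}^{r,\phi}_{/S_{\infty}}$, in particular, to show that $H^i_{\rm log\text{-}crys}(X_1/(S/pS))$ is finite free over $S/pS$.''). So there is no in-paper proof to compare against; what you can be measured against is whether your sketch matches the Breuil--Caruso strategy the paper alludes to, and on that score you land in roughly the right place: syntomic cohomology as the bridge, the Kato--Tsuji log-syntomic-to-\'etale comparison, and the identification of the technical bottleneck as placing the log-crystalline cohomology in the correct Breuil category.

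One correction worth making: the ``strong divisibility'' condition you write, $\phi(\operatorname{Fil}^r M^i_n)\subset p^r M^i_n$, is the condition for \emph{free} strongly divisible $S$-lattices, and it is vacuous or ill-posed for the $p$-power-torsion modules $M^i_n$ (which are killed by $p^n$, so $p^rM^i_n$ need not carry useful information). For torsion objects the correct requirement is the one the paper encodes in Definition \ref{classic definition}: membership in ${\rm Mod}^{r,\phi}_{/S_{\infty}}$, the extension closure of the category of $S/p$-free objects on which the image of $\phi_r$ generates. Proving $H^i_{\rm log\text{-}crys}(X_1/(S/pS))$ is \emph{finite free} over $S/pS$ (no $u$-torsion) is exactly the freeness input the paper flags as the hard step, and is what your phrase ``some torsion-vanishing is needed'' is gesturing at, but it should be stated as freeness over $S/p$ rather than as a divisibility bound. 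Your closing observation --- that the present paper sidesteps syntomic cohomology by working directly with Breuil--Kisin/prismatic cohomology, trading the condition $e(i+1)<p-1$ for $ie<p-1$ --- correctly identifies the relationship between this cited theorem and the paper's actual results (Theorems \ref{main} and \ref{formal}).
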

$T_{\rm st*}$ is a functor from the category ${\rm Mod}^{r,\phi}_{/S_{\infty}}$ (see Definition \ref{classic definition}) to the category of $\bb Z_p[G_K]$-modules, which preserves invariant factors. The proof also relies on the use of syntomic cohomology. One of the main difficulties in their proof is to show that $H^i_{\rm log-crys}(X_n/(S/p^nS))$ is in the category ${\rm Mod}^{r,\phi}_{/S_{\infty}}$, in particular, to show that $H^i_{\rm log-crys}(X_1/(S/pS))$ is finite free over $S/pS$. 

\begin{remark}
One crucial point of Breuil's theory is that it highly depends on the restriction $r\leq p-1$ which is rooted in the fact that the inclusion $\phi({\rm Fil^r}S)\subset p^rS$ is true only when $r\leq p-1$. One way to remove this restriction is to consider Breuil--Kisin modules. In fact, one of the main motivations of $A_{\inf}$-cohomology theory is to give a cohomological construction of Breuil--Kisin modules. The techniques in \cite{BMS1} can not directly give the desired Breuil--Kisin cohomology. However, this goal is achieved in \cite{BMS2} by using topological cyclic homology and in \cite{bhatt2019prisms} by defining prismatic site in a more general setting.
\end{remark}

Recently, Bhatt, Morrow and Scholze have obtained a more general result about the relation between $p$-adic \'etale cohomology and crystalline cohomology in \cite{BMS1} by using $A_{\inf}$-cohomology. Their result does not impose any restriction on the ramification degree, roughly saying that the torsion in the crystalline cohomology gives an upper bound for the torsion in the $p$-adic \'etale cohomology.

As we have said, by studying $A_{\inf}$-cohomology and its descent Breuil--Kisin cohomology, we can generalize the results of Fontaine--Messing, Breuil and Caruso to the case of formal schemes, at least in the good reduction case.

\begin{theorem}[Theorem \ref{unrcom}, Theorem \ref{final}]\label{formal}
Let $\frak X_{}$ be a proper smooth formal scheme over $\cal O_K$. Let $C$ be a complete algebraically closed non-archimedean extension of $K$ and $\bar {\frak X}:=\frak X\times_{{\rm Spf}(\cal O_K)}{\rm Spf}(\cal O_C)$. Write $X$ for the adic generic fiber of $\bar{\frak X}$. Then when $ie<p-1$, there is an isomorphism of $W(k)$-modules ${H^i_{{\rm\acute {e}t}}}(X,\bb Z_p)\otimes_{\mathbb Z_p}W(k)\cong H^i_{\rm crys}(\frak X_k/W(k))$.
\end{theorem}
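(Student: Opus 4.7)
The plan is to combine Theorem \ref{main} with the Breuil--Kisin-to-crystalline comparison established in \cite{BMS2}.

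First, I would invoke Theorem \ref{main}: under the hypothesis $ie<p-1$, we have an isomorphism of $\frak S$-modules
\[
H^i_{\frak S}(\frak X)\cong H^i_{\rm \acute et}(X,\bb Z_p)\otimes_{\bb Z_p}\frak S.
\]
Reducing modulo $u$ (noting that both sides are $u$-torsion free, since $\frak S$ is so over $\bb Z_p$) immediately yields $H^i_{\frak S}(\frak X)/u\cong H^i_{\rm \acute et}(X,\bb Z_p)\otimes_{\bb Z_p}W(k)$, which is precisely the object appearing on the right-hand side of the theorem.

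Second, I would identify $H^i_{\frak S}(\frak X)/u$ with $H^i\bigl(R\Gamma_{\frak S}(\frak X)\otimes^{\bb L}_{\frak S}W(k)\bigr)$. The short exact sequence $0\to\frak S\xrightarrow{u}\frak S\to W(k)\to 0$ of $\frak S$-modules produces, after tensoring with $R\Gamma_{\frak S}(\frak X)$, a universal-coefficient type exact sequence
\[
0\to H^i_{\frak S}(\frak X)/u\to H^i\bigl(R\Gamma_{\frak S}(\frak X)\otimes^{\bb L}_{\frak S}W(k)\bigr)\to H^{i+1}_{\frak S}(\frak X)[u]\to 0.
\]
The introduction already advertises the key $u$-torsion freeness: when $ie<p-1$, the group $H^{i+1}_{\frak S}(\frak X)$ has no $u$-torsion, so the last term vanishes and we obtain $H^i_{\frak S}(\frak X)/u\cong H^i\bigl(R\Gamma_{\frak S}(\frak X)\otimes^{\bb L}_{\frak S}W(k)\bigr)$.

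Third, I would invoke the crystalline specialization of Breuil--Kisin cohomology from \cite{BMS2}: derived base change of $R\Gamma_{\frak S}(\frak X)$ along $\frak S\to W(k)$, $u\mapsto 0$, recovers $R\Gamma_{\rm crys}(\frak X_k/W(k))$, at least up to a Frobenius twist on the target coming from conventions on $\phi$. Since $k$ is perfect, this twist is implemented by an automorphism of $W(k)$ and is therefore invisible at the level of $W(k)$-module isomorphism classes. Taking $H^i$ and chaining the three identifications gives the desired $W(k)$-module isomorphism $H^i_{\rm crys}(\frak X_k/W(k))\cong H^i_{\rm \acute et}(X,\bb Z_p)\otimes_{\bb Z_p}W(k)$.

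The main obstacle has already been absorbed into Theorem \ref{main}, namely the identification of $H^i_{\frak S}(\frak X)$ as a trivial $\frak S$-module together with the $u$-torsion freeness of $H^{i+1}_{\frak S}(\frak X)$; once these are in hand, the present theorem is essentially a formal consequence of the crystalline comparison, and the only genuine subtlety is the cosmetic Frobenius twist.
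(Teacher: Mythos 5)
Your proof is correct, but it follows a genuinely different route from the paper's. The paper proves this statement (as Theorem \ref{final} for general $e$, and Theorem \ref{unrcom} in the unramified case) by passing through the $A_{\inf}$- and de Rham cohomologies of $\bar{\frak X}$ over $\cal O_C$: it uses Corollary \ref{special}(iii) together with the $\xi$-torsion-freeness of $H^{i+1}_{A_{\inf}}(\bar{\frak X})$ (Corollary \ref{bktor}) to get $H^i_{A_{\inf}}(\bar{\frak X})/\xi\cong H^i_{\rm dR}(\bar{\frak X}/\cal O_C)$, applies Theorem \ref{comp} to replace the left-hand side by $H^i_{\rm \acute et}(X,\bb Z_p)\otimes_{\bb Z_p}\cal O_C$, and then invokes the Berthelot--Ogus de Rham/crystalline comparison over $\cal O_C$ (valid for $e<p$) plus a descent from $W(\bar k)$ to $W(k)$ to reach crystalline cohomology. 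You instead stay entirely on the Breuil--Kisin side: you reduce the isomorphism of Theorem \ref{main} modulo $u$, use the short exact sequence $0\to H^i_{\frak S}(\frak X)/u\to H^i(R\Gamma_{\frak S}(\frak X)\otimes^{\bb L}_{\frak S}\frak S/u)\to H^{i+1}_{\frak S}(\frak X)[u]\to 0$ together with the $u$-torsion-freeness of $H^{i+1}_{\frak S}(\frak X)$ (which is indeed Corollary \ref{bktor} in the paper, not just an advertisement in the introduction), and then invoke the crystalline specialization of Theorem \ref{theorem1.2}(iv) directly. The Frobenius twist on $W(k)$ in that specialization is correctly dismissed since $k$ is perfect and you only claim an abstract $W(k)$-module isomorphism. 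Your route is shorter, avoids the auxiliary ring $\cal O_C$, the Berthelot--Ogus input, and the descent from $\bar k$ to $k$ at the end; the paper's route has the minor side benefit of also exhibiting the de Rham cohomology of $\bar{\frak X}/\cal O_C$ explicitly, but for the stated theorem your argument is cleaner and equally rigorous.
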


We will study the unramified case and the ramified case in different ways. For the proof in the unramified case, we need the following theorem:

\begin{theorem}[Theorem \ref{unr}]\label{UNR}
With the same assumptions as the theorem above, when $e=1$, we have 
\[
{\rm length}_{\bb Z_p}(H^i_{\rm \acute et}(X, \bb Z_p)_{\rm tor}/p^m))\geq {\rm length}_{W(k)}(H^i_{\rm crys}(\frak X_k/W(k))_{\rm tor}/p^m)
\]
for any $i<p-1$ and any positive integer $m$.
\end{theorem}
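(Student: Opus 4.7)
The plan is to deduce the inequality --- in fact, an equality --- from Theorem \ref{main} by specializing the Breuil--Kisin cohomology to the crystalline cohomology of the special fiber along the map $\frak S\to W(k)$ sending $u\mapsto 0$. In the unramified case the condition $i<p-1$ coincides with $ie<p-1$, so Theorem \ref{main} and the accompanying torsion-freeness statement provide both
\[
H^i_{\frak S}(\frak X) \;\cong\; H^i_{\rm \acute et}(X,\bb Z_p)\otimes_{\bb Z_p}\frak S
\]
and the fact that $H^{i+1}_{\frak S}(\frak X)$ is $u$-torsion-free.

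Next, I would invoke the crystalline specialization of the Breuil--Kisin cohomology,
\[
R\Gamma_{\rm crys}(\frak X_k/W(k)) \;\simeq\; R\Gamma_{\frak S}(\frak X)\otimes^{\bb L}_{\frak S,\phi}W(k),
\]
where the $\frak S$-structure on $W(k)$ is through the Frobenius of $\frak S$ composed with $u\mapsto 0$. Since $W(k)=\frak S/u$ admits the length-one Koszul resolution $0\to \frak S\xto{u}\frak S\to W(k)\to 0$, the universal coefficient sequence degenerates to
\[
0\to H^i_{\frak S}(\frak X)/u \to H^i_{\rm crys}(\frak X_k/W(k)) \to H^{i+1}_{\frak S}(\frak X)[u]\to 0
\]
(all up to the Frobenius twist on the $W(k)$-structure). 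The right-hand term vanishes by the $u$-torsion-freeness from the previous step, and substituting the isomorphism of Theorem \ref{main} into the left-hand term yields
\[
H^i_{\rm crys}(\frak X_k/W(k)) \;\cong\; H^i_{\rm \acute et}(X,\bb Z_p)\otimes_{\bb Z_p}W(k)
\]
up to a Frobenius twist on the $W(k)$-action.

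For the length comparison, the Frobenius on $W(k)$ is a ring automorphism fixing $p$, hence preserves the $W(k)$-length of every $p$-power torsion module. Writing $(H^i_{\rm \acute et}(X,\bb Z_p))_{\rm tor}=\bigoplus_j\bb Z/p^{a_j}$, the module on the right has torsion $\bigoplus_j W(k)/p^{a_j}$, so
\[
{\rm length}_{W(k)}\bigl((H^i_{\rm crys})_{\rm tor}/p^m\bigr)=\sum_j\min(a_j,m)={\rm length}_{\bb Z_p}\bigl((H^i_{\rm \acute et})_{\rm tor}/p^m\bigr),
\]
which immediately implies the claimed inequality. The main obstacle will be to correctly formulate and apply the Breuil--Kisin to crystalline specialization in the formal-scheme setting and to track the Frobenius twist carefully so that the final length identification is fully justified; the torsion-freeness input provided by Theorem \ref{main} is what makes both the degeneration and the length matching possible in this range of $i$.
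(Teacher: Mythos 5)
Your proposal is correct, but it takes a genuinely different route from the paper's proof of Theorem~\ref{unr}. The paper proves the inequality by first establishing that the truncated Hodge--Tate complex $\tau^{\leq p-1}\tilde\Omega_{\bar{\frak X}}$ is formal (Theorem~\ref{dht}), deducing the direct-sum decomposition $H^n_{\rm HT}(\bar{\frak X})\cong\bigoplus_{i}H^{n-i}(\bar{\frak X},\Omega^i_{\bar{\frak X}}\{-i\})$, identifying this with $H^n_{\rm \acute et}\otimes\cal O_C$ via Theorem~\ref{HT}, and then bounding the de~Rham torsion through a careful length analysis of the abutment filtration of the Hodge-to-de Rham spectral sequence (Corollary~\ref{1st}, Lemma~\ref{hh}). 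This yields only the one-sided inequality, which is then combined with the BMS1 inequality (Theorem~\ref{torsion}) to produce the equality in Theorem~\ref{unrcom}.

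Your argument instead invokes Theorem~\ref{comp} and Corollary~\ref{bktor} from Section~\ref{E} --- the decomposition $H^i_{\frak S}(\frak X)\cong H^i_{\rm \acute et}\otimes\frak S$ and the $u$-torsion-freeness of $H^{i+1}_{\frak S}(\frak X)$ --- then pushes these through the crystalline specialization $R\Gamma_{\frak S}(\frak X)\otimes^{\bb L}_{\frak S}W(k)\simeq R\Gamma_{\rm crys}(\frak X_k/W(k))$ and the degenerate universal-coefficient sequence to get the comparison isomorphism directly, so that the stated inequality is a trivial corollary. This is in effect the paper's proof of Theorem~\ref{final} (the ramified case) specialized to $e=1$, and the paper itself remarks that the ramified argument recovers the unramified case. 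There is no circularity: nothing in Section~\ref{E} depends on Theorem~\ref{unr}. The trade-off is that your route proves \emph{more} than what is asked (equality rather than inequality) but relies on the heavier torsion-freeness machinery of Section~\ref{E}, whereas the paper's Hodge--Tate formality argument is self-contained within Section~\ref{D} and is the key that unlocks the strictly stronger statements there --- the degeneration of the integral Hodge-to-de Rham spectral sequence and the all-degree comparison under ${\rm dim}\,\frak X<p-1$ (Corollary~\ref{de}, Theorem~\ref{dimension}) --- which one cannot extract from the Breuil--Kisin specialization alone.

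One minor point of bookkeeping: the Frobenius twist in $\frak S\to W(k)$ really is immaterial as you say, since it is a ring automorphism of $W(k)$ fixing $p$ and hence preserves lengths of $p$-power torsion modules and the property of $u$-torsion-freeness; but you should state explicitly that $W(k)$, viewed as a $\frak S$-module via this twisted map, still has the two-term Koszul resolution by $u$ so that ${\rm Tor}_1^{\frak S}(-,W(k))$ does indeed compute $u$-torsion.
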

In fact, we first compare Hodge-Tate cohomology to Hodge cohomology by proving that the truncated Hodge-Tate complex of sheaves $\tau^{\leq p-1}\tilde\Omega_{\bar{\frak X}}$ is formal in this case, i.e. there is an isomorphism $\tau^{\leq p-1}\tilde\Omega_{\bar{\frak X}}\simeq \bigoplus_{i=0}^{p-1}\cal H^i(\tilde\Omega_{\bar{\frak X}})[-i]$. We then study the Hodge-to-de Rham spectral sequence to relate Hodge cohomology to de Rham cohomology. By Theorem \ref{main}, we can finally relate de Rham (or crystalline) cohomology to $p$-adic \'etale cohomology. Note that the theorem above gives a converse to Theorem \ref{torsion} in \cite{BMS1}, which implies that $H^n_{{\rm\acute {e}t}}(X,\bb Z_p)$ and $H^n_{\rm crys}(\frak X_k/W(k))$ have the same invariant factors.

In the ramified case, the integral comparison theorem follows directly from Theorem \ref{main} and Theorem \ref{Main}.

\begin{remark}
The $A_{\inf}$-cohomology theory in the semi-stable case has been studied in \cite{cesnavicius2017a_}. The Breuil--Kisin cohomology might be also generalised to the semi-stable case by using the prismatic site. Then one could also hope to generalize Theorem \ref{main} and Theorem \ref{formal} to the semi-stable case.
\end{remark}

We also remark that although the result in the ramified case can recover that in the unramified case, the method used in the unramified case can lead to the following theorem concerning the Hodge-to-de Rham spectral sequence and integral comparison result for all cohomological degrees.

\begin{theorem}[Theorem \ref{dimension}, Corollary \ref{de}]\label{alln}
Let $\frak X_{}$ be a proper smooth formal scheme over $W(k)$, where $k$ is a perfect field of characteristic $p$. Let $C$ be a complete algebraically closed non-archimedean extension of $W(k)[1/p]$ and $\cal O_C$ be its ring of integers. Let $\bar{\frak X}=\frak X_{}\times_{{\rm Spf}(W(k))}{\rm Spf}(\cal O_C)$ and write $X$ for the adic generic fiber of $\bar{\frak X}$. Assume the relative dimension of $\frak X$ satisfies ${\rm dim}\frak X<p-1$. Then we have the following results:
\begin{enumerate}
\item There is an isomorphism of $W(k)$-modules for all $i$
\[
{H^i_{{\rm\acute {e}t}}}(X,\bb Z_p)\otimes_{\mathbb Z_p}W(k)\cong H^i_{\rm crys}(\frak X_k/W(k)).
\]
\item The (integral) Hodge-to-de Rham spectral sequence degenerates at $E_1$-page.
\end{enumerate}
\end{theorem}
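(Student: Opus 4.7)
The approach is to mimic the strategy behind Theorem~\ref{UNR}, but to leverage the hypothesis $d := \dim \frak X < p-1$ so as to extend each step from the range $i<p-1$ to \emph{all} cohomological degrees at once.

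First, I would upgrade the formality of $\tau^{\leq p-1}\tilde\Omega_{\bar{\frak X}}$ established in the proof of Theorem~\ref{UNR} to a formality of the entire Hodge--Tate complex of sheaves. Since the cohomology sheaves $\cal H^i(\tilde\Omega_{\bar{\frak X}}) \cong \Omega^i_{\bar{\frak X}/\cal O_C}\{-i\}$ vanish for $i>d$, the canonical truncation $\tau^{\leq p-1}\tilde\Omega_{\bar{\frak X}}$ agrees with $\tilde\Omega_{\bar{\frak X}}$ on the nose, so the same argument produces a genuine decomposition
$$
\tilde\Omega_{\bar{\frak X}} \simeq \bigoplus_{i=0}^{d} \Omega^i_{\bar{\frak X}/\cal O_C}\{-i\}[-i]
$$
in the derived category. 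Taking hypercohomology and applying flat base change from $W(k)$ to $\cal O_C$ yields, for each $n$, an $\cal O_C$-module isomorphism
$$
H^n_{\rm HT}(\bar{\frak X}) \;\cong\; \bigoplus_{p+q=n}\bigl(H^q(\frak X,\Omega^p_{\frak X/W(k)})\otimes_{W(k)}\cal O_C\bigr)\{-p\}.
$$

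Next, I would deduce integral $E_1$-degeneration of the Hodge-to-de Rham spectral sequence $E_1^{p,q} = H^q(\frak X,\Omega^p_{\frak X/W(k)}) \Rightarrow H^{p+q}_{\rm dR}(\frak X/W(k))$. The displayed isomorphism, combined with the classical rational Hodge--Tate decomposition of \'etale cohomology, forces $\sum_{p+q=n} h^{p,q} = \dim_{\bb Q_p} H^n_{\rm \acute et}(X,\bb Q_p) = \dim_{K_0} H^n_{\rm dR}(\frak X/W(k))[1/p]$, which gives rational degeneration. Since $d<p-1<p$ and $\frak X$ itself provides a lift of $\frak X_k$ to $W_2(k)$, Deligne--Illusie gives degeneration modulo $p$ as well; a total length comparison mod $p^m$ then promotes both statements to integral $E_1$-degeneration, establishing part~(ii) together with the length identity
$$
\lgth_{W(k)}\bigl(H^n_{\rm crys}(\frak X_k/W(k))/p^m\bigr) = \sum_{p+q=n}\lgth_{W(k)}\bigl(H^q(\frak X,\Omega^p_{\frak X/W(k)})/p^m\bigr).
$$

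For part~(i), I would combine these length identities with the universal inequality $\lgth_{\bb Z_p}(H^n_{\rm \acute et}(X,\bb Z_p)/p^m)\leq\lgth_{W(k)}(H^n_{\rm crys}(\frak X_k/W(k))/p^m)$ from Theorem~\ref{torsion} of~\cite{BMS1}. The reverse inequality should come from the $L\eta$-morphisms between $H^n_{\rm \acute et}(X,\bb Z_p)\otimes\cal O_C$ and $H^n_{\rm HT}(\bar{\frak X})$ whose compositions are $(\zeta_p-1)^n$: the full formality splits $H^n_{\rm HT}(\bar{\frak X})$ into Hodge summands of bidegree $(p,q)$ with $p,q\leq d<p-1$, so torsion can be controlled summand-by-summand rather than in the total degree~$n$. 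Forcing every inequality to be an equality identifies invariant factors and yields $H^n_{\rm \acute et}(X,\bb Z_p)\otimes_{\bb Z_p} W(k)\cong H^n_{\rm crys}(\frak X_k/W(k))$ for all~$n$. The main obstacle is exactly this last torsion inequality in degrees $n\geq p-1$, where Theorem~\ref{main} does not apply directly; the saving point is that full formality reduces the problem to bidegrees where the individual $L\eta$-arguments from the proofs of Theorem~\ref{main} and Theorem~\ref{UNR} remain effective.
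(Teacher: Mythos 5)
Your first step — showing $\tilde\Omega_{\bar{\frak X}}$ is formal in its entirety because $\cal H^i(\tilde\Omega_{\bar{\frak X}}) \cong \Omega^i_{\bar{\frak X}}\{-i\}$ vanishes for $i>d$ — is exactly the paper's Lemma~\ref{ght}. After that the argument diverges, and there are two genuine problems with your route.

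First, the $L\eta$-argument as you present it has a gap. You state that the two $L\eta$-maps between $H^n_{\rm HT}(\bar{\frak X})$ and $H^n_{\rm\acute et}(X,\bb Z_p)\otimes\cal O_C$ compose to $(\zeta_p-1)^n$; but for $n\geq p-1$ this is useless, since $v((\zeta_p-1)^n)\geq v(p)$ and Lemma~\ref{torsionlemma} no longer applies. Your proposed rescue is that formality lets one ``control torsion summand-by-summand'' in bidegrees $(p,q)$ with $p,q\leq d$. That does not work: the $L\eta$-maps $f$ and $g$ are defined at the level of the whole complexes $\tau^{\leq *}\tilde\Omega_{\bar{\frak X}}$ and $\tau^{\leq *}R\nu_*\hat{\cal O}_X^+$ and there is no reason they should be compatible with the Hodge--Tate decomposition, so the problem does not actually decompose. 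The correct way out — and what the paper does in the proof of Theorem~\ref{get} — is to show that $R\nu_*\hat{\cal O}_X^+$ is \emph{almost concentrated in degrees $\leq d$} (via the Koszul-complex computation over small affines, so that $R^i\nu_*\hat{\cal O}_X^+$ is almost zero for $i>d$). Then one applies Lemma~\ref{bhattkey} to the complex sitting in degrees $[0,d]$, producing maps whose compositions are $(\zeta_p-1)^d$, not $(\zeta_p-1)^n$; since $d<p-1$, Lemma~\ref{torsionlemma} applies in every degree $n$. Formality of $\tilde\Omega_{\bar{\frak X}}$ alone (i.e.\ $\tau^{\leq d}\tilde\Omega_{\bar{\frak X}}\simeq\tilde\Omega_{\bar{\frak X}}$) gives the left-hand side of the required comparison, but you still need the independent statement about the almost cohomological dimension of $R\nu_*\hat{\cal O}_X^+$ for the right-hand side, and your write-up does not supply it.

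Second, the Deligne--Illusie route to degeneration is at best underspecified. Deligne--Illusie gives $E_1$-degeneration of the Hodge-to-de~Rham spectral sequence of the \emph{special fiber} $\frak X_k/k$, not of the mod-$p$ reduction of the integral spectral sequence; these differ by universal coefficient terms (e.g.\ $H^q(\frak X_k,\Omega^p_{\frak X_k/k})$ mixes $E_1^{p,q}/p$ and $E_1^{p,q+1}[p]$), so ``rational degeneration plus special-fiber degeneration'' does not automatically force integral $E_1$-degeneration when the integral Hodge groups have $p$-torsion — which is precisely the interesting case. The paper sidesteps this entirely: it first proves the comparison isomorphism $H^n_{\rm HT}(\bar{\frak X})\cong H^n_{\rm\acute et}\otimes\cal O_C$ for all $n$ (Theorem~\ref{get}), feeds that through the length inequalities of Corollary~\ref{1st} and Lemma~\ref{hh} together with the BMS bound (Theorem~\ref{torsion}), observes that all the inequalities collapse to equalities, and then reads off both the comparison (Theorem~\ref{dimension}) \emph{and} degeneration (Corollary~\ref{de}) from those equalities. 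The Deligne--Illusie input is never needed and the degeneration comes out as a by-product rather than as a prerequisite. I'd recommend restructuring your argument in that order: establish Theorem~\ref{get}'s comparison via the almost cohomological dimension bound, then run the length bookkeeping, and extract the degeneration at the end.
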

When $\frak X$ is a scheme, Theorem \ref{alln} can be deduced from \cite{Fontaine--Messing} together with Poincar\'e duality. When $\frak X$ is a formal scheme, the comparison isomorphism in Theorem \ref{alln} can not be deduced from Theorem \ref{formal} since there is still no Poincar\'e duality for \'etale cohomology of rigid analytic varieties over $C$ with coefficient in $\bb Z/p^n$. We also want to remark that Fontaine and Messing have proved the integral Hodge-to-de Rham spectral sequence degenerates at $E_1$-page when the special fiber of the proper smooth formal scheme $\frak X$ has dimension strictly less than $p$ (cf. Remark \ref{degeneration}).

\subsection*{Acknowledgements} 
I would like to express my great gratitude to my advisor Matthew Morrow for suggesting this problem to me and many helpful discussions during the preparation of this work. I am also grateful to Christophe Breuil, Shizhang Li and Takeshi Tsuji for their valuable comments. I also want to thank the referee for careful reading and great suggestions.
\\
\subsection*{Notations} 
Throughout this paper, let $\frak X$ be a proper smooth formal scheme over $\cal O_K$, which is the ring of integers in a complete discretely valued non-archimedean extension $K$ of $\bb Q_p$ with perfect residue field $k$ and ramification degree $e$. 

Fix $C$ a complete algebraically closed non-archimedean extension of $K$ with $\cal O_C$ its ring of integers. Let $\bar{\frak X}=\frak X_{}\times_{{\rm Spf}(\cal O_K)}{\rm Spf}(\cal O_C)$ and write $X$ for the adic generic fiber of $\bar{\frak X}$. 

Let $\frak X_k$ denote the special fiber of $\frak X$ and $\frak X_{\bar k}$ denote its base change to $\bar k$ which is the residue field of $\cal O_C$ (note that $\bar k$ is not necessarily the algebraic closure of $k$).

Define $\frak S:=W(k)[[u]]$. Fix a uniformizer $\pi$ in $\cal O_K$. We denote by $\beta$ the $W(k)$-linear map $\frak S\to \cal O_K$ sending $u$ to $\pi$, whose kernel is generated by a fixed Eisenstein polynomial $E=E(u)$ for $\pi$.

\section{Recollections on $A_{\inf}$-cohomology}\label{A}
In this section, we will simply recall the necessary ingredients for defining the $A_{\inf}$-cohomology theory in \cite{BMS1}. In fact, we will stick to the method using the pro-$\rm\acute e$tale site and the d$\rm \acute e$calage functor $L{\eta}$, which will provide us with some useful morphisms between $A_{\inf}$-cohomology groups and $p$-adic $\rm \acute e$tale cohomology groups. 
\ \

\subsection{Pro-\'etale sheaves}
We first define some sheaves on the pro-$\rm \acute e$tale site $X_{\rm pro\acute et}$. Recall that there is a natural projection map of sites
\[
\omega: X_{\rm pro\acute et}\to X_{\rm \acute et}
\]
which is defined by pulling back $U\in X_{\rm \acute et}$ to the constant tower $(\cdots\to U\to U\to X)$ in $X_{\rm pro\acute et}$. This just reflects the fact that an \'etale morphism is pro-\'etale.

\begin{definition}[\cite{scholze2013p} Section 6]
Consider the following sheaves on $X_{\rm pro\acute et}$.
\begin{enumerate}
\item The integral structure sheaf $\cal O^+_X:=\omega^*\cal O_{X_{\rm \acute et}}^+$.
\item The structure sheaf $\cal O_X:=\omega^*\cal O_{X_{\rm \acute et}}$.
\item The completed integral structure sheaf $\hat {\cal O}_X^+:=\varprojlim_n\cal O_X^+/p^n$.
\item The completed structure sheaf $\hat {\cal O}_X:=\hat {\cal O}_X^+[\frac{1}{p}]$.
\item The tilted completed integral structure sheaf $\hat {\cal O}_{X^{\flat}}^+:=\varprojlim_{\phi}\cal O_X^+/p$.
\item Fontaine's period sheaf $\bb A_{\inf,X}$, which is the derived $p$-adic completion of $W(\hat {\cal O}_{X^{\flat}}^+)$.
\end{enumerate}
\end{definition}
\begin{remark}
In \cite[Remark 5.5]{BMS1}, it has been pointed out that it is not clear whether $W(\hat {\cal O}_{X^{\flat}}^+)$ is derived $p$-adic complete. So in order to make the $A_{\inf}$-cohomology theory work well, we need to define $\bb A_{\inf,X}$ as the derived $p$-adic completion of $W(\hat {\cal O}_{X^{\flat}}^+)$, which is actually a complex of sheaves.

\end{remark}

\subsection{The $L\eta$-functor}
The other important ingredient for defining the $A_{\inf}$-cohomology is the d$\rm\acute e$calage functor, which functions as a tool to get rid of ``junk torsion''. The ``junk torsion'' exists already in Faltings' approach to $p$-adic Hodge theory in \cite{faltings1988p}. The introduction of the d\'ecalage functor is actually the main novelty of \cite{BMS1} to deal with this ``junk torsion''.

\begin{definition}[The $L\eta$-functor, \cite{BMS1} Section 6]
Let $(T, \cal O_T)$ be a ringed topos and $\cal I\subset \cal O_T$ be an invertible ideal sheaf. For any $\cal I$-torsion-free complex $C^{\bullet}\in K(\cal O_T)$, we can define a new complex $\eta_{\cal I}C^{\bullet}=(\eta_{\cal I}C)^{\bullet}\in {K}(\cal O_T)$ with terms
\[
(\eta_{\cal I}C)^i:=\{x\in C^i | dx\in \cal I\cdot C^{i+1}\}\otimes_{\cal O_T}\cal I^{\otimes i}
\]
For every complex $D^{\bullet}\in K(\cal O_T)$, there exists a strongly $K$-flat complex $C^{\bullet}\in K(\cal O_T)$ and a quasi-isomorphism $C^{\bullet}\to D^{\bullet}$. By saying strongly $K$-flat, we mean that each $C^i$ is a flat $\cal O_T$-module and for every acyclic  complex $P^\bullet\in K(\cal O_T)$, the total complex ${\rm Tot}(C^\bullet\otimes P^\bullet)$ is acyclic. In particular, $C^\bullet$ is $\cal I$-torsion free.
Then we can define 
\[
L\eta_{\cal I}: D(\cal O_T)\to D(\cal O_T)
\]
\[
L\eta_{\cal I}(D^{\bullet}):=\eta_{\cal I}(C^\bullet)
\]
\end{definition}

A concrete example is to consider a ring $A$ and a non-zero-divisor $a\in A$. If $C$ is a cochain complex of $a$-torsion free $A$-modules, we can define the subcomplex $\eta_aC$ of $C[\frac{1}{a}]$ as
\[
(\eta_{a}C)^i:=\{x\in a^iC^i : dx\in a^{i+1}\cdot C^{i+1}\}
\]
and this induces the corresponding functor $L\eta_a: D(A)\to D(A)$.




\begin{remark}
\begin{enumerate}
\item The $L\eta$-functor is not an exact functor between derived categories. For example, consider the distinguished triangle $\bb Z/p\to \bb Z/p^2\to\bb Z/p$ where the first map is induced by multiplication by $p$ on $\bb Z$ and the second map is modulo $p$. It is easy to see that $L\eta_p(\bb Z/p)=0$ and $L\eta_p(\bb Z/p^2)\neq 0$. 
\item By \cite[Proposition 6.7]{BMS1}, the $L\eta$-functor is lax symmetric monoidal.
\end{enumerate}
\end{remark}

\subsection{The $A_{\inf}$-cohomology}
We recall some basic definitions in $p$-adic Hodge theory. 
\begin{definition}[\cite{fontaine1994corps}]\label{mu}
\begin{enumerate}
\item Define $\cal O_C^{\flat}: =\projlim_{x\to x^p}\cal O_C/p$ which is called the tilt of $\cal O_C$ and $A_{\inf}:=W(\cal O_C^{\flat})$, the Witt vector ring of $\cal O_C^{\flat}$. Note that $\cal O_C^{\flat}$ is a perfect ring of characteristic $p$ and $A_{\inf}$ is equipped with a natural Frobenius map $\phi$, which is an isomorphism of rings.

\item Fix a compatible system of primitive $p$-power roots of unity $\{\zeta_{p^n}\}_{n\in \bb N}$ such that $\zeta_{p^{n+1}}^p=\zeta_{p^n}$. Under the isomorphism of multiplicative monoids $\cal O_C^{\flat}\cong \projlim_{x\to x^p}\cal O_C$, we define $\epsilon: =(1,\zeta_p, \zeta_{p^2},\cdots, \zeta_{p^n},\cdots)\in \cal O_C^{\flat}$ and $\mu: =[\epsilon]-1\in A_{\inf}$.

\item There is a map $\theta: A_{\inf}\to \cal O_C$ defined by Fontaine. The map $\theta$ is surjective and $\rm ker(\theta)$ is generated by $\xi=\mu/{\phi^{-1}(\mu)}$. After twisting with the Frobenius map, we get $\tilde \theta:=\theta\circ \phi^{-1}: A_{\inf}\to \cal O_C$, whose kernel is generated by $\tilde \xi:=\phi(\xi)=\phi(\mu)/\mu$.
\end{enumerate}
\end{definition}

Now we are ready to define the $A_{\inf}$-cohomology theory. We consider the natural projection $\nu: X_{\rm pro\acute et}\to \bar{\frak X}_{\rm zar}$, which is actually the composite $X_{\rm pro\acute et}\xrightarrow{\omega} X_{\rm \acute et}\to \bar{\frak X}_{\rm \acute et}\to \bar{\frak X}_{\rm zar}$. 

\begin{definition}[\cite{BMS1} Definition 9.1]
Define $A\Omega_{\bar{\frak X}}:=L{\eta}_{\mu}R\nu_{*}(\bb A_{\inf, X})$ and $\tilde \Omega_{\bar{\frak X}}:=L\eta_{\zeta_{p-1}}R\nu_{*}(\hat {\cal O}^+_X)$. The $A_{\inf}$-cohomology is defined to be the Zariski hypercohomology of the complex of sheaves $A\Omega_{\bar{\frak X}}$, i.e. $R\Gamma_{A_{\inf}}(\bar{\frak X}):=R\Gamma_{\rm zar}(\bar{\frak X}, A\Omega_{\bar{\frak X}})$. We can also define the Hodge--Tate cohomology $R\Gamma_{\rm HT}(\bar{\frak X}):=R\Gamma_{\rm zar}(\bar{\frak X}, \tilde \Omega_{\bar{\frak X}})$.
\end{definition}
\begin{remark}\label{commutative algebra structure}
As both $R\nu_{*}$ and the $L\eta$-functor are lax symmetric monoidal, the complex $\tilde \Omega_{\bar{\frak X}}$ is a commutative $\cal O_{\overline{\frak X}}$-algebra object in the derived category of $\cal O_{\overline{\frak X}}$-modules $D(\cal O_{\overline{\frak X}})$. For the same reason, the complex $A\Omega_{\bar{\frak X}}$ is a commutative ring in the derived category $D(\overline{\frak X}_{\rm zar}, \bb Z)$ of abelian sheaves. 
\end{remark}


The $A_{\inf}$-cohomology takes values in the category of what we call Breuil--Kisin-Fargues modules.
\begin{definition}[\cite{BMS1} Definition 4.22]
A Breuil--Kisin-Fargues module is a finitely presented $A_{\inf}$-module $M$ which becomes free over $A_{\inf}[\frac{1}{p}]$ after inverting $p$ and is equipped with an isomorphism
\[
\phi_M: M\otimes_{A_{\inf},\phi}A_{\inf}[\frac{1}{\tilde \xi}]\xrightarrow{\simeq}M[\frac{1}{\tilde\xi}].
\]
\end{definition}
The main theorem about the $A_{\inf}$-cohomology theory is the following:

\begin{theorem}[\hspace{1sp}\cite{BMS1} Theorem 14.3]\label{Main}
The complex $R\Gamma_{A_{\inf}}(\bar{\frak X})$ is a perfect complex of $A_{\inf}$-modules with a $\phi$-linear map $\phi: R\Gamma_{A_{\inf}}(\bar{\frak X})\to R\Gamma_{A_{\inf}}(\bar{\frak X})$ which becomes an isomorphism after inverting $\xi$ resp. $\tilde \xi$. The cohomology groups $H^i_{A_{\inf}}(\bar{\frak X}):=H^i(R\Gamma_{A_{\inf}}(\bar{\frak X}))$ are Breuil--Kisin-Fargues modules. Moreover, there are several comparison results:
\begin{enumerate}
\item With $\acute e$tale cohomology: $R\Gamma_{A_{\inf}}(\bar{\frak X})\otimes^{\bb L}_{A_{\inf}} A_{\inf}[1/\mu]\simeq R\Gamma_{\rm\acute et}(X, \bb Z_p)\otimes^{\bb L}_{\bb Z_p}A_{\inf}[1/\mu]$.
\item With crystalline cohomology: $R\Gamma_{A_{\inf}}(\bar{\frak X})\otimes^{\bb L}_{A_{\inf}} W(\bar k)\simeq R\Gamma_{\rm crys}(\frak X_{\bar k}/W(\bar k))$, where the map $A_{\inf}=W(\cal O_C^{\flat})\to W(\bar k)$ is induced by the natural projection $\cal O_C^{\flat}\to \bar k$ (in fact, $\cal O_C^{\flat}$ is a valuation ring with residue field $\bar k$).
\item With de Rham cohomology: $R\Gamma_{A_{\inf}}(\bar{\frak X})\otimes^{\bb L}_{A_{\inf},\theta} \cal O_C\simeq R\Gamma_{\rm dR}(\bar{\frak X}/\cal O_C)$.
\item With Hodge--Tate cohomology: $\tilde \Omega_{\bar{\frak X}}\simeq A\Omega_{\bar{\frak X}}\otimes_{A_{\inf},\tilde \theta}^{\bb L}\cal O_C$ and $R\Gamma_{A_{\inf}}(\bar{\frak X})\otimes^{\bb L}_{A_{\inf},\tilde \theta} \cal O_C\simeq R\Gamma_{\rm HT}(\bar{\frak X})$.
\end{enumerate}
\end{theorem}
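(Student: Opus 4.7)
The plan is to prove this locally in the Zariski topology, by describing $A\Omega_{\bar{\frak X}}$ on small affine opens in terms of an explicit Koszul / group-cohomology model, then globalising. Cover $\bar{\frak X}$ by affine opens $U = \Spf(R)$ admitting a toric framing, i.e.\ an \'etale map $\cal O_C\langle T_1^{\pm 1},\ldots,T_d^{\pm 1}\rangle \to R$. Adjoining $p^n$-th roots of the $T_i$ and $p$-adically completing the limit gives a perfectoid pro-finite-\'etale cover $R_\infty$ of the generic fibre, with Galois group $\Delta \cong \bb Z_p(1)^d$; Scholze's pro-\'etale almost purity formalism then identifies $R\nu_* \bb A_{\inf,X}|_U$ with the continuous group-cohomology complex $R\Gamma_{\cont}(\Delta, \bb A_{\inf}(R_\infty))$, which I compute by the Koszul complex on topological generators $\gamma_1,\ldots,\gamma_d$ of $\Delta$.

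Next I apply $L\eta_\mu$ to this local model. Each $\gamma_i-1$ acts on the $[T_i^\flat]^{a_i}$-weight basis of $\bb A_{\inf}(R_\infty)$ by multiplication by $[\epsilon]^{a_i}-1$, hence is divisible by $\mu = [\epsilon]-1$; the Koszul complex is therefore $\mu$-torsion-free, so $L\eta_\mu$ is computed by the naive $\eta_\mu$. The pay-off is that $\eta_\mu$ strips away the ``junk torsion'' inherent in $R\Gamma_{\cont}(\Delta, \bb A_{\inf}(R_\infty))$, leaving a complex whose cohomology groups are finitely presented over $A_{\inf}$ and whose $\phi$-linearisation becomes an isomorphism after inverting $\xi$ (resp.\ $\tilde\xi$). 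Gluing these explicit local complexes via lax symmetric monoidality of $L\eta$ and a \v{C}ech argument, then invoking properness of $\frak X$ for global finiteness, I conclude that $R\Gamma_{A_{\inf}}(\bar{\frak X})$ is a perfect $A_{\inf}$-complex and that each $H^i_{A_{\inf}}(\bar{\frak X})$ is a Breuil--Kisin--Fargues module.

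For the four comparisons I base change the local model. (i) After inverting $\mu$, $L\eta_\mu$ becomes the identity, so $A\Omega_{\bar{\frak X}}[1/\mu] \simeq R\nu_*\bb A_{\inf,X}[1/\mu]$; combined with Scholze's primitive comparison theorem $R\Gamma_{\rm pro\acute et}(X,\hat{\bb Z}_p) \otimes^{\bb L} A_{\inf}[1/\mu] \simeq R\Gamma_{\rm\acute et}(X,\bb Z_p) \otimes^{\bb L} A_{\inf}[1/\mu]$ this yields the \'etale comparison. (iv) Rests on the base-change identity $L\eta_\mu(-) \otimes^{\bb L}_{A_{\inf},\tilde\theta} \cal O_C \simeq L\eta_{\zeta_p-1}(- \otimes^{\bb L}_{A_{\inf},\tilde\theta} \cal O_C)$, a standard property of $L\eta$ along quotients by regular elements. (iii) A further reduction (tensoring along $\theta$ to $A_{\inf}/\xi \simeq \cal O_C$) collapses the local Koszul complex to $\Omega^\bullet_{R/\cal O_C}$ via a Cartier-style calculation, giving the de Rham comparison. (ii) Requires descending from $A_{\inf}$ to $W(\bar k)$ through the intermediary ring $A_{\crys}$ and identifying the result with $R\Gamma_{\crys}(\frak X_{\bar k}/W(\bar k))$ via a Berthelot--Ogus-type base change, exploiting the crystalline property of the perfectoid cover $R_\infty$.

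The main obstacle is two-fold. First, the perfectness and Breuil--Kisin--Fargues statement: since $A_{\inf}$ is not Noetherian, finite presentation cannot be read off abstractly but must be extracted by hand from the local Koszul models, using the classification of finitely presented $A_{\inf}$-modules as extensions of a classical BKF module by a perfect $\cal O_C^\flat$-torsion complex; checking that $\eta_\mu$ of the Koszul complex lands in this class is the technical core. Second, the crystalline comparison (ii) is the deepest: producing a Frobenius-equivariant isomorphism from $A\Omega_{\bar{\frak X}} \otimes^{\bb L}_{A_{\inf}} W(\bar k)$ to $R\Gamma_{\crys}(\frak X_{\bar k}/W(\bar k))$ requires a careful lift-and-glue argument on the special fibre together with a delicate manipulation of the $\phi$-action on $A_{\crys}$, and this is what ultimately propagates the exotic $A_{\inf}$-structure back to a classical Dieudonn\'e-theoretic cohomology.
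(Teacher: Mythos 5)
This theorem is stated in the paper as a recollection, cited verbatim from \cite{BMS1} (Theorem~14.3 there); the paper offers no proof of its own, so there is no internal argument to compare against. Your sketch is, however, a reasonable high-level outline of how the theorem is actually established in \cite{BMS1}: local computation via toric framings and Koszul complexes on the pro-\'etale site, application of $L\eta_\mu$, gluing, and the four specializations. A few points deserve sharper treatment if you were to flesh this out into a real proof.

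First, you write that almost purity ``identifies'' $R\nu_*\bb A_{\inf,X}|_U$ with the continuous group cohomology $R\Gamma_{\cont}(\Delta, \bb A_{\inf}(R_\infty))$. This identification is only an \emph{almost} quasi-isomorphism, and the central technical achievement of \cite{BMS1} (Theorem~9.4 there) is precisely that applying $L\eta_\mu$ promotes this almost isomorphism to an honest one; a priori $L\eta$ does not preserve almost isomorphisms, since it is not exact. Glossing over this point misses the main reason the $L\eta$-functor is being introduced at all. Second, the finite presentation of $H^i_{A_{\inf}}(\bar{\frak X})$ over the non-Noetherian ring $A_{\inf}$ does not, as you suggest, come from ``reading off'' the local Koszul models plus a module-classification lemma; it is extracted by first proving the de Rham and \'etale comparisons, deducing that $R\Gamma_{A_{\inf}}(\bar{\frak X})$ is derived $\xi$-complete and bounded with $\cal O_C$-perfect derived reduction, and then invoking a structure result for such complexes (\cite[Lemma~4.9]{BMS1}). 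So the logical order is reversed relative to your sketch: the specializations (i), (iii) are used to get perfectness and the Breuil--Kisin--Fargues property, not the other way around. Third, for (iv) the relevant base-change identity for $L\eta$ along a non-zero-divisor is correct in spirit but requires a torsion-freeness or Tor-independence hypothesis (cf.\ \cite[Lemma~6.14]{BMS1}); it is not unconditional. Finally, your description of the crystalline comparison (ii) as passing through $A_{\crys}$ is accurate, though the ``Berthelot--Ogus-type base change'' you allude to is one of the more delicate parts of \cite{BMS1} and certainly cannot be absorbed into a single sentence.
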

\begin{corollary}\label{special}
For all $i\geq 0$, we have isomorphisms and short exact sequences
\begin{enumerate}
\item $H^i_{A_{\inf}}(\bar {\frak X})\otimes_{A_{\inf}}A_{\inf}[1/\mu]\cong H^i_{\rm \acute et}(X, \bb Z_p)\otimes_{\bb Z_p}A_{\inf}[1/\mu]$.
\item $0\to H^i_{A_{\inf}}(\bar{\frak X})\otimes_{A_{\inf}}W(\bar k)\to H^i_{\rm crys}(\frak X_{\bar k}/W(\bar k))\to Tor_1^{A_{\inf}}(H^{i+1}_{A_{\inf}}(\bar{\frak X}), W(\bar k))\to 0$.
\item $0\to H^i_{A_{\inf}}(\bar{\frak X})\otimes_{A_{\inf},\theta}\cal O_C\to H^i_{\rm dR}(\bar{\frak X}/\cal O_C)\to H^{i+1}_{A_{\inf}}(\bar{\frak X})[\xi]\to 0$.
\item $0\to H^i_{A_{\inf}}(\bar{\frak X})\otimes_{A_{\inf},\tilde\theta}\cal O_C\to H^i_{\rm HT}(\bar{\frak X})\to H^{i+1}_{A_{\inf}}(\bar{\frak X})[\tilde \xi]\to 0$.

\end{enumerate}

\end{corollary}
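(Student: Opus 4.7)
The four parts of the corollary all follow mechanically from the four comparison isomorphisms of Theorem \ref{Main} by passing from derived tensor products to ordinary cohomology via appropriate Tor/flatness arguments; the strategy is uniform across the four cases but the homological input on the coefficient side differs.

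For (1), the plan is to note that $A_{\inf}[1/\mu]$ is a localization of $A_{\inf}$ and hence flat over it, so on the left-hand side $\otimes^{\bb L}_{A_{\inf}} A_{\inf}[1/\mu]$ collapses to $\otimes_{A_{\inf}} A_{\inf}[1/\mu]$ after passing to $H^i$; on the right-hand side, $A_{\inf}[1/\mu]$ is flat over $\bb Z_p$ (as $A_{\inf}=W(\cal O_C^\flat)$ is $p$-torsion free), so $\otimes^{\bb L}_{\bb Z_p}$ likewise commutes with $H^i$. For (3) and (4), the key input from Definition \ref{mu} is that $\ker(\theta)=(\xi)$ and $\ker(\tilde\theta)=(\tilde\xi)$ with $\xi,\tilde\xi$ non-zero-divisors in $A_{\inf}$, so $\cal O_C$ admits the two-term free resolution $0\to A_{\inf}\xrightarrow{\xi} A_{\inf}\to \cal O_C\to 0$ and its $\tilde\xi$-analogue. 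Tensoring $R\Gamma_{A_{\inf}}(\bar{\frak X})$ with these Koszul complexes and reading off cohomology yields the long exact Bockstein sequence
\[
\cdots \to H^i_{A_{\inf}}(\bar{\frak X}) \xrightarrow{\xi} H^i_{A_{\inf}}(\bar{\frak X}) \to H^i_{\rm dR}(\bar{\frak X}/\cal O_C) \to H^{i+1}_{A_{\inf}}(\bar{\frak X}) \xrightarrow{\xi} \cdots,
\]
after using Theorem \ref{Main}(3) to identify the middle term. Splitting this at each spot gives (3); the argument for (4) is identical with $\xi$ replaced by $\tilde\xi$ and the middle term identified with $H^i_{\rm HT}(\bar{\frak X})$ via Theorem \ref{Main}(4).

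Part (2) is the most delicate and constitutes the main obstacle. The natural framework is the hypertor spectral sequence
\[
E_2^{p,q} = \Tor^{A_{\inf}}_{-p}\bigl(H^q_{A_{\inf}}(\bar{\frak X}),\, W(\bar k)\bigr) \Longrightarrow H^{p+q}_{\rm crys}(\frak X_{\bar k}/W(\bar k))
\]
coming from Theorem \ref{Main}(2). The three-term short exact sequence of (2) is exactly what one obtains once this spectral sequence is concentrated in columns $p=0,-1$, that is, once $\Tor^{A_{\inf}}_i(H^q_{A_{\inf}}(\bar{\frak X}), W(\bar k))=0$ for all $i\geq 2$. Unlike in (3) and (4), the kernel of $A_{\inf}\to W(\bar k)$ is the non-principal ideal $W(\frak m^\flat)$, so no short free resolution of $W(\bar k)$ is immediately available. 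The plan is to exploit the extra structure from Theorem \ref{Main} that each $H^q_{A_{\inf}}(\bar{\frak X})$ is a Breuil--Kisin--Fargues module and to deduce the required Tor-vanishing from the structure theory of such modules, in particular from the fact that they sit in extensions of a finite free $A_{\inf}$-module by a torsion BKF module, against which Tor-amplitude over $W(\bar k)$ can be controlled directly. With this vanishing in hand, the spectral sequence collapses and yields the stated short exact sequence.
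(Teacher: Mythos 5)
Your treatment of parts (1), (3), and (4) is correct and is the standard argument: $A_{\inf}[1/\mu]$ is a flat $A_{\inf}$- and $\bb Z_p$-algebra, and for (3),(4) the two-term Koszul resolution $0\to A_{\inf}\xrightarrow{\xi}A_{\inf}\to\cal O_C\to 0$ (resp.\ with $\tilde\xi$) yields the Bockstein long exact sequence, which splits into the stated short exact sequences.

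For part (2) you correctly identify the crux: one needs $\Tor^{A_{\inf}}_i\bigl(H^q_{A_{\inf}}(\bar{\frak X}), W(\bar k)\bigr)=0$ for $i\geq 2$, which makes the hypertor spectral sequence concentrate in two columns and degenerate. However, two caveats. First, your description of the structure theory is not quite right: by Proposition \ref{413} a Breuil--Kisin--Fargues module $M$ does \emph{not} in general sit in a short exact sequence with a torsion submodule and a finite free quotient --- the sequence is the four-term $0\to M_{\rm tor}\to M\to M_{\rm free}\to \overline M\to 0$ with an extra term $\overline M$ supported at the closed point, and $M/M_{\rm tor}$ is only a sub-, not a direct summand, of $M_{\rm free}$. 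A d\'evissage using this sequence therefore has to handle $\overline M$ as well as $M_{\rm tor}$, and you do not indicate how to bound the Tor of either piece against $W(\bar k)$. Second, and more importantly, you assert the required Tor-vanishing but do not actually prove it; this is the entire content of the step. The clean route --- and the one underlying \cite[Theorem 14.5(iv)]{BMS1}, which this corollary implicitly cites --- is to observe that $W(\bar k)=A_{\inf}/W(\frak m^{\flat})$ has Tor-dimension at most $1$ over $A_{\inf}$ because $W(\frak m^{\flat})$ is a flat ideal (a filtered colimit of principal ideals generated by non-zero-divisors, hence of free rank-one modules). This gives the vanishing $\Tor^{A_{\inf}}_{i}(-,W(\bar k))=0$ for $i\geq 2$ for \emph{all} $A_{\inf}$-modules at once, with no appeal to the structure theory of BKF modules, and is both simpler and more robust than the route you sketch. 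You should either supply this flatness argument or fully carry out the d\'evissage on the four-term sequence; as written the proposal leaves the decisive step unproved.
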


One of the most important applications of the $A_{\inf}$-cohomology theory is to enable us to compare \'etale cohomology to crystalline cohomology integrally without any restrictions on the degree of cohomology groups and the ramification degree of the base field. More precisely, it can be showed that the torsion in the crystalline cohomology gives an upper bound for the torsion in the $\rm \acute e$tale cohomology.

\begin{theorem}[\hspace{1sp}\cite{BMS1} Theorem 14.5]\label{torsion}
For any $n, i\geq 0$, we have the inequality
\[
{\rm length}_{W(k)}(H^i_{\rm crys}(\frak X_k/W(k))_{\rm tor}/p^n)\geq {\rm length}_{\bb Z_p}(H^i_{\rm \acute et}(X, \bb Z_p)_{\rm tor}/p^n)
\]
where $H^i_{\rm crys}(\frak X_k/W(k))_{\rm tor}$ is the torsion submodule of $H^i_{\rm crys}(\frak X_k/W(k))$ and $H^i_{\rm \acute et}(X, \bb Z_p)_{\rm tor}$ is the torsion submodule of $H^i_{\rm \acute et}(X, \bb Z_p)$.
\end{theorem}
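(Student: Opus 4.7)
The plan is to use the $A_{\inf}$-cohomology $M := H^i_{A_{\inf}}(\bar{\frak X})$ as a bridge: by Theorem \ref{Main} it is a Breuil--Kisin--Fargues module whose specializations compute both sides of the desired inequality via Corollary \ref{special}. The strategy is to reduce the theorem to a purely algebraic length inequality for B-K-F modules, and then prove that using the structure theorem together with the $\phi$-isomorphism.

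First I would translate both sides into data on $M$. From Corollary \ref{special}(1), $(M/p^n)[1/\mu] \cong H^i_{\rm \acute et}(X,\bb Z_p)/p^n \otimes_{\bb Z/p^n} A_{\inf}[1/\mu]/p^n$; the invariant factors of $H^i_{\rm \acute et}(X,\bb Z_p)/p^n$, whose total length is the right-hand side, are preserved by this base change and hence can be read off from the $\mu$-localized torsion of $M/p^n$. On the crystalline side, faithful flatness of $W(k) \to W(\bar k)$ preserves $p^n$-torsion lengths, so we may replace $H^i_{\rm crys}(\frak X_k/W(k))$ by $H^i_{\rm crys}(\frak X_{\bar k}/W(\bar k))$; then Corollary \ref{special}(2) shows ${\rm length}_{W(\bar k)}(H^i_{\rm crys}(\frak X_{\bar k}/W(\bar k))_{\rm tors}/p^n)$ dominates ${\rm length}_{W(\bar k)}((M \otimes_{A_{\inf}} W(\bar k))_{\rm tors}/p^n)$. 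These reductions leave a purely algebraic inequality comparing the $W(\bar k)$-reduction and the $\mu$-localization of an arbitrary B-K-F module $M$.

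Next I would invoke the structure theorem for B-K-F modules: every such $M$ fits in a short exact sequence $0 \to M_{\rm free} \to M \to M_{\rm tors} \to 0$ with $M_{\rm free}$ finite free and $M_{\rm tors}$ finitely presented torsion, supported on $V(p) \cup V(\tilde\xi) \subset \Spec A_{\inf}$. The free summand contributes equally to both sides and cancels, so the question reduces to $M_{\rm tors}$. Using the Frobenius isomorphism $\phi: M[1/\tilde\xi] \cong M[1/\tilde\xi]$ guaranteed by Theorem \ref{Main}, one decomposes $M_{\rm tors}$ (up to a finite filtration by $\phi$-stable pieces) into elementary cyclic modules whose two specializations can be computed directly.

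The main obstacle will be this structural analysis and the bookkeeping it enables, which rests on the interplay of the three principal ideals $(p)$, $(\mu)$, $(\tilde\xi)$ in $A_{\inf}$, linked by $\tilde\xi = \phi(\mu)/\mu$. The key observation is that $\mu$ lies in the maximal ideal of $\cal O_C^{\flat}$, so the projection to $W(\bar k)$ kills $\mu$ and exposes all $\mu$-power torsion, whereas inverting $\mu$ erases exactly that torsion. An elementary piece with mixed $(p,\tilde\xi)$-torsion contributes equally to both lengths; a piece whose annihilator is a pure power of $\tilde\xi$ is seen only on the $W(\bar k)$-side; and the $\phi$-isomorphism forces such $\tilde\xi$-primary summands to occur in $\phi$-matched families whose aggregate $W(\bar k)$-length is at least what they remove from the $\mu$-inverted side. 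The technical heart, and the delicate point of the whole argument, is thus the precise classification of $\phi$-stable finitely presented torsion $A_{\inf}$-modules; once it is in hand, the inequality follows by summing the local contributions across the elementary decomposition.
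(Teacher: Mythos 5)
Your reduction to a purely algebraic statement about $M = H^i_{A_{\inf}}(\bar{\frak X})$ using Corollary \ref{special}(1)--(2) and faithful flatness of $W(k)\to W(\bar k)$ is sound and is the right first step. But the algebraic core you propose does not go through, for several related reasons.

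First, the structure theorem is misquoted: Proposition \ref{413} gives a \emph{four-term} functorial exact sequence $0 \to M_{\rm tor} \to M \to M_{\rm free} \to \overline M \to 0$, with the torsion as a \emph{sub}module and $\overline M$ supported at the closed point of $\Spec A_{\inf}$. There is no short exact sequence $0\to M_{\rm free}\to M\to M_{\rm tors}\to 0$ as you write, and the correction matters because the extra term $\overline M$ is exactly the kind of ``junk'' the argument must control.

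Second, and more seriously, the proposed decomposition of $M_{\rm tor}$ into ``elementary cyclic modules'' is not available. $A_{\inf}=W(\cal O_C^\flat)$ is a non-Noetherian local ring, not a PID or a Dedekind domain, and finitely presented torsion modules over it are \emph{not} direct sums of cyclics in general; the whole reason \cite{BMS1} develops the structure theory of Proposition \ref{413} is precisely because such a naive decomposition fails. You flag ``the precise classification of $\phi$-stable finitely presented torsion $A_{\inf}$-modules'' as the technical heart, but that classification is not achieved in \cite{BMS1} and is not needed for this inequality. Pursuing it would stall the proof.

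Third, the invocation of the Frobenius to ``match $\tilde\xi$-primary families'' is a red herring: the inequality in question does not use the $\phi$-structure at all. The actual argument in \cite[\S4.3, Lemma 4.16 and surroundings]{BMS1} observes that $M_{\rm tor}$ is a finitely presented module over $A_{\inf}/p^N = W_N(\cal O_C^\flat)$ for $N\gg 0$, and then applies a \emph{semicontinuity} statement for finitely presented modules over (Witt vectors of) the valuation ring $\cal O_C^\flat$: the length of the specialization at the closed point (residue field $\bar k$, giving the crystalline side) is at least the length of the specialization at the generic point (fraction field of $\cal O_C^\flat$, which after inverting $\mu$ sees the \'etale side). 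This is a general upper-semicontinuity of fiber length and needs no Frobenius and no cyclic decomposition. Your proposal correctly identifies the two specializations but replaces the semicontinuity lemma by a structural decomposition that does not exist; this is the gap to fix.
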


As we have mentioned, there is a refinement of the $A_{\inf}$-cohomology, i.e. the Breuil--Kisin cohomology, which is an $\frak S$-linear cohomology and recovers the $A_{\inf}$-cohomology after base change along a faithfully flat map\footnote{To define the map $\alpha$, we fix a compatible system of $p$-power roots $\pi^{1/{p^n}}\in C$, which defines an element $\pi^{\flat}=(\pi, \pi^{1/p}, \pi^{1/p^2}, \cdots)\in \projlim_{x\mapsto x^p}\cal O_C\cong \cal O_C^{\flat}$. Then $\alpha$ is defined to send $u$ to $[\pi^{\flat}]^p$ and be the Frobenius on $W(k)$.} $\alpha: \frak S\to A_{\inf}$ (in particular, we have $(\alpha(E))=(\tilde \xi)$). The Breuil--Kisin cohomology gives a cohomological construction of Breuil--Kisin modules, which plays a very important role in integral $p$-adic Hodge theory.

The first construction of the Breuil--Kisin cohomology is given in \cite{BMS2}  by using topological cyclic homology.  Another construction is given in\cite{bhatt2019prisms} by using the prismatic site. We will not say anything about the construction of the Breuil--Kisin cohomology theory here but choose to state a similar comparison theorem as in the $A_{\inf}$ case.

Let $R\Gamma_{\frak S}(\frak X)$ denote the Breuil-Kisin cohomology attached to $\frak X$. We first recall the definition of Breuil--Kisin module which is slightly more general than the original definition due to Kisin.
\begin{definition}[\cite{BMS1} Definition 4.1]\label{BK}
A Breuil--Kisin module is a finitely generated $\frak S$-module $M$ together with an isomorphism 
\[
\phi_M: M\otimes_{\frak S,\phi}\frak S[{\frac {1}{E}}]\to M[{\frac {1}{E}}].
\]
\end{definition}
\begin{theorem}[\hspace{1sp}\cite{BMS2} Theorem 1.2]\label{theorem1.2}
The Breuil--Kisin cohomology $R\Gamma_{\frak S}(\frak X)$ of $\frak X$ is a perfect complex of $\frak S$-modules. Moreover, it is equipped with a $\phi$-linear map $\phi: R\Gamma_{\frak S}(\frak X_{})\to R\Gamma_{\frak S}(\frak X_{})$ which induces an isomorphism 
\[
R\Gamma_{\frak S}(\frak X_{})\otimes^{\bb L}_{\frak S,\phi}\frak S[\frac {1}{E}]\simeq R\Gamma_{\frak S}(\frak X_{})[\frac{1}{E}]
\]
The cohomology groups $H^i_{\frak S}(\frak X_{}):=H^i(R\Gamma_{\frak S}(\frak X_{}))$ are Breuil--Kisin modules. There are several specializations that recover other $p$-adic cohomology theories:
\begin{enumerate}
\item With $A_{\inf}$-cohomology: after base change along $\alpha: \frak S\to A_{\inf}$, it recovers $A_{\inf}$-cohomology : $R\Gamma_{\frak S}(\frak X)\otimes^{\bb L}_{\frak S,\alpha}A_{\inf}\simeq R\Gamma_{A_{\inf}}(\bar{\frak X})$.

\item With \'etale cohomology: $R\Gamma_{\frak S}(\frak X_{})\otimes^{\bb L}_{\frak S,\tilde\alpha}A_{\inf}[1/\mu]\simeq R\Gamma_{\rm \acute{e}t}(X, \bb Z_p)\otimes^{\bb L}_{\bb Z_p}A_{\inf}[1/\mu]$, where $\tilde \alpha$ is the composite $\frak S\xrightarrow{\alpha} A_{\inf}\into A_{\inf}[1/\mu]$.

\item With de Rham cohomology: $R\Gamma_{\frak S}(\frak X_{})\otimes^{\bb L}_{\frak S,\tilde\beta}\cal O_K\simeq R\Gamma_{\rm dR}(\frak X_{}/\cal O_K)$, where $\tilde \beta:=\beta\circ\phi: \frak S\to \cal O_K$.

\item With crystalline cohomology: after base change along the map $\frak S\to W(k)$ which is the Frobenius on $W(k)$ and sends $u$ to 0, it recovers the crystalline cohomology of the special fiber: $R\Gamma_{\frak S}(\frak X_{})\otimes^{\bb L}_{\frak S}W(k)\simeq R\Gamma_{\rm crys}(\frak X_k/W(k))$.
\end{enumerate}
\end{theorem}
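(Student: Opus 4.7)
The plan is to construct $R\Gamma_{\frak S}(\frak X)$ via the prismatic site of Bhatt--Scholze (or, equivalently for our purposes, the topological cyclic homology approach of BMS2), and then to extract each stated property from the general formalism. The base object is the prism $(\frak S,(E))$ with the standard Frobenius lift $\phi$ that raises $u$ to $u^p$ and induces the Witt vector Frobenius on $W(k)$. One defines $R\Gamma_{\frak S}(\frak X):=R\Gamma((\frak X/\frak S)_{\Delta},\cal O_{\Delta})$ as the cohomology of the structure sheaf of the (absolute or relative) prismatic site of $\frak X$ over this base. Each of the four specializations in the theorem will then be realized as base change of $R\Gamma_{\frak S}(\frak X)$ along an appropriate map of prisms, or along a canonical quotient of the prismatic structure sheaf.

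Perfectness is first verified Zariski-locally: cover $\frak X$ by small affine opens $\Spf R$ admitting a framing $\roi_K\langle T_1^{\pm 1},\dots,T_d^{\pm 1}\rangle\to R$, and use the explicit Koszul / $q$-de Rham presentation of local prismatic cohomology to exhibit it as a perfect complex of $\frak S$-modules; properness of $\frak X$ then yields perfectness of the global cohomology after Zariski descent. The Frobenius-linear endomorphism $\phi$ on $R\Gamma_{\frak S}(\frak X)$ is induced by the Frobenius of the base prism acting on the prismatic site. The claim that $\phi$ becomes an isomorphism after inverting $E$ is a structural feature of prismatic cohomology: away from the Hodge--Tate locus the prismatic site becomes essentially crystalline and the Frobenius twist is rigid. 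Finite generation of each $H^i$ (from perfectness) together with this isomorphism at the level of complexes then forces $H^i_{\frak S}(\frak X)$ to be a Breuil--Kisin module in the sense of Definition \ref{BK}.

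For the four comparisons, each appears as a base change. The $A_{\inf}$-comparison (i) uses that $\alpha:(\frak S,(E))\to (A_{\inf},(\tilde\xi))$ is a morphism of prisms, since by construction $(\alpha(E))=(\tilde\xi)$, so functoriality of prismatic cohomology in the base prism identifies the base change with $R\Gamma_{A_{\inf}}(\bar{\frak X})$. The étale comparison (ii) then follows by further base change to $A_{\inf}[1/\mu]$ combined with the known étale comparison for $A_{\inf}$-cohomology in Theorem \ref{Main}(i). The de Rham comparison (iii) comes from the Hodge--Tate/de Rham specialization of prismatic cohomology, i.e.\ $R\Gamma_{\frak S}(\frak X)\otimes^{\bb L}_{\frak S}\frak S/E$ identifies (after the Frobenius twist encoded in $\tilde\beta=\beta\circ\phi$) with $R\Gamma_{\rm dR}(\frak X/\cal O_K)$, using the Hodge--Tate comparison built into prismatic cohomology plus smoothness. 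The crystalline comparison (iv) arises from the crystalline comparison for prismatic cohomology: the map $\frak S\to W(k)$ that sends $u\mapsto 0$ and is Frobenius on $W(k)$ is a morphism of prisms $(\frak S,(E))\to (W(k),(p))$ (since $E(0)$ equals $p$ up to a unit), and the base-changed prismatic cohomology of $\frak X_k$ over $(W(k),(p))$ identifies with $R\Gamma_{\rm crys}(\frak X_k/W(k))$.

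The principal obstacle is setting up the prismatic (or $\TC$) formalism itself and establishing the four underlying comparison theorems at the level of complexes with enough uniformity to handle ramified bases; these are precisely the main results of \cite{BMS2} and \cite{bhatt2019prisms}. The hardest technical input is proving that prismatic cohomology of a proper smooth formal scheme over a bounded prism is perfect and degenerates to the expected de Rham, Hodge--Tate, étale and crystalline specializations. In the $\TC$-approach of BMS2 the parallel hard step is to identify a suitable completion of $\pi_*\TC^-(\frak X/\roi_K)$ with a Frobenius-twisted Breuil--Kisin cohomology and to carry out the local computations that drive each comparison by pro-étale, crystalline, and de Rham descent.
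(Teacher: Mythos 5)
This theorem is stated in the paper as a direct citation of \cite{BMS2} (Theorem 1.2) and of the prismatic formalism of \cite{bhatt2019prisms}; the paper itself offers no proof, explicitly saying ``We will not say anything about the construction of the Breuil--Kisin cohomology theory here but choose to state a similar comparison theorem as in the $A_{\inf}$ case.'' So there is no in-paper argument to compare against, and the appropriate way to verify this statement in the present work is simply to point at the citation.

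Your sketch is nevertheless a reasonable high-level account of how the cited sources establish the theorem: defining $R\Gamma_{\frak S}(\frak X)$ as prismatic cohomology over the prism $(\frak S,(E))$ (or via a suitable completion of $\pi_*\TC^{-}$ in the $\THH$-approach of \cite{BMS2}), checking perfectness locally via the $q$-de Rham/Koszul presentation and globalising by properness, reading off the $\phi$-isogeny from the structure of the prismatic site, and realising each specialization as base change along a morphism of prisms. One minor imprecision: the de Rham comparison is the base change along $\tilde\beta=\beta\circ\phi$, not ``$R\Gamma_{\frak S}(\frak X)\otimes^{\bb L}_{\frak S}\frak S/E$ together with a twist''---the latter is the Hodge--Tate specialization $R\Gamma_{\rm HT}(\frak X)$, which is a genuinely different complex from $R\Gamma_{\rm dR}(\frak X/\cal O_K)$; the Frobenius twist has to be applied \emph{before} reducing modulo $E$, as in the de Rham comparison of \cite{bhatt2019prisms}. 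Aside from that, your outline is faithful to the references; but since the paper delegates this entire result to those works, a proof is out of scope here and is not what the paper provides.
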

For later convenience, we define $R\Gamma_{\rm HT}(\frak X):=R\Gamma_{\frak S}(\frak X)\otimes^{\bb L}_{\frak S,\beta}\cal O_K$ and call it the Hodge--Tate cohomology of $\frak X$. Note that there is an isomorphism: $R\Gamma_{\rm HT}(\frak X)\otimes_{\cal O_K}^{\bb L}\cal O_C\simeq R\Gamma_{\rm HT}(\overline{\frak X})$.
\begin{remark}
Note that there is a Frobenius twist appearing in the specializations above. As explained in \cite[Remark 1.4]{BMS2}, this is not artificial but contains some information about the torsion in the de Rham cohomology.
\end{remark}

\section{Lemmas in commutative algebra}\label{B}
In this section, we will recollect some results on finitely presented modules over $\cal O_C$ and prove some key lemmas that are frequently used later.

We begin with the following lemma:

\begin{lemma}[\cite{stacks-project}\href{https://stacks.math.columbia.edu/tag/0ASN}{Lemma 0ASN}]
Let $R$ be a ring. The following are equivalent:
\begin{enumerate}
\item For $a,b\in R$, either $a$ divides $b$ or $b$ divides $a$.
\item Every finitely generated ideal is principal and $R$ is local.
\item The set of ideals of $R$ are linearly ordered by inclusion.
\end{enumerate}

\end{lemma}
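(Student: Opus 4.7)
The cleanest route is to show (i) $\Leftrightarrow$ (iii) directly and then (i) $\Leftrightarrow$ (ii). The equivalence (i) $\Leftrightarrow$ (iii) is essentially formal: $(a)\subseteq (b)$ if and only if $b\mid a$, so linear ordering of the principal ideals is just a reformulation of (i); for arbitrary ideals, if (i) holds and we had $I\not\subseteq J$ and $J\not\subseteq I$, we could pick $a\in I\setminus J$ and $b\in J\setminus I$, and divisibility in either direction gives an immediate contradiction.

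For (i) $\Rightarrow$ (ii), the principal-ideal claim is an induction on the number of generators: for any pair $a,b$, one divides the other, so $(a,b)$ is already $(a)$ or $(b)$. For locality, I would verify that the non-units form an ideal. Given non-units $a,b$, assume without loss of generality that $a\mid b$, say $b=ac$; then $a+b = a(1+c)$, and if $a+b$ were a unit then so would $a$, a contradiction. Closure of the non-units under multiplication by arbitrary elements is clear, so the non-units form the unique maximal ideal.

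The main obstacle is (ii) $\Rightarrow$ (i), where zero-divisors complicate the naive argument. Given $a,b$, write $(a,b)=(c)$; by hypothesis there exist $\alpha,\beta,\gamma,\delta\in R$ with $a=c\alpha$, $b=c\beta$, and $c=a\gamma+b\delta$. Substituting the first two into the third yields $c(1-\alpha\gamma-\beta\delta)=0$, i.e.\ $\alpha\gamma+\beta\delta\equiv 1 \pmod{\Ann(c)}$. The quotient $R/\Ann(c)$ inherits locality from $R$, so one of $\alpha\gamma$, $\beta\delta$ must be a unit in $R/\Ann(c)$. If $\alpha\gamma$ is a unit mod $\Ann(c)$, then so is $\alpha$; choose $\alpha'$ with $\alpha\alpha'-1\in\Ann(c)$, multiply by $c$ to get $c=c\alpha\alpha'=a\alpha'$, and conclude $(c)=(a)$, whence $b\in(a)$ and $a\mid b$. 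The symmetric case gives $b\mid a$, and the case $c=0$ is trivial. The subtle point this argument hinges on is the passage to $R/\Ann(c)$: without it, the relation $c(1-\alpha\gamma-\beta\delta)=0$ is not strong enough to conclude that one of $\alpha\gamma,\beta\delta$ is a unit, precisely because $R$ may have zero-divisors.
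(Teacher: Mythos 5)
Your proof is correct, and the paper does not supply its own argument here but simply cites the Stacks Project (Lemma 0ASN); your proof follows the same overall outline as that source. The only difference is a small detour in (ii) $\Rightarrow$ (i): passing to $R/\Ann(c)$ is harmless but unnecessary, since once $c\neq 0$ the annihilator $\Ann(c)$ is a proper ideal and hence lies in the maximal ideal $\frak m$, so the relation $1-\alpha\gamma-\beta\delta\in\Ann(c)\subseteq\frak m$ already forces one of $\alpha\gamma$, $\beta\delta$ to be a unit in $R$ itself, giving $(c)=(a)$ or $(c)=(b)$ directly.
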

In particular, all valuation rings satisfy these equivalent conditions. The module structure of finitely presented modules over valuation rings is similar to that of finitely generated modules over principal ideal domains as the following lemma shows.

\begin{lemma}[\cite{stacks-project}\href{https://stacks.math.columbia.edu/tag/0ASP}{Lemma 0ASP}]\label{pid}
Let $R$ be a ring satisfying the equivalent conditions above, then every finitely presented $R$-module is isomorphic to a finite direct sum of modules of the form $R/aR$ where $a\in R$.
\end{lemma}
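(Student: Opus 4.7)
The plan is to mimic the Smith normal form argument that classifies finitely generated modules over a PID, exploiting the fact that the finitely many entries of a presenting matrix always have a (divisibility-theoretic) minimum under the hypothesis that the ideals of $R$ are linearly ordered by inclusion.

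Concretely, let $M$ be a finitely presented $R$-module. Choose a presentation
\[
R^n \xrightarrow{A} R^m \to M \to 0,
\]
where $A=(a_{ij})$ is an $m\times n$ matrix with entries in $R$. The finitely many principal ideals $(a_{ij})$ are linearly ordered by the hypothesis, so there exists a pair $(i_0,j_0)$ such that $(a_{i_0j_0})$ contains all the $(a_{ij})$; equivalently, $a_{i_0j_0}$ divides every entry of $A$. After performing a row and a column swap (these correspond to a change of basis of $R^m$ and $R^n$ respectively, hence do not alter $M$), we may assume $(i_0,j_0)=(1,1)$. Since $a_{11}$ divides every entry of the first row and the first column, elementary row and column operations of the form $\text{row}_i\leftarrow\text{row}_i - r\,\text{row}_1$ and $\text{col}_j\leftarrow\text{col}_j-s\,\text{col}_1$ bring $A$ to the block form
\[
\begin{pmatrix} a_{11} & 0 \\ 0 & A' \end{pmatrix},
\]
where $A'$ is an $(m-1)\times(n-1)$ matrix. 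These operations are induced by invertible (unipotent) changes of basis on $R^m$ and $R^n$, so again do not change $M$.

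I would then argue by induction on $\min(m,n)$ (the case $m=0$ being trivial, and the case $n=0$ giving $M=R^m$, which is already a sum of cyclics of the form $R/0$): applying the inductive hypothesis to the module presented by $A'$, I get a diagonal matrix whose diagonal entries $a_{11},a_{22},\ldots$ lie in $R$ (where some may be $0$, giving free summands $R\cong R/0R$). A module presented by a diagonal matrix splits as a direct sum $\bigoplus_i R/a_{ii}R$, which is the desired conclusion.

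The only point that requires real care, and which I regard as the main obstacle, is verifying that the divisibility-based row/column reduction genuinely works in the non-Noetherian valuation ring setting: one must check that we never need to pass to an infinite descent on ideals, but this is avoided because at each stage the matrix is finite, so a minimal ideal among finitely many linearly ordered ideals always exists. Everything else is a direct transcription of the Smith normal form proof, using only that any two elements of $R$ are comparable under divisibility so that row/column clearing is unobstructed.
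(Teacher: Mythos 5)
Your Smith-normal-form reduction is correct and is essentially the argument given in the cited Stacks Project lemma (Tag 0ASP): pick a matrix entry that generates the largest of the finitely many, linearly ordered principal ideals $(a_{ij})$, move it to position $(1,1)$, clear the first row and column by unimodular row and column operations, and induct on the size of the matrix. One small wording slip near the end: you ask for a \emph{minimal} ideal among the finitely many $(a_{ij})$, but what you used earlier (and what the argument requires) is the \emph{maximal} one under inclusion, i.e.\ the ideal $(a_{i_0 j_0})$ containing all the others, equivalently the entry that divides every other entry; the existence of such a maximal element among a finite linearly ordered set of ideals is exactly what makes the clearing step go through.
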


\begin{corollary}\label{structure}
Any finitely presented module over $\cal O_C$ is of the form $\bigoplus_{i=1}^n\cal O_C/{\pi_i}$ for some $\pi_i\in \cal O_C$. 
\end{corollary}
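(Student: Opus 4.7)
The plan is very short: Corollary \ref{structure} is essentially an immediate application of Lemma \ref{pid} once we verify that $\mathcal{O}_C$ falls into the class of rings covered by that lemma.

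First I would observe that $\mathcal{O}_C$ is a valuation ring. Indeed, $C$ is a (complete algebraically closed) non-archimedean field, so by definition of the valuation, for any two nonzero elements $a,b\in \mathcal{O}_C$ we have either $v(a)\leq v(b)$ or $v(b)\leq v(a)$, which translates to $a\mid b$ or $b\mid a$ in $\mathcal{O}_C$. This is exactly condition (i) of the first cited Stacks lemma, so $\mathcal{O}_C$ satisfies all three equivalent conditions listed there.

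Having verified the hypothesis of Lemma \ref{pid}, I would simply invoke that lemma to conclude that every finitely presented $\mathcal{O}_C$-module decomposes as a finite direct sum $\bigoplus_{i=1}^n \mathcal{O}_C/\pi_i\mathcal{O}_C$ for some elements $\pi_i\in \mathcal{O}_C$, which is exactly the content of the corollary.

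There is no real obstacle here: the corollary is just the specialization of the general valuation-ring structure theorem to the particular valuation ring $\mathcal{O}_C$. If anything were worth spelling out it would only be the (well known) fact that $\mathcal{O}_C$ is a valuation ring, but this is immediate from the definition of a non-archimedean field and is implicit in the set-up of the paper.
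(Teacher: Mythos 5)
Your proof is correct and follows exactly the route the paper intends: the paper itself remarks after the first Stacks lemma that all valuation rings satisfy the listed equivalent conditions, and the corollary is then an immediate application of Lemma \ref{pid} to the valuation ring $\mathcal{O}_C$. Nothing further to add.
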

 
We will need to study finitely presented torsion $\cal O_C$-modules later. The main tool to deal with these modules is the length function $l_{\cal O_C}$, as used in \cite{cesnavicius2017a_}, see also \cite{bhatt2017hodge}. In particular, this length function behaves additively under short exact sequences. Usually, we use the normalized length function, i.e. $l_{\cal O_C}(\cal O_C/p)=1$.
\begin{lemma}\label{lengthequal}
Let $A$ and $B$ be base changes to $\cal O_C$ of finitely presented torsion $W(k)$-modules. If for each $m>0$, we have 
\[
l_{\cal O_C}(A/p^m)=l_{\cal O_C}(B/p^m),
\]
then $A$ is isomorphic to $B$ as $\cal O_C$-modules.
\end{lemma}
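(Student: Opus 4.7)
The plan is to reduce to the classification already recorded in Corollary \ref{structure} and then extract the isomorphism type of a torsion module from the sequence of lengths $l_{\cal O_C}(A/p^m)$.

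First I would use the fact that $W(k)$ is a discrete valuation ring, so any finitely presented torsion $W(k)$-module is a finite direct sum $\bigoplus_{i=1}^n W(k)/p^{a_i}$ for positive integers $a_i$. Base change to $\cal O_C$ is exact, hence
\[
A \cong \bigoplus_{i=1}^n \cal O_C/p^{a_i}, \qquad B \cong \bigoplus_{j=1}^{n'} \cal O_C/p^{b_j}.
\]
Thus the lemma reduces to showing that the multisets of exponents $\{a_i\}$ and $\{b_j\}$ are equal. (Alternatively, one could invoke Corollary \ref{structure} directly, but the $W(k)$-origin of $A$ and $B$ guarantees that the uniformizers appearing are powers of $p$, which is convenient.)

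Next I would compute $l_{\cal O_C}(A/p^m)$ explicitly. Since $A/p^m \cong \bigoplus_i \cal O_C/p^{\min(a_i,m)}$ and the normalized length satisfies $l_{\cal O_C}(\cal O_C/p^s)=s$, we obtain
\[
l_{\cal O_C}(A/p^m) = \sum_{i=1}^n \min(a_i,m),
\]
and the analogous formula for $B$. Writing $N_A(k):=\#\{i : a_i \geq k\}$ and using $\min(a,m) = \sum_{k=1}^{m} \mathbf{1}_{a\geq k}$, this rearranges as
\[
l_{\cal O_C}(A/p^m) = \sum_{k=1}^{m} N_A(k),
\]
and similarly for $B$. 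Taking successive differences, the hypothesis $l_{\cal O_C}(A/p^m)=l_{\cal O_C}(B/p^m)$ for all $m>0$ forces $N_A(m)=N_B(m)$ for every $m\geq 1$.

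Finally, the multiset of exponents $\{a_i\}$ is determined by the counting function $N_A$ (since the number of exponents equal to $m$ is $N_A(m)-N_A(m+1)$), so $\{a_i\}=\{b_j\}$ as multisets, and therefore $A\cong B$ as $\cal O_C$-modules. There is no serious obstacle here; the only thing to be careful about is ensuring the normalization of $l_{\cal O_C}$ is compatible with the additivity used in the displayed formulas, which is automatic from the definition $l_{\cal O_C}(\cal O_C/p)=1$ and the additivity of length in short exact sequences.
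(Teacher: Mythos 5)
Your proof is correct and follows essentially the same route as the paper's: both extract the number of invariant factors $p^k$ from (iterated) differences of the sequence $m\mapsto l_{\cal O_C}(A/p^m)$. The paper states the case $k=1$ explicitly via the formula $2l_{\cal O_C}(A/p)-l_{\cal O_C}(A/p^2)$ and then invokes induction; your version with the counting function $N_A$ and first/second differences is just a cleaner, fully spelled-out rendering of the same idea.
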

\begin{proof}
Note that $2l_{\cal O_C}(A/p)-l_{\cal O_C}(A/p^2)$ is the number of the invariant factor $p$ in $A$. This implies that the number of the invariant factor $p$ of $A$ is equal to that of $B$. By induction on $m$, it is easy to prove $A\cong B$ as $\cal O_C$-modules.

\end{proof}

Next, we want to prove the following key lemma which will be used in the comparison of Hodge--Tate cohomology and $p$-adic \'etale cohomology.
\begin{lemma}\label{lemma2}
Let $M=\bigoplus_{i=1}^m\cal O_C/{\beta^{m_i}}$ and $N=\bigoplus_{j=1}^n\cal O_C/{\beta^{n_j}}$, where $\beta\neq 0$ is in the maximal ideal $\frak m$ of $\cal O_C$ and all $m_i, n_j$ are positive integers. Suppose there are two $\cal O_C$-linear morphisms $f: M\to N$ and $g: N\to M$ such that $g\circ f=\alpha$ and $f\circ g=\alpha$, where $\alpha\in \cal O_C$ and $v(\beta)>v(\alpha)$. Then $m=n$ and the multi-sets $\{m_i\}$ and $\{n_j\}$ are the same, i.e., $M\cong N$.
\end{lemma}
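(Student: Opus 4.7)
My plan is to extract the multisets $\{m_i\}$ and $\{n_j\}$ from the characteristic sequences $r_k(M):=\#\{i:m_i\geq k\}$ and $r_k(N):=\#\{j:n_j\geq k\}$, which determine the multisets via $\#\{i:m_i=k\}=r_k(M)-r_{k+1}(M)$. A direct computation with the $\beta$-adic filtration gives
\[
\beta^{k-1}M/\beta^k M\cong (\cal O_C/\beta)^{r_k(M)},
\]
and similarly for $N$. Since $f$ and $g$ are $\cal O_C$-linear they preserve the filtrations $\{\beta^j M\},\{\beta^j N\}$ and descend to $\cal O_C/\beta$-linear maps $\bar f_k,\bar g_k$ on the associated graded pieces, with $\bar g_k\bar f_k=\alpha\cdot\mathrm{id}$ and $\bar f_k\bar g_k=\alpha\cdot\mathrm{id}$. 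The statement thus reduces to the following ``free case'': if $A=R^a$ and $B=R^b$ over $R:=\cal O_C/\beta$ admit $R$-linear maps $f,g$ with $gf=\alpha I_a$ and $fg=\alpha I_b$, then $a=b$.

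The free case is the core of the argument, and I would handle it by lifting everything back to $\cal O_C$. Choose arbitrary matrix lifts $\tilde F\in M_{b\times a}(\cal O_C),\tilde G\in M_{a\times b}(\cal O_C)$ of $f,g$. Then $\tilde G\tilde F\equiv\alpha I_a\pmod\beta$, so we can write $\tilde G\tilde F=\alpha I_a+\beta X$ for some $X\in M_a(\cal O_C)$, and over $C:=\mathrm{Frac}(\cal O_C)$ this rearranges to $\tilde G\tilde F=\alpha\bigl(I_a+(\beta/\alpha)X\bigr)$. The crucial point is that $v(\beta/\alpha)=v(\beta)-v(\alpha)>0$, so $(\beta/\alpha)X$ has entries in the maximal ideal of $\cal O_C$ and the geometric series $\sum_{k\geq 0}(-(\beta/\alpha)X)^k$ converges entry-wise (because $\cal O_C$ is complete in the valuation topology), furnishing an inverse for $I_a+(\beta/\alpha)X$ inside $M_a(\cal O_C)$. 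Hence $\tilde G\tilde F$ becomes invertible in $M_a(C)$, so the $C$-linear extension $\tilde F_C\colon C^a\to C^b$ admits a left inverse, is in particular injective, and forces $a\leq b$. The symmetric argument applied to $\tilde F\tilde G$ gives $b\leq a$, and thus $a=b$.

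Putting the two parts together: applying the free case to each $\bar f_k,\bar g_k$ yields $r_k(M)=r_k(N)$ for every $k\geq 1$, and since these characteristic sequences determine the multisets we conclude $\{m_i\}=\{n_j\}$ and $M\cong N$.

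The main obstacle is the free case. A naive entry-wise valuation chase of the identities $FG=\alpha I_b$ and $GF=\alpha I_a$ succeeds only when $\min(a,b)=1$, where each off-diagonal entry of $FG$ is a single product $F_{ki}G_{il}$ that must then have valuation $\geq v(\beta)$; for $\min(a,b)\geq 2$ the off-diagonal entries are sums that can vanish modulo $\beta$ through cancellation, so no individual term is pinned down. Likewise, trace and characteristic-polynomial identities (Sylvester's determinant identity $(1+t\alpha)^a=(1+t\alpha)^b$ in $R[t]$) give only the constraint $(b-a)\alpha\equiv 0\pmod\beta$, which is too weak to force $a=b$. The correct move is to leave the residue ring entirely: lift to $\cal O_C$, pass to the fraction field where $\alpha$ becomes a unit, and exploit the completeness of $\cal O_C$ so that the ``invertible up to $\alpha$'' condition becomes honest injectivity of $\tilde F_C$ over $C$.
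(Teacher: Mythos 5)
Your proof is correct and takes a genuinely different route from the paper. The paper's argument works with the torsion-rank ${\rm trk}(\cdot)$ of the submodules $\beta^k M$ and $\beta^k N$: it first shows ${\rm trk}$ is monotone under injections and surjections (Lemma \ref{trk}), upgrades this to allow a kernel killed by a small element $\alpha$ (Lemma \ref{mmlemma}), and then applies the result to the restrictions of $f$ and $g$ between $\beta^k M$ and $\beta^k N$ to conclude ${\rm trk}(\beta^k M)={\rm trk}(\beta^k N)$ for every $k$. You instead pass to the associated graded pieces $\beta^{k-1}M/\beta^k M$, which are free over $\cal O_C/\beta$, reduce to a rank comparison for free modules over that quotient ring, and settle it by lifting to matrices over $\cal O_C$, writing $\tilde G\tilde F=\alpha\bigl(I_a+(\beta/\alpha)X\bigr)$, and inverting $I_a+(\beta/\alpha)X$ by a convergent geometric series, which works because $v(\beta/\alpha)>0$ and $\cal O_C$ is complete for its rank-one valuation. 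The numerical data extracted are the same, since ${\rm trk}(\beta^{k-1}M)$ is exactly the $\cal O_C/\beta$-rank of $\beta^{k-1}M/\beta^k M$; the two arguments just establish the equalities $r_k(M)=r_k(N)$ by different means. Yours is more explicitly linear-algebraic, trading the module-theoretic study of finitely presented torsion modules over valuation rings for a matrix lift and the completeness of $\cal O_C$; the paper's stays structural and never leaves the category of $\cal O_C$-modules. Both are valid.
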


In order to prove this lemma, we consider all finitely presented torsion modules over $\cal O_C$. As we have mentioned, any such module looks like $\bigoplus_{i=1}^n\cal O_C/{\pi_i}$ for some non-zero $\pi_i\in \frak m$. We call ${\rm trk}(N):=n$ the torsion-rank of $N$. Note that the torsion-rank of $N$ is equal to the dimension of $N$ base changed to the residue field of $\cal O_C$. So it is well-defined. We will also use the normalized length function $l_{\cal O_C}$ for finitely presented torsion $\cal O_C$-modules.

Now we prove a lemma concerning the torsion-rank:
\begin{lemma}\label{trk}
Let $N\into M$ be an injection of finitely presented torsion $\cal O_C$-modules. Then ${\rm trk}(N)\leq {\rm trk}(M)$. Dually if $N\onto M$ is a surjection of finitely presented torsion $\cal O_C$-modules, then ${\rm trk}(N)\geq {\rm trk}(M)$.
\end{lemma}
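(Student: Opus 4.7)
The plan is to reinterpret the torsion-rank as a minimal number of generators, after which the surjective half is essentially tautological while the injective half reduces to a classical structure theorem over valuation rings. Since $\cal O_C$ is local and any finitely presented torsion $\cal O_C$-module $M$ is finitely generated, Nakayama's lemma identifies ${\rm trk}(M)=\dim_{\res k}(M/\frak m M)$ with the minimal number of generators of $M$, where $\frak m$ is the maximal ideal and $\res k$ the residue field of $\cal O_C$. The surjective half is then immediate: $N\onto M$ induces $N/\frak m N\onto M/\frak m M$ by right exactness of $-\otimes_{\cal O_C}\res k$, and comparing $\res k$-dimensions gives ${\rm trk}(M)\leq {\rm trk}(N)$.

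For the injective half, write $r={\rm trk}(M)$ and lift a minimal generating set of $M$ to a surjection $q\colon \cal O_C^r\onto M$. Since $M$ is finitely presented, $K:=\ker q$ is finitely generated. Set $\tilde N:=q^{-1}(N)\sseq \cal O_C^r$; the short exact sequence $0\to K\to \tilde N\to N\to 0$ exhibits $\tilde N$ as an extension of finitely generated modules, hence itself finitely generated, and as a submodule of the free module $\cal O_C^r$ it is torsion-free. The essential input is now that \emph{every finitely generated torsion-free $\cal O_C$-module is free}: one proves this by induction on $r$, projecting $\tilde N\sseq \cal O_C^r$ onto one coordinate, noting that the image is a finitely generated and therefore principal ideal of the valuation (hence Bezout) ring $\cal O_C$, and splitting the resulting short exact sequence (the quotient being either zero or isomorphic to $\cal O_C$, hence free). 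Consequently $\tilde N\cong \cal O_C^{r'}$ with $r'\leq r$, and since $N$ is a quotient of $\tilde N$, Nakayama once more gives ${\rm trk}(N)\leq r'\leq r={\rm trk}(M)$.

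The main obstacle is the structural fact that finitely generated torsion-free modules over the non-Noetherian valuation ring $\cal O_C$ are free; the subtlety is that left-exactness of $-\otimes_{\cal O_C}\res k$ fails and the dimensions $\dim_{\res k}(-/\frak m\cdot -)$ cannot be compared directly, so one is forced to lift the configuration to a free cover and exploit the valuation-ring structure there. Once that structural fact is in hand, the remainder is a straightforward diagram chase together with Nakayama.
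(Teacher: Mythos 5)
Your proposal is correct, but it takes a genuinely different route from the paper's proof. The paper works directly with the explicit cyclic decompositions $N=\bigoplus_i\cal O_C/\pi_i$ and $M=\bigoplus_j\cal O_C/\varpi_j$: it picks the $\pi_i$ of smallest valuation and the $\varpi_j$ of largest valuation, sandwiches the injection as $(\cal O_C/\pi)^n\hookrightarrow M\subset(\cal O_C/\varpi)^m$, observes that the image lands in the $\pi$-torsion $(\cal O_C/\pi)^m$ of the right-hand side, and compares lengths to get $n\le m$; the surjective half is then handled by dualizing with $\Hom(-,\cal O_C/t)$. You instead exploit the observation (which the paper records but does not use in this proof) that $\mathrm{trk}(M)=\dim_{\bar k}(M/\frak m M)$ is the minimal number of generators: the surjective half is then immediate from right exactness of $-\otimes\bar k$, and for the injective half you lift to a free cover $q\colon\cal O_C^r\onto M$, form $\tilde N=q^{-1}(N)$, and invoke the structural fact that a finitely generated torsion-free module over a valuation ring is free of rank $\le r$. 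Your argument is more conceptual and would transfer verbatim to any valuation ring (or indeed Bezout domain), at the cost of importing the freeness theorem for finitely generated torsion-free modules; the paper's argument is more self-contained and hands-on, staying entirely within the explicit cyclic-decomposition calculus already set up in Corollary \ref{structure}, but it is tailored to that decomposition. Both are valid; you also avoid the dualization trick, getting the surjective case for free via Nakayama.
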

\begin{proof}
Write $N=\bigoplus_{i=1}^n\cal O_C/{\pi_i}$ and $M=\bigoplus_{i=1}^m\cal O_C/{\varpi_i}$.  Let $\pi$ be the smallest of the $\pi_i$ (i.e., the one with the smallest valuation), and let $\varpi$ be the largest of $\varpi_i$. Then 
\[
(\cal O_C/{\pi})^n\subset N\into M\subset (\cal O_C/\varpi)^m,
\]
which shows that $\varpi\in\pi\cal O_C$; write $\varpi=\pi x$. The composition of these maps lands in the $\pi$-torsion of $(\cal O_C/{\varpi})^m$, which is isomorphic to $(x\cal O_C/{\varpi\cal O_C})^m\cong(\cal O_C/{\pi\cal O_C})^m$. So now we have an injection $(\cal O_C/{\pi})^n\into (\cal O_C/{\pi})^m$. Taking length shows that $n\leq m$. 

If $N\onto M$ is a surjection of finitely presented torsion $\cal O_C$-modules, we can consider the injection ${\rm Hom}(M, \cal O_C/t)\into {\rm Hom}(N, \cal O_C/t)$ where $t$ is any non-zero element in $\frak m$. Then we have ${\rm trk}(M)={\rm trk}({\rm Hom}(M, \cal O_C/t))\leq {\rm trk}({\rm Hom}(N, \cal O_C/t))={\rm trk}(N)$.
\end{proof}

\begin{lemma}\label{mmlemma}
Let $g: N\to M$ be a morphism of finitely presented torsion $\cal O_C$-modules; write $N=\bigoplus_{i=1}^n\cal O_C/{\pi_i}$ and $M=\bigoplus_{i=1}^m\cal O_C/{\varpi_i}$. Assume that ${\rm ker}(g)$ is killed by some element $\alpha\in \cal O_C$ whose valuation is strictly smaller that all of the ${\pi_i}$. Then ${\rm trk}(N)\leq {\rm trk}(M)$.
\end{lemma}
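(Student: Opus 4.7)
The plan is to combine the injection half of Lemma \ref{trk} with its dual surjection half by factoring $g$ appropriately. First I observe that the hypothesis ``$\ker(g)$ is killed by $\alpha$'' says precisely $\ker(g) \subseteq N[\alpha]$, where $N[\alpha]$ denotes the $\alpha$-torsion submodule of $N$. Then $g$ factors through the injection $N/\ker(g) \into M$, and the inclusion $\ker(g) \subseteq N[\alpha]$ yields a canonical surjection $N/\ker(g) \onto N/N[\alpha]$. Applying Lemma \ref{trk} to the injection gives ${\rm trk}(N/\ker(g)) \leq {\rm trk}(M)$, and applying its dual half to the surjection gives ${\rm trk}(N/\ker(g)) \geq {\rm trk}(N/N[\alpha])$. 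So the problem reduces to showing ${\rm trk}(N/N[\alpha]) = n$.

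For this I would use the explicit decomposition $N = \bigoplus_{i=1}^n \cal O_C/\pi_i$ coming from Corollary \ref{structure} (more precisely, from the hypothesis). For each summand, the assumption $v(\alpha) < v(\pi_i)$ ensures that $\pi_i/\alpha \in \cal O_C$ and that the $\alpha$-torsion of $\cal O_C/\pi_i$ is exactly $(\pi_i/\alpha)\cal O_C/\pi_i\cal O_C$, so the quotient is $\cal O_C/(\pi_i/\alpha)$. The crucial point is that the same strict inequality guarantees $\pi_i/\alpha$ still lies in the maximal ideal, so $\cal O_C/(\pi_i/\alpha)$ remains a nonzero cyclic module. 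Summing over $i$ gives $N/N[\alpha] \cong \bigoplus_{i=1}^n \cal O_C/(\pi_i/\alpha)$, which has torsion-rank exactly $n$, and the chain of inequalities above then yields ${\rm trk}(N) = n \leq {\rm trk}(M)$.

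The main obstacle, or rather the only subtle point, is recognising that one should map $N/\ker(g)$ \emph{out} onto $N/N[\alpha]$ rather than, for instance, restrict $g$ to $N[\alpha]$ or to $\alpha N$; those alternatives do not immediately produce a lower bound on ${\rm trk}(N/\ker(g))$ in terms of $n$, because the kernel of $g$ could sit anywhere inside $N[\alpha]$. The strict inequality $v(\alpha) < v(\pi_i)$ is in fact doing double duty throughout the argument: it controls the shape of $N[\alpha]$ summand-by-summand, and it guarantees that the quotient $N/N[\alpha]$ still has all $n$ of its cyclic summands nonzero so that no torsion-rank is lost in passing to the quotient.
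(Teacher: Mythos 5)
Your proposal is correct and follows essentially the same route as the paper: both factor $g$ through the injection $N/\ker(g)\into M$, use the surjection $N/\ker(g)\onto N/N[\alpha]$ together with both halves of Lemma \ref{trk} to pin down $\operatorname{trk}(N/\ker(g))=n$, and then conclude. Your explicit computation $N/N[\alpha]\cong\bigoplus_{i}\cal O_C/(\pi_i/\alpha)$ is in fact slightly cleaner than the display in the paper's proof, but the logic is identical.
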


\begin{proof}
By assumption ${\rm ker}(g)$ is contained in the $\alpha$-torsion $N[\alpha]$ of $N$, which is given by $N[\alpha]\cong \bigoplus_{i=1}^n\frac{\pi_i}{\alpha}\cal O_C/{\pi_i\cal O_C}$. So 
\[
N\onto N/{{\rm ker}(g)}\onto N/{N[\alpha]}\cong \bigoplus_{i=1}^n\frac{\pi_i}{\alpha}\cal O_C/{\pi_i\cal O_C}.
\]
Taking torsion-ranks, Lemma \ref{trk} for surjections shows that ${\rm trk}(N/{{\rm ker}(g)})={\rm trk}(N)$. But $N/{{\rm ker}(g)}\into M$, so Lemma \ref{trk} also shows that ${\rm trk}(N/{{\rm ker}(g)})\leq{\rm trk}(M)$.
\end{proof}
Now we are ready to prove Lemma \ref{lemma2}.

\begin{proof}[Proof of Lemma \ref{lemma2}] 
Note that the number of invariant factor $\beta^k$ in $M$ is equal to ${\rm trk}(\beta^{k-1}M)-{\rm trk}(\beta^kM)$.  By Lemma \ref{mmlemma}, we have ${\rm trk}(\beta^kM)={\rm trk}(\beta^kN)$ for any $k$. This means that the number of invariant factor $\beta^k$ in $M$ and that in $N$ are equal for any $k$. So we must have $M\cong N$.
\end{proof}

\begin{lemma}\label{torsionlemma}
Let $M=\mathcal O_C^r\oplus(\bigoplus_{i=1}^{m}{\cal O_C}/\beta^{m_i})$ and $N=\mathcal O_C^s\oplus(\bigoplus_{j=1}^{n}{ \cal O_C}/\beta^{n_j})$. Suppose there are two $\cal O_C$-linear morphisms $f : M\to N$ and $g: N\to M$ such that $g\circ f=\alpha$ and $f\circ g=\alpha$, where $\alpha\in \cal O_C$ and $v(\beta)>v(\alpha)$. Then $M\cong N$. In particular, if $M=0$, then $N=0$.
\end{lemma}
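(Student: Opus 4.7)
The plan is to reduce the statement to Lemma \ref{lemma2} by splitting $M$ and $N$ canonically into their free and torsion parts, handling each part separately.

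First, I would pin down the free ranks. Tensoring $f$ and $g$ with the fraction field $C = \mathrm{Frac}(\mathcal O_C)$ kills all torsion summands, leaving $C$-linear maps $\bar f: C^r \to C^s$ and $\bar g: C^s \to C^r$ whose compositions are both multiplication by $\alpha$. Since $v(\beta) > v(\alpha)$ forces $\alpha \neq 0$ (otherwise $v(\alpha) = \infty$), $\alpha$ is a unit in $C$, so $\bar f$ and $\bar g$ are mutually inverse up to a unit scalar. Hence $r = s$.

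Next, I would isolate the torsion. Since any $\mathcal O_C$-linear map sends torsion elements to torsion elements, $f$ and $g$ restrict to maps $f_t: M_{\mathrm{tor}} \to N_{\mathrm{tor}}$ and $g_t: N_{\mathrm{tor}} \to M_{\mathrm{tor}}$ between the torsion submodules $M_{\mathrm{tor}} = \bigoplus_i \mathcal O_C/\beta^{m_i}$ and $N_{\mathrm{tor}} = \bigoplus_j \mathcal O_C/\beta^{n_j}$. The identities $g_t \circ f_t = \alpha\cdot \mathrm{id}$ and $f_t \circ g_t = \alpha \cdot \mathrm{id}$ are inherited from the global ones, so Lemma \ref{lemma2} applies and yields $M_{\mathrm{tor}} \cong N_{\mathrm{tor}}$.

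Combining the two steps, Corollary \ref{structure} (unique decomposition of finitely presented $\mathcal O_C$-modules into cyclic summands) gives $M \cong \mathcal O_C^r \oplus M_{\mathrm{tor}} \cong \mathcal O_C^s \oplus N_{\mathrm{tor}} \cong N$. The ``in particular'' clause then follows: if $M = 0$, we must have $r = 0$ and $M_{\mathrm{tor}} = 0$, so by what we just proved $s = 0$ and $N_{\mathrm{tor}} = 0$, i.e. $N = 0$. There is no real obstacle here; the only mild subtlety is verifying that both hypotheses of Lemma \ref{lemma2} genuinely pass to the torsion subgroups, which is immediate from functoriality.
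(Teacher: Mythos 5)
Your proof is correct, and it takes a genuinely different (though closely related) route from the paper's. The paper applies Lemma \ref{lemma2} to the reductions $M/\beta^k$ and $N/\beta^k$, noting that for $k$ large enough these capture both the torsion submodule (via the small invariant factors) and the free rank (via the multiplicity of $\beta^k$), so one reading-off of Lemma \ref{lemma2} suffices. You instead split the problem cleanly into two independent pieces: the free ranks are matched by base-changing to the fraction field $C$, where $\alpha$ becomes a unit and $f,g$ become mutually inverse up to scalar; the torsion submodules are matched by restricting $f,g$ and invoking Lemma \ref{lemma2} directly on $M_{\mathrm{tor}}$ and $N_{\mathrm{tor}}$. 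Both arguments are equally rigorous; yours arguably separates the two sources of information more transparently, while the paper's version is a single application of Lemma \ref{lemma2} with no appeal to the generic fibre. One tiny nit: you do not actually need Corollary \ref{structure} for the final assembly, since the decompositions $M\cong\mathcal O_C^r\oplus M_{\mathrm{tor}}$ and $N\cong\mathcal O_C^s\oplus N_{\mathrm{tor}}$ are already given by hypothesis (the free summand is manifestly torsion-free and the remaining summand is the full torsion submodule); and what you would want from Corollary \ref{structure} is uniqueness of decomposition, which that corollary does not assert, but the point is moot here.
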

\begin{proof}
According to Lemma \ref{lemma2}, $M/{\beta^k}$ and $N/{\beta^k}$ are isomorphic for all $k$. For large enough $k$, this means the torsion submodule $M_{\rm tor}$ of $M$ is isomorphic to the torsion submodule $N_{\rm tor}$ of $N$ and also the rank of the free part of $M$ is equal to that of $N$, i.e. $r=s$. We are done.
\end{proof}

\section{Hodge--Tate cohomology}\label{C}

In this section, we study the Hodge--Tate specialization of the Breuil--Kisin cohomology and prove the isomorphism concerning Hodge--Tate cohomology groups in Theorem \ref{main} under the restriction $ie<p-1$. 

Our strategy is to first study the Hodge--Tate specialization of the $A_{\inf}$-cohomology of $\bar{\frak X}$. We can take advantage of the $L{\eta}$-construction of $A_{\inf}$-cohomology and its Hodge--Tate specilizaton. This will provide us with two morphisms which enable us to use Lemma \ref{torsionlemma}. In order to make this more precise, we need to introduce the framework of almost mathematics (derived category version) following \cite{bhatt2018specializing}. 

\begin{definition}[The pair $(\cal O_C, \frak m)$]
Let $\frak m$ denote the maximal ideal of $\cal O_C$. We say an $\cal O_C$-module $M$ is almost zero if $\frak m\cdot M=0$. A map $f: K\to L$ in $D(\cal O_C)$ is an almost isomorphism if the cohomology groups of the mapping cone of $f$ are almost zero.
\end{definition}

Now we consider the almost derived category of $\cal O_C$-modules. Precisely, there are two functors:
\[
D(\cal O_C)\xrightarrow{()^a} D(\cal O_C)^a:=D(\cal O_C)/D(k),\ \ \ \ K\mapsto K^a
\]
\[
D(\cal O_C)^a\xrightarrow{()_*} D(\cal O_C), \ \ \ \ K^a\mapsto (K^a)_*:={\rm RHom}_{\cal O_C}(\frak m, K)
\]
where the Verdier quotient $D(\cal O_C)/D(k)$ is actually the localization of $D(\cal O_C)$ with respect to almost isomorphisms. The functor $()_*$ is right adjoint to the quotient functor $()^a$.

\begin{lemma}\label{almost1}
If $C$ is spherically complete, i.e. any decreaing sequence of discs in $C$ has nonempty intersection, we have $K\simeq (K^a)_{*}$ for any perfect complex $K\in D(\cal O_C)$.
\end{lemma}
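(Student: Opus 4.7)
The plan is to reduce the statement to the vanishing $\mathrm{RHom}_{\cal O_C}(k_C, \cal O_C) = 0$, where $k_C := \cal O_C/\frak m$ is the residue field, and then to establish that vanishing directly using spherical completeness.

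First, applying $\mathrm{RHom}_{\cal O_C}(-, K)$ to the short exact sequence $0 \to \frak m \to \cal O_C \to k_C \to 0$ produces a fibre sequence
\[
\mathrm{RHom}_{\cal O_C}(k_C, K) \to K \to \mathrm{RHom}_{\cal O_C}(\frak m, K),
\]
whose right-hand term is $(K^a)_{*}$ by construction of the right adjoint. Hence $K \simeq (K^a)_*$ if and only if $\mathrm{RHom}_{\cal O_C}(k_C, K) = 0$. Since $\cal O_C$ is local, any perfect complex $K$ is quasi-isomorphic to a bounded complex of finitely generated free $\cal O_C$-modules, so by d\'evissage in the triangulated category it suffices to treat the case $K = \cal O_C$.

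Next, applying $\mathrm{RHom}_{\cal O_C}(-, \cal O_C)$ to the same short exact sequence reduces the vanishing $\mathrm{RHom}_{\cal O_C}(k_C, \cal O_C) = 0$ to showing that the natural map $\cal O_C \to \mathrm{RHom}_{\cal O_C}(\frak m, \cal O_C)$ is a quasi-isomorphism. I would write $\frak m$ as the filtered colimit $\frak m = \varinjlim_n a_n \cal O_C$ along a cofinal countable sequence $(a_n) \subset \frak m \setminus \{0\}$ with $v(a_n) \to 0^+$; such a sequence exists because $C$, being algebraically closed, has value group dense in $\bb R_{\geq 0}$. Since each $a_n \cal O_C$ is free of rank one, this identification gives
\[
\mathrm{RHom}_{\cal O_C}(\frak m, \cal O_C) \simeq \mathrm{R}\varprojlim_n \cal O_C,
\]
with the transition at step $n$ being multiplication by $c_n := a_n/a_{n+1} \in \frak m$.

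Finally, I would show this $\mathrm{R}\varprojlim$ is concentrated in degree $0$ and equals $\cal O_C$. The degree-$0$ calculation is direct: any compatible family $(y_n)$ satisfies $y_0 = (a_0/a_n) y_n$ for all $n$, and the combination of $y_n \in \cal O_C$ with $v(a_n) \to 0$ forces $y_0 \in a_0 \cal O_C$; writing $y_n = a_n\cdot x$ then identifies $\varprojlim$ with $\cal O_C$ compatibly with the natural map. The main obstacle I expect is the vanishing $\mathrm{R}^1\varprojlim = 0$: given $(y_n) \in \prod_n \cal O_C$, one must solve the recurrence $x_n - c_n x_{n+1} = y_n$, whose formal solution is the series $x_n = \sum_{k \ge 0}(a_n/a_{n+k}) y_{n+k}$. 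The partial sums lie in $\cal O_C$, but their successive differences have valuation bounded above by $v(a_n)$ and so do not tend to $0$ in the $p$-adic topology, so the series does not converge in the usual sense. However, the partial sums form a pseudo-Cauchy sequence in the valuation-theoretic sense, and spherical completeness of $C$ is precisely the condition guaranteeing such a sequence has a limit in $\cal O_C$; that limit supplies the required $x_n$ and yields $\mathrm{R}^1\varprojlim = 0$. Higher $\mathrm{R}^i\varprojlim$ vanish automatically for a countable tower, concluding the proof.
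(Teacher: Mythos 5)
Your overall strategy is sound and appears to be essentially the route taken in the cited reference \cite[Lemma 3.4]{bhatt2018specializing}, which the paper itself does not reproduce: reduce to $\mathrm{RHom}_{\cal O_C}(k_C,\cal O_C)=0$ via the fibre sequence for $0\to\frak m\to\cal O_C\to k_C\to 0$, use that perfect complexes over the local ring $\cal O_C$ are built from $\cal O_C$, identify $\mathrm{RHom}_{\cal O_C}(\frak m,\cal O_C)$ with $R\varprojlim(\cal O_C\xleftarrow{c_n}\cal O_C)$ by writing $\frak m$ as a countable sequential colimit of principal ideals, and then compute the two relevant $R^i\varprojlim$. The degree-$0$ and degree-$\geq 2$ parts are fine.

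The one place to tighten is the final appeal to pseudo-Cauchy sequences. The successive differences $T_k:=(a_n/a_{n+k})y_{n+k}$ have valuation $v(a_n)-v(a_{n+k})+v(y_{n+k})$, which is bounded \emph{below} by $v(a_n)-v(a_{n+k})$; it is not in general bounded above by $v(a_n)$, and more to the point it need not be eventually strictly increasing in $k$ if the $v(y_{n+k})$ oscillate. So the partial sums $S_N$ need not form a pseudo-Cauchy sequence, and the characterization of spherical completeness via pseudo-limits does not directly apply. What is always true, and is all that is needed, is that the discs $B_N:=S_N+(a_n/a_{n+N})\cal O_C$ are nested, because $S_{N+1}-S_N=T_N\in(a_n/a_{n+N})\cal O_C$ and the radii $(a_n/a_{n+N})\cal O_C$ shrink; thus the decreasing-discs formulation of spherical completeness, exactly as stated in the lemma, produces a point $S\in\bigcap_N B_N$. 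Taking $S$ for the $n=0$ tower and setting $x_n:=\bigl(S-\sum_{k<n}(a_0/a_k)y_k\bigr)/(a_0/a_n)\in\cal O_C$ then solves the recurrence $x_n-c_nx_{n+1}=y_n$ for all $n$ simultaneously, killing $R^1\varprojlim$. This is a small repair rather than a change of strategy, but as written the pseudo-Cauchy step would fail for general $(y_n)$.
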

\begin{proof}
See \cite[Lemma 3.4]{bhatt2018specializing}.
\end{proof}

There are similar constructions and results in the setting of $A_{\inf}$-modules.
\begin{definition}[The pair $(A_{\inf}, W({\frak m}^{\flat}))$]
An $A_{\inf}$-module $M$ is called almost zero if $W({\frak m}^{\flat})\cdot M=0$, where $W({\frak m}^{\flat})={\rm Ker}(A_{\inf}\to W(\bar k))$. A map $f: K\to L$ in $D(A_{\inf})$ is called an almost isomorphism if the cohomology groups of the mapping cone of $f$ are almost zero.
\end{definition}
Similarly, we consider the almost derived category of $A_{\inf}$-modules. Let $D_{\rm comp}(A_{\inf})\subset D(A_{\inf})$ be the full subcategory of all derived $p$-adically complete complexes. There are two functors:
\[
D_{\rm comp}(A_{\inf})\xrightarrow{()^a}D_{\rm comp}(A_{\inf})^a:=D_{\rm comp}(A_{\inf})/D_{\rm comp}(W(k)), \ \ \ K\mapsto K^a
\]
\[
D_{\rm comp}(A_{\inf})^a\xrightarrow{()_*}D_{\rm comp}(A_{\inf}), \ \ \ K^a\to (K^a)_*:= {\rm RHom}_{A_{\inf}}(W({\frak m}^{\flat}), K)
\]
where the Verdier quotient $D_{\rm comp}(A_{\inf})^a:=D_{\rm comp}(A_{\inf})/D_{\rm comp}(W(k))$ is actually the localization of $D_{\rm comp}(A_{\inf})$ with respect to almost isomorphisms. The functor $()_*$ is also right adjoint to $()^a$.

\begin{lemma}\label{almostainf}
If $C$ is spherically complete, we have $K\simeq (K^a)_*$ for any perfect complex $K\in D_{\rm comp}(A_{\inf})$.
\end{lemma}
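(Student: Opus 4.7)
The plan is to mimic the proof of Lemma \ref{almost1} by transferring the question from $A_{\inf}$ down to $\cal O_C^{\flat}$ via mod-$p$ reduction, and then exploit spherical completeness of the tilt. First, identify the cofiber of the natural map $K \to (K^a)_*$: from the short exact sequence $0 \to W(\frak m^{\flat}) \to A_{\inf} \to W(\bar k) \to 0$ and the definition $(K^a)_* = R\Hom_{A_{\inf}}(W(\frak m^{\flat}), K)$, one obtains a distinguished triangle
\[
R\Hom_{A_{\inf}}(W(\bar k), K) \to K \to (K^a)_* \to R\Hom_{A_{\inf}}(W(\bar k), K)[1].
\]
It therefore suffices to show $R\Hom_{A_{\inf}}(W(\bar k), K) = 0$ for every perfect $K \in D_{\rm comp}(A_{\inf})$; by devissage along cones and shifts this reduces to the single case $R\Hom_{A_{\inf}}(W(\bar k), A_{\inf}) = 0$.

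Next, remove the Witt vectors. Apply $R\Hom_{A_{\inf}}(-, A_{\inf})$ to $0 \to W(\bar k) \xrightarrow{p} W(\bar k) \to \bar k \to 0$ to get
\[
R\Hom_{A_{\inf}}(\bar k, A_{\inf}) \to R\Hom_{A_{\inf}}(W(\bar k), A_{\inf}) \xrightarrow{p} R\Hom_{A_{\inf}}(W(\bar k), A_{\inf}).
\]
If $R\Hom_{A_{\inf}}(\bar k, A_{\inf}) = 0$, then $p$ acts invertibly on $R\Hom_{A_{\inf}}(W(\bar k), A_{\inf})$; but this complex is derived $p$-complete (as $A_{\inf}$ is, and $R\Hom$ preserves derived completeness in the target), and a derived $p$-complete complex on which $p$ is invertible must vanish. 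The task therefore reduces further to showing $R\Hom_{A_{\inf}}(\bar k, A_{\inf}) = 0$.

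For this, transfer to the tilt. Since $\bar k$ is $p$-torsion, $p$ acts as zero on $R\Hom_{A_{\inf}}(\bar k, -)$, so applying this functor to $0 \to A_{\inf} \xrightarrow{p} A_{\inf} \to \cal O_C^{\flat} \to 0$ splits the resulting triangle and yields
\[
R\Hom_{A_{\inf}}(\bar k, \cal O_C^{\flat}) \simeq R\Hom_{A_{\inf}}(\bar k, A_{\inf}) \oplus R\Hom_{A_{\inf}}(\bar k, A_{\inf})[1].
\]
On the other hand, change of rings along $A_{\inf} \onto \cal O_C^{\flat}$ combined with the computation $\bar k \dotimes_{A_{\inf}} \cal O_C^{\flat} \simeq \bar k \oplus \bar k[1]$ (since $\bar k$ is $p$-torsion) gives
\[
R\Hom_{A_{\inf}}(\bar k, \cal O_C^{\flat}) \simeq R\Hom_{\cal O_C^{\flat}}(\bar k, \cal O_C^{\flat}) \oplus R\Hom_{\cal O_C^{\flat}}(\bar k, \cal O_C^{\flat})[-1].
\]
Combining these identifications reduces the whole problem to $R\Hom_{\cal O_C^{\flat}}(\bar k, \cal O_C^{\flat}) = 0$.

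Since spherical completeness of $C$ passes to $C^{\flat}$ (the two fields share the same value group), $\cal O_C^{\flat}$ is a rank-one spherically complete valuation ring, and the argument proving Lemma \ref{almost1} applies to $\cal O_C^{\flat}$ verbatim to give this final vanishing. The main obstacle is the careful bookkeeping through the chain of reductions -- in particular the change-of-rings step, which involves a derived tensor supported in two degrees, so the shifts must be tracked precisely to reconcile the two displayed decompositions above; a secondary point is the (standard) verification that spherical completeness is preserved by tilting.
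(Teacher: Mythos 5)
The paper offers no internal proof of this lemma (it cites \cite[Lemma 3.10]{bhatt2018specializing}), so there is nothing to compare your argument against; I will instead assess it on its own merits.

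Your reduction chain is sound until the last step. Passing from $K\simeq (K^a)_*$ to $R\Hom_{A_{\inf}}(W(\bar k),K)=0$ via the triangle is correct, devissage to $K=A_{\inf}$ is valid since a perfect complex is a finite iterated cone of shifts of $A_{\inf}$, and the derived-$p$-completeness argument legitimately strips the Witt vectors, reducing the problem to $R\Hom_{A_{\inf}}(\bar k, A_{\inf})=0$. The splitting of the triangle obtained by applying $R\Hom(\bar k,-)$ to $A_{\inf}\xrightarrow{p}A_{\inf}\to \cal O_C^{\flat}$, and the change-of-rings computation with $\bar k\dotimes_{A_{\inf}}\cal O_C^{\flat}\simeq \bar k\oplus \bar k[1]$, are both correct and do cancel shifts to leave $R\Hom_{\cal O_C^{\flat}}(\bar k,\cal O_C^{\flat})=0$ as the remaining target.

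The gap is the claim that $C^{\flat}$ is spherically complete ``because the two fields share the same value group.'' That parenthetical is not a proof: having the same value group (and even the same residue field) does not pin down spherical completeness --- $\bb C_p$ and its spherical completion have the same value group $\bb Q$ and residue field $\overline{\bb F}_p$, yet only one of them is spherically complete. The preservation of spherical completeness under tilting is in fact true, but establishing it requires a genuine argument (e.g., via Kaplansky's theory of maximal immediate extensions and the fact that the tilt of the Kaplansky field over $(\bar k,\Gamma)$ in mixed characteristic is the Hahn field $\bar k((t^\Gamma))$), and none is supplied. More to the point, the detour through the tilt is avoidable. Instead of reducing modulo $p$, reduce modulo $\xi$: since $\xi$ lies in the maximal ideal of $A_{\inf}$ it annihilates the residue field $\bar k$, so applying $R\Hom(\bar k,-)$ to the short exact sequence $0\to A_{\inf}\xrightarrow{\xi}A_{\inf}\to \cal O_C\to 0$ produces, by exactly the same splitting and change-of-rings maneuver (now with $\bar k\dotimes_{A_{\inf}}\cal O_C\simeq \bar k\oplus \bar k[1]$), the reduction to $R\Hom_{\cal O_C}(\bar k,\cal O_C)=0$. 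This is precisely the content underlying Lemma \ref{almost1}, and it uses spherical completeness of $C$ --- the stated hypothesis --- directly, with no appeal to the tilt at all. I would recommend replacing the $p$-reduction by the $\xi$-reduction and dropping the tilting claim.
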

\begin{proof}
See \cite[Lemma 3.10]{bhatt2018specializing}.
\end{proof}

Now we are ready to study the structure of the Hodge--Tate cohomology groups. We first state a lemma about the $L\eta$-functor, which will give us two important maps connecting Hodge--Tate cohomology and $p$-adic $\rm{\acute e}$tale cohomology.
\begin{lemma}\label{bhattkey}
Let $A$ be a commutative ring and $a\in A$ be a non-zero divisor. Assume $K\in D^{[0,s]}(A)$ with $H^0(K)$ being $a$-torsion free. Then there are natural maps $L{\eta}_a(K)\to K$ and $K\to L\eta_a(K)$ whose composition in either direction is $a^s$.
\end{lemma}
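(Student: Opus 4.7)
The plan is to explicitly compute $L\eta_a K$ using a well-chosen representative of $K$ and then to exhibit the two maps very concretely as a subcomplex inclusion and a ``multiplication by $a^s$'' map, using the cohomological degree bound to verify that the latter lands in the subcomplex.

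First I would reduce to the following setup: represent $K$ by an honest cochain complex $C^{\bullet}$ concentrated in degrees $[0,s]$ whose terms are $a$-torsion-free. Start with a projective (in particular flat) resolution $P^{\bullet}\to K$ with $P^i=0$ for $i>s$ and $P^i$ projective for all $i\le s$, and then apply the smart truncation $C:=\tau^{\geq 0}P$. The only subtle term is $C^0=P^0/d(P^{-1})$, which fits in a short exact sequence
\[
0\to H^0(K)\to C^0\to \mathrm{im}(d^0)\to 0.
\]
The right-hand term sits inside the flat module $P^1$, so it is $a$-torsion-free; the left-hand term is $a$-torsion-free by the hypothesis on $H^0(K)$; hence so is $C^0$. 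Consequently $C$ is an $a$-torsion-free complex concentrated in $[0,s]$ with $C\simeq K$, and we may compute $L\eta_a K = \eta_a C$ directly from the formula
\[
(\eta_a C)^i=\{x\in a^i C^i : dx\in a^{i+1}C^{i+1}\}\ \subset\ C^i.
\]

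Next, the natural inclusion of subcomplexes $\eta_a C\hookrightarrow C$ gives the desired map $L\eta_a K\to K$. For the other direction, I would show that multiplication by $a^s$ on $C$ factors through $\eta_a C$. For $x\in C^i$ with $0\le i\le s$, the element $a^s x$ lies in $a^s C^i\subset a^i C^i$ because $s\ge i$; moreover $d(a^s x)=a^s\,dx$, and if $i<s$ then $a^s\subset a^{i+1}$, while if $i=s$ then $C^{s+1}=0$, so in either case $d(a^s x)\in a^{i+1}C^{i+1}$. Hence $x\mapsto a^s x$ defines a map of complexes $C\to \eta_a C$, giving $K\to L\eta_a K$.

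Finally, I would verify the composition statements directly. The composite $C\xrightarrow{a^s}\eta_a C\hookrightarrow C$ is literally multiplication by $a^s$ on $C$ by construction, and the composite $\eta_a C\hookrightarrow C\xrightarrow{a^s}\eta_a C$ sends $y\mapsto a^s y$, which (since $\eta_a C$ is an $A$-submodule of $C$) is multiplication by $a^s$ on $\eta_a C$. The main obstacle is really the first paragraph: ensuring that some representative of $K$ is simultaneously concentrated in $[0,s]$ and termwise $a$-torsion-free, since an arbitrary flat resolution of an object in $D^{[0,s]}(A)$ need not be bounded above; this is precisely where the hypothesis that $H^0(K)$ is $a$-torsion-free is used, to guarantee that the truncated term $C^0$ remains $a$-torsion-free.
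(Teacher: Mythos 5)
Your proof is correct and takes essentially the same approach as the paper: both reduce to an $a$-torsion-free representative concentrated in $[0,s]$ via canonical truncation and then exhibit the two maps as the subcomplex inclusion $\eta_a C \hookrightarrow C$ and multiplication by $a^s$, using $i\le s$ to see that $a^s$ lands in $\eta_a C$. The paper begins with an arbitrary $a$-torsion-free representative and applies $\tau^{\leq s}\tau^{\geq 0}$, checking $a$-torsion-freeness of the degree-zero term by a direct element chase, whereas you start from a projective resolution already bounded above by $s$ and read off the same fact from the short exact sequence $0\to H^0(K)\to C^0\to \mathrm{im}(d^0)\to 0$ — a minor stylistic variation of the same argument.
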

\begin{proof}
This is \cite[Lemma 6.9]{BMS1}. We give the proof here.

Firstly, we choose a representative $L$ of $K$ such that $L$ is $a$-torsion free. Then we apply the truncation functor $\tau^{\leq s}$ and $\tau^{\geq 0}$ to $L$, i.e. $\tau^{\leq s}\tau^{\geq 0}L=(\cdots\to 0\to L^0/{{\rm Im}(d^{-1})^{}}\to L^1\to \cdots \to L^{s-1}\to {\rm ker}(d^n)\to 0\cdots)$. Since $K\in D^{[0,s]}(A)$, $\tau^{\leq s}\tau^{\geq 0}L$ is still isomorphic to $K$. Moreover $\tau^{\leq s}\tau^{\geq 0}L$ is still $a$-torsion free. It is easy to see that ${\rm ker}(d^n)$ is $a$-torsion free. For $L^0/{{\rm Im}(d^{-1})^{}}$, suppose $\bar x\in L^0/{{\rm Im}(d^{-1})^{}}$ is killed by $a$, then $ax\in {\rm Im}(d^{-1})$ for any lifting $x\in L^0$ of $\bar x$ and $d^0(ax)=ad^0(x)=0$. As $L^0$ is $a$-torsion free, $d^0(x)$ must be 0, which implies that $x\in {\rm ker}(d^0)$. But this also means that $H^0(L)=H^0(K)$ has $a$-torsion. So $\tau^{\leq s}\tau^{\geq 0}L$ is still $a$-torsion free and we can apply $\eta$-functor to it.

There is a natural inclusion $\eta_a(\tau^{\leq s}\tau^{\geq 0}L)\to \tau^{\leq s}\tau^{\geq 0}L$. We can define another map $\tau^{\leq s}\tau^{\geq 0}L\to \eta_a(\tau^{\leq s}\tau^{\geq 0}L)$ by multiplying by $a^s$. Then the composition of these two maps in either direction is $a^s$.
\end{proof}

We may apply Lemma \ref{bhattkey} to $A=\cal O_{\bar {\frak X}}$, $a=\zeta_p-1$ and $K=\tau^{\leq i}R\nu_*{\hat {\cal O}_{X}^+}$. In fact $\tau^{\leq i}R\nu_*{\hat {\cal O}_{X}^+}$ is in $D^{[0,i]}(\cal O_{\bar{\frak X}})$ with $H^0(\tau^{\leq i}R\nu_*{\hat {\cal O}_{X}^+})$ being $(\zeta_p-1)$-torsion-free. By the same argument in the proof of Lemma \ref{bhattkey} we can always find a representative $L$ of $\tau^{\leq i}R\nu_*{\hat {\cal O}_{X}^+}$ such that $L$ is $(\zeta_p-1)$-torsion-free and $L^s=0$ for any $s\notin [0,i]$. Then there are two natural maps which we denote by $f$ and $g$, 
 \[
 f:L\eta_{\zeta_p-1}(\tau^{\leq i}R\nu_*{\hat {\cal O}_{X}^+})\simeq \tau^{\leq i}\tilde \Omega_{\bar{\frak X}}\to \tau^{\leq i}R\nu_*{\hat {\cal O}_{X}^+}
 \]
 \[
 g:\tau^{\leq i}R\nu_*{\hat {\cal O}_{X}^+}\to \tau^{\leq i}\tilde \Omega_{\bar{\frak X}}
 \]
 whose composition in either direction is $(\zeta_p-1)^i$. The isomorphism $L\eta_{\zeta_p-1}(\tau^{\leq i}R\nu_*{\hat {\cal O}_{X}^+})\simeq \tau^{\leq i}\tilde \Omega_{\bar{\frak X}}$ is due to the commutativity of the $L\eta$ functor and the canonical truncation functor $\tau^{\leq i}$ (see \cite[Corollary 6.5]{BMS1}). Recall that for any $K\in D(\cal O_{\bar{\frak X}})$, $\tau^{\leq i}K:=(\cdots\to K^{i-1}\xrightarrow{d^{i-1}}{\rm ker}(d^{i})\to 0\to \cdots)$.
 
 Passing to sheaf cohomology, we get two natural maps
 \[
 f: \tau^{\leq i}R\Gamma_{\rm zar}(\bar{\frak X}, \tau^{\leq i} \tilde \Omega_{\bar{\frak X}})\to \tau^{\leq i}R\Gamma_{\rm zar}(\bar{\frak X}, \tau^{\leq i} R\nu_*{\hat {\cal O}}_{X}^+)
 \]
 \[
 g:\tau^{\leq i}R\Gamma_{\rm zar}(\bar{\frak X}, \tau^{\leq i} R\nu_*{\hat {\cal O}}_{ X}^+)\to \tau^{\leq i}R\Gamma_{\rm zar}(\bar{\frak X}, \tau ^{\leq i}\tilde \Omega_{\bar{\frak X}})
 \]
 whose composition in either direction is $(\zeta_p-1)^i$. Since there is an isomorphsim 
 \[
 \tau^{\leq i}R\Gamma_{\rm zar}(\bar{\frak X}, \tau^{\leq i} \tilde \Omega_{\bar{\frak X}})\simeq \tau^{\leq i}R\Gamma_{\rm zar}(\bar{\frak X},  \tilde \Omega_{\frak X})
 \]
 which is induced by the natural morphism $\tau^{\leq i} \tilde \Omega_{\bar{\frak X}}\to \tilde \Omega_{\bar{\frak X}}$, we get two maps
 \[
 f:\tau^{\leq i}R\Gamma_{\rm zar}(\bar{\frak X},  \tilde \Omega_{\bar{\frak X}})\to \tau^{\leq i}R\Gamma_{\rm zar}(\bar{\frak X}, R\nu_*{\hat {\cal O}_{X}^+})
 \]
 \[
 g:\tau^{\leq i}R\Gamma_{\rm zar}(\bar{\frak X}, R\nu_*{\hat {\cal O}_{X}^+})\to \tau^{\leq i}R\Gamma_{\rm zar}(\bar{\frak X}, \tilde \Omega_{\bar{\frak X}})
 \]
 whose composition in either direction is $(\zeta_p-1)^i$.
 
Note that there is an isomorphism $R\Gamma_{\rm zar}(\bar{\frak X}, R\nu_*{\hat {\cal O}}_{ X}^+)\simeq R\Gamma_{\rm pro{\acute e}t}(X, \hat{\cal O}_X^+)$. What we want to study at the end is not the pro-\'etale cohomology but the $p$-adic $\rm \acute e$tale cohomology. Actually we get almost what we want. Recall the primitive comparison theorem due to Scholze.
 \begin{theorem}[\hspace{1sp}{\cite[Theorem 8.4]{scholze2013p}}]\label{primitive}
 For any proper smooth adic space $Y$ over $C$, there are natural almost isomorphisms
 \[
 R\Gamma_{\rm {\acute e}t}(Y, \mathbb Z_p)\otimes_{\mathbb Z_p}^{\mathbb L}\cal O_C\simeq R\Gamma_{\rm pro{\acute e}t}(Y, \hat{\cal O}_Y^+)
\]
 and
\[
 R\Gamma_{\rm {\acute e}t}(Y, \mathbb Z_p)\otimes_{\mathbb Z_p}^{\mathbb L}A_{\inf}\simeq R\Gamma_{\rm pro{\acute e}t}(Y, \bb A_{\inf,Y}).
\]
 \end{theorem}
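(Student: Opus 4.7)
The plan is to derive both comparisons from Scholze's perfectoid formalism in \cite{scholze2013p}, via the intermediate pro-\'etale sheaf $\cal O_Y^+/p$. First I would reduce the integral statement to the torsion statement: namely, that for each $n\geq 1$ one has an almost isomorphism
\[
R\Gamma_{\rm \acute et}(Y, \bb Z/p^n\bb Z)\otimes^{\bb L}_{\bb Z/p^n\bb Z}\cal O_C/p^n\simeq R\Gamma_{\rm pro\acute et}(Y, \cal O_Y^+/p^n).
\]
The first displayed isomorphism then follows by taking $R\varprojlim_n$, using that each $H^i_{\rm \acute et}(Y, \bb Z/p^n)$ is finite (which must itself be proved first, by a Noether-normalisation argument reducing to a finite \'etale cover of a perfectoid torus, where cohomology can be computed explicitly). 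A d\'evissage through the short exact sequences $0\to \cal O_Y^+/p\to \cal O_Y^+/p^n\to \cal O_Y^+/p^{n-1}\to 0$ and the five-lemma then reduces everything to the case $n=1$.

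For the mod-$p$ statement, the key local input is that on any affinoid perfectoid $U\in Y_{\rm pro\acute et}$, the higher cohomology $H^i(U, \cal O_Y^+/p)$ vanishes almost for $i>0$, and $H^0(U, \cal O_Y^+/p)$ is almost $\cal O_Y^+(U)/p$. This is proved by tilting $U$ to a perfectoid space in characteristic $p$, where the analogous vanishing is a direct Frobenius/perfectness argument, and then transferring back via the almost purity theorem. Since affinoid perfectoids form a basis of $Y_{\rm pro\acute et}$ by a theorem of Scholze, a \v{C}ech-to-derived spectral sequence reduces the global comparison to a computation on perfectoid covers, which is in turn identified almost with the $\bb F_p$-coefficient \'etale computation via the Artin--Schreier sequence $0\to \bb F_p\to \cal O_{Y^\flat}^+/p^\flat\xrightarrow{F-1}\cal O_{Y^\flat}^+/p^\flat\to 0$ on the tilt (one checks that $F-1$ is almost surjective on affinoid perfectoids in characteristic $p$, so the higher cohomology is killed and $H^0$ recovers $\bb F_p$).

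For the $A_{\inf}$ version I would combine the above with the description $\bb A_{\inf,Y}\simeq W(\hat{\cal O}_{Y^\flat}^+)$ (derived $p$-adically completed). Applying Witt-vector functors to the tilted mod-$p$ statement level by level and then derived $p$-adically completing yields the second comparison, after invoking the tilting equivalence of \'etale sites to identify $H^*_{\rm \acute et}(Y, \bb F_p)$ with $H^*_{\rm \acute et}(Y^\flat, \bb F_p)$ and $W(\cal O_C^\flat)$ with $A_{\inf}$.

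The main obstacle is the almost vanishing of $H^{>0}(U, \cal O_Y^+/p)$ on affinoid perfectoid $U$: this is the technical heart of \cite{scholze2013p} and rests on the full strength of the tilting equivalence, the almost purity theorem, and a careful Frobenius analysis in characteristic $p$. Everything else (the d\'evissage, the derived inverse limit, the passage to Witt vectors) is essentially formal given this input.
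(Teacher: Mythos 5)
The paper does not prove this statement: it is quoted verbatim as \cite[Theorem 8.4]{scholze2013p} and used throughout as a black box, so there is no internal argument of the paper to compare yours against. That said, your sketch is a recognisable reconstruction of Scholze's actual proof strategy --- the almost acyclicity of $\cal O_Y^+/p$ on affinoid perfectoids via tilting and almost purity, the Artin--Schreier sequence $0\to\bb F_p\to\cal O_{Y^\flat}^+/p^\flat\xrightarrow{F-1}\cal O_{Y^\flat}^+/p^\flat\to 0$ on the tilt, the d\'evissage from $\bb Z/p^n$ to $\bb F_p$, the derived inverse limit to reach $\bb Z_p$, and the Witt-vector lift for $\bb A_{\inf,Y}$ --- and these are indeed the load-bearing ingredients.

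One genuine logical slip worth flagging: you assert that finiteness of $H^i_{\rm\acute et}(Y,\bb Z/p^n)$ ``must itself be proved first,'' by a Noether-normalisation argument reducing to covers of a perfectoid torus. In Scholze's proof there is no such independent prior input; finiteness and the mod-$p$ almost comparison are established \emph{simultaneously}. One runs the spectral sequence for $R\nu_*(\cal O_Y^+/p)$ together with the local almost computation, compares against a complex whose cohomology is finite by Kiehl's finiteness theorem for coherent cohomology of proper rigid spaces, and extracts finiteness of $H^i_{\rm\acute et}(Y,\bb F_p)$ as an output, not a hypothesis. Your plan as written would go in a circle: the d\'evissage and the $R\varprojlim$ step need the finiteness, but your proposed proof of finiteness needs the comparison machinery you are trying to set up. Rearranging so that finiteness and the almost comparison are proved in the same induction repairs this; the rest of your outline is sound modulo the considerable technical burden (almost purity, the tilting equivalence of \'etale sites) that it correctly identifies as the heart of the matter.
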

 
Then by passing to the world of almost mathematics, we get two natural maps in $D(\cal O_C)^a$:
 \[
 f^a:(\tau^{\leq i}R\Gamma_{\rm zar}(\bar{\frak X},  \tilde \Omega_{\bar{\frak X}}))^a\to (\tau^{\leq i}R\Gamma_{\rm zar}(\bar{\frak X}, R\nu_*{\hat {\cal O}}_{ X}^+))^a\simeq (\tau^{\leq i}R\Gamma_{\rm {\acute e}t}(X, \mathbb Z_p)\otimes_{\mathbb Z_p}\cal O_C)^a
 \]
 \[
 g^a:(\tau^{\leq i}R\Gamma_{\rm zar}(\bar{\frak X}, R\nu_*{\hat {\cal O}}_{ X}^+))^a\simeq (\tau^{\leq i}R\Gamma_{\rm {\acute e}t}(X, \mathbb Z_p)\otimes_{\mathbb Z_p}\cal O_C)^a \to (\tau^{\leq i}R\Gamma_{\rm zar}(\bar{\frak X}, \tau ^{\leq i}\tilde \Omega_{\bar{\frak X}}))^a
 \]

 \begin{lemma}\label{perfect}
 The complex $\tau^{\leq i}R\Gamma_{\rm HT}(\bar{\frak X})=\tau^{\leq i}R\Gamma_{\rm zar}(\bar{\frak X},  \tilde \Omega_{\bar{\frak X}})$ (resp. $\tau^{\leq i}R\Gamma_{A_{\inf}}(\bar{\frak X})$) is a perfect complex of $\cal O_C$-modules (resp. $A_{\inf}$-modules).
 \end{lemma}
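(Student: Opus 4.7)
The plan is to deduce perfectness of the truncations from the already-known perfectness of the full complexes (Theorem~\ref{Main}), via an induction that reduces the question to perfectness of the individual cohomology modules. By Theorem~\ref{Main}, $R\Gamma_{A_{\inf}}(\bar{\frak X})$ is a perfect $A_{\inf}$-complex, and applying derived base change along $\tilde\theta: A_{\inf}\to \cal O_C$ to the comparison $R\Gamma_{\rm HT}(\bar{\frak X}) \simeq R\Gamma_{A_{\inf}}(\bar{\frak X}) \otimes^{\bb L}_{A_{\inf}, \tilde\theta} \cal O_C$ shows that $R\Gamma_{\rm HT}(\bar{\frak X})$ is a perfect $\cal O_C$-complex. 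Both complexes are cohomologically bounded, in degrees $[0,b]$ for some $b$.

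Next I would verify that each individual cohomology module is perfect over its base ring. Over $\cal O_C$, Corollary~\ref{structure} writes any finitely presented module as a finite direct sum of cyclic modules $\cal O_C/\pi$, and the short exact sequence $0\to \cal O_C\xrightarrow{\pi}\cal O_C\to \cal O_C/\pi\to 0$ exhibits each summand as perfect; hence every $H^j_{\rm HT}(\bar{\frak X})$ is a perfect $\cal O_C$-module. Over $A_{\inf}$, each $H^j_{A_{\inf}}(\bar{\frak X})$ is a Breuil--Kisin--Fargues module, and by the classification of BKF modules given in \cite{BMS1} every such module admits a bounded resolution by finite free $A_{\inf}$-modules and is therefore perfect.

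Finally, I would run a descending induction on $i$ using the canonical distinguished triangle
\[
\tau^{\leq i}K \To \tau^{\leq i+1}K \To H^{i+1}(K)[-i-1]\To,
\]
where $K$ is either $R\Gamma_{A_{\inf}}(\bar{\frak X})$ or $R\Gamma_{\rm HT}(\bar{\frak X})$. The base case $\tau^{\leq b}K\simeq K$ is perfect by the first paragraph. At each inductive step, $\tau^{\leq i}K$ is the fiber of a map between two complexes that are perfect (namely $\tau^{\leq i+1}K$ by the inductive hypothesis and $H^{i+1}(K)[-i-1]$ by the previous paragraph); since perfect complexes form a thick subcategory of the derived category, $\tau^{\leq i}K$ is perfect as well. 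The only nontrivial input is the perfectness of individual BKF modules as $A_{\inf}$-modules, which is the main potential obstacle since $A_{\inf}$ is a $2$-dimensional non-Noetherian local ring; this is supplied by the structure theory of \cite{BMS1}. The $\cal O_C$-analogue is, by contrast, immediate from the valuation-ring structure.
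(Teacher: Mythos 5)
Your overall strategy is sound and parallels the paper's: reduce perfectness of the truncation to perfectness of the individual cohomology groups, then close the argument via the truncation triangle (the paper instead cites a Stacks Project lemma, which is essentially the same induction you spelled out). The $A_{\inf}$ half also matches: the paper likewise invokes the perfectness of Breuil--Kisin--Fargues modules from \cite{BMS1}.

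There is, however, a gap in the $\cal O_C$ half. You write that each $H^j_{\rm HT}(\bar{\frak X})$ is finitely presented and then apply Corollary~\ref{structure}, but finite presentation of these cohomology groups is not automatic over the non-Noetherian ring $\cal O_C$: in general the cohomology modules of a perfect complex over a non-Noetherian ring need not be finitely presented. This step requires an argument. One option, closest in spirit to what you wrote, is to note that $\cal O_C$ is a valuation ring, hence coherent (every finitely generated ideal is principal, and $\cal O_C$ is a domain), and over a coherent ring the cohomology of a perfect complex is finitely presented. The paper instead routes through the Breuil--Kisin cohomology: since $R\Gamma_{\frak S}(\frak X)$ is perfect over $\frak S$, the base change $R\Gamma_{\rm HT}(\frak X)$ is a perfect $\cal O_K$-complex, so each $H^j_{\rm HT}(\frak X)$ is finitely generated (hence finitely presented) over the Noetherian local ring $\cal O_K$; a further flat base change along $\cal O_K\to\cal O_C$ then gives finite presentation of $H^j_{\rm HT}(\bar{\frak X})$ over $\cal O_C$. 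Either justification suffices, but some such justification must appear; as written, your proposal takes finite presentation of $H^j_{\rm HT}(\bar{\frak X})$ for granted.
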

 \begin{proof}
Recall that we have 
\[
R\Gamma_{\rm HT}(\bar {\frak X})\simeq R\Gamma_{\frak S}(\frak X)\otimes_{\frak S, \alpha}^{\bb L}A_{\inf}\otimes_{A_{\inf}}^{\bb L}A_{\inf}/{\tilde \xi}\simeq R\Gamma_{\frak S}(\frak X)\otimes_{\frak S,\beta}^{\bb L}\cal O_K\otimes_{\cal O_K}^{\bb L}\cal O_C.
\]
Since $R\Gamma_{\frak S}(\frak X)$ is a perfect complex of $\frak S$-modules and $R\Gamma_{\rm HT}(\frak X):=R\Gamma_{\frak S}(\frak X)\otimes_{\frak S,\beta}\cal O_K$, the Hodge--Tate cohomology $R\Gamma_{\rm HT}(\frak X)$ of $\frak X$ is a perfect complex of $\cal O_K$-modules by \cite[\href{https://stacks.math.columbia.edu/tag/066W}{Lemma 066W}]{stacks-project}. Moreover as $\cal O_K$ is a Notherian local ring, the cohomology groups $H^i_{\rm HT}(\frak X)$ are finitely generated $\cal O_K$-modules and so finitely presented $
\cal O_K$-modules. So we see that every Hodge--Tate cohomology group $H^i_{\rm HT}(\bar{\frak X})$ is also finitely presented over $\cal O_C$. By Lemma \ref{structure}, this means $H^i_{\rm HT}(\bar{\frak X})\cong\bigoplus_{j=1}^n\cal O_C/\pi_j$ for some $\pi_j\in \cal O_C$. So $H^i_{\rm HT}(\bar{\frak X})$ is perfect. The lemma hence follows from 
 \cite[\href{https://stacks.math.columbia.edu/tag/066U}{Lemma 066U}]{stacks-project}. 
 For $\tau^{\leq i}R\Gamma_{A_{\inf}}(\bar{\frak X})$, this follows from \cite[Lemma 4.9]{BMS1} stating that each $H^i_{A_{\inf}}(\bar{\frak X})$ is perfect.
 \end{proof}

 As $\tau^{\leq i}R\Gamma_{\rm{\acute e}t}(X, \mathbb Z_p)$ and $\tau^{\leq i}R\Gamma_{\rm zar}(\bar{\frak X},  \tilde \Omega_{\bar{\frak X}})$ are perfect complexes, Lemma \ref{almost1} shows that if $C$ is spherically complete, we then have $(\tau^{\leq i}R\Gamma(\bar{\frak X},  \tilde \Omega_{\bar{\frak X}}))^a_*\simeq \tau^{\leq i}R\Gamma(\frak X,  \tilde \Omega_{\frak X})$ and $(\tau^{\leq i}R\Gamma_{\rm{\acute e}t}(X, \mathbb Z_p)\otimes_{\mathbb Z_p}\cal O_C)^a_*\simeq \tau^{\leq i}R\Gamma_{\rm{\acute e}t}(X, \mathbb Z_p)\otimes_{\mathbb Z_p}\cal O_C$.
 By moving back to the real world, we have two maps
 \[
 (f^a)_*:\tau^{\leq i}R\Gamma_{\rm zar}(\bar{\frak X},  \tilde \Omega_{\bar{\frak X}}) \to  \tau^{\leq i}R\Gamma_{\rm{\acute e}t}(X, \mathbb Z_p)\otimes_{\mathbb Z_p}\cal O_C
 \]
 \[
 (g^a)_*: \tau^{\leq i}R\Gamma_{\rm{\acute e}t}(X, \mathbb Z_p)\otimes_{\mathbb Z_p}\cal O_C \to \tau^{\leq i}R\Gamma_{\rm zar}(\bar{\frak X}, \tilde \Omega_{\bar{\frak X}})
 \]
 whose composition in either direction is $(\zeta_p-1)^i$. These two maps induce maps between cohomology groups for any $n\leq i$.
 \[
 f: H^n(\bar{\frak X}, \tilde \Omega_{\bar{\frak X}})\to H^n_{\rm{\acute e}t}(X, \mathbb Z_p)\otimes_{\mathbb Z_p}\cal O_C
 \]
 \[
 g: H^n_{\rm{\acute e}t}(X, \mathbb Z_p)\otimes_{\mathbb Z_p}\cal O_C\to H^n(\bar{\frak X}, \tilde \Omega_{\bar{\frak X}})
 \]
 whose composition in either direction is  $(\zeta_p-1)^i$. 
 \\
 
Now we come to the following key theorem:
 \begin{theorem}\label{HT}
 Let $\frak X$ be a proper smooth formal scheme over $\cal O_K$, where $\cal O_K$ is the ring of integers in a complete discretely valued non-archimedean extension $K$ of $\bb Q_p$ with perfect residue field $k$ and ramification degree $e$. Let $\cal O_C$ be the ring of integers in a complete and algebraically closed extension $C$ of $K$ and $X$ be the adic generic fibre of $\bar{\frak X}:=\frak X\times_{{\rm Spf}(\cal O_K)}{\rm Spf}(\cal O_C)$. Assuming $ie<p-1$, there is an isomorphism of $\cal O_C$-modules between the Hodge--Tate cohomology group and the $p$-adic $\acute e$tale cohomology group
 \[
H^i_{\rm HT}(\bar{\frak X}):=H^i(\bar{\frak X}, \tilde \Omega_{\frak X})\cong H^i_{\rm{\acute e}t}(X, \mathbb Z_p)\otimes_{\mathbb Z_p}\cal O_C.
\]
\end{theorem}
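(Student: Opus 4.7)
The plan is to use the two natural $\cal O_C$-linear maps between $M:=H^i_{\rm HT}(\bar{\frak X})$ and $N:=H^i_{\rm \acute et}(X,\bb Z_p)\otimes_{\bb Z_p}\cal O_C$ constructed just above (via the $L\eta$-functor, the primitive comparison theorem, and the spherical completeness argument of Lemma \ref{almost1}), whose composition in either direction is multiplication by $\alpha:=(\zeta_p-1)^i$, an element of valuation $i/(p-1)$. Once both modules are described in terms of a common element with valuation strictly larger than $v(\alpha)$, Lemma \ref{torsionlemma} will produce an isomorphism of $\cal O_C$-modules.

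I first analyze the structure of $M$. Since $R\Gamma_{\frak S}(\frak X)$ is a perfect complex of $\frak S$-modules (Theorem \ref{theorem1.2}), the base change $R\Gamma_{\rm HT}(\frak X)=R\Gamma_{\frak S}(\frak X)\otimes^{\bb L}_{\frak S,\beta}\cal O_K$ is perfect over the Noetherian local ring $\cal O_K$, so each $H^i_{\rm HT}(\frak X)$ is a finitely generated $\cal O_K$-module and is therefore isomorphic to $\cal O_K^r\oplus\bigoplus_j\cal O_K/\pi^{a_j}$ with $a_j\geq 1$. As $\cal O_C$ is torsion-free, and hence flat, over the DVR $\cal O_K$, I obtain $M\cong H^i_{\rm HT}(\frak X)\otimes_{\cal O_K}\cal O_C\cong\cal O_C^r\oplus\bigoplus_j\cal O_C/\pi^{a_j}$, where now $\pi$ is viewed in $\cal O_C$ with valuation $1/e$. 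Similarly, the $\bb Z_p$-module $H^i_{\rm \acute et}(X,\bb Z_p)$ is finitely generated (this can be extracted from Theorem \ref{Main} and the perfectness of $R\Gamma_{A_{\inf}}(\bar{\frak X})$), whence $N\cong\cal O_C^s\oplus\bigoplus_j\cal O_C/p^{b_j}$ with $b_j\geq 1$.

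Since $p$ and $\pi^e$ generate the same ideal of $\cal O_C$, the torsion summands of $N$ may also be written $\cal O_C/\pi^{eb_j}$, so both $M$ and $N$ are of the form required by Lemma \ref{torsionlemma} with common base $\pi$ and all torsion exponents $\geq 1$. The hypothesis $ie<p-1$ translates exactly into $v(\pi)=1/e>i/(p-1)=v(\alpha)$, so Lemma \ref{torsionlemma} applies and delivers the desired isomorphism $M\cong N$. The main obstacle I anticipate is the step from the almost-derived category back to actual $\cal O_C$-linear maps between cohomology groups: this uses Lemma \ref{almost1} and hence requires $C$ to be spherically complete. For a general $C$, one has to embed $C$ into a spherically complete algebraically closed complete extension $C'$, prove the isomorphism there, and descend to $\cal O_C$ along the faithfully flat inclusion $\cal O_C\hookrightarrow \cal O_{C'}$, using that both sides commute with the base change $\cal O_C\to \cal O_{C'}$.
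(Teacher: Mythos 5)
Your argument is correct and follows essentially the same route as the paper: reduce to $C$ spherically complete, use the two maps $f$ and $g$ with composition $(\zeta_p-1)^i$ coming from the $L\eta$-functor and the primitive comparison theorem, observe that $H^i_{\rm HT}(\bar{\frak X})$ (via perfectness of $R\Gamma_{\frak S}(\frak X)$ and base change from the DVR $\cal O_K$) and $H^i_{\rm \acute et}(X,\bb Z_p)\otimes_{\bb Z_p}\cal O_C$ are both of the form $\cal O_C^r\oplus\bigoplus\cal O_C/\pi^{a_j}$ with $a_j\geq 1$, and conclude by Lemma \ref{torsionlemma} using $v(\pi)=1/e>i/(p-1)=v((\zeta_p-1)^i)$. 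The paper extracts the module decomposition slightly differently (via flat base change along $\alpha:\frak S\to A_{\inf}$ in the proof text and via Lemma \ref{perfect}), and puts the spherical-completion reduction at the start rather than the end, but these are purely expository differences.
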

\begin{proof}
Note that replacing $C$ by its spherical completion $C'$ will not make any difference to this theorem. The spherical completion always exists (cf. \cite[Chapter 3]{robert2013course}), which is still complete and algebraically closed. On one hand, $p$-adic $\rm \acute e$tale cohomology is insensitive to such extensions in the rigid-analytic setting (cf. \cite[Section 0.3.2]{huber2013etale}). On the other hand, by the base change of prismatic cohomology (cf. \cite[Theorem 1.8]{bhatt2019prisms}), we have $H^i_{\rm HT}(\overline{\frak X}\otimes_{\cal O_C}\cal O_{C'})\cong H^i_{\rm HT}(\overline{\frak X})\otimes_{\cal O_C}\cal O_{C'}$ and the natural injection $\cal O_C\to \cal O_{C'}$ is flat.

So now we assume $C$ is spherically complete. Using the flat base change along $\alpha:\frak S\to A_{\inf}$ from the Breuil--Kisin cohomology to the $A_{\inf}$-cohomology, we see that $H^i(\bar{\frak X}, \tilde \Omega_{\bar{\frak X}})$ has a decomposition as $\cal O_C^n\oplus(\bigoplus_{j=1}^{m}\cal O_C/{\pi^{m_j}})$.
By requiring $ie<p-1$, we have $v((\zeta_p-1)^i)<v(\pi)$ in $\cal O_C$ as $v((\zeta_p-1)^{p-1})=v(p)$ and $v(p)=v(\pi^e)$. Now the theorem follows from Lemma \ref{torsionlemma} and the existence of maps 
 \[
 f: H^i(\bar{\frak X}, \tilde \Omega_{\bar{\frak X}})\to H^i_{\rm{\acute e}t}(X, \mathbb Z_p)\otimes_{\mathbb Z_p}\cal O_C
 \]
 \[
 g: H^i_{\rm{\acute e}t}(X, \mathbb Z_p)\otimes_{\mathbb Z_p}\cal O_C\to H^i(\bar{\frak X}, \tilde \Omega_{\bar{\frak X}}).
 \]
\end{proof}

\section{The unramified case: comparison theorem}\label{D}
In this section, let $\cal O_K=W(k)$, i.e. the ramification degree $e=1$. We will study the relation between the $p$-adic \'etale cohomology group $H^i_{\rm {\acute e}t}(X, \bb Z_p)$ and the crystalline cohomology group $H^i_{\rm crys}(\frak X_k/W(k))$ in the unramifed case. Note that in the unramified case, the crystalline cohomology $R\Gamma_{\rm crys}(\frak X_k/W(k))$ is canonically isomorphic to the de Rham cohomology $R\Gamma_{\rm dR}(\frak X/\cal O_K)$.

In order to prove the integral comparison theorem, we first relate Hodge--Tate cohomology to Hodge cohomology. And then we can use Theorem \ref{HT} to get a link between Hodge cohomology and $p$-adic \'etale cohomology. The last step is to study the Hodge-to-de Rham spectral sequence and we can prove the converse to \cite[Theorem 14.5]{BMS1}, which results in the final comparison theorem.

\subsection{Decomposition of Hodge--Tate cohomology groups}
In this subsection, we explain how to relate Hodge--Tate cohomology to Hodge cohomology. In fact, we can show that the complex of sheaves $\tau^{\leq p-1}\tilde \Omega_{\bar{\frak X}}$ is formal in the unramified case.
\begin{theorem}\label{dht}
The complex of sheaves $\tau^{\leq p-1}\tilde \Omega_{\bar{\frak X}}$ is formal, i.e. there is an isomorphism 
\[
\gamma: \bigoplus_{i=0}^{p-1}\Omega_{\bar{\frak X}}^{i{}}\{-i\}[-i]\simeq \tau^{\leq p-1}\tilde \Omega_{\bar{\frak X}},
\]
where $\Omega^{i}_{\bar{\frak X}}:=\projlim \Omega^i_{(\bar{\frak X}/p^n)/(\cal O_C/p^n)}$ is the $\cal O_{\bar{\frak X}}$-module of continuous differentials and $\Omega_{\bar{\frak X}}^{i{}}\{-i\}$ is the Breuil--Kisin twist of $\Omega_{\bar{\frak X}}^{i{}}$.
\end{theorem}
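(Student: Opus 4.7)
The plan is to adapt the classical Deligne--Illusie decomposition of the de Rham complex in characteristic $p$. Three ingredients will carry the argument: (i) the Hodge--Tate comparison of \cite{BMS1}, which identifies $\cal H^i(\tilde\Omega_{\bar{\frak X}}) \cong \Omega^i_{\bar{\frak X}}\{-i\}$; (ii) the commutative algebra structure on $\tilde\Omega_{\bar{\frak X}}$ in $D(\cal O_{\bar{\frak X}})$ coming from the lax symmetric monoidality of $R\nu_*$ and $L\eta$ (Remark \ref{commutative algebra structure}); and (iii) the invertibility of $i!$ in $\cal O_C$ for each $i<p$.

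The heart of the proof is the construction of a splitting in cohomological degree one. The canonical truncation yields a distinguished triangle
\[
\cal O_{\bar{\frak X}} \to \tau^{\leq 1}\tilde\Omega_{\bar{\frak X}} \to \Omega^1_{\bar{\frak X}}\{-1\}[-1] \xrightarrow{\delta} \cal O_{\bar{\frak X}}[1],
\]
and the first task is to show that $\delta=0$, equivalently to build a section $\phi_1:\Omega^1_{\bar{\frak X}}\{-1\}[-1]\to\tau^{\leq 1}\tilde\Omega_{\bar{\frak X}}$ of the right-hand projection. This is exactly where the hypothesis $e=1$ enters: locally on $\frak X$ we expect to use a lift of Frobenius (available because $\cal O_K=W(k)$ carries its canonical Witt Frobenius, in parallel with Deligne--Illusie) to produce $\phi_1$, then glue the local sections by the usual cocycle-obstruction argument.

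With $\phi_1$ in hand, the commutative ring structure on $\tilde\Omega_{\bar{\frak X}}$ produces cup-product maps
\[
\phi_1^{\cup i}:(\Omega^1_{\bar{\frak X}}\{-1\})^{\otimes i}[-i]\longrightarrow \tilde\Omega_{\bar{\frak X}},
\]
and the invertibility of $i!$ for $i<p$ allows antisymmetrization, factoring each $\phi_1^{\cup i}$ through the exterior power $\Lambda^i(\Omega^1_{\bar{\frak X}}\{-1\})[-i]\cong\Omega^i_{\bar{\frak X}}\{-i\}[-i]$ to yield $\phi_i:\Omega^i_{\bar{\frak X}}\{-i\}[-i]\to\tau^{\leq p-1}\tilde\Omega_{\bar{\frak X}}$ for $1\le i\le p-1$. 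Taking $\phi_0$ to be the unit map $\cal O_{\bar{\frak X}}\to\tilde\Omega_{\bar{\frak X}}$, we set $\gamma:=\bigoplus_{i=0}^{p-1}\phi_i$.

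Finally, we verify that $\gamma$ is a quasi-isomorphism by checking the induced map on each cohomology sheaf in the range $0\le i\le p-1$. For $\cal H^0$ and $\cal H^1$ this is built into the construction; for higher $i$ it follows from the multiplicativity of the Hodge--Tate comparison, which makes $\bigoplus_i\cal H^i(\tilde\Omega_{\bar{\frak X}})$ the exterior algebra on $\Omega^1_{\bar{\frak X}}\{-1\}$, so that $\phi_i$ restricts to the canonical identification on $\cal H^i$. The principal obstacle is the first step: producing the degree-one splitting $\phi_1$ globally from only the unramified hypothesis, in direct analogy with Deligne--Illusie's use of a Frobenius lift modulo $p^2$.
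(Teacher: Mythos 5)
Your proposal correctly identifies the three architectural pillars — the Hodge--Tate comparison, the commutative ring structure on $\tilde\Omega_{\bar{\frak X}}$, and the invertibility of $i!$ for $i<p$ — and the second half (antisymmetrization, cup products, checking on cohomology sheaves via multiplicativity of the Hodge--Tate isomorphism) matches the paper's argument step for step. The gap is in the part you yourself flag as ``the principal obstacle'': the degree-one splitting $\phi_1$.

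You propose to build $\phi_1$ by choosing local Frobenius lifts and gluing via a cocycle argument, in analogy with Deligne--Illusie. This analogy is not quite the right one, and as written the step would not go through. In Deligne--Illusie the characteristic-$p$ de Rham complex is split using a lift of the \emph{scheme} to $W_2(k)$ (the Frobenius lifts are only an intermediate device, available locally once the scheme lift is fixed). The corresponding input here is a lift of $\bar{\frak X}$ to $A_{\inf}/\tilde\xi^2$: by \cite[Remark 8.4]{BMS1}, the map $\cal O_{\bar{\frak X}}\to\tau^{\leq 1}\tilde\Omega_{\bar{\frak X}}$ splits if and only if such a lift exists. Your sketch neither constructs this lift nor replaces it; invoking ``the canonical Witt Frobenius on $W(k)$'' does not by itself produce the lift, and without it the cocycle obstruction has no reason to vanish. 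You also do not explain why $e=1$ is what makes the obstruction die, beyond asserting that it does.

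The paper sidesteps the lift entirely. It uses $\tau^{\leq 1}\tilde\Omega_{\bar{\frak X}}\simeq\mathbb L_{\bar{\frak X}/\mathbb Z_p}\{-1\}[-1]$ from \cite[Proposition 8.15]{BMS1}, notes that $\hat{\mathbb L_{W(k)/\mathbb Z_p}}$ vanishes (this is precisely where $e=1$ enters — for general $\cal O_K$ the completed cotangent complex $\bb L_{\cal O_K/W(k)}$ does not vanish), hence $\mathbb L_{\bar{\frak X}/\mathbb Z_p}\simeq\mathbb L_{\bar{\frak X}/W(k)}$, and then decomposes $\mathbb L_{\bar{\frak X}/W(k)}$ by K\"unneth for $\bar{\frak X}=\frak X\times_{W(k)}\cal O_C$ into $\cal O_{\bar{\frak X}}\{1\}[1]\oplus\Omega^1_{\bar{\frak X}}$. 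This is a complete, global, functorial argument requiring no local lifts or gluing. Your approach could in principle be rescued by arguing that the lift of $\bar{\frak X}$ to $A_{\inf}/\tilde\xi^2$ exists when $\frak X$ is defined over $W(k)$ (base change along $W(k)\to A_{\inf}$), but you would still need to justify that a lift of the space gives the splitting; as it stands, this step is the missing idea.
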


\begin{proof}
We proceed by first showing that $\tau ^{\leq 1}\tilde \Omega_{\bar{\frak X}}$ is formal and then constructing the general isomorphism in the statement. In this proof, $\bb L_{\bar{\frak X}/\bb Z_p}$ and $\bb L_{\bar{\frak X}/W(k)}$ always mean the derived $p$-adic complete cotangent complex.




By \cite[Proposition 8.15]{BMS1}, there is an isomorphism $\tau ^{\leq 1}\tilde \Omega_{\bar{\frak X}}\simeq \mathbb L_{\bar{\frak X}/{\mathbb Z_p}}\{-1\}[-1]$. Considering the sequence of sheaves $\bb Z_p\to W(k)\to \cal O_{\bar{\frak X}}$, there is an associated distinguished triangle
\[
\hat{\bb L_{W(k)/\bb Z_p}}\otimes^{\bb L}_{W(k)}\cal O_{\bar{\frak X}}\to \bb L_{\bar {\frak X}/\bb Z_p}\to \bb L_{\bar{\frak X}/W(k)}.
\]
By derived Nakayama lemma, we know that $\hat{\bb L_{W(k)/\bb Z_p}}$ vanishes as $\bb L_{k/\bb F_p}$ vanishes. Therefore, we have
\[
{\mathbb L_{\bar{\frak X}/{\mathbb Z_p}}}\{-1\}[-1]\simeq {\bb L_{\bar{\frak X}/W(k)}}\{-1\}[-1].
\]

For any affine open ${\rm Spf}(R)\subset \frak X$, write $\bar R$ for the base change $R{\otimes}_{W(k)}\cal O_C$ and $\hat{\overline R}$ for its $p$-adic completion. Then we have $\hat{\bb L_{\hat{\overline R}/W(k)}}\simeq \hat{\bb L_{\bar R/W(k)}}$.

By the K$\rm \ddot u$nneth property of cotangent complex (cf. \cite{illusie2006complexe}), we get
\[
{\mathbb L_{\bar R/W(k)}}\simeq({\bb L_{\cal O_C/W(k)}}\otimes^{\bb L}_{W(k)}R)\oplus({\bb L_{R/W(k)}}\otimes^{\bb L}_{W(k)}\cal O_C).
\]

Applying the derived $p$-adic completion functor which is exact, we see
\[
\hat{\mathbb L_{R\otimes_{W(k)}\cal O_C/W(k)}}\simeq (\hat{{\bb L_{\cal O_C/W(k)}}\otimes^{\bb L}_{W(k)}R})\oplus(\hat{{\bb L_{R/W(k)}}\otimes^{\bb L}_{W(k)}\cal O_C}).
\]
On one hand, we have
\[
\hat{{\bb L_{\cal O_C/W(k)}}\otimes_{W(k)}R}\simeq \hat{\hat{{\bb L_{\cal O_C/W(k)}}}\otimes_{W(k)}R}\simeq \hat{{\bar R}\{1\}[1]}.
\]
As $\hat{\overline R}$ coincides with the derived $p$-adic completion of ${\bar R}$ (cf. \cite[\href{https://stacks.math.columbia.edu/tag/0BKG}{Example 0BKG}]{stacks-project}), we have $\hat{{\bb L_{\cal O_C/W(k)}}\otimes_{W(k)}R}\simeq \hat{\overline R}\{1\}[1]$. 
On the other hand, by the base change property of cotangent complex (cf. \cite{illusie2006complexe}), we get ${\bb L_{R/W(k)}}\otimes_{W(k)}\cal O_C\simeq \bb L_{\bar R/\cal O_C}$. The derived $p$-adic completion $\hat{\bb L_{\bar{R}/\cal O_C}}$ is isomorphic to $\projlim_n \Omega^1_{(\bar{R}/p^n)/(\cal O_C/p^n)}$. In fact as $\bar R/{p^n}$ is a smooth $\cal O_C/{p^n}$-algebra for all $n$, we have
\[
\hat{\bb L_{\bar{R}/\cal O_C}}\simeq {\rm Rlim}(\bb L_{\bar{R}/\cal O_C}\otimes^{\bb L}_{\bb Z_p}\bb Z_p/p^n)\simeq {\rm Rlim}(\bb L_{(\bar{R}/p^n)/(\cal O_C/p^n)})\simeq \projlim_n \Omega^1_{(\bar{R}/p^n)/(\cal O_C/p^n)}.
\]
So finally there is an isomorphism
\[
{\mathbb L_{\bar{\frak X}/W(k)}}\simeq \cal O_{\bar{\frak X}}\{1\}[1]\oplus \Omega^{1}_{\bar{\frak X}}
\]
and we get a decomposition $\tau ^{\leq 1}\tilde \Omega_{\bar{\frak X}}\simeq \cal O_{\bar{\frak X}}\oplus \Omega^{1}_{\bar{\frak X}}\{-1\}[-1]$. In particular, we have a map $\gamma_1: \Omega^{1}_{\bar{\frak X}}\{-1\}[-1]\to \tilde \Omega_{\bar{\frak X}}$ which gives the Hodge--Tate isomorphism $C^{-1}: \Omega_{\bar{\frak X}}^{1{}}\{-1\}\to \mathcal H^1(\tilde \Omega_{\bar{\frak X}})$ (cf. \cite[Theorem 8.3]{BMS1}).

Now we consider the map for any $i\leq p-1$ given by
\[
{(\Omega^{1}_{\bar{\frak X}})}^{\otimes i}\to \Omega^{i\,{}}_{\bar{\frak X}}, \ \ \  \omega_1\otimes \cdots \otimes \omega_i\mapsto \omega_1\wedge\cdots \wedge \omega_i
\]
It has an anti-symmetrization section $a$ as shown in \cite{deligne1987relevements}, given by
\[
a(\omega_1\wedge\cdots \wedge \omega_i)=(1/i!)\sum_{s\in Sym_i}{\rm sgn}(s)\omega_{s(1)}\otimes \cdots \otimes \omega_{s(i)}
\]
Then we define $\gamma_i$ as the composite
\[
\Omega^{i}_{\bar{\frak X}}\{-i\}[-i]\xrightarrow{a} {(\Omega^{1}_{\bar{\frak X}}\{-1\})}^{\otimes i}[-i]\simeq {(\Omega^{1}_{\bar{\frak X}}\{-1\}[-1])}^{\otimes^{\mathbb L} i}\xrightarrow{\gamma_1^{\otimes^{\mathbb L}i} }(\tilde \Omega_{\bar{\frak X}})^{\otimes^{\mathbb L}i} \xrightarrow{\rm multi}\tilde \Omega_{\bar{\frak X}}
\]
Note that $\tilde \Omega_{\bar{\frak X}}$ is a commutative $\cal O_{\overline{\frak X}}$-algebra object in $D(\cal O_{\overline{\frak X}})$ (see Remark \ref{commutative algebra structure}). By applying $\mathcal H^i$, we have
{\footnotesize{\[
{\Omega^{i}_{\bar{\frak X}}\{-i\}\xrightarrow{a}\mathcal H^i({(\Omega^{1}_{\bar{\frak X}}\{-1\}[-1])}^{\otimes^{\mathbb L}i}) \cong   (\mathcal H^1(\Omega^{1}_{\bar{\frak X}}\{-1\}[-1]))^{\otimes i}
\xrightarrow {\gamma_1^{\otimes^{\mathbb L}i}} (\mathcal H^1(\tilde \Omega_{\bar{\frak X}}))^{\otimes i}\to \mathcal H^i((\tilde \Omega_{\bar{\frak X}})^{\otimes^{\mathbb L}i} )\xrightarrow{\rm multi}\mathcal H^i(\tilde \Omega_{\bar{\frak X}})}
\]}}

Since the Hodge--Tate isomorphism is compatible with multiplication (cf. \cite[Corollary 8.13]{BMS1}), this composite is exactly the Hodge--Tate isomorphism $C^{-1}: \Omega^{i}_{\bar{\frak X}}\{-i\}\simeq \mathcal H^i(\tilde\Omega_{\bar{\frak X}})$.
So we get the desired isomorphism $\gamma=\bigoplus_{i=0}^{p-1}\gamma_i: \bigoplus_{i=0}^{p-1}\Omega_{\bar{\frak X}}^{i{}}\{-i\}[-i]\simeq \tau^{\leq p-1}\tilde \Omega_{\bar{\frak X}}$.
\end{proof}

\begin{remark}
Note that the key input in the proof above is the Hodge--Tate isomorphism $C^{-1}: \Omega_{\bar{\frak X}}^{i{}}\{-i\}\to \mathcal H^i(\tilde \Omega_{\bar{\frak X}})$. In general, there is a Hodge--Tate isomorphism for any bounded prism $(A, I)$ (cf. \cite[Theorem 4.10]{bhatt2019prisms}) and also a generalization of the isomorphism $\tau ^{\leq 1}\tilde \Omega_{\bar{\frak X}}\simeq {\mathbb L_{\bar{\frak X}/{\mathbb Z_p}}}\{-1\}[-1]$. 

The map $\cal O_{\bar{\frak X}}\to \tau^{\leq 1}\tilde \Omega_{\bar{\frak X}}$ splits as an $\cal O_{\bar{\frak X}}$-module map if and only if $\bar{\frak X}$ lifts to $A_{\inf}/{\tilde \xi}^2$ (cf. \cite[Remark 8.4]{BMS1}). In the ramified case, this seems to be hardly satisfied due to the non-vanishing of the cotangent complex $\bb L_{\cal O_K/W(k)}$. Note that $H^0(\bb L_{\cal O_K/W(k)})\simeq \Omega_{\cal O_K/W(k)}^1$ is generated by one element (cf. \cite[Chapter \uppercase\expandafter{\romannumeral3}, Proposition 14]{serre2013local}).

\end{remark}

\begin{corollary}\label{deligne}
There is a natural decomposition for any $n\leq p-1$,
\[
H^n_{\rm HT}(\bar{\frak X})=H^n(\bar{\frak X}, \tilde \Omega_{\bar{\frak X}})\cong \bigoplus_{i=0}^{n}H^{n-i}(\bar{\frak X}, \Omega_{\bar{\frak X}}^i\{-i\}).
\]
\end{corollary}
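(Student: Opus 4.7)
The plan is to obtain this corollary as a direct consequence of Theorem \ref{dht} by passing to Zariski hypercohomology. First I would apply $R\Gamma_{\rm zar}(\bar{\frak X}, -)$ to the formality isomorphism
\[
\gamma: \bigoplus_{i=0}^{p-1}\Omega_{\bar{\frak X}}^{i}\{-i\}[-i] \isoto \tau^{\leq p-1}\tilde \Omega_{\bar{\frak X}},
\]
and then take the $n$-th cohomology group on each side.

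On the left-hand side, the direct sum commutes with Zariski cohomology (as a finite colimit), and the shift $[-i]$ shifts the cohomological degree, so one obtains
\[
H^n\bigl(\bar{\frak X}, \tfrac{}{}\bigoplus_{i=0}^{p-1}\Omega_{\bar{\frak X}}^{i}\{-i\}[-i]\bigr) \;\cong\; \bigoplus_{i=0}^{p-1} H^{n-i}(\bar{\frak X}, \Omega_{\bar{\frak X}}^{i}\{-i\}).
\]
For $i > n$ the corresponding summand has negative cohomological index and hence vanishes, so the range of summation may be restricted to $0 \leq i \leq n$, which gives precisely the desired right-hand side.

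On the right-hand side, the key observation is that when $n \leq p-1$ the canonical truncation map $\tau^{\leq p-1} \tilde\Omega_{\bar{\frak X}} \to \tilde\Omega_{\bar{\frak X}}$ induces an isomorphism on the $n$-th Zariski hypercohomology. Indeed, the mapping cone of this map lies in cohomological degrees $\geq p$, so its Zariski hypercohomology vanishes in degrees $\leq p-1$ after a standard hypercohomology spectral sequence argument (or simply because $\mathcal{H}^j$ of the truncated complex agrees with $\mathcal{H}^j$ of the original for $j \leq p-1$, and the two hypercohomology spectral sequences coincide through degree $p-1$). Therefore $H^n(\bar{\frak X}, \tau^{\leq p-1}\tilde\Omega_{\bar{\frak X}}) \cong H^n(\bar{\frak X}, \tilde\Omega_{\bar{\frak X}}) = H^n_{\rm HT}(\bar{\frak X})$.

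There is no real obstacle here; the content is entirely in Theorem \ref{dht}, and the corollary is a bookkeeping exercise. The only point to be a little careful about is the identification of Zariski hypercohomology of the truncation with that of $\tilde\Omega_{\bar{\frak X}}$ in the range $n \leq p-1$, which is standard but worth stating explicitly.
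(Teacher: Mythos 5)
Your proof is correct and is exactly the routine argument the paper leaves implicit: take hypercohomology of the formality isomorphism from Theorem \ref{dht}, observe that the summands $H^{n-i}(\bar{\frak X}, \Omega^i_{\bar{\frak X}}\{-i\})$ vanish for $i>n$ since negative-degree sheaf cohomology is zero, and use that the cone of $\tau^{\leq p-1}\tilde\Omega_{\bar{\frak X}}\to\tilde\Omega_{\bar{\frak X}}$ has cohomology sheaves only in degrees $\geq p$ so that its hypercohomology vanishes in degrees $\leq p-1$. Nothing is missing.
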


\subsection{Hodge-to-de Rham spectral sequence}

In this subsection, we study the Hodge-to-de Rham spectral sequence and finish the proof of the integral comparison theorem in the unramified case. More precisely, we will prove the converse to Theorem \ref{torsion} by analyzing the length of the torsion submodule of de Rham cohomology groups and that of $p$-adic \'etale cohomology groups.
\\

Note that we have the Hodge-to-de Rham spectral sequence
\[
E^{i,j}_1=H^j(\bar{\frak X}, \Omega^i_{\bar{\frak X}})\Longrightarrow H^{i+j}(\bar{\frak X}, \Omega^{\bullet}_{\bar{\frak X}})=H^{i+j}_{\rm dR}(\bar{\frak X}/\cal O_C)
\]
As $\bar{\frak X}=\frak X\times_{{\rm Spf}(W(k))}{\rm Spf}(\cal O_C)$, this spectral sequence can be seen as the flat base change to $\cal O_C$ of the Hodge-to-de Rham spectral sequence of $\frak X$ over $W(k)$. This tells us $E_{\infty}^{i,j}$ is a finitely presented $\cal O_C$-module (note that $E_{\infty}^{i,j}$ is also a subquotient of $H^j(\bar{\frak X}, \Omega^i_{\bar{\frak X}})$).

For any integers $i$ and $n$ such that $0\leq i\leq n$, we have the abutment filtration
\[
0=F^{n+1}\subset F^n\subset \cdots \subset F^0=H^{n}_{\rm dR}(\bar{\frak X}/\cal O_C)
\]
and the short exact sequences
\[
0\to F^{i+1}\to F^i\to E_{\infty}^{i, n-i}\to 0.
\]

Now we consider the normalized length $l_{\cal O_C}$ for finitely presented torsion $\cal O_C$-modules. Recall that this length behaves additively under short exact sequences and $l_{\cal O_C}(\cal O_C/p)=1$. For any finitely presented $\cal O_C$-module $M$, one can deduce from Lemma \ref{structure} that $M_{\rm tor}$ is also a finitely presented $\cal O_C$-module and so is $M_{\rm tor}/p^m$ for any $m>0$. Then we have the following lemma:

\begin{lemma}\label{torlength}
For any short exact sequence of finitely presented $\cal O_C$-modules
\[
 0\to A\to B \to C\to 0
 \]
 we have $ l_{\cal O_C}(B_{\rm tor})\leq l_{\cal O_C}(A_{\rm tor})+l_{\cal O_C}(C_{\rm tor})$ and $ l_{\cal O_C}(B_{\rm tor}/p^m)\leq l_{\cal O_C}(A_{\rm tor}/p^m)+l_{\cal O_C}(C_{\rm tor}/p^m)$ for any $m>0$.
\end{lemma}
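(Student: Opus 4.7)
My plan is to apply the torsion functor to the given short exact sequence, invoke additivity of $l_{\cal O_C}$, and then handle the mod $p^m$ version via the identity $l_{\cal O_C}(N/p^m)=l_{\cal O_C}(N[p^m])$ which I will show holds for any finitely presented torsion $\cal O_C$-module $N$.

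First I would apply the (left exact) torsion functor to $0\to A\to B\to C\to 0$ to obtain an exact sequence $0\to A_{\rm tor}\to B_{\rm tor}\to C_{\rm tor}$. Writing $D$ for the image of the second map, $D$ is a finitely generated submodule of $C_{\rm tor}$; since $\cal O_C$ is a valuation ring and therefore Bezout and coherent, $D$, $A_{\rm tor}$, and $C_{\rm tor}/D$ are all finitely presented torsion $\cal O_C$-modules. Additivity of $l_{\cal O_C}$ on the short exact sequences $0\to A_{\rm tor}\to B_{\rm tor}\to D\to 0$ and $0\to D\to C_{\rm tor}\to C_{\rm tor}/D\to 0$ then yields $l_{\cal O_C}(B_{\rm tor})=l_{\cal O_C}(A_{\rm tor})+l_{\cal O_C}(D)\leq l_{\cal O_C}(A_{\rm tor})+l_{\cal O_C}(C_{\rm tor})$, which is the first inequality.

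Next, tensoring $0\to A_{\rm tor}\to B_{\rm tor}\to D\to 0$ with $\cal O_C/p^m$ gives a right-exact sequence $A_{\rm tor}/p^m\to B_{\rm tor}/p^m\to D/p^m\to 0$, from which $l_{\cal O_C}(B_{\rm tor}/p^m)\leq l_{\cal O_C}(A_{\rm tor}/p^m)+l_{\cal O_C}(D/p^m)$ is immediate. The remaining step, and the main piece of content, is to verify $l_{\cal O_C}(D/p^m)\leq l_{\cal O_C}(C_{\rm tor}/p^m)$. For this I plan to first establish $l_{\cal O_C}(N/p^m)=l_{\cal O_C}(N[p^m])$ for every finitely presented torsion $\cal O_C$-module $N$: by Corollary \ref{structure} it suffices to check this on a cyclic summand $\cal O_C/\pi$, where both sides equal $\min(v(\pi),m)$ in the normalized valuation. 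Applying this identity to $D$ and $C_{\rm tor}$ and combining with the obvious inclusion $D[p^m]\subseteq C_{\rm tor}[p^m]$, together with additivity of $l_{\cal O_C}$ (applicable because the $p^m$-torsion of a finitely presented $\cal O_C$-module is again finitely presented by coherence), yields $l_{\cal O_C}(D[p^m])\leq l_{\cal O_C}(C_{\rm tor}[p^m])$ and therefore $l_{\cal O_C}(D/p^m)\leq l_{\cal O_C}(C_{\rm tor}/p^m)$. The only delicate point throughout is the repeated invocation of coherence of $\cal O_C$ to keep all the relevant kernels, images, and quotients finitely presented so that additivity of length applies; everything else reduces to a direct computation on the cyclic modules provided by Corollary \ref{structure}.
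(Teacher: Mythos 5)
Your proof is correct, and it follows the same overall skeleton as the paper's—reducing both inequalities to the submodule $D\cong B_{\rm tor}/A_{\rm tor}\subseteq C_{\rm tor}$ and the tensor sequence over $\cal O_C/p^m$—but you diverge on the final comparison $l_{\cal O_C}(D/p^m)\leq l_{\cal O_C}(C_{\rm tor}/p^m)$. The paper applies the $\Tor$ exact sequence a second time to the inclusion $D\subseteq C_{\rm tor}$ and then cancels the contribution of $N=C_{\rm tor}/D$ using the identity $l_{\cal O_C}(N[p^m])=l_{\cal O_C}(N/p^m)$; you instead use that same identity (which you verify by direct computation on cyclic summands $\cal O_C/\pi$, whereas the paper derives it from the four-term sequence $0\to N[p^m]\to N\xrightarrow{p^m}N\to N/p^m\to 0$) on $D$ and $C_{\rm tor}$ themselves, together with the trivial containment $D[p^m]\subseteq C_{\rm tor}[p^m]$. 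Your route is a touch more elementary: it replaces a second appeal to right-exactness with a direct inclusion of torsion submodules, at the modest cost of the explicit case analysis $\min(v(\pi),m)$ on cyclic pieces. Both arguments are equally rigorous, and your attentiveness to coherence of $\cal O_C$ (so that kernels, images, and $p^m$-torsion remain finitely presented and the length is additive) is exactly the point the paper takes for granted.
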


\begin{proof}
For the first statement, it is easy to see that $M=B_{\rm tor}/{A_{\rm tor}}$ is a submodule of $C_{\rm tor}$, so we have $l_{\cal O_C}(M)=l_{\cal O_C}(B_{\rm tor})-l_{\cal O_C}(A_{\rm tor})\leq l_{\cal O_C}(C_{\rm tor})$ by the additivity of the length. For the second one,  we have an exact sequence
\[
M[p^m]\to A_{\rm tor}/p^m\to B_{\rm tor}/p^m\to M/p^m\to 0
\]
So we get $l_{\cal O_C}(B_{\rm tor}/p^m)\leq l_{\cal O_C}(A_{\rm tor}/p^m)+l_{\cal O_C}(M/p^m)$. Then we need to prove that $l_{\cal O_C}(M/p^m)\leq l_{\cal O_C}(C_{\rm tor}/{p^m})$. More generally, given two finitely presented torsion $\cal O_C$ modules $N_1\subset N_2$, there is an exact sequence
\[
N[p^m]\to N_1/p^m\to N_2/p^m\to N/p^m\to 0
\]
where $N=N_2/N_1$. Note that $l_{\cal O_C}(N[p^m])=l_{\cal O_C}(N/p^m)$. In fact, this follows from the exact sequence
\[
0\to N[p^m]\to N\xrightarrow{p^m}N\to N/p^m\to 0
\]
Hence $l_{\cal O_C}(N_2/p^m)\geq l_{\cal O_C}(N/p^m)+l_{\cal O_C}(N_1/{p^m})-l_{\cal O_C}(N[p^m])=l_{\cal O_C}(N_1/{p^m})$. So finally we get $ l_{\cal O_C}(B_{\rm tor}/p^m)\leq l_{\cal O_C}(A_{\rm tor}/p^m)+l_{\cal O_C}(C_{\rm tor}/p^m)$.
\end{proof}

\begin{corollary}\label{1st}
For any integers $i$ and $n$ such that $0\leq i\leq n$ and any positive integer $m$, we have $l_{\cal O_C}(F^i_{\rm tor}/p^m)\leq l_{\cal O_C}(F^{i+1}_{\rm tor}/p^m)+l_{\cal O_C}({E_{\infty}^{i, n-i}}_{\rm tor}/p^m)$. In particular, $l_{\cal O_C}(H^{n}_{\rm dR}(\bar{\frak X}/\cal O_C)_{\rm tor}/p^m)\leq \sum_{i=0}^{n}l_{\cal O_C}({E_{\infty}^{i, n-i}}_{\rm tor}/p^m)$.
\end{corollary}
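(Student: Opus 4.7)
The plan is straightforward: the first inequality is an immediate application of Lemma \ref{torlength} to the abutment short exact sequence, and the second follows by induction (or telescoping) along the filtration.

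First I would verify the hypotheses of Lemma \ref{torlength} for the short exact sequence
\[
0\to F^{i+1}\to F^i\to E_{\infty}^{i, n-i}\to 0.
\]
Since the Hodge-to-de Rham spectral sequence for $\bar{\frak X}/\cal O_C$ is the flat base change to $\cal O_C$ of the one for $\frak X/W(k)$, every $E_1^{i,j}=H^j(\bar{\frak X},\Omega^i_{\bar{\frak X}})$ is a finitely presented $\cal O_C$-module; consequently so is every subquotient $E_{\infty}^{i,n-i}$, as is $H^n_{\rm dR}(\bar{\frak X}/\cal O_C)$ itself. Each step $F^i$ of the abutment filtration sits in a two-step extension by finitely presented modules, hence is finitely presented. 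Thus Lemma \ref{torlength} applies and directly yields
\[
l_{\cal O_C}(F^i_{\rm tor}/p^m)\leq l_{\cal O_C}(F^{i+1}_{\rm tor}/p^m)+l_{\cal O_C}({E_{\infty}^{i, n-i}}_{\rm tor}/p^m),
\]
which is the first assertion.

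For the second assertion I would argue by descending induction on $i$, using the filtration $0=F^{n+1}\subset F^n\subset\cdots\subset F^0=H^n_{\rm dR}(\bar{\frak X}/\cal O_C)$. The base case $F^{n+1}_{\rm tor}/p^m=0$ is trivial, and iterating the first inequality gives
\[
l_{\cal O_C}(F^i_{\rm tor}/p^m)\leq \sum_{j=i}^{n}l_{\cal O_C}({E_{\infty}^{j, n-j}}_{\rm tor}/p^m)
\]
for every $0\leq i\leq n$. Setting $i=0$ and using $F^0=H^n_{\rm dR}(\bar{\frak X}/\cal O_C)$ concludes the proof.

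There is essentially no obstacle here — the real content lies in Lemma \ref{torlength}, which has already been established. The only care needed is to make sure the inductive telescoping is carried out with the correct direction of the filtration (descending from $i=n$ down to $i=0$) so that the zero module $F^{n+1}$ serves as the base case of the induction, and to ensure the finite-presentation hypothesis is transferred along each extension, which follows from the two-out-of-three property for finitely presented modules in a short exact sequence.
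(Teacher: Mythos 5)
Your proof is correct and is exactly the argument the paper intends: apply Lemma \ref{torlength} to the short exact sequences $0\to F^{i+1}\to F^i\to E_{\infty}^{i,n-i}\to 0$ coming from the abutment filtration, then telescope down from $F^{n+1}=0$. The paper leaves the corollary without an explicit proof precisely because this is the immediate deduction.
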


Recall that the rational Hodge-to-de Rham spectral sequence degenerates at $E_1$ page:
\begin{theorem}[\hspace{1sp}{\cite[Corollary 1.8]{scholze2013p}}]\label{degeneration}
For any proper smooth rigid analytic space $Y$ over $C$, the Hodge-to-de Rham spectral sequence 
\[
E^{i,j}_1=H^j(Y, \Omega_Y^i)\Longrightarrow H^{i+j}_{\rm dR}(Y/C)
\]
degenerates at $E_1$. Moreover, for all $i\geq 0$, 
\[
\sum^i_{j=0}{\rm dim}_CH^{i-j}(Y, \Omega_Y^j)={\rm dim}_CH^i_{\rm dR}(Y/C)={\rm dim}_{\mathbb Q_p}H^i_{\rm {\acute et}}(Y, \mathbb Q_p).
\]

\end{theorem}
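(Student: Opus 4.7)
The plan is to combine three ingredients: the Hodge--Tate spectral sequence built on the pro-\'etale site, a formality statement for $R\nu_*\hat{\cal O}_Y$ that forces its degeneration, and the filtered de Rham comparison isomorphism with $B_{\rm dR}$-cohomology.

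First, via the projection $\nu: Y_{\rm pro\acute et} \to Y_{\rm \acute et}$, the key local input is the computation $R^j\nu_*\hat{\cal O}_Y \cong \Omega_Y^j\{-j\}$, proved on a perfectoid toric chart by an (almost) Koszul calculation combined with Faltings' almost purity. The Leray spectral sequence, together with the primitive comparison Theorem \ref{primitive} identifying the abutment, yields a rational Hodge--Tate spectral sequence
\[
E_2^{i,j} = H^i(Y, \Omega_Y^j)\{-j\} \Longrightarrow H^{i+j}_{\rm \acute et}(Y, \bb Q_p) \otimes_{\bb Q_p} C,
\]
which provides the upper bound $\dim_{\bb Q_p} H^n_{\rm \acute et}(Y, \bb Q_p) \leq \sum_{i+j=n}\dim_C H^i(Y, \Omega^j_Y)$. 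I would then promote this to an equality by establishing the formality $R\nu_*\hat{\cal O}_Y \simeq \bigoplus_j \Omega_Y^j\{-j\}[-j]$ in $D(Y_{\rm \acute et}, C)$, using exactly the antisymmetrization/multiplication construction appearing in Theorem \ref{dht} of the present paper; the obstructions to splitting (denominators $1/j!$) are invertible rationally, so the construction goes through in all degrees and forces the Hodge--Tate spectral sequence to degenerate.

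Second, I would invoke the filtered de Rham comparison
\[
H^n_{\rm \acute et}(Y, \bb Q_p) \otimes_{\bb Q_p} B_{\rm dR} \cong H^n_{\rm dR}(Y/C) \otimes_C B_{\rm dR},
\]
proved by constructing the de Rham period sheaf $\bb B_{{\rm dR}, Y}^+$ on $Y_{\rm pro\acute et}$ and establishing a rigid-analytic Poincar\'e lemma resolving $\bb Q_p$ by the de Rham complex with coefficients. Counting $B_{\rm dR}$-dimensions yields $\dim_{\bb Q_p} H^n_{\rm \acute et}(Y, \bb Q_p) = \dim_C H^n_{\rm dR}(Y/C)$. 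Combined with the classical Hodge-to-de Rham upper bound $\dim_C H^n_{\rm dR}(Y/C) \leq \sum_{i+j=n}\dim_C H^i(Y, \Omega_Y^j)$ and the equality from the previous step, this forces the Hodge-to-de Rham spectral sequence to degenerate at $E_1$ and gives the claimed identity of dimensions.

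The hardest part is the filtered de Rham comparison: it requires developing the machinery of period sheaves on the pro-\'etale site, proving a rigid-analytic Poincar\'e lemma that resolves $\bb Q_p$ by the de Rham complex with coefficients in $\bb B_{{\rm dR}, Y}^+$, and carrying out cohomological descent from the pro-\'etale site down to the coherent cohomology of $Y$; this is the technical heart of Scholze's paper on $p$-adic Hodge theory for rigid analytic varieties. A secondary difficulty is globalizing the formality of $R\nu_*\hat{\cal O}_Y$, which rests on compatibility of the Hodge--Tate isomorphism with the ring structure of $\tilde\Omega_{\bar{\frak X}}$, exactly as exploited in the proof of Theorem \ref{dht}.
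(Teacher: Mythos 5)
This result is quoted by the paper from Scholze's \cite{scholze2013p} without proof, so the relevant comparison is with Scholze's own argument, not with anything in the present text. Your broad framework does match Scholze's: the pro-\'etale site and Faltings' almost purity give the local computation $R^j\nu_*\hat{\cal O}_Y$, the primitive comparison identifies the abutment of the Leray/Hodge--Tate spectral sequence with $H^\ast_{\rm\acute et}(Y,\mathbb{Q}_p)\otimes C$, and the filtered de~Rham comparison via $\mathbb{B}^+_{\rm dR}$, $\mathcal{OB}^+_{\rm dR}$ and the rigid-analytic Poincar\'e lemma produces the dimension equality $\dim_{\mathbb{Q}_p}H^n_{\rm\acute et}=\dim_C H^n_{\rm dR}$, after which the degeneration statements fall out of a count.

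The gap is in the formality step. You propose to establish $R\nu_*\hat{\cal O}_Y\simeq\bigoplus_j\Omega^j_Y(-j)[-j]$ by importing the argument of Theorem~\ref{dht}. But that argument hinges on two inputs that are unavailable for a general proper smooth rigid space $Y$ over $C$: first, the identification $\tau^{\leq 1}\tilde\Omega\simeq\hat{\mathbb{L}}_{-/\mathbb{Z}_p}\{-1\}[-1]$ is a statement about the prismatic/Hodge--Tate complex of a smooth \emph{formal scheme} over a perfectoid base, not about $R\nu_*\hat{\cal O}_Y$ for an arbitrary rigid $Y$; second, and more fundamentally, the splitting of $\tau^{\leq 1}$ in Theorem~\ref{dht} is produced by the K\"unneth decomposition $\hat{\mathbb{L}}_{\bar R/W(k)}\simeq(\hat{\mathbb{L}}_{\cal O_C/W(k)}\otimes R)\oplus(\hat{\mathbb{L}}_{R/W(k)}\otimes\cal O_C)$, which exists precisely because $\bar{\frak X}$ is a base change from the unramified $W(k)$. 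For a general $Y$ over $C$ the analogous transitivity triangle $\cal O_Y(1)[1]\to\mathbb{L}_{Y/\mathbb{Q}_p}\to\Omega^1_Y$ has no reason to split: the obstruction is a class in $H^2(Y,T_Y(1))$, the obstruction to lifting $Y$ to $B^+_{\rm dR}/\xi^2$, and this class need not vanish (and even when it does, the splitting is non-canonical, so the multiplicative extension to higher degrees requires care). So the map $\gamma_1$, and with it the whole antisymmetrization argument, is not available in the generality the theorem requires. Scholze does not prove or use formality of $R\nu_*\hat{\cal O}_Y$; instead the degeneration of both spectral sequences is extracted from the \emph{filtered} structure of the de~Rham comparison isomorphism itself, by comparing the resulting filtration on $H^n_{\rm\acute et}(Y,\mathbb{Q}_p)\otimes C$ with the Hodge--Tate spectral sequence and counting dimensions. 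If you want to keep a formality-style argument you would need, at minimum, to first descend $Y$ to a discretely valued subfield of $C$ (so that a $B^+_{\rm dR}/\xi^2$-lift exists and a Galois weight argument is available), which in effect reproduces Scholze's route rather than bypassing it.
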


As a consequence, we have the following lemma:
\begin{lemma}\label{hh}
For any $m>0$, we have 
\[
l_{\cal O_C}({E_{\infty}^{i, n-i}}_{\rm tor}/p^m)\leq l_{\cal O_C}(H^{n-i}(\bar{\frak X}, \Omega_{\bar{\frak X}}^i)_{\rm tor}/p^m).
\]
\end{lemma}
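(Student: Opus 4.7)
The plan is to exploit that $E_\infty^{i,n-i}$ is by construction a subquotient of $E_1^{i,n-i} = H^{n-i}(\bar{\frak X}, \Omega^i_{\bar{\frak X}})$, and to use the rational Hodge--to--de Rham degeneration of Theorem \ref{degeneration} to constrain this subquotient structure sharply. Write $E_\infty^{i,n-i} = N/N'$ with finitely presented $\cal O_C$-submodules $N' \subset N \subset E_1^{i,n-i}$. Since $\bar{\frak X}$ is proper smooth and $\cal O_C \to C$ is flat, tensoring the integral Hodge--to--de Rham spectral sequence of $\bar{\frak X}$ with $C$ produces the rational Hodge--to--de Rham spectral sequence of the rigid generic fiber $X$, which degenerates at $E_1$ by Theorem \ref{degeneration}. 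Consequently the $\cal O_C$-ranks of $E_\infty^{i,n-i}$ and $E_1^{i,n-i}$ coincide, which forces both $N'$ and $E_1^{i,n-i}/N$ to be torsion.

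Next I would pin down the torsion submodule $(E_\infty^{i,n-i})_{\rm tor}$. Since $N'$ is torsion, $N' \subset N_{\rm tor}$; moreover a class $\bar x \in N/N'$ is torsion if and only if $p^k x \in N'$ for some $k$, and because $N'$ is killed by some $p^\ell$ this forces $x \in N_{\rm tor}$. Therefore $(E_\infty^{i,n-i})_{\rm tor} = N_{\rm tor}/N'$. Since the inclusion $N \subset E_1^{i,n-i}$ implies $N_{\rm tor} \subset (E_1^{i,n-i})_{\rm tor}$, the module $(E_\infty^{i,n-i})_{\rm tor}$ is a subquotient of the torsion finitely presented $\cal O_C$-module $H^{n-i}(\bar{\frak X}, \Omega^i_{\bar{\frak X}})_{\rm tor}$.

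The proof then reduces to the general fact that for any subquotient $A$ of a torsion finitely presented $\cal O_C$-module $B$ one has $l_{\cal O_C}(A/p^m) \leq l_{\cal O_C}(B/p^m)$ for every $m>0$. For a submodule $A \subset B$ with $B/A$ torsion, applying $-\otimes_{\cal O_C}\cal O_C/p^m$ to the defining short exact sequence and using $l(T[p^m]) = l(T/p^m)$ for torsion $T$ yields the equality $l(A/p^m) = l(B/p^m)$; for a quotient $A = B/C$, the identification $A/p^m = B/(C + p^m B)$ presents $A/p^m$ as a quotient of $B/p^m B$. Stringing these together for the subquotient $N_{\rm tor}/N' \subset H^{n-i}(\bar{\frak X}, \Omega^i_{\bar{\frak X}})_{\rm tor}$ completes the proof.

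The main subtlety I anticipate is in the first step---transferring rational degeneration from the rigid generic fiber $X$ to an equality of $\cal O_C$-ranks on the integral spectral sequence of $\bar{\frak X}$---although this is a routine consequence of flat base change together with the comparison between coherent cohomology on the proper formal scheme $\bar{\frak X}$ and its rigid generic fiber $X$.
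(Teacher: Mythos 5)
Your argument follows essentially the same route as the paper's: identify $(E_\infty^{i,n-i})_{\rm tor}$ with the subquotient $(Z_\infty^{i,n-i})_{\rm tor}/B_\infty^{i,n-i}$ of $H^{n-i}(\bar{\frak X},\Omega^i_{\bar{\frak X}})_{\rm tor}$, using rational degeneration (Theorem~\ref{degeneration}) to deduce that $B_\infty^{i,n-i}$ is torsion, and then push the length inequality through the submodule step and the quotient step. The one flaw is your claimed \emph{equality} $l_{\cal O_C}(A/p^m)=l_{\cal O_C}(B/p^m)$ for a submodule $A\subset B$ of finitely presented $\cal O_C$-modules with $B/A$ torsion: this is false (take $A=0\subset B=\cal O_C/p$). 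In the six-term Tor sequence
\[
0\to A[p^m]\to B[p^m]\to (B/A)[p^m]\to A/p^m\to B/p^m\to (B/A)/p^m\to 0
\]
you discarded the first two terms; additivity of length actually gives $l(B/p^m)-l(A/p^m)=l(B[p^m])-l(A[p^m])\geq 0$, where the last inequality holds because $A[p^m]\hookrightarrow B[p^m]$. This is exactly the inequality you need (and is what the paper's Lemma~\ref{torlength} records, phrased via $[p^m]$ in place of $/p^m$). With that correction, your chain $l((E_\infty^{i,n-i})_{\rm tor}/p^m)\leq l((Z_\infty^{i,n-i})_{\rm tor}/p^m)\leq l(H^{n-i}(\bar{\frak X},\Omega^i_{\bar{\frak X}})_{\rm tor}/p^m)$ goes through and coincides with the paper's proof.
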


\begin{proof}
Theorem \ref{degeneration} tells us that the integral Hodge-to-de Rham spectral sequence degenerates at $E_1$ after inverting $p$. This means that the coboundaries $B_{\infty}^{i,n-i}$ must be a finitely presented torsion $\cal O_C$-module. Consider the short exact sequence
\[
0\to B_{\infty}^{i,n-i}\to Z_{\infty}^{i,n-i}\to E_{\infty}^{i,n-i}\to 0.
\]
For any $x\in {E_{\infty}^{i,n-i}}_{\rm tor}$, there exists $\hat x\in Z_{\infty}^{i,n-i}$ whose image in $E_{\infty}^{i,n-i}$ is $x$. As $E_{\infty}^{i,n-i}$ is killed by $p^N$ for some large enough $N$, we can see that $p^N\hat x$ is in $B_{\infty}^{i,n-i}\subset {Z_{\infty}^{i,n-i}}_{\rm tor}$. So we have another short exact sequence
\[
0\to B_{\infty}^{i,n-i}\to {Z_{\infty}^{i,n-i}}_{\rm tor}\to {E_{\infty}^{i,n-i}}_{\rm tor}\to 0.
\]

Then by the additivity of the length, we get that
\[
l_{\cal O_C}({E_{\infty}^{i, n-i}}_{\rm tor}/p^m)\leq l_{\cal O_C}({Z_{\infty}^{i, n-i}}_{\rm tor}/p^m),
\]
and
{\footnotesize{\[
l_{\cal O_C}({Z_{\infty}^{i, n-i}}_{\rm tor}/p^m)=l_{\cal O_C}({Z_{\infty}^{i, n-i}}_{\rm tor}[p^m])\leq l_{\cal O_C}(H^{n-i}(\bar{\frak X}, \Omega_{\bar{\frak X}}^i)_{\rm tor}[p^m])=l_{\cal O_C}(H^{n-i}(\bar{\frak X}, \Omega_{\bar{\frak X}}^i)_{\rm tor}/p^m).
\]}}

So we have $l_{\cal O_C}({E_{\infty}^{i, n-i}}_{\rm tor}/p^m)\leq l_{\cal O_C}(H^{n-i}(\bar{\frak X}, \Omega_{\bar{\frak X}}^i)_{\rm tor}/p^m)$.
\end{proof}


Now we prove the converse to Theorem \ref{torsion}.
\begin{theorem}\label{unr}
For any positive integer $m$ and any integer $n$ such that $0\leq n<p-1$, we have 
\[
l_{\cal O_C}(H^{n}_{\rm dR}(\bar{\frak X}/\cal O_C)_{\rm tor}/p^m)\leq l_{\cal O_C}(H^n_{{\rm\acute {e}t}}(X, \mathbb Z_p)_{\rm tor}\otimes_{\mathbb Z_p}\cal O_C/p^m).
\]
\end{theorem}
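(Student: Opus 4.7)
The plan is to chain together the three main inputs built up in the previous subsection: Corollary \ref{1st} which bounds the torsion of de Rham cohomology by torsion in the $E_\infty$-terms, Lemma \ref{hh} which bounds the $E_\infty$-torsion by $E_1$-torsion (i.e.\ Hodge cohomology torsion), and finally Corollary \ref{deligne} together with Theorem \ref{HT} which identify the sum of Hodge torsion with the torsion of the \'etale side. No new ideas should be required; the point is just to verify that the inequalities compose correctly.

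Concretely, I would first combine Corollary \ref{1st} and Lemma \ref{hh} to get
\[
l_{\cal O_C}\bigl(H^{n}_{\rm dR}(\bar{\frak X}/\cal O_C)_{\rm tor}/p^m\bigr)\;\le\;\sum_{i=0}^{n} l_{\cal O_C}\bigl(H^{n-i}(\bar{\frak X},\Omega^i_{\bar{\frak X}})_{\rm tor}/p^m\bigr).
\]
Since the Breuil--Kisin twist $\{-i\}$ is an invertible $\cal O_C$-module (a free rank-one $\cal O_C$-module after restriction from $A_{\inf}$), it does not affect torsion length, so $l_{\cal O_C}(H^{n-i}(\bar{\frak X},\Omega^i_{\bar{\frak X}}\{-i\})_{\rm tor}/p^m)=l_{\cal O_C}(H^{n-i}(\bar{\frak X},\Omega^i_{\bar{\frak X}})_{\rm tor}/p^m)$. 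Then by Corollary \ref{deligne} (valid since $n<p-1\le p-1$), the decomposition of $H^n_{\rm HT}(\bar{\frak X})$ is a direct sum, so the right-hand side equals $l_{\cal O_C}(H^n_{\rm HT}(\bar{\frak X})_{\rm tor}/p^m)$.

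For the final identification I would invoke Theorem \ref{HT}: since $e=1$ and $n<p-1$ we have $ne<p-1$, so $H^n_{\rm HT}(\bar{\frak X})\cong H^n_{\rm \acute et}(X,\bb Z_p)\otimes_{\bb Z_p}\cal O_C$ as $\cal O_C$-modules. Using flatness of $\cal O_C$ over $\bb Z_p$ to split off the torsion part of $H^n_{\rm \acute et}(X,\bb Z_p)$, this gives
\[
l_{\cal O_C}\bigl(H^n_{\rm HT}(\bar{\frak X})_{\rm tor}/p^m\bigr)=l_{\cal O_C}\bigl(H^n_{\rm \acute et}(X,\bb Z_p)_{\rm tor}\otimes_{\bb Z_p}\cal O_C/p^m\bigr),
\]
which closes the chain. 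I do not expect any serious obstacle: the only subtlety is making sure that the twist $\Omega^i_{\bar{\frak X}}\{-i\}$ and the underlying $\Omega^i_{\bar{\frak X}}$ have the same torsion length (clear, since the twist is locally free of rank one), and that the hypothesis $n<p-1$ is genuinely needed both to apply Theorem \ref{HT} and to ensure that Corollary \ref{deligne} splits $H^n_{\rm HT}(\bar{\frak X})$ as a direct sum of Hodge pieces.
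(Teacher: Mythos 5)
Your proposal is correct and follows essentially the same route as the paper's proof: it chains Corollary \ref{1st}, Lemma \ref{hh}, Corollary \ref{deligne}, and Theorem \ref{HT} in the same way, with only a cosmetic reordering of the inequality and equality steps. The one extra detail you spell out — that the Breuil--Kisin twist $\{-i\}$ is an invertible $\cal O_C$-module and hence does not affect torsion lengths — is implicit in the paper's statement of Corollary \ref{deligne} but is a harmless and correct clarification.
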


\begin{proof}
By Theorem \ref{HT} and Theorem \ref{deligne}, we have
 \[
 H^n_{{\rm\acute {e}t}}(X, \mathbb Z_p)\otimes_{\mathbb Z_p}\cal O_C\cong H^n_{\rm HT}(\bar{\frak X})\cong\bigoplus_{i=0}^{n}H^{n-i}(\bar{\frak X}, \Omega^i_{\bar{\frak X}}).
 \]
 This implies that 
  \[
 \sum_{i=0}^{n}l_{\cal O_C}(H^{n-i}(\bar{\frak X}, \Omega_{\bar{\frak X}}^i)_{\rm tor}/p^m)=l_{\cal O_C}(H^n_{{\rm\acute {e}t}}(X, \mathbb Z_p)_{\rm tor}\otimes_{\mathbb Z_p}\cal O_C/p^m)
 \]
Moreover, by Corollary \ref{1st} and Lemma \ref{hh}, we have
\[
 l_{\cal O_C}(H^{n}_{\rm dR}(\bar{\frak X}/\cal O_C)_{\rm tor}/p^m)\leq \sum_{i=0}^{n}l_{\cal O_C}({E_{\infty}^{i, n-i}}_{\rm tor}/p^m)\leq \sum_{i=0}^{n}l_{\cal O_C}(H^{n-i}(\bar{\frak X}, \Omega_{\bar{\frak X}}^i)_{\rm tor}/p^m).
 \]
 So we get that
 \[
 l_{\cal O_C}(H^{n}_{\rm dR}(\bar{\frak X}/\cal O_C)_{\rm tor}/p^m)\leq l_{\cal O_C}(H^n_{{\rm\acute {e}t}}(X, \mathbb Z_p)_{\rm tor}\otimes_{\mathbb Z_p}\cal O_C/p^m)
 \]
 \end{proof}
 \begin{theorem}\label{unrcom}
For any $n<p-1$, there is an isomorphism of $W(k)$-modules
\[
H^n_{\rm crys}(\frak X_k/W(k))\cong H^n_{\rm\acute {e}t}(X, \mathbb Z_p)\otimes_{\mathbb Z_p}W(k).
\]
 \end{theorem}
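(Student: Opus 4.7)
Since $e=1$ we have $\cal O_K=W(k)$, so the crystalline cohomology of $\frak X_k$ over $W(k)$ is canonically identified with the de Rham cohomology $H^n_{\rm dR}(\frak X/W(k))$. Write $A:=H^n_{\rm dR}(\frak X/W(k))$ and $B:=H^n_{\rm \acute et}(X,\bb Z_p)\otimes_{\bb Z_p}W(k)$. Both are finitely generated modules over the discrete valuation ring $W(k)$, so the structure theorem reduces the desired isomorphism to two checks: (a) equality of free ranks, and (b) equality of torsion invariant factors.

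For (a), observe that $H^n_{\rm dR}(\bar{\frak X}/\cal O_C)\simeq A\otimes_{W(k)}\cal O_C$ by flat base change, and similarly $H^n_{\rm \acute et}(X,\bb Z_p)\otimes_{\bb Z_p}\cal O_C\simeq B\otimes_{W(k)}\cal O_C$. By Theorem \ref{degeneration}, $\dim_C H^n_{\rm dR}(\bar{\frak X}/\cal O_C)[1/p]=\dim_{\bb Q_p}H^n_{\rm \acute et}(X,\bb Q_p)$. Dividing by $\dim_{K_0}C$ (or directly using faithful flatness of $\cal O_C$ over $W(k)$), the free ranks of $A$ and $B$ over $W(k)$ therefore coincide.

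For (b), I will combine the two available length inequalities. Theorem \ref{torsion} gives
\[
l_{W(k)}(A_{\rm tor}/p^m)\geq l_{\bb Z_p}(H^n_{\rm \acute et}(X,\bb Z_p)_{\rm tor}/p^m)
\]
for every $m\geq 1$. Conversely, Theorem \ref{unr} applied with the hypothesis $n<p-1$ yields
\[
l_{\cal O_C}(H^n_{\rm dR}(\bar{\frak X}/\cal O_C)_{\rm tor}/p^m)\leq l_{\cal O_C}(H^n_{\rm \acute et}(X,\bb Z_p)_{\rm tor}\otimes_{\bb Z_p}\cal O_C/p^m).
\]
Using the normalization $l_{\cal O_C}(\cal O_C/p)=l_{W(k)}(W(k)/p)=1$ and the fact that for any finitely generated $W(k)$-module $M$ one has $l_{\cal O_C}(M\otimes_{W(k)}\cal O_C/p^m)=l_{W(k)}(M/p^m)$ (checked on the cyclic summands of $M_{\rm tor}$), I translate the second inequality back to $W(k)$ and obtain
\[
l_{W(k)}(A_{\rm tor}/p^m)\leq l_{\bb Z_p}(H^n_{\rm \acute et}(X,\bb Z_p)_{\rm tor}/p^m).
\]
Combining the two inequalities forces equality for every $m$, and then the $W(k)$-analogue of Lemma \ref{lengthequal} (which is just the elementary divisor theorem over the DVR $W(k)$) shows that $A_{\rm tor}$ and $B_{\rm tor}$ have the same invariant factors.

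Together with the equality of ranks from step (a), this establishes $A\cong B$ as $W(k)$-modules, which is the theorem. No serious obstacle remains: the substantive input is Theorem \ref{unr} (already proved via the formality of $\tau^{\leq p-1}\tilde\Omega_{\bar{\frak X}}$ and a careful analysis of the Hodge-to-de Rham spectral sequence) together with Theorem \ref{torsion} of Bhatt--Morrow--Scholze, and the remaining work is purely the bookkeeping of lengths under the flat base change $W(k)\to\cal O_C$.
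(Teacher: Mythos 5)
Your argument is correct and follows essentially the same route as the paper: both directions of the torsion-length comparison come from Theorem \ref{torsion} and Theorem \ref{unr}, and ranks are matched using Scholze's rational Hodge--de Rham degeneration (the paper invokes \cite[Theorem 1.1]{BMS1} for this rank equality, which amounts to the same input). The only organizational difference is that the paper first establishes the $\cal O_C$-module isomorphism $H^n_{\rm dR}(\bar{\frak X}/\cal O_C)\cong H^n_{\rm\acute et}(X,\mathbb Z_p)\otimes_{\mathbb Z_p}\cal O_C$ via Lemma \ref{lengthequal} and then descends, whereas you translate the $\cal O_C$-length inequality back to $W(k)$ and compare there directly; both require the same bookkeeping of elementary divisors under the flat base change $W(k)\to\cal O_C$, so the proofs are substantively identical.
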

 \begin{proof}
 We first prove that there is an isomorphism of $\cal O_C$-modules
 \[
 H^n_{\rm dR}(\bar{\frak X}/\cal O_C)\cong H^n_{\rm\acute {e}t}(X, \mathbb Z_p)\otimes_{\mathbb Z_p}\cal O_C.
 \]
Note that Theorem \ref{torsion} tells us that for any positive integer $m$,
 \[
 l_{\cal O_C}(H^n_{{\rm\acute {e}t}}(X, \mathbb Z_p)_{\rm tor}\otimes_{\mathbb Z_p}\cal O_C/p^m)\leq l_{\cal O_C}(H^{n}_{\rm dR}(\bar{\frak X}/\cal O_C)_{\rm tor}/p^m)
 \]
So they must be equal by Theorem \ref{unr}. This means that $H^n_{{\rm\acute {e}t}}(X, \mathbb Z_p)_{\rm tor}\otimes_{\mathbb Z_p}\cal O_C\cong H^{n}_{\rm dR}(\bar{\frak X}/\cal O_C)_{\rm tor}$ by Lemma \ref{lengthequal}. Furthermore by \cite[Theorem 1.1]{BMS1}, the $\cal O_C$-modules $H^{n}_{\rm dR}(\bar{\frak X}/\cal O_C)$ and $H^n_{{\rm\acute {e}t}}(X, \mathbb Z_p)\otimes_{\mathbb Z_p}\cal O_C$ have the same rank. So we have $H^{n}_{\rm dR}(\bar{\frak X}/\cal O_C)\cong H^n_{{\rm\acute {e}t}}(X, \mathbb Z_p)\otimes_{\mathbb Z_p}\cal O_C$. 

On the other hand, there is an isomorphism between de Rham cohomology and crystalline cohomology in the unramified case (cf. \cite{berthelot2006cohomologie})
\[
H^n_{\rm dR}(\frak X/W(k))\cong H^n_{\rm crys}(\frak X_k/W(k)).
\]
We also have 
\[
H^n_{\rm dR}(\frak X_{}/W(k))\otimes_{W(k)}\cal O_C\cong H^n_{\rm dR}(\bar{\frak X}/\cal O_C)
\]
by base change of de Rham cohomology. So finally we get the isomorphism of $W(k)$-modules
\[
H^n_{\rm crys}(\frak X_k/W(k))\cong H^n_{\rm\acute {e}t}(X, \mathbb Z_p)\otimes_{\mathbb Z_p}W(k).
\]
\end{proof}

\subsection{Degeneration of the Hodge-to-de Rham spectral sequence}
In this subsection, we assume $d={\rm dim}{\frak X}<p-1$, where ${\rm dim}{\frak X}$ means the relative dimension of $\frak X$. We will improve Theorem \ref{unrcom} by considering all cohomological degrees and study the degeneration of the Hodge-to-de Rham spectral sequence. These will follow from improvements of Theorem \ref{HT} and Corollary \ref{deligne}.

We begin with an improvement of Corollary \ref{deligne}.
\begin{lemma}\label{ght}
When $d={\rm dim}\frak X<p-1$, we have 
\[
H^n_{\rm HT}(\bar{\frak X})=H^n(\bar{\frak X}, \tilde \Omega_{\bar{\frak X}})\cong \bigoplus_{i=0}^{n}H^{n-i}(\bar{\frak X}, \Omega_{\bar{\frak X}}^i\{-i\}).
\]
for all $n$.
\end{lemma}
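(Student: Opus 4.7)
Plan: The key point is that the restriction $n \le p-1$ in Corollary \ref{deligne} came entirely from the truncation $\tau^{\leq p-1}$ appearing in Theorem \ref{dht}; once the dimension hypothesis $d = \dim \frak X < p-1$ is imposed, this truncation becomes redundant. Concretely, $\frak X$ being smooth of relative dimension $d$ over $\cal O_K$ forces $\Omega^i_{\bar{\frak X}} = 0$ for $i > d$, and by the Hodge--Tate isomorphism $C^{-1}: \Omega^i_{\bar{\frak X}}\{-i\} \isoto \cal H^i(\tilde\Omega_{\bar{\frak X}})$ quoted in the proof of Theorem \ref{dht}, this gives $\cal H^i(\tilde\Omega_{\bar{\frak X}}) = 0$ for $i > d$. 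Since $d \le p-2$, all cohomology sheaves of $\tilde\Omega_{\bar{\frak X}}$ in degrees $\ge p-1$ vanish, so the natural morphism $\tau^{\leq p-1}\tilde\Omega_{\bar{\frak X}} \to \tilde\Omega_{\bar{\frak X}}$ is a quasi-isomorphism.

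Given this, I would invoke Theorem \ref{dht} directly: it provides
\[
\tilde\Omega_{\bar{\frak X}} \;\simeq\; \tau^{\leq p-1}\tilde\Omega_{\bar{\frak X}} \;\simeq\; \bigoplus_{i=0}^{p-1}\Omega^i_{\bar{\frak X}}\{-i\}[-i] \;=\; \bigoplus_{i=0}^{d}\Omega^i_{\bar{\frak X}}\{-i\}[-i],
\]
where the last equality uses $\Omega^i_{\bar{\frak X}} = 0$ for $i > d$. Taking Zariski hypercohomology on $\bar{\frak X}$ and using that the direct sum is finite (so commutes with $R\Gamma$), I obtain
\[
H^n_{\rm HT}(\bar{\frak X}) \;\cong\; \bigoplus_{i=0}^{d} H^{n-i}(\bar{\frak X}, \Omega^i_{\bar{\frak X}}\{-i\}) \;=\; \bigoplus_{i=0}^{n} H^{n-i}(\bar{\frak X}, \Omega^i_{\bar{\frak X}}\{-i\}),
\]
the last identification holding because the summands with $i > \min(n,d)$ are automatically zero (either the sheaf vanishes, or the cohomological degree $n-i$ is negative).

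There is really no significant obstacle here, since the formality Theorem \ref{dht} does all the work; the only thing to verify carefully is the vanishing $\Omega^i_{\bar{\frak X}} = 0$ for $i > d$, which is standard for smooth formal schemes (one reduces to an affine formal chart and uses the smoothness of the structure map modulo each power of $p$). The analogous improvement of Theorem \ref{HT} promised in the text should presumably be carried out in parallel by the same mechanism: applying Lemma \ref{torsionlemma} to the morphisms $f, g$ through $\tau^{\leq n}$ for all $n$, using that $\tilde\Omega_{\bar{\frak X}}$ lives in degrees $\le d < p-1$ so the truncation is harmless.
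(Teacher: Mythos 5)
Your proposal is correct and follows the same route as the paper: use the Hodge--Tate isomorphism $\cal H^i(\tilde\Omega_{\bar{\frak X}})\cong\Omega^i_{\bar{\frak X}}\{-i\}$ to see that the cohomology sheaves vanish in degrees $>d$, so that $\tau^{\leq p-1}\tilde\Omega_{\bar{\frak X}}\to\tilde\Omega_{\bar{\frak X}}$ is a quasi-isomorphism, and then apply the formality Theorem \ref{dht} and take hypercohomology. The only cosmetic difference is that you spell out the vanishing of $\Omega^i_{\bar{\frak X}}$ and the truncation indices more carefully than the paper does.
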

\begin{proof}
Recall the Hodge--Tate isomorphism: $H^i(\tilde \Omega_{\overline{\frak X}})\cong \Omega_{\overline{\frak X}}^{i{}}$ (cf. \cite[Theorem 8.3]{BMS1}). When $i\geq p-1>d$, we have $\Omega_{\overline{\frak X}}^{i{}}=0$. This implies $\tau^{\leq p-2}\tilde\Omega_{\overline{\frak X}}\simeq \tilde\Omega_{\overline{\frak X}}$. In particular, the whole complex $\tilde \Omega_{\overline{\frak X}}$ is formal by Theorem \ref{dht}, from which this lemma follows.

\end{proof}

Next we study the comparison between Hodge--Tate cohomology and $p$-adic \'etale cohomology. Recall that we have the following two maps
\[
f: \tau^{\leq d}\tilde\Omega_{\overline{\frak X}}\to \tau^{\leq d}R\nu_*\hat{\cal O}_X^+
\]
\[
g: \tau^{\leq d}R\nu_*\hat{\cal O}_X^+\to \tau^{\leq d}\tilde\Omega_{\overline{\frak X}}
\]
whose composition in either direction is $(\zeta_p-1)^{d}$.

We claim that $R\nu_*\hat{\cal O}_X^+$ is almost supported in degrees $\leq d$, i.e. there is an almost isomorphism $\tau^{\leq d}R\nu_*\hat{\cal O}_X^+\simeq R\nu_*\hat{\cal O}_X^+$. We will check this locally.

Recall that an $\cal O_C$-algebra $R$ is called formally smooth (as in \cite{BMS1}) if it is a $p$-adically complete flat $\cal O_C$-algebra such that $R/p$ is a smooth $\cal O_C/p$-algebra. And a formally smooth $\cal O_C$-algebra $R$ is called small (cf. \cite[Definition 8.5]{BMS1}) if there is an \'etale map 
\[
\Box: {\rm Spf}R\to {\rm Spf}\cal O_C\langle T_1^{\pm1},\cdots,T_d^{\pm1}\rangle.
\]
We call such \'etale map a framing. Given a framing, we can define 
\[
R_{\infty}:= R\hat{\otimes}_{\cal O_C\langle T_1^{\pm1},\cdots,T_d^{\pm1}\rangle}\cal O_C\langle T_1^{\pm1/{p^{\infty}}},\cdots,T_d^{\pm1/{p^{\infty}}}\rangle
\]
which is an integral perfectoid ring. And there is an action of $\Gamma=\bb Z_p(1)^d$ on it. More precisely, choose a compatible system $(\zeta_{p^m})$ of $p$-power roots of unity and let $\gamma_i$, $i=1,\cdots, d$ be generators of $\Gamma$. Then $\gamma_i$ acts by sending $T_i^{1/{p^m}}$ to $\zeta_{p^m}T_i^{1/{p^m}}$ and sending $T_j^{1/{p^m}}$ to $T_j^{1/{p^m}}$ for $j\neq i$.

By Faltings' almost purity theorem (cf. \cite[Chapter 1, Section 3 and 4]{faltings1988p}) and \cite[Proposition 3.5, Proposition 3.7, Corollary 6.6]{scholze2013p}, there is an almost isomorphism of complexes of $\cal O_C$-modules
\[
R\Gamma_{{}}(\Gamma, R_{\infty})\to R\Gamma(Y_{\rm{pro\acute et}}, \hat{\cal O}_Y^+),
\]
where $Y={\rm Spa}(R[1/p], R)$. Moreover the continuous group cohomology on the left hand side can be calculated by the Koszul complex $K_{R_{\infty}}(\gamma_1-1,\cdots, \gamma_d-1)$ by \cite[Lemma 7.3]{BMS1}, which can be defined as
\[
K_{R_{\infty}}(\gamma_1-1,\cdots, \gamma_d-1)=R_{\infty}\otimes_{\bb Z[\gamma_1,\cdots,\gamma_d]}(\bigotimes_{i=1}^d(\bb Z[\gamma_1,\cdots,\gamma_d]\xrightarrow{\gamma_i-1}\bb Z[\gamma_1,\cdots,\gamma_d])).
\]
This complex sits in non-negative cohomological degrees $[0, d]$. On the other hand, since $\overline{\frak X}$ is a proper smooth formal scheme over $\cal O_C$, there exists a basis of small affine opens (cf. \cite[Theorem 2]{kedlaya2003more}, \cite[Lemma 4.9]{bhatt2018specializing}). 
So when $i>d$, we get that $R^i\nu_*\hat{\cal O}_X^+$ is almost zero. 

So now we have an almost isomorphism: $\tau^{\leq d}R\nu_*\hat{\cal O}_X^+\to R\nu_*\hat{\cal O}_X^+$. Taking cohomology, we then get an almost isomorphism: $R\Gamma(\overline{\frak X}, \tau^{\leq d}R\nu_*\hat{\cal O}_X^+)\to R\Gamma(\overline{\frak X}, R\nu_*\hat{\cal O}_X^+)$. Again by Theorem \ref{primitive}, we get two maps in almost derived category $D(\cal O_C)^a$:
\[
f: (R\Gamma(\overline{\frak X}, \tau^{\leq d}\tilde{\Omega}_{\overline{\frak X}}))^a\to (R\Gamma_{\rm {\acute et}}(X, \bb Z_p)\otimes_{\bb Z_p}\cal O_C)^a
\]
\[
g: (R\Gamma_{\rm {\acute et}}(X, \bb Z_p)\otimes_{\bb Z_p}\cal O_C)^a\to (R\Gamma(\overline{\frak X}, \tau^{\leq d}\tilde{\Omega}_{\overline{\frak X}}))^a
\]
whose composition in either direction is $(\zeta_p-1)^{d}$.
Since both sides are perfect complexes of $\cal O_C$-modules, we get two maps in the derived category $D(\cal O_C)$:
\[
f: R\Gamma(\overline{\frak X}, \tau^{\leq d}\tilde{\Omega}_{\overline{\frak X}})\to R\Gamma_{\rm {\acute et}}(X, \bb Z_p)\otimes_{\bb Z_p}\cal O_C
\]
\[
g: R\Gamma_{\rm {\acute et}}(X, \bb Z_p)\otimes_{\bb Z_p}\cal O_C\to R\Gamma(\overline{\frak X}, \tau^{\leq d}\tilde{\Omega}_{\overline{\frak X}})
\]
whose composition in either direction is $(\zeta_p-1)^{d}$.

Now as $\tau^{\leq d}\tilde\Omega_{\overline{\frak X}}\simeq \tilde\Omega_{\overline{\frak X}}$, we have $R\Gamma(\overline{\frak X}, \tau^{\leq d}\tilde{\Omega}_{\overline{\frak X}})\simeq R\Gamma(\overline{\frak X},\tilde{\Omega}_{\overline{\frak X}})=R\Gamma_{\rm HT}(\overline{\frak X})$. So we get two maps
\[
f: R\Gamma_{\rm HT}(\overline{\frak X})\to R\Gamma_{\rm {\acute et}}(X, \bb Z_p)\otimes_{\bb Z_p}\cal O_C
\]
\[
g: R\Gamma_{\rm {\acute et}}(X, \bb Z_p)\otimes_{\bb Z_p}\cal O_C\to R\Gamma_{\rm HT}(\overline{\frak X})
\]
whose composition in either direction is $(\zeta_p-1)^{d}$.

\begin{theorem}\label{get}
There is an isomorphism of $\cal O_C$-modules for all $n$
\[
H^n_{\rm HT}(\overline{\frak X})\cong H^n_{\rm {\acute et}}(X, \bb Z_p)\otimes_{\bb Z_p}\cal O_C.
\]
\end{theorem}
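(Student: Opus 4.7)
The plan is to mimic the proof of Theorem \ref{HT}, exploiting the two maps $f$ and $g$ constructed just before the statement of Theorem \ref{get}, combined with Lemma \ref{torsionlemma}. Passing to $n$-th cohomology, these complex-level morphisms induce $\cal O_C$-linear maps
\[
f_n: H^n_{\rm HT}(\overline{\frak X})\to H^n_{\rm {\acute et}}(X,\bb Z_p)\otimes_{\bb Z_p}\cal O_C,\qquad g_n: H^n_{\rm {\acute et}}(X,\bb Z_p)\otimes_{\bb Z_p}\cal O_C\to H^n_{\rm HT}(\overline{\frak X})
\]
whose composition in either direction is multiplication by $(\zeta_p-1)^{d}$.

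To apply Lemma \ref{torsionlemma} with $\alpha=(\zeta_p-1)^d$ and $\beta=p$, two things must be checked: that both modules have the shape $\cal O_C^r\oplus\bigoplus \cal O_C/p^{m_i}$, and that $v(\alpha)<v(\beta)$. The valuation inequality is immediate, since $v((\zeta_p-1)^{p-1})=v(p)$ and $d<p-1$ give $v((\zeta_p-1)^d)=\tfrac{d}{p-1}v(p)<v(p)$. For the module structure, $H^n_{\rm\acute et}(X,\bb Z_p)$ is a finitely generated $\bb Z_p$-module, so its base change to $\cal O_C$ automatically has the required form. On the Hodge--Tate side, because we are in the unramified case $\cal O_K=W(k)$, the base change isomorphism $R\Gamma_{\rm HT}(\bar{\frak X})\simeq R\Gamma_{\rm HT}(\frak X)\otimes^{\bb L}_{W(k)}\cal O_C$ together with flatness of $W(k)\to\cal O_C$ yields $H^n_{\rm HT}(\bar{\frak X})\cong H^n_{\rm HT}(\frak X)\otimes_{W(k)}\cal O_C$; as in Lemma \ref{perfect}, $H^n_{\rm HT}(\frak X)$ is finitely presented over the discrete valuation ring $W(k)$, so its invariant-factor decomposition transports to the desired form on $\cal O_C$.

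With both hypotheses verified, Lemma \ref{torsionlemma} applied to $f_n$ and $g_n$ yields the isomorphism $H^n_{\rm HT}(\overline{\frak X})\cong H^n_{\rm \acute et}(X,\bb Z_p)\otimes_{\bb Z_p}\cal O_C$ for every $n$.

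The real technical work has already been carried out in the paragraphs preceding the statement: namely, constructing $f$ and $g$ as genuine morphisms in $D(\cal O_C)$ rather than merely in the almost derived category. This required (i) the truncation equivalence $\tau^{\leq d}\tilde\Omega_{\bar{\frak X}}\simeq \tilde\Omega_{\bar{\frak X}}$ coming from Lemma \ref{ght} and the vanishing $\Omega^i_{\bar{\frak X}}=0$ for $i>d$, (ii) the almost vanishing $R^i\nu_*\hat{\cal O}_X^+\simeq 0$ for $i>d$ via Faltings' almost purity and the Koszul complex bound, and (iii) the passage from the almost to the honest derived category using Lemma \ref{almost1}, which as in Theorem \ref{HT} costs nothing since $C$ may be enlarged to its spherical completion without affecting either cohomology. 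Once these ingredients are in hand, the final deduction is purely commutative-algebraic, and the main step I would expect to justify carefully is the reduction of the module structure of $H^n_{\rm HT}(\bar{\frak X})$ to a sum of cyclic $\cal O_C/p^{m_j}$'s, which uses the unramifiedness crucially.
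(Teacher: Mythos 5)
Your proposal is correct and follows exactly the route of the paper: the paper's proof of Theorem~\ref{get} is literally the single line ``This follows from Lemma~\ref{torsionlemma},'' relying on the maps $f,g$ with composition $(\zeta_p-1)^d$ built in the preceding paragraphs. Your fleshed-out justification of the hypotheses of Lemma~\ref{torsionlemma} (the invariant-factor shape of both cohomology groups via the unramified flat base change $W(k)\to\cal O_C$, and the valuation inequality $v((\zeta_p-1)^d)<v(p)$ from $d<p-1$) is precisely what the paper leaves implicit, so you have matched the intended argument.
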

\begin{proof}
This follows from Lemma \ref{torsionlemma}.
\end{proof}

\begin{theorem}\label{dimension}
Assume $d={\rm dim}\frak X<p-1$. Then there is an isomorphism of $W(k)$-modules for all $n$
\[
H^n_{\rm crys}(\frak X_k/W(k))\cong H^n_{\rm\acute {e}t}(X, \mathbb Z_p)\otimes_{\mathbb Z_p}W(k).
\]
\end{theorem}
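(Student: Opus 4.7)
The plan is to rerun the proof of Theorem \ref{unrcom} without the cohomological-degree restriction $n < p-1$, substituting Theorem \ref{get} for Theorem \ref{HT} and Lemma \ref{ght} for Corollary \ref{deligne}. Both of these upgraded inputs are available for every $n$ precisely because of the hypothesis $d = \dim\frak X < p-1$; once they are in place, the rest of the argument (the Hodge-to-de Rham spectral sequence bookkeeping, Theorem \ref{torsion}, and length computations) is degree-independent and transposes verbatim.

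First I would combine Theorem \ref{get} with Lemma \ref{ght} to obtain, for every $n$ and every $m > 0$,
\[
l_{\cal O_C}\bigl(H^n_{\rm \acute et}(X,\bb Z_p)_{\rm tor}\otimes_{\bb Z_p}\cal O_C/p^m\bigr) = \sum_{i=0}^{n} l_{\cal O_C}\bigl(H^{n-i}(\overline{\frak X},\Omega^i_{\overline{\frak X}})_{\rm tor}/p^m\bigr),
\]
the Breuil--Kisin twists $\{-i\}$ being invisible to the length function. Corollary \ref{1st} and Lemma \ref{hh} involve no restriction on $n$ and yield
\[
l_{\cal O_C}\bigl(H^n_{\rm dR}(\overline{\frak X}/\cal O_C)_{\rm tor}/p^m\bigr) \leq \sum_{i=0}^{n} l_{\cal O_C}\bigl(H^{n-i}(\overline{\frak X},\Omega^i_{\overline{\frak X}})_{\rm tor}/p^m\bigr).
\]
Chaining these two gives the extension of Theorem \ref{unr} to arbitrary cohomological degree.

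Next I would convert to crystalline cohomology using the canonical identification $H^n_{\rm dR}(\frak X/W(k)) \cong H^n_{\rm crys}(\frak X_k/W(k))$ (valid in the unramified case) together with flat base change $H^n_{\rm dR}(\overline{\frak X}/\cal O_C) \cong H^n_{\rm dR}(\frak X/W(k))\otimes_{W(k)}\cal O_C$. Since normalized length is preserved under the flat base changes $W(k) \to \cal O_C$ and $\bb Z_p \to \cal O_C$ (as $p$ has length $1$ throughout), the extended Theorem \ref{unr} becomes
\[
l_{W(k)}\bigl(H^n_{\rm crys}(\frak X_k/W(k))_{\rm tor}/p^m\bigr) \leq l_{\bb Z_p}\bigl(H^n_{\rm \acute et}(X,\bb Z_p)_{\rm tor}/p^m\bigr).
\]
Combining with the reverse inequality from Theorem \ref{torsion} forces equality for every $n$ and $m$. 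A finitely generated torsion $W(k)$-module is determined up to isomorphism by the lengths of its reductions modulo powers of $p$ (the same argument as in the proof of Lemma \ref{lengthequal} applies over $W(k)$), so the torsion submodules on the two sides are isomorphic as $W(k)$-modules; the free ranks match by the rational comparison of \cite[Theorem 1.1]{BMS1}, giving the desired $W(k)$-module isomorphism.

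I do not anticipate a genuine obstacle, because the substantive work has already been done upstream in Theorem \ref{get} and Lemma \ref{ght}, which are exactly what lift the restriction $n < p-1$ once $d < p-1$ is assumed. The only place that deserves minor care is the bookkeeping of normalized lengths across base change between $W(k)$, $\bb Z_p$ and $\cal O_C$, but this is purely formal since in all three rings $p$ has length $1$ and each base change is flat.
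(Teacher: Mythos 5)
Your proof is correct and takes essentially the same approach as the paper: the paper's own argument is a two-line remark that Theorem \ref{unr} and Theorem \ref{unrcom} hold in all degrees once Theorem \ref{HT} and Corollary \ref{deligne} are replaced by Theorem \ref{get} and Lemma \ref{ght}, and you have simply written out what that substitution amounts to. The only cosmetic difference is where you descend to $W(k)$: the paper first establishes the $\cal O_C$-module isomorphism $H^n_{\rm dR}(\overline{\frak X}/\cal O_C)\cong H^n_{\rm \acute et}(X,\bb Z_p)\otimes_{\bb Z_p}\cal O_C$ and descends at the very end via the crystalline and flat base-change identifications, whereas you translate the length (in)equalities to $W(k)$ and $\bb Z_p$ first and conclude there; both are equivalent since finitely generated $W(k)$-modules are determined by invariant factors, which are stable under the flat base change $W(k)\to\cal O_C$.
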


\begin{proof}
Note that if Theorem \ref{unr} is true for all $n$, then Theorem \ref{unrcom} is true for all $n$. And if Theorem \ref{HT} and Theorem \ref{deligne} are true for all cohomological degrees, then Theorem \ref{unrcom} is true for all cohomological degrees. So this theorem follows from Theorem \ref{ght} and Theorem \ref{get}.

\end{proof}

\begin{corollary}\label{de}
If $d={\rm dim}(\frak X)<p-1$, the coboundaries $B^{i,n-i}_{\infty}$ vanish for all $n$. In particular the Hodge-to-de Rham spectral sequence degenerates at $E_1$-page.
\end{corollary}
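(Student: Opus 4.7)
The plan is to force every inequality appearing in the length-based estimates of the previous subsection into an equality, from which the vanishing of $B_\infty^{i,n-i}$ falls out.

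First, combining Theorem \ref{ght}, Theorem \ref{get} and Theorem \ref{dimension} (the last base-changed from $W(k)$ to $\cal O_C$ via flat base change of de Rham cohomology), I would establish, for every $n$ and every $m>0$,
\[
l_{\cal O_C}\!\bigl(H^n_{\rm dR}(\bar{\frak X}/\cal O_C)_{\rm tor}/p^m\bigr)
= \sum_{i=0}^{n} l_{\cal O_C}\!\bigl(H^{n-i}(\bar{\frak X},\Omega^i_{\bar{\frak X}})_{\rm tor}/p^m\bigr),
\]
using along the way that a Breuil--Kisin twist, being a tensor product with an invertible $\cal O_C$-module, preserves $l_{\cal O_C}$.

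Next, Corollary \ref{1st} combined with the refined two-step chain
\[
l_{\cal O_C}\!\bigl((E_\infty^{i,n-i})_{\rm tor}/p^m\bigr)
\leq l_{\cal O_C}\!\bigl((Z_\infty^{i,n-i})_{\rm tor}/p^m\bigr)
\leq l_{\cal O_C}\!\bigl(H^{n-i}(\bar{\frak X},\Omega^i_{\bar{\frak X}})_{\rm tor}/p^m\bigr)
\]
extracted from the proof of Lemma \ref{hh} sandwiches the left-hand side of the previous display between $\sum_i l_{\cal O_C}((E_\infty^{i,n-i})_{\rm tor}/p^m)$ and $\sum_i l_{\cal O_C}((Z_\infty^{i,n-i})_{\rm tor}/p^m)$, both of which lie between the two equal quantities of the first step. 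Hence all inequalities are forced to be equalities, term by term; in particular $l_{\cal O_C}((Z_\infty^{i,n-i})_{\rm tor}/p^m) = l_{\cal O_C}((E_\infty^{i,n-i})_{\rm tor}/p^m)$ for every $i,n,m$.

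Finally, the torsion in $H^{n-i}(\bar{\frak X},\Omega^i_{\bar{\frak X}})$ is killed by a fixed power $p^N$, and the same therefore holds for its subquotients $(Z_\infty^{i,n-i})_{\rm tor}$ and $(E_\infty^{i,n-i})_{\rm tor}$; taking $m\geq N$ in the equality above removes the $/p^m$ and yields $l_{\cal O_C}((Z_\infty^{i,n-i})_{\rm tor}) = l_{\cal O_C}((E_\infty^{i,n-i})_{\rm tor})$. Applying additivity of $l_{\cal O_C}$ to the short exact sequence $0\to B_\infty^{i,n-i}\to (Z_\infty^{i,n-i})_{\rm tor}\to (E_\infty^{i,n-i})_{\rm tor}\to 0$ (which appears inside the proof of Lemma \ref{hh}) then forces $l_{\cal O_C}(B_\infty^{i,n-i})=0$, hence $B_\infty^{i,n-i}=0$. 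Since $B_\infty^{i,n-i}$ collects the images of all differentials $d_r$ landing in bidegree $(i,n-i)$, its vanishing at every bidegree is equivalent to $d_r=0$ for all $r\geq 1$, i.e.\ to degeneration at $E_1$. I do not foresee a genuine obstacle; the only delicate point is the bookkeeping required to ensure that every module entering the length-additivity argument is finitely presented torsion over $\cal O_C$ (which it is, because $Z_\infty^{i,n-i}$ and $B_\infty^{i,n-i}$ are subquotients of the finitely presented $\cal O_C$-module $E_1^{i,n-i}$, and $B_\infty^{i,n-i}$ is torsion by the rational degeneration invoked in the proof of Lemma \ref{hh}).
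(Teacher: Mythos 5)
Your proof is correct and takes essentially the same route as the paper's: squeeze the length inequalities from Corollary \ref{1st} and Lemma \ref{hh} between the two equal quantities provided by Theorems \ref{ght}, \ref{get}, and \ref{dimension}, force all of them to be equalities, and then apply additivity of $l_{\cal O_C}$ to the short exact sequence $0\to B_\infty^{i,n-i}\to (Z_\infty^{i,n-i})_{\rm tor}\to (E_\infty^{i,n-i})_{\rm tor}\to 0$ to conclude $B_\infty^{i,n-i}=0$. The only difference is that you explicitly take $m\geq N$ to pass from equalities modulo $p^m$ to equalities of the lengths themselves, a small step the paper leaves implicit.
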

\begin{proof}
By Theorem \ref{ght} and Theorem \ref{get}, we see that
  \[
 \sum_{i=0}^{n}l_{\cal O_C}(H^{n-i}(\bar{\frak X}, \Omega_{\bar{\frak X}}^i)_{\rm tor}/p^m)=l_{\cal O_C}(H^n_{{\rm\acute {e}t}}(X, \mathbb Z_p)_{\rm tor}\otimes_{\mathbb Z_p}\cal O_C/p^m)
 \]
is true for all $n$.

Theorem \ref{dimension} shows that for all $n$ we have
\[
l_{\cal O_C}(H^{n}_{\rm dR}(\bar{\frak X}/\cal O_C)_{\rm tor}/p^m)=l_{\cal O_C}(H^n_{{\rm\acute {e}t}}(X, \mathbb Z_p)_{\rm tor}\otimes_{\mathbb Z_p}\cal O_C/p^m).
\]

So we conclude that 
\[
l_{\cal O_C}(H^{n}_{\rm dR}(\bar{\frak X}/\cal O_C)_{\rm tor}/p^m)=\sum_{i=0}^{n}l_{\cal O_C}(H^{n-i}(\bar{\frak X}, \Omega_{\bar{\frak X}}^i)_{\rm tor}/p^m)
\]
holds for all $n$. 

As we have seen in the proof of Lemma \ref{hh}, there are inequalities for all $n$
\[
l_{\cal O_C}({E_{\infty}^{i, n-i}}_{\rm tor}/p^m)\leq l_{\cal O_C}({Z_{\infty}^{i, n-i}}_{\rm tor}/p^m)\leq l_{\cal O_C}(H^{n-i}(\bar{\frak X}, \Omega_{\bar{\frak X}}^i)_{\rm tor}/p^m).
\]

Also by using the same argument as in the proof of Theorem \ref{unr}, we have 
\[
 l_{\cal O_C}(H^{n}_{\rm dR}(\bar{\frak X}/\cal O_C)_{\rm tor}/p^m)\leq \sum_{i=0}^{n}l_{\cal O_C}({E_{\infty}^{i, n-i}}_{\rm tor}/p^m)\leq \sum_{i=0}^{n}l_{\cal O_C}(H^{n-i}(\bar{\frak X}, \Omega_{\bar{\frak X}}^i)_{\rm tor}/p^m).
 \]
 holds for all $n$. But these inequalities are in fact equalities. This means that 
\[
l_{\cal O_C}({E_{\infty}^{i, n-i}}_{\rm tor}/p^m)=l_{\cal O_C}({Z_{\infty}^{i, n-i}}_{\rm tor}/p^m)=l_{\cal O_C}(H^{n-i}(\bar{\frak X}, \Omega_{\bar{\frak X}}^i)_{\rm tor}/p^m).
\]
In other words, the coboundaries $B^{i,n-i}_{\infty}$ vanish as we have $l_{\cal O_C}(B_{\infty}^{i,n-i})=l_{\cal O_C}({Z^{i,n-i}_{\infty}}_{\rm tor})-l_{\cal O_C}({E^{i,n-i}_{\infty}}_{\rm tor})=0$. So the Hodge-to-de Rham spectral sequence degenerates at $E_1$-page.
\end{proof}

\begin{remark}\label{degeneration}
We collect some other results about the degeneration of the (integral) Hodge-to-de Rham spectral sequence.

\begin{enumerate}
\item In \cite[Corollary 2.7]{Fontaine--Messing}, Fontaine and Messing have proved that for any proper smooth (formal) scheme $\frak Y$ whose special fiber has dimension strictly less than $p$, the Hodge-to-de Rham spectral sequence degenerates at $E_1$-page. Their proof makes use of the syntomic cohomology.
\item
For any projective smooth scheme $\frak Y$ over $W(k)$ where $k$ is a perfect field of characteristic $p$, Kazuya Kato has proved that if ${\rm dim}(\frak Y)\leq p$, the Hodge-to-de Rham spectral sequence degenerates at $E_1$-page and the de Rham cohomology groups are Fontane--Laffaille modules (cf. \cite[chapter \uppercase\expandafter{\romannumeral2}, Proposition 2.5]{kato1987p}).
\item 
For any proper smooth formal scheme $\frak Y$ over $\cal O_K$, where $\cal O_K$ is the ring of integers of a complete discretely valued non-archimedean extension $K$ of $\bb Q_p$ with perfect residue field $k$ and ramification degree $e$. Let $\frak S$ be $W(k)[[u]]$ and $E$ be an Eisenstein polynomial for a uniformizer $\pi$ of $\cal O_K$. Shizhang Li has proved that if $\frak Y$ can be lifted to $\frak S/(E^2)$ and ${\rm dim}(\frak Y)\cdot e<p-1$, then the Hodge-to-de Rham spectral sequence is split degenerate (cf. \cite[Theorem 1.1]{li2020integral}). His proof uses Theorem \ref{main}.
\end{enumerate}

\end{remark}

\section{The ramified case: comparison theorem}\label{E}
In this section, we will get some properties about the torsion in the Breuil--Kisin cohomology groups $H^{i+1}_{\frak S}(\frak X)$ when $ie<p-1$ and obtain an integral comparison theorem comparing the $p$-adic \'etale cohomology groups and the crystalline cohomology groups.
~\\

\subsection{Torsion in Breuil--Kisin cohomology groups}
Note that the ring $\frak S=W(k)[[u]]$ is a two-dimensional regular local ring. The structure of $\frak S$-modules is subtle in general (see Remark \ref{torsionstructure}). In particular, it is difficult to study the $u$-torsion. But in our case, it turns out to be simpler. 

Recall that we can define $A_{\inf}:=W(\cal O_C^{\flat})$ as in Definition \ref{mu}. We start by studying the $A_{\inf}$-cohomology groups  of $\overline{\frak X}$.

\begin{lemma}\label{torsionfree}
The $A_{\inf}$-cohomology group $H^{i+1}_{A_{\inf}}(\bar{\frak X}):=H^{i+1}(\bar{\frak X}, A\Omega_{\bar{\frak X}})$ is $\tilde \xi$-torsion-free for any $i$ such that $ie<p-1$. 
\end{lemma}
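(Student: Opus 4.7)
The starting point is the short exact sequence of Corollary \ref{special}(4),
\[
0\to H^{i}_{A_{\inf}}(\bar{\frak X})/\tilde\xi\to H^{i}_{\rm HT}(\bar{\frak X})\to H^{i+1}_{A_{\inf}}(\bar{\frak X})[\tilde\xi]\to 0,
\]
which reduces the claim to showing that the leftmost injection is surjective, i.e.\ that the cokernel vanishes. My strategy is to lift the $L\eta_{\zeta_p-1}$-construction used in Section \ref{C} to an $L\eta_\mu$-construction at the $A_{\inf}$-level, and then to compare with the Hodge--Tate identification of Theorem \ref{HT}.

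To produce the relevant maps, I would apply Lemma \ref{bhattkey} with $a=\mu$ to $K:=\tau^{\leq i+1}R\nu_*\bb A_{\inf,X}$, noting that $H^0(K)$ is $\mu$-torsion-free because it embeds in the $\mu$-torsion-free pro-\'etale sheaf $\bb A_{\inf,X}$. Since $L\eta_\mu R\nu_*\bb A_{\inf,X}\simeq A\Omega_{\bar{\frak X}}$, this yields maps $f,g$ of complexes with composition $\mu^{i+1}$ in either direction. Taking Zariski hypercohomology on $\bar{\frak X}$, invoking the primitive comparison theorem (Theorem \ref{primitive}), and passing through the almost-category adjunction of Lemma \ref{almostainf} (after replacing $C$ by its spherical completion, as in the proof of Theorem \ref{HT}), I obtain genuine maps
\[
f: H^{i+1}_{A_{\inf}}(\bar{\frak X})\to H^{i+1}_{\rm\acute et}(X,\bb Z_p)\otimes_{\bb Z_p}A_{\inf},\qquad g: H^{i+1}_{\rm\acute et}(X,\bb Z_p)\otimes_{\bb Z_p}A_{\inf}\to H^{i+1}_{A_{\inf}}(\bar{\frak X})
\]
with $g\circ f=f\circ g=\mu^{i+1}$. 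The target is $\tilde\xi$-torsion-free: this follows from the fact that $(p,\tilde\xi)$ is a regular sequence in $A_{\inf}$, so each $A_{\inf}/p^n$ is $\tilde\xi$-torsion-free, combined with the cyclic decomposition of the finitely generated $\bb Z_p$-module $H^{i+1}_{\rm\acute et}(X,\bb Z_p)$. Consequently $f$ annihilates $H^{i+1}_{A_{\inf}}(\bar{\frak X})[\tilde\xi]$, hence so does $\mu^{i+1}=g\circ f$; on this $\cal O_C$-module, $\mu$ acts as $\zeta_p-1$, so it is killed by $(\zeta_p-1)^{i+1}$.

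To finish, I would combine this annihilator bound with the $\tau^{\leq i}$-versions of $f$ and $g$, whose composition is $\mu^i$ and whose reductions mod $\tilde\xi$ are the maps $\bar f,\bar g$ of Section \ref{C} between $H^i_{A_{\inf}}/\tilde\xi$ and $H^i_{\rm\acute et}(X,\bb Z_p)\otimes_{\bb Z_p}\cal O_C$ with composition $(\zeta_p-1)^i$. When $ie<p-1$ we have $v((\zeta_p-1)^i)<v(\pi)$, so Lemma \ref{torsionlemma} forces an abstract isomorphism of $\cal O_C$-modules $H^i_{A_{\inf}}/\tilde\xi\cong H^i_{\rm\acute et}(X,\bb Z_p)\otimes_{\bb Z_p}\cal O_C$; by Theorem \ref{HT} the same holds for $H^i_{\rm HT}(\bar{\frak X})$. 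A length computation of $l_{\cal O_C}(\,\cdot\,/p^m)$ across the short exact sequence, together with the established $(\zeta_p-1)^{i+1}$-annihilator on the cokernel, should then force the cokernel to have vanishing length for all $m$, hence to be zero. I expect the main technical obstacle to lie precisely here: namely, verifying that Lemma \ref{torsionlemma} actually applies to $H^i_{A_{\inf}}/\tilde\xi$ (requiring that it be of the form $\cal O_C^r\oplus\bigoplus_j\cal O_C/\pi^{m_j}$, which should hold because it injects into $H^i_{\rm HT}(\bar{\frak X})\cong H^i_{\rm HT}(\frak X)\otimes_{\cal O_K}\cal O_C$), and then propagating the resulting length equalities through the short exact sequence to conclude that the injection itself --- not merely an abstract comparison of modules --- is surjective.
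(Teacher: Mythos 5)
Your proposal correctly identifies the reduction to the vanishing of the cokernel of $H^i_{A_{\inf}}(\bar{\frak X})/\tilde\xi \into H^i_{\rm HT}(\bar{\frak X})$ (equivalently, of $H^{i+1}_{A_{\inf}}(\bar{\frak X})[\tilde\xi]$), and the machinery you invoke (Lemma \ref{bhattkey} with $a=\mu$, the primitive comparison theorem, passage to the spherical completion, the almost-category adjunction) is exactly what the paper uses. However, there are two genuine gaps in the final step.

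First, the annihilator $\mu^{i+1}$ you obtain by applying Lemma \ref{bhattkey} to $\tau^{\leq i+1}R\nu_*\bb A_{\inf,X}$ is too weak for the full range $ie<p-1$. The condition $ie<p-1$ guarantees $v\bigl((\zeta_p-1)^i\bigr)<v(\pi)$, but it does \emph{not} in general give $v\bigl((\zeta_p-1)^{i+1}\bigr)<v(\pi)$: for instance $p=5$, $e=2$, $i=1$ satisfies $ie=2<4$, yet $v\bigl((\zeta_p-1)^{2}\bigr)=v(\pi)$. In that situation the torsion module $\cal O_C/\pi$ is killed by $(\zeta_p-1)^{i+1}$, so the bound from your $\tau^{\leq i+1}$ construction does not force the cokernel to vanish. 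Second, the length argument is insufficient: an abstract isomorphism $H^i_{A_{\inf}}/\tilde\xi\cong H^i_{\rm HT}(\bar{\frak X})$ of finitely presented $\cal O_C$-modules, combined with an injection between them, does not force that injection to be surjective. Running $l_{\cal O_C}(\,\cdot\,/p^m)$ across the short exact sequence only shows that the torsion submodules agree as submodules; the free parts can still inject via $\pi$-scaling (e.g.\ $\cal O_C\xrightarrow{p}\cal O_C$ has cokernel $\cal O_C/p\neq0$ yet the two modules are abstractly isomorphic and have $l(\,\cdot\,/p^m)$ equal for all $m$).

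The paper avoids both problems by applying Lemma \ref{bhattkey} only to the $\tau^{\leq i}$ truncation on the $\bb A_{\inf,X}$-level, and simultaneously on the $\hat{\cal O}_X^+$-level, obtaining a \emph{commutative} square
\[
\begin{tikzcd}
H^i_{A_{\inf}}(\bar{\mathfrak X})\arrow{r}{s_1}\arrow[xshift=2.5ex]{d}{f_1} & H^i_{\rm HT}(\bar{ \mathfrak X})\arrow[xshift=2.5ex]{d}{f_2}\\
H^i_{\rm \acute et}(X, \mathbb Z_p)\otimes_{\mathbb Z_p}A_{\rm inf}\arrow{r}{s_2} \arrow{u}{g_1} & H^i_{\rm \acute et}(X, \mathbb Z_p)\otimes_{\mathbb Z_p}\mathcal O_C\arrow{u}{g_2}
\end{tikzcd}
\]
with $g_jf_j=f_jg_j=\mu^i$ for $j=1,2$. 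Taking cokernels of the horizontal maps yields induced maps between $\coker(s_1)\cong H^{i+1}_{A_{\inf}}(\bar{\frak X})[\tilde\xi]$ and $\coker(s_2)=0$ whose composition is still $\mu^i$, hence $\mu^i$ acts by zero on $H^{i+1}_{A_{\inf}}(\bar{\frak X})[\tilde\xi]$, i.e.\ it is killed by $(\zeta_p-1)^i$. This is exactly the sharper bound your argument lacks: combined with the decomposition of $H^{i+1}_{A_{\inf}}(\bar{\frak X})[\tilde\xi]\cong H^{i+1}_{\frak S}(\frak X)[E]\otimes_{\cal O_K}\cal O_C$ into $\cal O_C/\pi^{n_s}$-summands and the inequality $v\bigl((\zeta_p-1)^i\bigr)<v(\pi)$, Lemma \ref{torsionlemma} applied with $N=0$ forces the vanishing. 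The ingredient you are missing is thus not a stronger length lemma but the compatibility of the two pairs $(f_1,g_1)$ and $(f_2,g_2)$ with the reduction maps $s_1, s_2$, which converts the annihilation of the target of $s_2$ directly into a $\mu^i$-bound on $\coker(s_1)$.
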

\begin{proof}
We assume that $C$ is spherically complete. As in the proof of Theorem \ref{HT}, we see that the spherical completion of $C$ exists and is still complete and algebraically closed. Moreover since $R\Gamma_{A_{\inf}}(\overline{\frak X})\simeq R\Gamma_{\frak S}(\frak X)\otimes_{\frak S, \alpha}^{\bb L}A_{\inf}$ where $\alpha: \frak S\to A_{\inf}$ is the faithfully flat map taking $(E)$ to $(\tilde \xi)$, we have $H^{i+1}_{A_{\inf}}(\bar{\frak X})\cong H^{i+1}_{\frak S}(\frak X)\otimes_{\frak S, \alpha}A_{\inf}$, in particular $H^{i+1}_{A_{\inf}}(\bar{\frak X})$ is $\tilde \xi$-torsion-free if and only if $H^{i+1}_{\frak S}(\frak X)$ is $E$-torsion-free as $(\alpha(E))=(\tilde\xi)$. So it does not matter whether $C$ is spherically complete or not.

As in Chapter 3, we apply Lemma \ref{bhattkey} to the complex of sheaves of $A_{\inf}$-modules $\tau^{\leq i}R\nu_{*} \bb A_{\inf,X}$ and the element $\mu\in A_{\inf}$. Precisely, in the category $D^{[0,i]}(\overline{\frak X}, A_{\inf})$, we get two natural maps
\[
f: \tau^{\leq i}R\nu_{*} \bb A_{\inf,X}\to L\eta_{\mu}\tau^{\leq i}R\nu_{*} \bb A_{\inf,X}\simeq \tau^{\leq i}A\Omega_{\overline{\frak X}}
\]
\[
g: \tau^{\leq i}A\Omega_{\overline{\frak X}}\simeq L\eta_{\mu}\tau^{\leq i}R\nu_{*} \bb A_{\inf,X}\to \tau^{\leq i}R\nu_{*} \bb A_{\inf,X}
\]
whose composition in either direction is $\mu^i$.

We consider the the complex of sheaves $\tau^{\leq i}R{\nu}_*\widehat {\mathcal O}_X^+$ as in the category $D(\bar{\frak X}, A_{\rm inf})$ via the map $A_{\rm inf}\xrightarrow{\tilde \theta} \mathcal O_C\to \mathcal O_{\bar{\frak X}}$. Moreover it is in the category $D^{[0, i]}(\bar {\frak X}, A_{\rm inf})$. 

There is a map $\tau^{\leq i}R\nu_*\mathbb A_{\rm inf,X}\to \tau^{\leq i}R{\nu}_*\widehat {\mathcal O}_X^+$ induced by $\tilde \theta: \mathbb A_{\rm inf,X}\to \widehat{\mathcal O}_X^+$. So we can get a commutative diagram
\[
\begin{tikzcd}
L\eta_{\mu}\tau^{\leq i}R\nu_*\mathbb A_{\inf,X}\arrow{r}{s_1}\arrow[xshift=3ex]{d}{f_1} & L\eta_{\mu}\tau^{\leq i}R{\nu}_*\widehat {\mathcal O}_X^+\arrow[xshift=3ex]{d}{f_2}\\
\tau^{\leq i}R\nu_*\mathbb A_{\inf,X}\arrow{r}{s_2} \arrow{u}{g_1}& \tau^{\leq i}R{\nu}_*\widehat {\mathcal O}_X^+\arrow{u}{g_2}
\end{tikzcd}
\]
where the composition of $f_j$ with $g_j$ in either direction is $\mu^i$ for $j=1, 2$. Note that  $L\eta_{\zeta_p-1}\tau^{\leq i}R\nu_*\widehat{\mathcal O}_X^+$ is isomorphic to $L\eta_{\mu}\tau^{\leq i}R{\nu}_*\widehat {\mathcal O}_X^+$ in $D(\bar{\mathfrak X}, A_{\inf})$.

Recall that $\tau^{\leq i}R\Gamma_{A_{\inf}}(\bar{\frak X})$ is a perfect complex of $A_{\inf}$-modules according to Lemma \ref{perfect}. Then by the second almost isomorphism in Theorem \ref{primitive} and Lemma \ref{almostainf}, we can get two maps
\[
f: \tau^{\leq i}R\Gamma_{A_{\inf}}(\bar{\frak X})\to \tau^{\leq i}R\Gamma_{\rm\acute {e}t}(X, \mathbb Z_p)\otimes_{\mathbb Z_p}A_{\inf}.
\]
\[
g: \tau^{\leq i}R\Gamma_{\rm\acute {e}t}(X, \mathbb Z_p)\otimes_{\mathbb Z_p}A_{\inf}\to \tau^{\leq i}R\Gamma_{A_{\inf}}(\bar{\frak X})
\]
whose composition in either direction is $\mu^i$.

By taking cohomology, we can obtain another commutative diagram
\[
\begin{tikzcd}
H^i_{A_{\inf}}(\bar{\mathfrak X})\arrow{r}{s_1}\arrow[xshift=2.5ex]{d}{f_1} & H^i_{\rm HT}(\bar{ \mathfrak X})\arrow[xshift=2.5ex]{d}{f_2}\\
H^i_{\rm \acute et}(X, \mathbb Z_p)\otimes_{\mathbb Z_p}A_{\rm inf}\arrow{r}{s_2} \arrow{u}{g_1} & H^i_{\rm \acute et}(X, \mathbb Z_p)\otimes_{\mathbb Z_p}\mathcal O_C\arrow{u}{g_2}
\end{tikzcd}
\]
Note that ${\rm Coker}(s_1)$ is in fact $H^{i+1}_{A_{\inf}}(\bar{\mathfrak X})[\tilde\xi]$ and ${\rm Coker}(s_2)=0$. 

Therefore we get two induced maps
\[
 \begin{tikzcd} 
H^{i+1}_{A_{\inf}}(\bar{\mathfrak X})[\tilde\xi]\arrow[yshift=1ex]{r}{f_3} & 0\arrow{l}{g_3}
\end{tikzcd}
\]
where the composition of $f_3$ and $g_3$ in either direction is $\mu^i$. Since $H^{i+1}_{A_{\inf}}(\bar{\frak X})[{\tilde \xi}]\simeq H^{i+1}_{\frak S}(\frak X_{})[E]\otimes_{\mathcal O_K} \mathcal O_C$ as $\mathcal O_C$-modules, it has a decomposition as $\cal O_C^m\oplus(\bigoplus_{s=1}^{n}\mathcal O_C/{\pi^{n_s}})$. Note that the image of $\mu$ under the reduction $A_{\inf}\to A_{\inf}/{\tilde \xi}$ is $\zeta_p-1$ and $v((\zeta_p-1)^i)<v(\pi)$ when $ie<p-1$. We then can get $H^{i+1}_{A_{\inf}}(\bar{\frak X})[{\tilde \xi}]=0$ by Lemma \ref{torsionlemma}.

\end{proof}

\begin{remark}
The previous version of this lemma covers the cohomological degree $i$ such that $ie<p-1$. We want to thank Shizhang Li for pointing out that the previous proof can be improved slightly to include the cohomological degree $i+1$ such that $ie<p-1$.

\end{remark}

In the next lemma, we give an equivalent statement to the $\tilde \xi$-torsion-freeness for some special $A_{\inf}$-modules.
\begin{lemma}\label{max}
Let $M$ be a finitely presented $A_{\inf}$-module such that $M[\frac{1}{p}]$ is finite projective over $A_{\inf}[\frac{1}{p}]$, and let $x\in \frak m\backslash (p)$ where $\frak m$ is the maximal ideal of $A_{\inf}$. Then $M$ is $\tilde \xi$-torsion-free if and only if it is $x$-torison-free.
\end{lemma}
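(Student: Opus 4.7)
The plan is to reduce the question to modules killed by a power of $p$, and then to leverage the valuation-ring structure of $\mathcal{O}_C^\flat = A_{\inf}/p$. Since $\tilde\xi$ and $x$ are both nonzero in the domain $A_{\inf}[1/p]$, they act as non-zero-divisors on the finite projective module $M[1/p]$, so both $M[\tilde\xi]$ and $M[x]$ lie in $M[p^\infty]$. Invoking the structure theorem for finitely presented $A_{\inf}$-modules with finite projective generic fibre (cf.\ \cite[Proposition 4.13]{BMS1}), $M$ splits as $M_{\mathrm{free}} \oplus M_{\mathrm{tor}}$ with $M_{\mathrm{free}}$ finite free and $M_{\mathrm{tor}}$ killed by $p^n$ for some $n$. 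Since the free part is automatically $y$-torsion-free for any non-zero-divisor $y \in A_{\inf}$, the question reduces to the same statement for $M$ killed by $p^n$.

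The base case $n = 1$ is then handled directly: $M$ becomes finitely presented over the rank-one valuation ring $\mathcal{O}_C^\flat$, and by Lemma \ref{pid} decomposes as $\bigoplus_i \mathcal{O}_C^\flat / \pi_i$. For any $y \in \mathfrak{m} \setminus (p)$, the reduction $\bar y \in \mathcal{O}_C^\flat$ is a nonzero nonunit; a cyclic module $\mathcal{O}_C^\flat/\pi$ with $\pi$ a nonzero nonunit always has $\bar y$-torsion (pick $a \in \mathcal{O}_C^\flat$ with $v(\pi) - v(\bar y) \leq v(a) < v(\pi)$, using that the value group of $\mathcal{O}_C^\flat$ is densely ordered). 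Hence $M$ is $\bar y$-torsion-free iff every $\pi_i$ is zero, iff $M$ is free over $\mathcal{O}_C^\flat$ --- a characterization independent of $y$.

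For general $n$, I would argue by induction on $n$. The goal is to show that $\tilde\xi$-torsion-freeness (and $x$-torsion-freeness) of $M$ is equivalent to freeness of $M/pM$ over $\mathcal{O}_C^\flat$, at which point the base case finishes the argument. The descent of torsion-freeness to $M/pM$ is the easier half: a relation $\tilde\xi m \in pM$ gives $p^{n-1}\tilde\xi m = 0$, and $\tilde\xi$-torsion-freeness of $M$ forces $p^{n-1} m = 0$; iterating through successive powers of $p$ and using the $p$-adic separatedness of $M$ reduces to the base case. The converse (lifting freeness from $M/pM$ to an identification of $M$ as $\bigoplus_i A_{\inf}/p^{a_i}$) uses Nakayama-style lifting in the local ring $A_{\inf}/p^n$, exploiting that $A_{\inf}/p^n$ is itself $y$-torsion-free for every $y \in \mathfrak{m} \setminus (p)$ (because in the domain $A_{\inf}$, the ideals $(p)$ and $(y)$ are distinct and one can iteratively divide by $p$).

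The main technical obstacle lies in the inductive lifting: handling finitely presented $A_{\inf}$-modules killed by $p^n$ where the decomposition into cyclic summands is not a priori given, and where $M[p]$ may fail to lie in $pM$ (which blocks a direct reduction modulo $p$, as the example $A_{\inf}/p^2\oplus A_{\inf}/p$ shows). Overcoming this requires a careful argument exploiting finite presentation and the local structure of $A_{\inf}/p^n$, essentially to identify the $\tilde\xi$-torsion-free f.p.\ modules killed by $p^n$ with direct sums of cyclic $A_{\inf}/p^{a_i}$, at which point the role of $\tilde\xi$ becomes symmetric with that of $x$ and the equivalence is immediate.
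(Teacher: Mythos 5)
Your reduction to the torsion submodule and the base case $n=1$ are sound (with one small slip: Proposition~4.13 of \cite{BMS1} yields the four-term exact sequence $0\to M_{\rm tor}\to M\to M_{\rm free}\to \overline M\to 0$, not a direct-sum decomposition $M\cong M_{\rm free}\oplus M_{\rm tor}$; but since every $\tilde\xi$- or $x$-torsion element is $p$-power torsion, the passage to $M_{\rm tor}$ is still justified). The genuine problem is the inductive step, and it cannot be repaired in the form you propose: the target characterization ``finitely presented, killed by $p^n$, $\tilde\xi$-torsion-free $\Rightarrow$ isomorphic to $\bigoplus_i A_{\inf}/p^{a_i}$'' is \emph{false}. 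Already over the Noetherian ring $\frak S=W(k)[[u]]$, the module $M=(p,u)/p^2\frak S$ is finitely presented, killed by $p^2$, and $u$-torsion-free (a direct check, using that $\frak S$ is a UFD with $p,u$ coprime primes), yet $M[p]=(p)/(p^2)\cong\frak S/p$ has torsion-rank $1$ while $M/\frak m M=(p,u)/(p^2,pu,u^2)$ is two-dimensional over $k$; for any $\bigoplus_i\frak S/p^{a_i}$ these two invariants coincide. The analogous construction works over $A_{\inf}$. This is precisely why the paper's Corollary~\ref{key2} requires the stronger hypothesis that $(p^sM)/p$ be $u$-torsion-free for \emph{every} $s\geq 0$, and obtains it in Lemma~\ref{key} by a separate argument with the $L\eta$-maps; torsion-freeness of $M$ alone is not enough.

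The paper's proof of Lemma~\ref{max} sidesteps module classification entirely via one commutative-algebra observation: for $x\in\frak m\setminus(p)$, the image $\bar x$ is a nonzero nonunit in $A_{\inf}/p=\cal O_C^\flat$, a rank-one valuation ring, so $\sqrt{(p,x)}=\frak m$. Hence if $\tilde\xi a=0$ with $a\neq 0$, then $p^ka=0$ for some $k$ (as $M[1/p]$ is torsion-free), and for any $y\in\frak m\setminus(p)$ one has $y^n\in(p^k,\tilde\xi)$ for $n\gg 0$ (since $\sqrt{(p^k,\tilde\xi)}=\frak m$), so $y^na=0$; a minimal such $n$ produces a nonzero $y$-torsion element. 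Thus $M[\tilde\xi]\neq 0\Leftrightarrow M[x]\neq 0$ with no structure theory needed. You should look for this kind of direct radical-ideal transfer rather than attempt a classification of the torsion-free $p^n$-torsion modules.
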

\begin{proof}
Note that the radical ideal of $(p,x)$ is the maximal ideal. If there exists $a\in M$ such that $xa=0$, then for any other $y\in m\backslash (p)$, we have $y^na=0$ for any sufficiently large $n$. This is because all torsion in $M$ is killed by some power of $p$. Then this lemma follows.
\end{proof}
\begin{corollary}\label{bktor}
When $ie<p-1$, the $A_{\inf}$-cohomology group $H^{i+1}_{A_{\inf}}(\bar{\frak X})$ is $\xi$-torsion-free and the Breuil--Kisin cohomology group $H^{i+1}_{\frak S}(\frak X)$ is both $E$-torsion-free and $u$-torsion-free.
\end{corollary}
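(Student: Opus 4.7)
The plan is to combine the $\tilde\xi$-torsion-freeness already obtained in Lemma \ref{torsionfree} with Lemma \ref{max} and with the faithful flatness of $\alpha\colon \frak S\to A_{\inf}$. The corollary asks for three statements: $\xi$-torsion-freeness of $H^{i+1}_{A_{\inf}}(\bar{\frak X})$, and $E$- and $u$-torsion-freeness of $H^{i+1}_{\frak S}(\frak X)$. Each will be reduced to a torsion-freeness statement over $A_{\inf}$ that Lemma \ref{torsionfree} already supplies.

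First I would handle $\xi$-torsion-freeness of $H^{i+1}_{A_{\inf}}(\bar{\frak X})$. By Theorem \ref{Main}, this module is a Breuil--Kisin--Fargues module, so it is finitely presented over $A_{\inf}$ and finite projective after inverting $p$; in particular every torsion element is killed by some power of $p$. Since $\xi$ lies in the maximal ideal of $A_{\inf}$ but not in $(p)$, Lemma \ref{max} applies and gives the equivalence between $\tilde\xi$-torsion-freeness (known from Lemma \ref{torsionfree}) and $\xi$-torsion-freeness.

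Next I would deduce the Breuil--Kisin statements by descent along the faithfully flat map $\alpha\colon \frak S\to A_{\inf}$ with $(\alpha(E))=(\tilde\xi)$. The base-change identification $H^{i+1}_{\frak S}(\frak X)\otimes_{\frak S,\alpha}A_{\inf}\cong H^{i+1}_{A_{\inf}}(\bar{\frak X})$ combined with flatness gives, for any $r\in\frak S$, an isomorphism $H^{i+1}_{\frak S}(\frak X)[r]\otimes_{\frak S}A_{\inf}\cong H^{i+1}_{A_{\inf}}(\bar{\frak X})[\alpha(r)]$, so by faithful flatness the vanishing of the right-hand side forces the vanishing of $H^{i+1}_{\frak S}(\frak X)[r]$. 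Applying this with $r=E$ gives $E$-torsion-freeness immediately from Lemma \ref{torsionfree}, since $\alpha(E)$ and $\tilde\xi$ generate the same ideal.

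Finally, for $u$-torsion-freeness I apply the same descent with $r=u$. The element $\alpha(u)=[\pi^{\flat}]^p$ lies in the maximal ideal of $A_{\inf}$ but not in $(p)$, so Lemma \ref{max}, applied again with $x=[\pi^{\flat}]^p$, upgrades the $\tilde\xi$-torsion-freeness of $H^{i+1}_{A_{\inf}}(\bar{\frak X})$ to $[\pi^{\flat}]^p$-torsion-freeness; descending along $\alpha$ yields $u$-torsion-freeness of $H^{i+1}_{\frak S}(\frak X)$. There is essentially no obstacle here: all the real work has already been done in Lemma \ref{torsionfree} and Lemma \ref{max}; the mildest care needed is only in verifying that $\alpha(u)\notin(p)A_{\inf}$, which is clear since $\pi^{\flat}$ is a nonzero element of the maximal ideal of the valuation ring $\cal O_C^{\flat}$ and $[\,\cdot\,]$ is the Teichm\"uller lift.
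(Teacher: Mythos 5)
Your proof is correct and follows exactly the route the paper intends: Lemma \ref{torsionfree} supplies $\tilde\xi$-torsion-freeness of $H^{i+1}_{A_{\inf}}(\bar{\frak X})$, Lemma \ref{max} converts it to $\xi$-torsion-freeness (and, applied with $x=\alpha(u)=[\pi^{\flat}]^p$, to $\alpha(u)$-torsion-freeness), and faithfully flat descent along $\alpha\colon\frak S\to A_{\inf}$ transfers the statements to $H^{i+1}_{\frak S}(\frak X)$, using $(\alpha(E))=(\tilde\xi)$ for the $E$-case. You also correctly verify the hypotheses of Lemma \ref{max} (finitely presented, finite projective after inverting $p$, via the Breuil--Kisin--Fargues property from Theorem \ref{Main}) and that $\xi,[\pi^{\flat}]^p\in\frak m\setminus(p)$.
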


Recall that for any finitely presented $A_{\inf}$-module $M$ such that $M[\frac{1}{p}]$ is finite projective over $A_{\inf}[\frac{1}{p}]$, we have the following proposition:
\begin{proposition}[\cite{BMS1} Proposition 4.13]\label{413}
Let $M$ be a finitely presented $A_{\inf}$-module such that $M[\frac{1}{p}]$ is finite projective over $A_{\inf}[\frac{1}{p}]$. Then there is a functorial exact sequence
\[
0\to M_{\rm tor}\to M\to M_{\rm free}\to \overline M\to 0
\]
satisfying:
\begin{enumerate}
\item $M_{\rm tor}$, the torsion submodule of $M$, is finitely presented and perfect as an $A_{\inf}$-module, and is killed by $p^n$ for $n\gg 0$.
\item $M_{\rm free}$ is a finite free $A_{\inf}$-module.
\item $\overline M$ is finitely presented and perfect as an $A_{\inf}$-module, and is supported at the closed point $s\in {\rm Spec}(A_{\inf})$.
\end{enumerate}

\end{proposition}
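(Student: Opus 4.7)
The plan is to peel off the torsion part of $M$ first, then embed the torsion-free quotient into a finite free $A_{\inf}$-module whose cokernel is supported only at the closed point $s\in\Spec(A_{\inf})$.

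First, I would define $M_{\rm tor}\subseteq M$ to be the submodule of elements killed by some nonzero element of $A_{\inf}$. Because $M[1/p]$ is finite projective---in particular $p$-torsion-free---every torsion element of $M$ is annihilated by a power of $p$, so $M_{\rm tor}$ coincides with the kernel of the localization map $M\to M[1/p]$. A standard argument for finitely presented modules shows that $M_{\rm tor}$ is itself finitely presented; being finitely generated and purely $p$-power torsion, it is annihilated by $p^{n}$ for some $n$. Perfectness of $M_{\rm tor}$ as an $A_{\inf}$-complex follows by combining this with the fact that $A_{\inf}/p^{n}$ is itself perfect over $A_{\inf}$ (via the Koszul complex on the non-zero-divisor $p^{n}$) and that a finitely presented module killed by $p^{n}$ has bounded Tor-amplitude over $A_{\inf}$.

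Next, to construct $M_{\rm free}$, I would invoke the fact (established in \cite{BMS1}) that every finitely generated projective $A_{\inf}[1/p]$-module is free, so $M[1/p]\cong A_{\inf}[1/p]^{r}$ for some $r$. Lifting a basis to $M/M_{\rm tor}$ and clearing denominators then yields an $A_{\inf}$-linear map $\psi\colon M/M_{\rm tor}\to A_{\inf}^{r}=: M_{\rm free}$ which is an isomorphism after inverting $p$. Injectivity of $\psi$ follows because $M/M_{\rm tor}$ is $p$-torsion-free and $\psi[1/p]$ is injective. Setting $\overline{M}:=\coker(\psi)$ produces the desired four-term exact sequence, and the cokernel $\overline{M}$ is automatically annihilated by some power of $p$.

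The main obstacle will be arranging that $\overline{M}$ is supported at the closed point $s$ and not merely at $V(p)$. For any height-one prime $\mathfrak{p}\neq\mathfrak{m}$ of $A_{\inf}$, the localization $A_{\inf,\mathfrak{p}}$ is a discrete valuation ring, so $(M/M_{\rm tor})_{\mathfrak{p}}$ is finitely generated torsion-free and hence finite free of rank $r$. A careful choice of $\psi$---for instance by replacing the naive $M_{\rm free}$ with the double dual $((M/M_{\rm tor})^{\vee})^{\vee}$, which is reflexive and, by general structural results on reflexive modules over this two-dimensional local ring, free of rank $r$ over $A_{\inf}$---forces $\psi$ to be an isomorphism at every height-one prime. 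Then $\overline{M}$ is set-theoretically supported at $s$, hence annihilated by some power of the maximal ideal $(p,[\pi^{\flat}])$; perfectness as an $A_{\inf}$-module then follows from a Koszul-type finite free resolution built on the regular sequence $(p,[\pi^{\flat}])$. Functoriality of the whole construction in $M$ finally follows from the canonical (double-dualization) nature of each step.
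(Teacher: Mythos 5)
Your outline captures the right overall architecture of the argument: peel off the $p$-power torsion, then build a finite free module receiving $M/M_{\rm tor}$ with cokernel supported only at the closed point $s$. The replacement of BMS1's construction of $M_{\rm free}$ as $\Gamma\bigl(U,\, \widetilde{M/M_{\rm tor}}\big|_U\bigr)$, where $U=\Spec(A_{\inf})\setminus\{s\}$ (see the discussion after Proposition \ref{413}, citing \cite[Lemmas 4.6 and 4.10]{BMS1}), by the reflexive hull $(M/M_{\rm tor})^{\vee\vee}$ is a reasonable repackaging; over a two-dimensional local ring with good finiteness properties the two constructions agree. So the plan is essentially that of the source.

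The problem is that the proposal declares ``standard'' or ``general'' exactly the three points where the BMS1 lemmas do the real work, and those points are genuine gaps. First, there is no standard argument showing that the torsion submodule of a finitely presented module over the non-Noetherian ring $A_{\inf}$ is finitely presented; in general this can fail, and here it is extracted from the perfectness of $M$ itself (\cite[Lemma 4.9]{BMS1}), which you never invoke. Relatedly, the claim that ``a finitely presented module killed by $p^n$ has bounded Tor-amplitude over $A_{\inf}$'' is essentially the assertion that such a module is perfect, so citing it to deduce perfectness of $M_{\rm tor}$ is circular; over a non-regular local ring, finitely presented modules killed by a non-zero-divisor routinely have infinite Tor-amplitude. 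Second, the freeness of the reflexive hull $(M/M_{\rm tor})^{\vee\vee}$ is not a ``general structural result'': even over Noetherian normal two-dimensional local rings reflexive modules need not be free (this is the obstruction measured by the divisor class group and fails as soon as the ring is not regular), and $A_{\inf}$ is not Noetherian, so the usual regular-local dimension-two argument does not apply verbatim either. This is precisely the content of \cite[Lemma 4.6]{BMS1} (global sections of a vector bundle on $U$ are finite free), paired with \cite[Lemma 4.10]{BMS1} (that $M/M_{\rm tor}$ restricts to a vector bundle on $U$), both of which rest on a careful analysis of $\Spec(A_{\inf})$ using that $\mathcal O_C^\flat$ is a perfect rank-one valuation ring with algebraically closed fraction field. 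In the same vein, the assertion that $A_{\inf,\mathfrak p}$ is a discrete valuation ring for every height-one prime $\mathfrak p$ requires verification against the explicit description of $\Spec(A_{\inf})$ rather than being taken for granted. Without these inputs the proof is incomplete; with them it becomes the BMS1 argument.
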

Here we recall the construction of the free module $M_{\rm free}$. Since $M/M_{\rm tor}$ is torsion-free, the quasi-coherent sheaf associated to it restricts to a vector bundle on ${\rm Spec}(A_{\inf})\backslash\{s\}$ by \cite[Lemma 4.10]{BMS1}. By \cite[Lemma 4.6]{BMS1}, the global section of this vector bundle is a finite free $A_{\inf}$-module, which gives $M_{\rm free}$. In particular, if $M/M_{\rm tor}$ is free itself, then $M/M_{\rm tor}=M_{\rm free}$. For more details, see the proof of \cite[Proposition 4.13]{BMS1}.

By applying this proposition to $H^{i}_{A_{\inf}}(\bar{\frak X})$, we can obtain the following lemma saying that $H^{i}_{A_{\inf}}(\bar{\frak X})$ is a direct sum of its torsion submodule and a free $A_{\inf}$-module.
\begin{lemma}\label{vanish}
For any $i$ such that $ie<p-1$, the term $\overline M$ in the functorial exact sequence 
\[
0\to M_{\rm tor}\to M=H^{i}_{A_{\inf}}(\bar{\frak X})\to M_{\rm free}\to \overline M\to 0
\]
vanishes.
\end{lemma}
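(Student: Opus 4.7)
Plan: The plan is to combine the $\tilde\xi$-torsion-freeness established in Lemma \ref{torsionfree}, the Hodge--Tate identification of Theorem \ref{HT}, and the functoriality of Proposition \ref{413}, all applied to the maps between $M := H^i_{A_{\inf}}(\bar{\frak X})$ and $N := H^i_{\rm \acute et}(X, \bb Z_p) \otimes_{\bb Z_p} A_{\inf}$ provided by the proof of Lemma \ref{torsionfree}, namely $f: M \to N$ and $g: N \to M$ with $f \circ g = g \circ f = \mu^i$.

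Under the hypothesis $ie < p - 1$, Lemma \ref{torsionfree} applied in degrees $i - 1$ and $i$ (the case $i = 0$ being immediate) shows that $M$ and $H^{i+1}_{A_{\inf}}(\bar{\frak X})$ are $\tilde\xi$-torsion-free, so by Corollary \ref{special}(4) and Theorem \ref{HT},
\[
M/\tilde\xi \cong H^i_{\rm HT}(\bar{\frak X}) \cong H^i_{\rm \acute et}(X, \bb Z_p) \otimes_{\bb Z_p} \cal O_C \cong \cal O_C^r \oplus T_0,
\]
with $T_0 = \bigoplus_j \cal O_C/p^{m_j}$. The module $N = A_{\inf}^r \oplus N_{\rm tor}$ already decomposes in the form of Proposition \ref{413}, so $\overline N = 0$ and $N_{\rm free} = A_{\inf}^r$. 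By functoriality, $f$ and $g$ descend to maps $f_{\rm free}: M_{\rm free} \to A_{\inf}^r$ and $g_{\rm free}: A_{\inf}^r \to M_{\rm free}$ with compositions $\mu^i$ on both sides; rank counts via the $\mu$-inverted étale comparison force $M_{\rm free}$ to be free of rank $r$. Since $g_{\rm free}$ extends $g': N' \to M'$ along $M' \hookrightarrow M_{\rm free}$, its image lies inside $M'$, so the identity $g_{\rm free} \circ f_{\rm free} = \mu^i$ on $M_{\rm free}$ gives $\mu^i M_{\rm free} \subseteq g_{\rm free}(A_{\inf}^r) \subseteq M' \subseteq M_{\rm free}$, whence $\overline M = M_{\rm free}/M'$ is killed by $\mu^i$.

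To finish, note that $\overline M$ is supported at the closed point $s$ of $\Spec A_{\inf}$, hence $\tilde\xi$-power torsion, so showing $\tilde\xi \overline M = \overline M$ forces $\overline M = 0$ by iteration. Reducing $0 \to M_{\rm tor} \to M \to M_{\rm free} \to \overline M \to 0$ modulo $\tilde\xi$ (using $\tilde\xi$-torsion-freeness of $M$) identifies $M'/\tilde\xi \cong \cal O_C^r \oplus (T_0/I)$ and $\overline M[\tilde\xi] \cong T_0/I$, where $I := M_{\rm tor}/\tilde\xi \hookrightarrow T_0$. The functoriality of Proposition \ref{413} on the torsion parts yields maps between $M_{\rm tor}/\tilde\xi$ and $T_0$ composing to $(\zeta_p-1)^i$, and since $ie < p-1$ gives $v((\zeta_p-1)^i) = i/(p-1) < 1 \leq v(p^{m_j})$, Lemma \ref{lemma2} (combined with the inclusion $M_{\rm tor}/\tilde\xi \hookrightarrow T_0$) forces $I = T_0$, making $M'/\tilde\xi \cong \cal O_C^r$ free of rank $r$; a parallel argument on $f_{\rm free}/\tilde\xi, g_{\rm free}/\tilde\xi$ between $\cal O_C^r$ and $\cal O_C^r$ using Lemma \ref{torsionlemma} gives $\overline M/\tilde\xi = 0$ and hence $\overline M = 0$. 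The main obstacle will be verifying the hypotheses of Lemma \ref{lemma2} on the torsion parts, since $M_{\rm tor}/\tilde\xi$ needs to have elementary divisors that are integer powers of $p$ — a priori this is not automatic for submodules of $\bigoplus \cal O_C/p^{m_j}$, and must be obtained via a refinement of the torsion-rank considerations of Lemmas \ref{trk}--\ref{mmlemma} together with the valuation gap afforded by $ie < p - 1$.
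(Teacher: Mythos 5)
Your proposal and the paper's proof start from the same two ingredients — the pair of maps $f,g$ between $M$ and $N$ with compositions $\mu^i$, and the functoriality of Proposition \ref{413} — and both aim to show $\overline M/\tilde\xi = 0$ and then apply Nakayama. Your derivation of $\mu^i\overline M = 0$ (via $\mu^i M_{\rm free}\subseteq g_{\rm free}(N_{\rm free})\subseteq M'$) is correct and is essentially the same observation the paper extracts from the right-hand column of the commutative diagram, namely $\overline M\to\overline N=0\to\overline M$ with compositions $\mu^i$.

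From there, however, you take a detour through $M_{\rm tor}/\tilde\xi$ that introduces the gap you yourself flag. The obstacle is genuine, but its diagnosis is slightly off: the missing ingredient is not ``a refinement of the torsion-rank lemmas,'' which cannot by themselves control the elementary divisors of an arbitrary submodule of $T_0=\bigoplus\cal O_C/p^{m_j}$ (such a submodule can have elementary divisors with fractional $p$-valuation, and equality of torsion ranks $\mathrm{trk}(p^s I)=\mathrm{trk}(p^s T_0)$ for all $s$ does not force $I=T_0$). Moreover, even granting $I=T_0$, your last step has the same problem one stage later: the cokernel of an injection $\cal O_C^r\hookrightarrow\cal O_C^r$ can have arbitrary elementary divisors, so knowing $(\zeta_p-1)^i$ kills $\overline M/\tilde\xi$ does not on its own let you invoke Lemma \ref{torsionlemma}.

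What the paper actually uses, and what you need, is the Breuil--Kisin descent of the whole functorial exact sequence: by \cite[Propositions 4.3 and 4.13]{BMS1} the sequence for $M=H^i_{A_{\inf}}(\bar{\frak X})$ is the flat base change along $\alpha:\frak S\to A_{\inf}$ of the analogous sequence for $H^i_{\frak S}(\frak X)$, so $\overline M\cong\overline{H^i_{\frak S}(\frak X)}\otimes_{\frak S}A_{\inf}$ with $\overline{H^i_{\frak S}(\frak X)}$ a finitely presented torsion $\frak S$-module killed by a power of $(p,u)$. Consequently $\overline{H^i_{\frak S}(\frak X)}/E$ is a finite torsion $\cal O_K$-module, hence of the form $\bigoplus\cal O_K/\pi^{n_j}$ with every $n_j\geq 1$, and $\overline M/\tilde\xi\cong\bigoplus\cal O_C/\pi^{n_j}$. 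This is exactly the shape required by Lemma \ref{torsionlemma} with $\beta=\pi$, $\alpha=(\zeta_p-1)^i$; since $ie<p-1$ gives $v((\zeta_p-1)^i)<v(\pi)$ and $\overline M/\tilde\xi$ has the two (trivial) maps to and from $0$ composing to $(\zeta_p-1)^i$, the lemma yields $\overline M/\tilde\xi=0$, and Nakayama concludes. This route bypasses entirely the analysis of $M_{\rm tor}/\tilde\xi$ and the question of whether $I=T_0$, which is why the paper's proof is shorter and does not run into the elementary-divisor control problem that stalls your argument.
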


\begin{proof}
Let $N=H^i_{\rm\acute {e}t}(X, \mathbb Z_p)\otimes_{\mathbb Z_p}A_{\inf}$, we have two maps $f: M\to N$ and $g:N\to M$, whose composition in either direction is $\mu^i$. Then we have a commutative diagram
\[
\begin{tikzcd}
  0 \arrow[r] & M_{\rm tor} \arrow[d]{f_1} \arrow[r] & M \arrow[d]{f_2} \arrow[r] & M_{\rm free} \arrow[d]{f_3} \arrow[r] & \overline M  \arrow[d]{f_4} \arrow[r] &0 \\
  0 \arrow[r] & N_{\rm tor}\arrow[r] & N\arrow[r] & N_{\rm free}\arrow[r] & 0\arrow[r] &0
\end{tikzcd}
\]
by functoriality. All the vertical maps have inverses up to $\mu^i$.

On the other hand, the exact sequence associated to $H^i_{A_{\inf}}(\bar{\frak X})$ is the flat base change of the canonical exact sequence associated to $H^{i}_{\frak S}(\frak X)$ (see \cite[Proposition 4.3 and 4.13]{BMS1}). Hence $\overline M\cong \overline {H^{i}_{\frak S}(\frak X_{})}\otimes_{\frak S}A_{\inf}$ and $\overline M/{\tilde \xi}\cong (\overline {H^{i}_{\frak S}(\frak X_{})}/E)\otimes_{\frak S}A_{\inf}$ where $\overline {H^{i}_{\frak S}(\frak X_{})}$ is a torsion $\frak S$-module and is killed by some power of $(p, u)$. Again, by using the decomposition of $\overline{H^{i}_{\frak S}(\frak X_{})}/E$ and the fact that $v((\zeta_p-1)^i)<v(\pi)$ when $ie<p-1$, we get $\overline{H^{i}_{\frak S}(\frak X_{})}/E=0$ and $\overline M/{\tilde \xi}=0$ by Lemma \ref{torsionlemma}. Then $\overline M=0$ follows from Nakayama lemma.

\end{proof}
\begin{corollary}\label{finalcor}
For any $i$ such that $ie<p-1$, the $A_{\inf}$-cohomology group $H^i_{A_{\inf}}(\overline{\frak X})$ is a direct sum of a free $A_{\inf}$-module and its torsion submodule. Also, the Breuil--Kisin cohomology group $H^i_{\frak S}(\frak X)$ is a direct sum of a free $\frak S$-module and its torsion submodule.
\end{corollary}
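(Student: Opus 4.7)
The plan is to obtain both decompositions as essentially formal consequences of Lemma \ref{vanish}. For the $A_{\inf}$-cohomology, I would apply Proposition \ref{413} to $M = H^i_{A_{\inf}}(\overline{\frak X})$ (which is a Breuil--Kisin--Fargues module by Theorem \ref{Main}, hence finitely presented with $M[1/p]$ finite projective over $A_{\inf}[1/p]$). This yields the functorial four-term exact sequence
\[
0 \to M_{\rm tor} \to M \to M_{\rm free} \to \overline M \to 0,
\]
and the hypothesis $ie<p-1$ together with Lemma \ref{vanish} gives $\overline M = 0$. What remains is the short exact sequence
\[
0 \to M_{\rm tor} \to H^i_{A_{\inf}}(\overline{\frak X}) \to M_{\rm free} \to 0,
\]
which splits because $M_{\rm free}$ is a finite free (hence projective) $A_{\inf}$-module. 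This produces the claimed decomposition $H^i_{A_{\inf}}(\overline{\frak X}) \cong M_{\rm tor} \oplus M_{\rm free}$.

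For the Breuil--Kisin cohomology, I would use the analogous structure result for finitely presented $\frak S$-modules $N$ with $N[1/p]$ finite projective over $\frak S[1/p]$: Proposition 4.3 of \cite{BMS1} supplies a functorial four-term exact sequence with $N_{\rm free}$ finite free over $\frak S$ (this uses that $\frak S$ is a two-dimensional regular local ring, so reflexive finitely generated modules are free). As already noted inside the proof of Lemma \ref{vanish}, the cokernel term for $N = H^i_{\frak S}(\frak X)$ pulls back to $\overline M$ under the faithfully flat base change $\alpha \colon \frak S \to A_{\inf}$. Since $\overline M = 0$, faithful flatness of $\alpha$ forces $\overline{H^i_{\frak S}(\frak X)} = 0$. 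The resulting short exact sequence again splits because the free part is projective.

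There is essentially no new obstacle: the genuinely substantive input — the vanishing of the cokernel term $\overline M$, which rests on Lemma \ref{torsionlemma} and the valuation estimate $v((\zeta_p-1)^i) < v(\pi)$ under $ie<p-1$ — is already handled in Lemma \ref{vanish}. The only minor care point is to ensure that the ``free'' part in the Breuil--Kisin structure sequence is genuinely finite free over $\frak S$ rather than merely reflexive, which one verifies via the Auslander--Buchsbaum formula on the two-dimensional regular local ring $\frak S$.
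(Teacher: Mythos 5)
Your proposal is correct and matches the paper's (implicit) argument: Corollary \ref{finalcor} is simply Lemma \ref{vanish} plus the observation that once $\overline M = 0$, the remaining short exact sequence $0\to M_{\rm tor}\to M\to M_{\rm free}\to 0$ splits because $M_{\rm free}$ is free, with the Breuil--Kisin case following by the same structure result over $\frak S$. One minor remark: the detour through faithful flatness to get $\overline{H^i_{\frak S}(\frak X)}=0$ is valid but not needed, since the proof of Lemma \ref{vanish} already shows $\overline{H^i_{\frak S}(\frak X)}/E=0$ and hence $\overline{H^i_{\frak S}(\frak X)}=0$ directly by Nakayama; likewise the Auslander--Buchsbaum check is superfluous, as \cite[Proposition 4.3]{BMS1} already produces the free part as genuinely finite free.
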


In the following part, we consider the torsion submodules of the cohomology groups $H^i_{A_{\inf}}(\bar{\frak X})$ and $H^i_{\frak S}(\frak X_{})$, and let ${H^i_A}_{-\rm tor}$, ${H^i_{\frak S}}_{-\rm tor}$ denote them respectively.



We first prove a key lemma which enables us to study the structure of ${H^n_{\frak S}}_{-\rm tor}$.

\begin{lemma}\label{key}
For any $i$ such that $ie<p-1$, the modules ${(p^sH^i_{A_{}}}_{-\rm tor})/p^{m}$ (resp. \\$(p^sH^i_{\frak S-\rm tor})/p^m)$ are $\tilde \xi$-torsion-free (resp. $E$-torsion-free) for all non-negative integers $m, s$. 
\end{lemma}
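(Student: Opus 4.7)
By faithful flatness of $\alpha: \frak S \to A_{\inf}$ and the equality of ideals $(\alpha(E)) = (\tilde\xi)$, the $E$-torsion-freeness statement on the Breuil--Kisin side is equivalent to the $\tilde\xi$-torsion-freeness statement on the $A_{\inf}$-side, so I focus on the latter. Using the direct sum decomposition $H^i_{A_{\inf}}(\bar{\frak X}) = M_{\mathrm{free}} \oplus M_{\mathrm{tor}}$ from Corollary~\ref{finalcor}, and noting that $(p, \tilde\xi)$ is a regular sequence in $A_{\inf}$ (since its image in $A_{\inf}/p = \cal O_C^\flat$ is a nonzero element of a valuation ring), one deduces inductively that $A_{\inf}/p^m$, and hence $(p^s M_{\mathrm{free}})/p^m \cong (A_{\inf}/p^m)^r$, is $\tilde\xi$-torsion-free. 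Therefore the claim for $(p^s M_{\mathrm{tor}})/p^m$ is equivalent to the claim for $(p^s H^i_{A_{\inf}}(\bar{\frak X}))/p^m$, with which I will work.

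My strategy is to transplant the argument of Lemma~\ref{torsionfree} to the level of these quotients. The maps $f: H^i_{A_{\inf}}(\bar{\frak X}) \to H^i_{\mathrm{\acute et}}(X, \bb Z_p) \otimes_{\bb Z_p} A_{\inf}$ and $g$ in the reverse direction, whose compositions in both directions are $\mu^i$, commute with multiplication by $p^s$ and reduction modulo $p^m$, so they induce maps $\bar f, \bar g$ with composition $\mu^i$ between $(p^s H^i_{A_{\inf}})/p^m$ and $(p^s(H^i_{\mathrm{\acute et}}(X, \bb Z_p) \otimes A_{\inf}))/p^m$. The decomposition $H^i_{\mathrm{\acute et}}(X, \bb Z_p) = \bb Z_p^r \oplus \bigoplus_j \bb Z/p^{n_j}$ realises the target as a finite direct sum of modules of the form $A_{\inf}/p^k$, each of which is $\tilde\xi$-torsion-free. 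Consequently, any $\tilde\xi$-torsion element $x$ of the source satisfies $\bar f(x) = 0$, whence $\mu^i x = \bar g \bar f(x) = 0$.

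It remains to deduce $x = 0$ from this. The $\tilde\xi$-torsion submodule of $(p^s H^i_{A_{\inf}})/p^m$ is naturally an $A_{\inf}/(\tilde\xi, \mu^i) = \cal O_C/(\zeta_p - 1)^i$-module, and the hypothesis $ie < p-1$ gives the key valuation inequality $v((\zeta_p - 1)^i) < v(\pi)$ in $\cal O_C$. Combining Theorem~\ref{HT}, Corollary~\ref{special}(4), and Lemma~\ref{torsionfree} yields an identification $H^i_{A_{\inf}}(\bar{\frak X})/\tilde\xi \cong H^i_{\mathrm{HT}}(\bar{\frak X}) \cong H^i_{\mathrm{\acute et}}(X, \bb Z_p) \otimes_{\bb Z_p} \cal O_C$, which pins down $M_{\mathrm{tor}}/\tilde\xi$ as $\bigoplus_j \cal O_C/p^{n_j}$. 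I would then apply the same valuation-theoretic argument used at the end of Lemma~\ref{torsionfree} --- a variant of Lemma~\ref{torsionlemma} for the restriction of $\bar f, \bar g$ to the $\tilde\xi$-torsion submodules --- to force this submodule to vanish. The main obstacle lies precisely here: converting the joint annihilation by $(\tilde\xi, \mu^i)$ together with the structural information on $M_{\mathrm{tor}}/\tilde\xi$ into outright vanishing in $(p^s M_{\mathrm{tor}})/p^m$ requires careful tracking through the Tor long exact sequences, since the $\tilde\xi$-torsion submodule is not \emph{a priori} of the direct-sum-of-cyclics form to which Lemma~\ref{torsionlemma} directly applies.
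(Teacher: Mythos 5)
Your argument parallels the paper's up to the point where you establish that any $\tilde\xi$-torsion element of $(p^s M_{\rm tor})/p^m$ is killed by $\mu^i$; that part is sound. But the gap you flag in your final paragraph is genuine, and the attempt to resolve it via the identification of $M_{\rm tor}/\tilde\xi$ does not help, because the module you actually need to control is $((p^s M_{\rm tor})/p^m)[\tilde\xi]$, which is not a specialization of $M_{\rm tor}$ that any of the cited comparison results describe. The worry that this module might not a priori be of direct-sum-of-cyclics form is precisely the issue.

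The missing idea --- which is what the paper does --- is to descend the $\tilde\xi$-torsion submodule to the Breuil--Kisin side before analyzing its structure. By flatness of $\alpha: \frak S \to A_{\inf}$ one has a canonical identification
\[
\big((p^s H^i_{A_{\inf}-\rm tor})/p^m\big)[\tilde\xi] \;\cong\; \big((p^s H^i_{\frak S-\rm tor})/p^m\big)[E] \otimes_{\frak S,\alpha} A_{\inf}.
\]
The module $((p^s H^i_{\frak S-\rm tor})/p^m)[E]$ is finitely generated over $\frak S$ (perfectness over a Noetherian ring) and is killed by both $E$ and $p^m$, hence is a finitely generated module over $\frak S/(E,p^m) = \cal O_K/p^m$. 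Since $\cal O_K$ is a discrete valuation ring, such a module is automatically a finite direct sum $\bigoplus_t \cal O_K/\pi^{n_t}$. Tensoring along $\frak S/E = \cal O_K \to A_{\inf}/\tilde\xi = \cal O_C$ then exhibits $((p^s H^i_{A_{\inf}-\rm tor})/p^m)[\tilde\xi]$ as $\bigoplus_t \cal O_C/\pi^{n_t}$ --- exactly the shape to which Lemma \ref{torsionlemma} applies. Taking $M = 0$ and $N$ this module in Lemma \ref{torsionlemma}, with $\beta = \pi$ and with $\alpha$ there equal to the image $(\zeta_p-1)^i$ of $\mu^i$ in $\cal O_C$ (using $v((\zeta_p-1)^i) < v(\pi)$ from $ie < p-1$), the induced maps $\bar f, \bar g$ force $N = 0$. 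In short, the structural input that rescues the argument is not any comparison theorem but the elementary observation that this $\tilde\xi$-torsion module descends along the faithfully flat $\alpha$ to a finitely generated torsion module over the DVR $\cal O_K$; no Tor gymnastics are needed.

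Two minor remarks: working with $p^s H^i_{A_{\inf}}$ rather than $p^s H^i_{A_{\inf}-\rm tor}$ is fine, since (as you note) the free summand from Corollary \ref{finalcor} contributes a $\tilde\xi$-torsion-free piece after reduction mod $p^m$; the paper simply restricts the maps $f,g$ to torsion submodules from the outset, which is possible because any $A_{\inf}$-linear map carries torsion into torsion, and the decomposition of Proposition \ref{413} is functorial. Both routes lead to the same place.
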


\begin{proof}
Recall that we have two injective map $f: {H^i_A}_{-\rm tor}\to { H^i_{\rm\acute {e}t}}_{-\rm tor}\otimes_{\mathbb Z_p}A_{\inf}$ and $g: { H^i_{\rm\acute {e}t}}_{-\rm tor}\otimes_{\mathbb Z_p}A_{\inf}\to {H^i_A}_{-\rm tor}$ whose composition in either direction is $\mu^i$. These induce two new maps (we still denote $f$ and $g$) between $({(p^sH^i_{A_{}}}_{-\rm tor})/p^{m})[\tilde \xi]$ and $({(p^sH^i_{\rm\acute {e}t}}_{-\rm tor})\otimes_{\mathbb Z_p}A_{\inf}/{p^m})[\tilde \xi]$ whose composition in either direction is $\mu^i$. Note that $((p^s{H^i_{\rm\acute {e}t}}_{-\rm tor})\otimes_{\mathbb Z_p}A_{\inf}/{p^m})[\tilde \xi]=0$. This means $((p^s{H^i_A}_{-\rm tor})/{p^m})[\tilde \xi]$ is killed by $\mu^i$. As $((p^s{H^i_A}_{-\rm tor})/{p^m})[\tilde \xi]\cong ((p^s{H^i_{\frak S}}_{-\rm tor})/{p^m})[E]\otimes_{\frak S}A_{\inf}$ admits a decomposition as $\bigoplus_{t=1}^{n}\mathcal O_C/{\pi^{n_t}}$ and $v((\zeta_p-1)^i)<v(\pi)$, the module $((p^s{H^i_A}_{-\rm tor})/{p^m})[\tilde \xi]$ must be 0 by Lemma \ref{torsionlemma}. Since $((p^s{H^i_A}_{-\rm tor})/{p^m})[\tilde \xi]\cong (p^sH^i_{\frak S-\rm tor})/p^m)[E]\otimes_{\frak S,\alpha}A_{\inf}$ and the map $\alpha: \frak S\to A_{\inf}$ is faithfully flat, we also have $(p^sH^i_{\frak S-\rm tor})/p^m)$ is $E$-torsion-free.
\end{proof}

In order to determine the module structure of $H^i_{\frak S}(\frak X)$, we need the following lemma.
\begin{lemma}\label{2mod}
Let $M$ be a finitely presented torsion $\frak S$-module. If $M/p\cong (\frak S/p)^n$ and $pM\cong \bigoplus_{i=1}^r \frak S/p^{n_i}$, we have an isomorphism of $\frak S$-modules: $M\cong \bigoplus_{i=1}^{n}\frak S/p^{m_i}$.
\end{lemma}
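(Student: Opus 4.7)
The plan is to identify the candidate invariant factors $m_j$ from the $W(k)$-module structure of $M/uM$, construct a map $\bigoplus \frak S/p^{m_j} \to M$ via carefully chosen lifts, and verify it is an isomorphism using Nakayama together with a $\mathrm{Tor}$ computation.

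First, I would establish that $M$ is $u$-torsion-free. If $um = 0$, then in $M/pM \cong (\frak S/p)^n = k[[u]]^n$ (a free $k[[u]]$-module) one has $\bar m = 0$, so $m = pm'$; then $pm' \in pM \cong \bigoplus_i \frak S/p^{n_i}$ is $u$-annihilated in a $u$-torsion-free module, so $pm' = 0$ and hence $m = 0$. Next, $M/uM$ is a finitely generated torsion $W(k)$-module (torsion because $M$ is), so the structure theorem over the PID $W(k)$ gives $M/uM \cong \bigoplus_{j=1}^s W(k)/p^{m_j}$; counting $k$-dimensions after reducing modulo $p$ shows $s = \dim_k (M/pM)/u(M/pM) = n$. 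The candidate isomorphism is $M \cong \bigoplus_{j=1}^n \frak S/p^{m_j}$.

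The main technical step, and the core obstacle, is arranging lifts $f_j \in M$ of the generators $\bar f_j$ of the $j$-th $W(k)/p^{m_j}$-summand with $p^{m_j} f_j = 0$ on the nose. The key intermediate observation is that $M/p^a M$ is $u$-torsion-free for every $a \geq 1$: using $pM \cong \bigoplus_i \frak S/p^{n_i}$ one has $pM/p^a M \cong \bigoplus_i \frak S/p^{\min(n_i, a-1)}$, which is $u$-torsion-free, and the extension $0 \to pM/p^a M \to M/p^a M \to M/pM \to 0$ with both outer terms $u$-torsion-free forces $M/p^a M$ to be as well. Given any lift $f_j$ of $\bar f_j$, one has $p^{m_j} f_j \in uM$, so $p^{m_j} f_j = u\phi_j$ with $\phi_j$ unique. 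Reducing modulo $p^{m_j} M$ gives $u \bar \phi_j = 0$ in the $u$-torsion-free module $M/p^{m_j} M$, whence $\phi_j = p^{m_j} y_j$ for some $y_j \in M$. Replacing $f_j$ by $f_j - u y_j$ preserves the reduction mod $u$ while achieving $p^{m_j} f_j = 0$.

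It remains to check that $\Phi : \bigoplus_{j=1}^n \frak S/p^{m_j} \to M$, $e_j \mapsto f_j$, is an isomorphism. It is well-defined by the previous step, and reduces modulo $u$ to the chosen isomorphism $\bigoplus W(k)/p^{m_j} \cong M/uM$; Nakayama at the maximal ideal $(p,u) \subset \frak S$ then gives surjectivity. For injectivity, let $N = \ker \Phi$ and apply $- \otimes_{\frak S} \frak S/u$ to $0 \to N \to \bigoplus \frak S/p^{m_j} \to M \to 0$. The resulting exact sequence $\mathrm{Tor}_1^{\frak S}(M, \frak S/u) \to N/uN \to \bigoplus W(k)/p^{m_j} \to M/uM \to 0$ has vanishing $\mathrm{Tor}_1$-term (since $M$ is $u$-torsion-free) and rightmost map an isomorphism, so $N/uN = 0$; one more application of Nakayama gives $N = 0$.
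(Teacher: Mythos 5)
Your proof is correct, and it takes a genuinely different route from the paper. The paper works entirely with the given decomposition of $pM$: it chooses a basis of $M/pM$, lifts it, renumbers so that $p\hat e_1,\dots,p\hat e_r$ give a $k$-basis of $pM\otimes k$, finds an invertible matrix $A$ to align these lifts with the given generators $f_i$ of $pM$, and subtracts suitable multiples so that $p\hat e_j=0$ for $j>r$. This produces an explicit surjection $h:\bigl(\bigoplus_{i\le r}\frak S/p^{n_i+1}\bigr)\oplus\bigl(\bigoplus_{j>r}\frak S/p\bigr)\to M$, and injectivity is established by a short ladder/five-lemma-style argument using that $h$ is an isomorphism on $pM'$ and on $M'/pM'$. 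You instead work modulo $u$: you first establish $u$-torsion-freeness of $M$ and the stronger and genuinely useful intermediate fact that $M/p^aM$ is $u$-torsion-free for all $a$, identify the invariants from $M/uM$ via the structure theorem over the PID $W(k)$, use the $u$-torsion-freeness of $M/p^{m_j}M$ to correct lifts so that $p^{m_j}f_j=0$ on the nose, and then verify the resulting map is an isomorphism via Nakayama (surjectivity) and $\mathrm{Tor}_1^{\frak S}(M,\frak S/u)=M[u]=0$ (injectivity). What you give up relative to the paper is the explicit formula $M\cong(\bigoplus_{i\le r}\frak S/p^{n_i+1})\oplus(\bigoplus_{j>r}\frak S/p)$ relating the invariant factors of $M$ directly to those of $pM$; what you gain is a more conceptual argument that avoids the matrix manipulation, isolates the $u$-torsion-freeness of all $M/p^aM$ as a reusable lemma, and replaces the hands-on injectivity check with a clean homological one. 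Both are valid; neither has a gap.
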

\begin{proof}
The proof is just that of \cite[Lemma 2.3.1.1]{breuil1998construction}, simply by replacing $S$ by $\frak S$. For readers' convenience, we give the proof here.

Choose $m\geq 0$ such that $p^mM=0$. Let $(e_1, e_2,\cdots, e_n)$ be a basis of $M/pM$ over $\frak S/p$ and we choose their liftings $\hat e_1, \hat e_2,\cdots, \hat e_n$ in $M$. By Nakayama lemma, we see that $M$ is generated by $(\hat e_1, \hat e_2,\cdots, \hat e_n)$ as a $\frak S/p^m$-module. So $(p\hat e_1, p\hat e_2, \cdots, p\hat e_n)$ generate the $\frak S/p^m$-module $pM$. 

After renumbering $(\hat e_i)$, we can suppose that the images of $p\hat e_1, p\hat e_2,\cdots, p\hat e_r$ in $pM\otimes_{\frak S/p^m}k$ form a basis over $k$. Choose $f_1,\cdots, f_r\in pM$ such that $pM\cong \bigoplus_{i=1}^{r}\frak S/p^{n_i}\frak S\cdot f_i$. Then there exists a $r\times r$-matrix $A\in M_r(\frak S/p^m\frak S)$ such that $(f_1, f_2, \cdots, f_r)A=(p\hat e_1, p\hat e_2, \cdots, p\hat e_r)$. Since $A\mod(p, u)\in GL_r(k)$, we know that $A$ is in $GL_r(\frak S/p^m\frak S)$. So we can replace $(\hat e_1, \hat e_2, \cdots, \hat e_r)$ by $(\hat e_1, \hat e_2, \cdots, \hat e_r)A^{-1}$ and suppose $p\hat e_i=f_i$ for $1\leq i\leq r$.

For $r+1\leq j\leq n$, there exist $a_{ij}\in \frak S/p^m\frak S$ for $1\leq i\leq r$ such that $p\hat e_j=\sum_{i=1}^ra_{ij}f_i=\sum_{i=1}^{r}a_{ij}p\hat e_i$. Again, we can replace $\hat e_j$ by $\hat e_j-\sum_{i=1}^ra_{ij}\hat e_i$ for $r+1\leq j\leq n$. That means we can suppose $p\hat e_j=0$ for $r+1\leq j\leq n$. 

Finally, we can construct a surjective morphism of $\frak S/p^m\frak S$-module:
\[
h: M^{\prime}=(\bigoplus_{i=1}^r\frak S/p^{n_i+1}\frak S\times g_i)\bigoplus (\bigoplus_{i=r+1}^n\frak S/p\frak S\times g_i)\to M
\]
\[
g_i\mapsto\hat e_i
\]
Note that the morphism $h:M^{\prime}\to M$ induces two isomorphisms: $h_1: pM^{\prime}\xrightarrow{\sim}pM$ and $h_2: M^{\prime}/pM^{\prime}\xrightarrow{\sim}M/pM$ under the choice of $\hat e_i$, $1\leq i\leq n$. For any $x$ such that $h(x)=0$, if $x\in pM^{\prime}$, then $x=0$ since $h_2(x)=h(x)=0$. If $x\notin pM^{\prime}$, then $h_2(\bar x)=0$ implies that $x\in pM^{\prime}$ where $\bar x$ is the image of $x$ in $M^{\prime}/pM^{\prime}$. So $h: M^{\prime}\to M$ must be an isomorphism. We are done.
\end{proof}

\begin{corollary}\label{key2}
Let $M$ be a finitely presented torsion $\frak S$-module which is killed by some power of $p$. If $(p^sM)/p$ is $u$-torsion-free for all $s\geq 0$, the module $M$ admits a decomposition as $M\cong \bigoplus_{i=1}^{n}\frak S/p^{m_i}$.
\end{corollary}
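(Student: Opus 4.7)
The plan is to proceed by induction on the minimal integer $N \geq 1$ with $p^N M = 0$, with the inductive lift supplied by Lemma \ref{2mod}.

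For the base case $N=1$, the module $M$ is annihilated by $p$, so it is finitely presented over $\mathfrak S/p = k[[u]]$, and the hypothesis with $s=0$ tells us it is $u$-torsion-free. Since $k[[u]]$ is a discrete valuation ring, a finitely presented $u$-torsion-free module over it is free, so $M \cong (\mathfrak S/p)^n$, which has the required shape.

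For the inductive step, I would assume the statement for modules killed by $p^{N-1}$ and take $M$ with $p^N M = 0$. The submodule $pM$ is killed by $p^{N-1}$, it is finitely presented (as $\mathfrak S$ is Noetherian, so finitely generated suffices), and for each $s\geq 0$ we have $(p^s(pM))/p = (p^{s+1}M)/p$, which is $u$-torsion-free by hypothesis. The inductive hypothesis therefore gives a decomposition $pM \cong \bigoplus_{i=1}^r \mathfrak S/p^{n_i}$. Meanwhile the base-case argument applied to $M/p$ (which is $u$-torsion-free by the $s=0$ hypothesis and finitely presented over $k[[u]]$) yields $M/p \cong (\mathfrak S/p)^n$. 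Both hypotheses of Lemma \ref{2mod} are now met, so $M \cong \bigoplus_{i=1}^n \mathfrak S/p^{m_i}$, as desired.

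There is no substantial obstacle: the content has been packaged into Lemma \ref{2mod}, and this corollary is a formal reduction exploiting the freeness criterion over the discrete valuation ring $k[[u]]$ together with the fact that the $u$-torsion-freeness hypothesis is stable under passing from $M$ to $pM$.
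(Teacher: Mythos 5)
Your proof is correct and takes essentially the same approach as the paper's: both run an induction on the minimal $N$ with $p^N M = 0$ and lift the decomposition from $pM$ to $M$ via Lemma \ref{2mod}, with the base case supplied by the freeness of finitely generated $u$-torsion-free modules over the discrete valuation ring $k[[u]]$. You have merely phrased as a clean induction what the paper states as an iterative descent.
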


\begin{proof}
To prove this corollary, we want to apply Lemma \ref{2mod} to $M$. Note that $M/p$ is $u$-torsion-free by our assumption, therefore finite free as a $\frak S/p=k[[u]]$-module. So we need to prove that $pM$ admits a nice decomposition as in Lemma \ref{2mod}. Since the module $(pM)/p$ is also $u$-torsion-free by our assumption, we only need to prove that $p^2M$ admits a nice decomposition as in Lemma \ref{2mod}. We can continue this process until that we need to prove $p^mM$ admits a nice decomposition as in Lemma \ref{2mod} for some $m$ such that $M$ is killed by $p^{m+1}$. As $p(p^mM)=0$ and $(p^mM)/p=p^mM$ has no $u$-torsion, we see that $p^mM$ is a free $\frak S/p$-module by Lemma \ref{2mod}. So we are done.

\end{proof}

\subsection{Integral comparison theorem}

Now we state our main theorem of this section comparing the module structure of Breuil--Kisin cohomology groups to that of $p$-adic \'etale cohomology groups.
\begin{theorem}\label{comp}
Let $\frak X$ be a proper smooth formal scheme over $\cal O_K$, where $\cal O_K$ is the ring of integers in a complete discretely valued non-archimedean extension $K$ of $\bb Q_p$ with perfect residue field $k$ and ramification degree $e$. Let $\cal O_C$ be the ring of integers in a complete algebraically closed non-archimedean extension $C$ of $K$ and $X$ be the adic generic fibre of $\bar{\frak X}:=\frak X\times_{{\rm Spf}(\cal O_K)}{\rm Spf}(\cal O_C)$. Assuming $ie<p-1$, 
there is an isomorphism of $\frak S$-modules
\[
H^i_{\frak S}(\frak X_{})\cong H^i_{\rm \acute et}(X, \bb Z_p)\otimes_{\bb Z_p}\frak S.
\]
In particular, we also have an isomorphism of $A_{\inf}$-modules
\[
H^i_{A_{\inf}}(\overline{\frak X})\cong H^i_{\rm \acute et}(X, \bb Z_p)\otimes_{\bb Z_p}A_{\inf}.
\]

\end{theorem}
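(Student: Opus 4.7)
The plan is to show that both $\frak S$-modules in question decompose as a finite free part plus a direct sum of cyclic torsion modules of the form $\frak S/p^{m_i}$, and then to match the rank and the multi-set of exponents by passing to the Hodge--Tate specialization over $\cal O_C$, where Theorem~\ref{HT} already identifies the two sides.

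For the first step, I would invoke Corollary~\ref{finalcor} to write $H^i_{\frak S}(\frak X) = F \oplus T$, with $F$ finite free and $T$ the torsion submodule (killed by some power of $p$, via its $A_{\inf}$-base change). The structural input comes from Lemma~\ref{key}: $(p^s T)/p^m$ is $E$-torsion-free for all $s, m \geq 0$. Since $E(u) \equiv u^e \pmod{p}$ up to a unit, the $E$-torsion-freeness of $(p^s T)/p$ is equivalent to its $u$-torsion-freeness, so Corollary~\ref{key2} produces a decomposition $T \cong \bigoplus_{i=1}^{n} \frak S/p^{m_i}$. Altogether this gives $H^i_{\frak S}(\frak X) \cong \frak S^r \oplus \bigoplus_{i=1}^n \frak S/p^{m_i}$.

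Next, to pin down $r$ and the multi-set $\{m_i\}$, I would base change along $\beta: \frak S \to \cal O_K = \frak S/E$. Because $H^{i+1}_{\frak S}(\frak X)$ is $E$-torsion-free by Corollary~\ref{bktor}, the universal coefficient sequence arising from the Koszul resolution
\[
0 \to H^i_{\frak S}(\frak X)/E \to H^i_{\rm HT}(\frak X) \to H^{i+1}_{\frak S}(\frak X)[E] \to 0
\]
collapses to an isomorphism $H^i_{\rm HT}(\frak X) \cong \cal O_K^r \oplus \bigoplus_i \cal O_K/p^{m_i}$. Further flat base change along $\cal O_K \hookrightarrow \cal O_C$ yields $H^i_{\rm HT}(\bar{\frak X}) \cong \cal O_C^r \oplus \bigoplus_i \cal O_C/p^{m_i}$, while Theorem~\ref{HT} identifies this with $H^i_{\rm \acute et}(X, \bb Z_p) \otimes_{\bb Z_p} \cal O_C \cong \cal O_C^{r'} \oplus \bigoplus_j \cal O_C/p^{n_j}$, where $H^i_{\rm \acute et}(X, \bb Z_p) \cong \bb Z_p^{r'} \oplus \bigoplus_j \bb Z/p^{n_j}$ is the standard structure of a finitely generated $\bb Z_p$-module. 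Uniqueness of invariant factors for finitely presented $\cal O_C$-modules (Corollary~\ref{structure} combined with the length argument of Lemma~\ref{lengthequal}) then forces $r = r'$ and an equality of multi-sets $\{m_i\} = \{n_j\}$, yielding the desired $\frak S$-linear isomorphism.

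The $A_{\inf}$-version follows at once by base change along the faithfully flat map $\alpha: \frak S \to A_{\inf}$, which sends $\frak S^r$ to $A_{\inf}^r$ and each $\frak S/p^{m_i}$ to $A_{\inf}/p^{m_i}$. The principal technical obstacle in this route is the structural step establishing $T \cong \bigoplus \frak S/p^{m_i}$, where one must carefully track the $E$-torsion and $u$-torsion through each quotient $(p^s T)/p^m$; however this is exactly what Lemma~\ref{key} and Corollary~\ref{key2} package, and once these tools are in place the rest of the argument reduces to a bookkeeping match of invariants carried out on the Hodge--Tate side.
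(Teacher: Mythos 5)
Your proof is correct, and the first half (the structural decomposition $H^i_{\frak S}(\frak X) \cong \frak S^r \oplus \bigoplus \frak S/p^{m_i}$ via Lemma~\ref{key}, Corollary~\ref{key2}, and Corollary~\ref{finalcor}) is exactly the paper's argument. Where you diverge is in the endgame used to match the rank $r$ and the exponents $\{m_i\}$ with those of $H^i_{\rm \acute et}(X,\bb Z_p)$. The paper base changes along $\frak S \to A_{\inf} \to A_{\inf}[1/\mu]$ and invokes the \'etale specialization of Theorem~\ref{theorem1.2}(ii) directly; the invariants are then read off over $A_{\inf}[1/\mu]$. You instead base change along $\beta: \frak S \to \cal O_K = \frak S/E$, using Corollary~\ref{bktor} to collapse the universal coefficient sequence, then flat base change to $\cal O_C$ and invoke Theorem~\ref{HT} to land on the \'etale side. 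Both are valid; the paper's route is shorter and needs no input about $E$-torsion in degree $i+1$ (it works purely with the flat map $\frak S \to A_{\inf}[1/\mu]$), whereas your route has the advantage of comparing invariant factors over the more transparent valuation ring $\cal O_C$ rather than $A_{\inf}[1/\mu]$, and it recycles the Hodge--Tate comparison Theorem~\ref{HT} that the paper already proved. One small presentational point: the step ``uniqueness of invariant factors for finitely presented $\cal O_C$-modules'' uses the converse direction of Lemma~\ref{lengthequal} (isomorphism $\Rightarrow$ equal lengths $\Rightarrow$ equal multi-set of invariant factors), which is what you mean, but it would be cleaner to say that directly rather than citing Lemma~\ref{lengthequal}, which is stated in the other direction.
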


\begin{proof}
Note that the torsion submodule $H^i_{\frak S-\rm {tor}}$ of $H^i_{\frak S}(\frak X)$ is killed by some power of $p$. Then by Lemma \ref{key} and Lemma \ref{key2}, we get a decomposition $H^i_{\frak S-\rm tor}\cong\bigoplus_{t=1}^n\frak S/p^{m_t}$.

Since $H^i_{\frak S}(\frak X)$ is a direct sum of a free $\frak S$-module and $H^i_{\frak S-\rm {tor}}$ by Corollary \ref{finalcor}, this theorem then follows from the $\rm \acute e$tale specialization of the Breuil--Kisin cohomology groups (see Theorem \ref{theorem1.2})
\[
H^n_{\frak S}(\frak X_{})\otimes_{\frak S}A_{\inf}[1/\mu]\cong H^n_{\rm \acute et}(X, \bb Z_p)\otimes_{\bb Z_p}A_{\inf}[1/\mu].
\]
where the map $\frak S\to A_{\inf}[1/\mu]$ is the composition of the faithfully flat map $\alpha: \frak S\to A_{\inf}$ and the natural injection $A_{\inf}=W(\cal O_C^{\flat})\to A_{\inf}[1/\mu]$.

\end{proof}

\begin{remark}\label{torsionstructure}
In general, for any finitely generated module $M$ over $\frak S$ (or any other two dimensional regular local ring), there is a pseudo-isomorphism between $M$ and $\frak S^r\oplus(\bigoplus_{i=1}^n \frak S/\cal P_i)$ where each $\cal P_i$ is a prime ideal of height $1$. Pseudo-isomorphism means its localization at all prime ideals of height 1 is in fact an isomorphism. Within the range $ie<p-1$, the theorem above tells us that the classical $p$-adic cohomology theories provide enough information to determine the structure of Breuil--Kisin cohomology groups. But beyond this range, the situation gets subtle.
\end{remark}

Now we come to prove the integral comparison theorem in the ramified case.
\begin{theorem}\label{final}
Let $\frak X$ be a proper smooth formal scheme over $\cal O_K$, where $\cal O_K$ is the ring of integers in a complete discretely valued non-archimedean extension $K$ of $\bb Q_p$ with perfect residue field $k$ and ramification degree $e$. Let $\cal O_C$ be the ring of integers in a complete algebraically closed non-archimedean extension $C$ of $K$ with residue field $\bar k$. Let $X$ be the adic generic fibre of $\bar{\frak X}:=\frak X\times_{{\rm Spf}(\cal O_K)}{\rm Spf}(\cal O_C)$ and $\frak X_k$ be the special fiber of $\frak X$. If $ie<p-1$, then there is an isomorphism of $W(k)$-modules
\[
H^i_{\rm \acute et}(X, \bb Z_p)\otimes_{\bb Z_p}W(k)\cong H^i_{\rm crys}(\frak X_{k}/W(k)).
\]

\end{theorem}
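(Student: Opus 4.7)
The plan is to combine Theorem \ref{comp} with the crystalline specialization of Breuil--Kisin cohomology (Theorem \ref{theorem1.2}(4)) via a universal-coefficient style argument, with the $u$-torsion-freeness from Corollary \ref{bktor} ensuring that the relevant $\Tor$-term vanishes.

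First I would recall from Theorem \ref{theorem1.2}(4) that there is a canonical isomorphism
\[
R\Gamma_{\frak S}(\frak X)\otimes^{\bb L}_{\frak S}W(k)\simeq R\Gamma_{\rm crys}(\frak X_k/W(k)),
\]
where $\frak S\to W(k)$ sends $u\mapsto 0$ and is the Frobenius on $W(k)$. Since $W(k)=\frak S/u$ admits the Koszul resolution $0\to\frak S\xto{u}\frak S\to W(k)\to 0$, taking cohomology on the left-hand side produces a short exact sequence
\[
0\to H^i_{\frak S}(\frak X)\otimes_{\frak S}W(k)\to H^i_{\rm crys}(\frak X_k/W(k))\to \Tor^{\frak S}_1\bigl(H^{i+1}_{\frak S}(\frak X),W(k)\bigr)\to 0,
\]
in which the $\Tor$-term is simply the $u$-torsion submodule $H^{i+1}_{\frak S}(\frak X)[u]$ (Frobenius-twisted, but this does not affect its vanishing).

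Next I would invoke Corollary \ref{bktor}, which, under the hypothesis $ie<p-1$, asserts precisely that $H^{i+1}_{\frak S}(\frak X)$ is $u$-torsion-free. This forces the $\Tor$-term above to vanish, giving
\[
H^i_{\rm crys}(\frak X_k/W(k))\;\cong\; H^i_{\frak S}(\frak X)\otimes_{\frak S}W(k).
\]
Finally I would feed in Theorem \ref{comp}: under the same hypothesis $ie<p-1$, there is an isomorphism of $\frak S$-modules $H^i_{\frak S}(\frak X)\cong H^i_{\rm\acute et}(X,\bb Z_p)\otimes_{\bb Z_p}\frak S$. Tensoring over $\frak S$ with $W(k)$ and noting that the Frobenius twist on $W(k)$ is an automorphism (so does not affect the underlying $W(k)$-module structure), we obtain
\[
H^i_{\frak S}(\frak X)\otimes_{\frak S}W(k)\;\cong\;H^i_{\rm\acute et}(X,\bb Z_p)\otimes_{\bb Z_p}W(k),
\]
which chained with the previous isomorphism yields the theorem.

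There is essentially no hard step here: the whole argument reduces Theorem \ref{final} to the two nontrivial inputs already established, namely the module-theoretic comparison Theorem \ref{comp} and the torsion-freeness Corollary \ref{bktor}. The only point requiring any care is the verification that under $ie<p-1$ the correct cohomological degree $i+1$ falls within the regime where Corollary \ref{bktor} applies (which is immediate from its statement), and the observation that the Frobenius twist appearing in the specialization $\frak S\to W(k)$ is harmless because $k$ is perfect.
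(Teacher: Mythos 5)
Your argument is correct, and it takes a genuinely different and in fact more direct route than the paper. The paper's proof passes through the $A_{\inf}$-theoretic de Rham specialization: it uses Corollary \ref{special}(3) together with Corollary \ref{bktor} to identify $H^i_{A_{\inf}}(\bar{\frak X})/\xi$ with $H^i_{\rm dR}(\bar{\frak X}/\cal O_C)$, then invokes Theorem \ref{comp}, and only then compares de Rham with crystalline cohomology over $\cal O_C$ via the Berthelot-type theorem valid for $e<p$, before finally descending from $W(\bar k)$ to $W(k)$ by base change of crystalline cohomology. Your proof instead stays entirely inside the Breuil--Kisin framework: the crystalline specialization of $R\Gamma_{\frak S}(\frak X)$ (Theorem \ref{theorem1.2}(4)) together with the Koszul resolution $\frak S\xto{u}\frak S$ of $W(k)$ yields the short exact sequence whose $\Tor$-term is (a Frobenius twist of) $H^{i+1}_{\frak S}(\frak X)[u]$, and this vanishes by Corollary \ref{bktor}; plugging in Theorem \ref{comp} and tensoring with $W(k)$ finishes the job. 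This sidesteps the auxiliary de Rham--crystalline comparison input over $\cal O_C$, avoids the detour through $A_{\inf}$ and $\cal O_C$-linear algebra, and makes the role of the $u$-torsion-freeness in degree $i+1$ completely transparent. The one point worth stating explicitly (which you do) is that the Frobenius twist in the map $\frak S\to W(k)$ only relabels the $W(k)$-module structure by an automorphism and hence does not affect the isomorphism class of the underlying $W(k)$-module. Both proofs are valid; yours is shorter and more self-contained.
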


\begin{proof}
Assume $ie<p-1$. By Corollary \ref{special} and Corollary \ref{bktor}, we have an isomorphism of $\cal O_C$-modules
\[
H^i_{A_\inf}(\bar{\frak X})/\xi\cong H^i_{\rm dR}(\bar{\frak X}/\cal O_C).
\]

Since we also have $H^i_{A_{\inf}}(\bar{\frak X})\cong H^i_{\rm \acute et}(X, \bb Z_p)\otimes_{\bb Z_p}A_{\inf}$ by Theorem \ref{comp}, we get an isomorphism of $\cal O_C$-modules
\[
H^i_{\rm dR}(\bar{\frak X}/\cal O_C)\cong H^i_{\rm \acute et}(X, \bb Z_p)\otimes_{\bb Z_p}\cal O_C. 
\]
Note that when $e<p$, we have an integral comparison isomorphism between de Rham cohomology and crystalline cohomology (cf. \cite{berthelot2006cohomologie})
\[
H^i_{\rm dR}(\bar{\frak X}/\cal O_C)\cong H^i_{\rm crys}(\frak X_{\bar k}/W(\bar k))\otimes_{W(\bar k)}\cal O_C
\]
where $\frak X_{\bar k}:=\frak X_k\otimes_k\bar k$.

So finally, we get the isomorphism
\[
H^i_{\rm \acute et}(X, \bb Z_p)\otimes_{\bb Z_p}W(\bar k)\cong H^i_{\rm crys}(\frak X_{\bar k}/W(\bar k)).
\]
By virtue of the base change of crystalline cohomology 
\[
H^i_{\rm crys}(\frak X_{\bar k}/W(\bar k))\cong H^i_{\rm crys}(\frak X_{k}/W(k))\otimes_{W(k)}W(\bar k),
\]
we also have
\[
H^i_{\rm \acute et}(X, \bb Z_p)\otimes_{\bb Z_p}W(k)\cong H^i_{\rm crys}(\frak X_{k}/W(k)).
\]

\end{proof}

\begin{remark}
When $(i+1)e<p-1$, the proof of the integral comparison isomorphism for schemes in \cite{caruso2008conjecture} depends on the fact that the crystalline cohomology groups $H^i_{\rm crys}(\frak X_{\cal O_K/p}/S)$ admits a decomposition as $H^i_{\rm crys}(\frak X_{\cal O_K/p}/S)\cong S^{n}\oplus(\bigoplus_{j=1}^mS/p^{a_j})$. This can also be deduced from Theorem \ref{comp} and the base change of prismatic cohomology along the map of prisms $(\frak S, (E))\to (S, (p))$, which is the composition of the Frobenius map $\frak S\to \frak S$ and the natural injection $\frak S\into S$. 
\end{remark}

\section{Categories of Breuil--Kisin modules}\label{extra}\label{F}
In this section, we want to give a slightly more general result about the structure of torsion Breuil--Kisin modules of height $r$, under the restriction $er<p-1$. Namely, all torsion Breuil--Kisin modules in this case are isomorphic to $\bigoplus_{i=1}^n\frak S/p^{a_i}$. As a result, this gives another proof of Theorem \ref{comp} without using Lemma \ref{key}.
~\\

Recall that there is a natural $W(k)$-linear surjection $\beta: \frak S=W(k)[[u]]\to \cal O_K$ sending $u$ to $\pi$. The kernel of this map is generated by an Eisenstein polynomial $E=E(u)$ for $\pi$. Fix a non-negative integer $r$. We first need to define some categories that we will study.
\begin{definition}[$'{\rm{Mod}}^{r,\phi}_{/{\frak S}}$,\cite{caruso2009quasi}]\label{exactcategory}
The objects of category $'{\rm{Mod}}^{r,\phi}_{/{\frak S}}$ are defined to be $\frak S$-modules $\frak M$ equipped with a $\phi$-linear endomorphism $\phi:\frak M\to \frak M$ such that the cokernel of $id\otimes\phi: \phi^{*}\frak M:=\frak S\otimes_{\phi, \frak S}\frak M\to \frak M$ is killed by $E^r$. Morphisms are homomorphisms of $\frak S$-modules compatible with $\phi$. We say that a short sequence $0\to \frak M_1\to \frak M_2\to \frak M_3\to 0$ is exact if it is exact in the abelian category of $\frak S$-modules.
\end{definition}

\begin{definition}[${\rm{Mod}}^{r,\phi}_{/{\frak S_1}}$,\cite{caruso2009quasi}]
The category ${\rm{Mod}}^{r,\phi}_{/{\frak S_1}}$ is the full subcategory of $'{\rm{Mod}}^{r,\phi}_{/{\frak S}}$ spanned by the objects which are finite free over $\frak S_1:=\frak S/p=k[[u]]$.
\end{definition}

\begin{definition}[${\rm{Mod}}^{r,\phi}_{/{\frak S_{\infty}}}$,\cite{caruso2009quasi}]
We define ${\rm{Mod}}^{r,\phi}_{/{\frak S_{\infty}}}$ to be the smallest full subcategory of $'{\rm{Mod}}^{r,\phi}_{/{\frak S}}$ which contains ${\rm{Mod}}^{r,\phi}_{/{\frak S_1}}$ and is stable under extensions. 
\end{definition}
\begin{remark}
The category ${\rm{Mod}}^{1,\phi}_{/{\frak S_1}}$ first appeared in \cite{breuilschemas}. And the category ${\rm{Mod}}^{1,\phi}_{/{\frak S_{\infty}}}$ is just the category $\rm Mod/{\frak S}$ defined by Kisin in \cite{kisin2006crystalline}.
 \end{remark}

The following lemma gives us some important descriptions of objects in ${\rm{Mod}}^{r,\phi}_{/{\frak S_{\infty}}}$.
\begin{lemma}\label{tongliulemma}
\begin{enumerate}
\item
For any $\frak M$ in ${\rm{Mod}}^{r,\phi}_{/{\frak S_{\infty}}}$, the morphism $id\otimes \phi: \phi^*\frak M\to \frak M$ is injective.
\item An object $\frak M$ in $'{\rm{Mod}}^{r,\phi}_{/{\frak S}}$ is in ${\rm{Mod}}^{r,\phi}_{/{\frak S_{\infty}}}$ if and only if it is of finite type over $\frak S$, it has no $u$-torsion and it is killed by some power of $p$.
\end{enumerate}
\end{lemma}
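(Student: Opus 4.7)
The plan is to prove both parts by dévissage along the definition of ${\rm Mod}^{r,\phi}_{/\frak S_\infty}$ as the smallest full subcategory of $'{\rm Mod}^{r,\phi}_{/\frak S}$ containing ${\rm Mod}^{r,\phi}_{/\frak S_1}$ and closed under extensions, together with one crucial injectivity statement for $1\otimes\phi$ proved by localization at the height-one prime $(p)\subset \frak S$.

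For part (1) I would proceed by dévissage. The base case is $\frak M$ finite free of some rank $d$ over $\frak S_1=k[[u]]$; then $\phi^*\frak M$ is also free of rank $d$, and the cokernel of $1\otimes\phi$ is killed by $E^r\equiv u^{er}\pmod p$, hence is a torsion $k[[u]]$-module, so a map between two free $k[[u]]$-modules of the same rank with torsion cokernel must be injective. The extension step uses the exactness of $\phi^*$ (since $\phi:\frak S\to \frak S$ is flat) together with the snake lemma. The forward direction of part (2) is another easy dévissage: finite free $k[[u]]$-modules are finite type over $\frak S$, $u$-torsion-free, and killed by $p$, and these three properties clearly pass to extensions.

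The main content is the reverse direction of part (2), which I prove by induction on the smallest $n$ with $p^n\frak M=0$. The base case $n=1$ is immediate since $k[[u]]$ is a PID. For $n>1$, I use the short exact sequence $0\to \frak M[p]\to \frak M\to \frak M/\frak M[p]\to 0$. The quotient is finite type, $p^{n-1}$-torsion, and $u$-torsion-free (if $u\bar x=0$ then $pux=u(px)=0$ in $\frak M$ forces $px=0$, hence $x\in \frak M[p]$), and it lies in $'{\rm Mod}^{r,\phi}_{/\frak S}$ because its $1\otimes\phi$-cokernel is a quotient of $\frak M$'s; by induction it then lies in ${\rm Mod}^{r,\phi}_{/\frak S_\infty}$. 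The subtle step is showing $\frak M[p]\in {\rm Mod}^{r,\phi}_{/\frak S_1}$: it is finite free over $k[[u]]$, and to verify that the cokernel of $\phi^*(\frak M[p])\to \frak M[p]$ is killed by $E^r$, I would write $E^rx=(1\otimes\phi)(y)$ for $x\in \frak M[p]$ and some $y\in \phi^*\frak M$; then $(1\otimes\phi)(py)=0$, so if $1\otimes\phi$ is injective, $py=0$, whence $y\in (\phi^*\frak M)[p]=\phi^*(\frak M[p])$ by flatness of $\phi$.

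The main obstacle is therefore establishing injectivity of $1\otimes\phi:\phi^*\frak M\to \frak M$ directly from the hypotheses of part (2), without invoking part (1) (which would be circular). The approach is to localize at $(p)\subset\frak S$: the ring $\frak S_{(p)}$ is a DVR with uniformizer $p$, and since $\phi$ fixes $p$, the base change $\phi^*_{(p)}$ preserves the length of finite length $\frak S_{(p)}$-modules. Now $E\equiv u^e\pmod p$ with $u\in \frak S_{(p)}^\times$, so $E$ is a unit in $\frak S_{(p)}$; hence $(1\otimes\phi)_{(p)}$ is surjective between two $\frak S_{(p)}$-modules of equal (finite) length, and therefore an isomorphism. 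Finally, any finite type $u$-torsion-free $p^n$-torsion $\frak S$-module has all associated primes equal to $(p)$ (the maximal ideal is excluded by $u$-torsion-freeness), so the same holds for $\phi^*\frak M$; any nonzero submodule of $\phi^*\frak M$ therefore persists under localization at $(p)$, and the vanishing $\ker(1\otimes\phi)_{(p)}=0$ forces $\ker(1\otimes\phi)=0$.
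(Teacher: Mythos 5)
The paper gives no argument of its own here --- the ``proof'' is a pointer to Liu's \emph{Torsion $p$-adic Galois representations and a conjecture of Fontaine}, Section 2.3 --- so there is nothing internal to compare against. Your self-contained argument is correct and essentially reproduces the standard d\'evissage that Liu uses, with the core of the matter correctly identified: one must show $1\otimes\phi$ is injective for a finite-type, $u$-torsion-free, $p$-power-torsion module in $'{\rm Mod}^{r,\phi}_{/\frak S}$ \emph{before} knowing it lies in ${\rm Mod}^{r,\phi}_{/\frak S_\infty}$, and only then can one check that the $p$-torsion submodule inherits the height-$r$ condition.

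Your localization argument for this injectivity is sound. The three things it rests on are all true and worth spelling out: (a) $\phi$ carries $\frak S\setminus(p)$ into itself (its reduction mod $p$ is injective on $k[[u]]$), so $\phi$ localizes to a flat endomorphism of the DVR $\frak S_{(p)}$ fixing the uniformizer $p$, whence $\phi^*_{(p)}$ is length-preserving; (b) $E\equiv u^{e}\pmod{p}$ and $u\in\frak S_{(p)}^{\times}$, so $E\in\frak S_{(p)}^{\times}$ and the cokernel condition makes $(1\otimes\phi)_{(p)}$ surjective between modules of equal finite length, hence an isomorphism; (c) since $\Ass_{\frak S}(\frak M)=\{(p)\}$ (the only primes over $(p)$ are $(p)$ and $(p,u)$, and $u$-torsion-freeness rules out the latter), flat base change of associated primes gives $\Ass_{\frak S}(\phi^*\frak M)=\{(p)\}$ as well, so a nonzero kernel would survive localization at $(p)$, contradicting (a)--(b). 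The rest --- base case over the PID $k[[u]]$, exactness of $\phi^*$ plus the snake lemma for the extension step in (1), the computation that $\frak M/\frak M[p]$ stays $u$-torsion-free, and the identification $(\phi^*\frak M)[p]=\phi^*(\frak M[p])$ via flatness --- is all fine. No gaps.
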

\begin{proof}
See \cite[section 2.3]{liu2007torsion}.
\end{proof}

\begin{corollary}
The torsion submodule $H^i_{\frak S-{\rm tor}}$ of the Breuil--Kisin cohomology groups of a proper smooth formal scheme over $\cal O_K$ is in the category ${\rm{Mod}}^{r,\phi}_{/{\frak S_{\infty}}}$ when $i\leq r<\frac{p-1}{e}$.
\end{corollary}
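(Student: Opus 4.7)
By Lemma~\ref{tongliulemma}(2), to place $H^i_{\frak S-\rm tor}$ into ${\rm Mod}^{r,\phi}_{/\frak S_\infty}$, I must first exhibit it as an object of $'{\rm Mod}^{r,\phi}_{/\frak S}$ and then verify that it is of finite type over $\frak S$, $u$-torsion-free, and killed by some power of $p$.  The finite generation is immediate because $\frak S$ is Noetherian and $H^i_\frak S(\frak X)$ is finitely generated (Theorem~\ref{theorem1.2}), and the Frobenius descends to the torsion submodule since a $\phi$-semilinear endomorphism takes torsion to torsion (as $\phi$ is injective on the integral domain $\frak S$).  The $u$-torsion-freeness follows from Corollary~\ref{bktor} applied with index $i-1$: the hypothesis $i\le r<(p-1)/e$ gives $(i-1)e<ie<p-1$, so $H^i_\frak S(\frak X)$ itself is $u$-torsion-free, and hence so is its submodule $H^i_{\frak S-\rm tor}$.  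Killedness by a power of $p$ follows from Corollary~\ref{finalcor}, which yields a direct-sum decomposition $H^i_\frak S(\frak X)=\frak S^n\oplus H^i_{\frak S-\rm tor}$, combined with the faithfully flat base change $\alpha:\frak S\to A_\inf$ (which identifies the torsion summands on both sides); Proposition~\ref{413} bounds the $A_\inf$-torsion by $p^N$ for $N\gg 0$, and faithful flatness descends this bound back to $\frak S$.

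The main obstacle is the height-$r$ condition: I must show that the cokernel of $id\otimes\phi:\phi^*H^i_{\frak S-\rm tor}\to H^i_{\frak S-\rm tor}$ is killed by $E^r$.  My plan is to invoke the standard fact (proved in \cite{BMS2}) that the $i$-th Breuil--Kisin cohomology group has height at most $i$, so that the cokernel of $id\otimes\phi:\phi^*H^i_\frak S(\frak X)\to H^i_\frak S(\frak X)$ is killed by $E^i$, hence by $E^r$ since $i\le r$; then propagate the bound to the torsion summand.  For the propagation, write $H^i_\frak S(\frak X)=F\oplus T$ with $F=\frak S^n$ and $T=H^i_{\frak S-\rm tor}$.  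Since $\phi^*\frak S=\frak S$, one has $\phi^*(F\oplus T)=F\oplus\phi^*T$, and the Frobenius takes block form $\begin{pmatrix}A&0\\ B&D\end{pmatrix}$: the upper-right block vanishes because $\phi^*T$ is $p$-power torsion while $F$ is torsion-free, and $A:F\to F$ is injective because it becomes an isomorphism after inverting $E$ on a torsion-free module.  A brief diagram chase then suffices: if $m\in T$ and $E^r m$ lies in the image of the full Frobenius with preimage $(f,t)$, the vanishing of the $F$-component of $E^r m$ forces $Af=0$, hence $f=0$ by injectivity of $A$, so $E^r m=Dt$; this shows $\coker(D)$ is killed by $E^r$, as desired.

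With these four conditions in hand, Lemma~\ref{tongliulemma}(2) finishes the argument.  The only ingredient not already available in the excerpt is the height-$\le i$ property of the $i$-th Breuil--Kisin cohomology; every other step is a careful combination of Corollaries~\ref{bktor} and~\ref{finalcor}, Proposition~\ref{413}, and the block-triangular description of the Frobenius on the free/torsion splitting.
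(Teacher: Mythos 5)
Your proposal is correct and follows the paper's own route, which is a one-line appeal to Corollary~\ref{bktor} and to \cite[Theorem 1.8(6)]{bhatt2019prisms} (the prismatic height bound, the same height-$\le i$ fact you cite from \cite{BMS2}) to verify the hypotheses of Lemma~\ref{tongliulemma}(2). You have carefully filled in what the paper leaves implicit—especially the block-triangular argument transferring the $E^r$-bound on $\coker(id\otimes\phi)$ from $H^i_{\frak S}(\frak X)$ to its torsion summand, and the use of Corollary~\ref{finalcor} and Proposition~\ref{413} to get the $p$-power bound on the torsion.
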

\begin{proof}
This follow from Corollary \ref{bktor} and \cite[Theorem 1.8 (6)]{bhatt2019prisms}
\end{proof}

Next we introduce Breuil's ring $S$ and define some related categories analogous to those associated with the ring $\frak S$.

\begin{definition}[Breuil's ring]\label{Breuilring}
Let $S$ be the $p$-adic completion of the PD-envelope of $W(k)[u]$ with respect to the ideal $(E)\subset W(k)[u]$. The ring $S$ is endowed with several additional structures:
\begin{enumerate}
\item a canonical (PD-)filtration:  ${\rm Fil^i}S$ is the $p$-adic completion of the ideal generated by elements $(\frac{E^m}{m!})_{m\geq i}$. 
\item a Frobenius $\phi$: it is the unique continuous map which is Frobenius semi-linear over $W(k)$ and sends $u$ to $u^p$.
\end{enumerate}
\end{definition}
For $r<p-1$,  we have $\phi({\rm Fil^r}S)\subset p^rS$ and we can define $\phi_r=\frac{\phi}{p^r}: {\rm Fil^r }S\to S$. Set $S_n:=S/p^n$.

\begin{definition}[$'{\rm Mod}^{r,\phi}_{/S}$,\cite{caruso2009quasi}]
The objects of $'{\rm Mod}^{r,\phi}_{/S}$ are the following data:
\begin{enumerate}
\item an $S$-module;
\item a submodule ${\rm Fil^r}M\subset M$ such that ${\rm Fil^r}S\cdot M\subset {\rm Fil^r}M$;
\item a $\phi$-linear map $\phi_r:{\rm Fil^r}M\to M$ such that for all $s\in {\rm Fil^r}S$ and $x\in M$ we have $\phi_r(sx)=c^{-r}\phi_r(s)\phi_r(E^rx)$, where $c=\phi_1(E)$.
\end{enumerate}
The morphisms are homomorphisms of $S$-modules compatible with additional structures. We say a short sequence $0\to M_1\to M_2\to M_3\to 0$ in $'{\rm Mod}^{r,\phi}_{/S}$ is exact if both sequences $0\to M_1\to M_2\to M_3\to 0$ and $0\to {\rm Fil^r}M_1\to  {\rm Fil^r}M_2\to  {\rm Fil^r}M_3\to 0$ are exact in the abelian category of $S$-modules.
\end{definition}

\begin{definition}[${\rm Mod}^{r,\phi}_{/S_1}$,\cite{caruso2009quasi}]
The objects of ${\rm Mod}^{r,\phi}_{/S_1}$  are $M$ in $'{\rm Mod}^{r,\phi}_{/S}$ such that $M$ is finite free over $S_1$ and the image of $\phi_r$ generates $M$ as an $S$-module.

\end{definition}
\begin{definition}[${\rm Mod}^{r,\phi}_{/S_{\infty}}$,\cite{caruso2009quasi}]\label{classic definition}
The category ${\rm Mod}^{r,\phi}_{/S_{\infty}}$ is the smallest subcategory of $'{\rm Mod}^{r,\phi}_{/S}$ containing ${\rm Mod}^{r,\phi}_{/S_1}$ and is stable under extensions.
\end{definition}

For any $r<p-1$, one can define a functor $M_{\frak S_{\infty}}: {\rm Mod}^{r,\phi}_{/\frak S_{\infty}}\to {\rm 'Mod}^{r,\phi}_{/S}$ as follows: 
\begin{enumerate}
\item $M_{\frak S_{\infty}}(\frak M)=S\otimes_{\phi,\frak S}\frak M$. Here $\phi: \frak S\to S$ is the composite $\frak S\to \frak S\to S$ where the first map is the Frobenius on $\frak S$ and the second map is the canonical injection. 
\item Submodule: The Frobenius on $\frak M$ induces a $S$-linear map $id\otimes \phi: S\otimes_{\phi,\frak S}\frak M\to S\otimes_{\frak S}\frak M$. The submodule ${\rm Fil^r}M_{\frak S_{\infty}}(\frak M)$ is then defined by the following formula:
\[
{\rm Fil^r}M_{\frak S_{\infty}}(\frak M):=\{x\in M_{\frak S_{\infty}}(\frak M) \mid (id\otimes \phi)(x)\in {\rm Fil^r}S\otimes_{\frak S}\frak M\subset S\otimes_{\frak S}\frak M)\}
\]
\item Frobenius: the map $\phi_r$ is the following composite:
\[
{\rm Fil^r}M_{\frak S_{\infty}}(\frak M)\xrightarrow{id\otimes\phi}{\rm Fil^r}S\otimes_{\frak S}\frak M\xrightarrow{\phi_r\otimes id}M_{\frak S_{\infty}}(\frak M).
\]
\end{enumerate}
We state a theorem describing the functor $M_{\frak S_{\infty}}$.

\begin{theorem}\label{classic}
For any $r<p-1$, the functor $M_{\frak S_{\infty}}$ takes value in ${\rm Mod}^{r,\phi}_{/S_{\infty}}$. The induced functor $M_{\frak S_{\infty}}: {\rm Mod}^{r,\phi}_{/\frak S_{\infty}}\to {\rm Mod}^{r,\phi}_{/S_{\infty}}$ is exact and it is an equivalence of categories. Moreover, if we choose $M_{S_{\infty}}$ a quasi-inverse of $M_{\frak S_{\infty}}$, then the functor $M_{S_{\infty}}$ is also exact.
\end{theorem}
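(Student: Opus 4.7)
The plan is to prove Theorem \ref{classic} in four conceptual steps: verify that $M_{\frak S_\infty}$ lands in ${\rm Mod}^{r,\phi}_{/S_\infty}$, prove its exactness, construct a quasi-inverse $M_{S_\infty}$, and finally verify exactness of $M_{S_\infty}$.

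First I would check well-definedness on the finite-free stratum. For $\frak M \in {\rm Mod}^{r,\phi}_{/\frak S_1}$, a direct computation shows $S \otimes_{\phi,\frak S}\frak M$ is finite free over $S_1$. The subobject ${\rm Fil}^r M_{\frak S_\infty}(\frak M)$ and the map $\phi_r$ are defined exactly so as to match Definition \ref{Breuilring}, and the hypothesis that $\operatorname{coker}(id\otimes\phi_{\frak M})$ is killed by $E^r$ translates (after applying $\phi_r\otimes id$ to the defining diagram) into the statement that the image of $\phi_r$ generates $M_{\frak S_\infty}(\frak M)$ as an $S$-module, so $M_{\frak S_\infty}(\frak M) \in {\rm Mod}^{r,\phi}_{/S_1}$. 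Once exactness of $M_{\frak S_\infty}$ is proved (Step 2), the extension-closure definition of ${\rm Mod}^{r,\phi}_{/\frak S_\infty}$ and ${\rm Mod}^{r,\phi}_{/S_\infty}$ extends the assignment to all of the source.

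For exactness there are two parts. On underlying modules, $- \otimes_{\phi,\frak S} S$ is exact because the composite $\phi:\frak S \to S$ is flat (the target is the $p$-adic completion of a divided-power localization of $W(k)[u]$, which is standardly flat over $\frak S$ via Frobenius). For the filtration, given a short exact sequence $0 \to \frak M_1 \to \frak M_2 \to \frak M_3 \to 0$ in ${\rm Mod}^{r,\phi}_{/\frak S_\infty}$, the injectivity of $id\otimes\phi$ on each $\phi^*\frak M_i$ (Lemma \ref{tongliulemma}(i)) together with the definition of ${\rm Fil}^r M_{\frak S_\infty}(\frak M_i)$ as the preimage of ${\rm Fil}^r S \otimes_{\frak S} \frak M_i$ reduces exactness of the ${\rm Fil}^r$-sequence to the flatness of ${\rm Fil}^r S$ as an $\frak S$-module via $\phi$, together with a snake-lemma argument.

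The main obstacle is Step 3, the construction of the quasi-inverse. I would proceed by induction along the extension-closure definition of ${\rm Mod}^{r,\phi}_{/S_\infty}$. For $M \in {\rm Mod}^{r,\phi}_{/S_1}$, I would carve out $\frak M \subset M$ as the $\frak S$-submodule generated by a suitable $\phi$-stable set of generators of the image of $\phi_r$, and verify that $M \cong S \otimes_{\phi,\frak S}\frak M$ naturally. The restriction $r < p-1$ is essential here: it guarantees $\phi({\rm Fil}^r S) \subseteq p^r S$, so that $\phi_r$ is well-defined and the data $(M,{\rm Fil}^r M,\phi_r)$ determines $\frak M$ uniquely up to canonical isomorphism. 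For general $M$, one uses that an extension $0 \to M_1 \to M \to M_3 \to 0$ in ${\rm Mod}^{r,\phi}_{/S_\infty}$ comes from an extension on the $\frak S$-side by applying the base-case construction to the quotient and using exactness of $M_{\frak S_\infty}$ to lift. Once the equivalence is established, exactness of $M_{S_\infty}$ follows formally: an exact functor with a quasi-inverse has exact quasi-inverse (since both categories have their exactness inherited from modules, and equivalences of exact categories preserve and reflect short exact sequences). The delicate point I expect to require the most care is checking that the extension class on the $S$-side really descends compatibly with the $\frak S$-structure, which again hinges on the inequality $r<p-1$.
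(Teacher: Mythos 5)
The paper does not actually prove Theorem \ref{classic}: its ``proof'' is the single line ``See \cite[Proposition 2.1.2, Theorem 2.3.1, Proposition 2.3.2]{caruso2009quasi}.'' So your attempt is not being measured against a worked-out argument in the text, but against the content of Caruso's results that are being cited.

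Your high-level architecture (well-definedness on the free stratum, exactness, quasi-inverse, exactness of the quasi-inverse) matches the three cited results, and several of your observations are accurate: the composite $\frak S \xrightarrow{\phi} \frak S \hookrightarrow S$ is flat (the Frobenius is finite free on $\frak S$ and $\frak S\hookrightarrow S$ is flat), the restriction $r<p-1$ is indeed what makes $\phi_r=\phi/p^r$ well-defined on $\mathrm{Fil}^r S$, and the extension-closure definitions let you bootstrap from $\frak S_1$ and $S_1$ to $\frak S_\infty$ and $S_\infty$.

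However, there is a genuine gap exactly where you acknowledge the difficulty, in Step 3. The phrase ``carve out $\frak M \subset M$ as the $\frak S$-submodule generated by a suitable $\phi$-stable set of generators of the image of $\phi_r$'' is not a construction; it does not specify the lattice, does not explain why the result is finite free over $\frak S_1$, and glosses over the fact that inside $M_{\frak S_\infty}(\frak M)=S\otimes_{\phi,\frak S}\frak M$ the subset $\{1\otimes m\}$ carries an $\frak S$-action that is \emph{not} the obvious one (since $u\cdot(1\otimes m)=u\otimes m$, whereas $1\otimes um = u^p\otimes m$); identifying the correct sublattice is precisely the nontrivial content of Caruso's Theorem 2.3.1. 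Moreover, you argue essential surjectivity by lifting extensions but never address full faithfulness, which is also required for the equivalence and is not automatic. Finally, the claim that ``an exact functor with a quasi-inverse has exact quasi-inverse'' and so ``exactness of $M_{S_\infty}$ follows formally'' is not a general fact about exact categories; one must show that $M_{\frak S_\infty}$ \emph{reflects} short exact sequences, and the asymmetry in the definitions of exactness on the two sides (on the $S$-side one also demands exactness of the $\mathrm{Fil}^r$-sequence, which imposes an additional condition) means this requires a real argument, as Caruso indeed proves separately in Proposition 2.3.2. So while your plan has the right shape, Steps 3 and 4 as written do not constitute a proof; they restate the desired conclusions in place of supplying the constructions that make them true.
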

\begin{proof}
See \cite[Proposition 2.1.2, Theorem 2.3.1, Proposition 2.3.2]{caruso2009quasi}.
\end{proof}

\begin{theorem}\label{fil}
Assuming $er<p-1$, the category ${\rm Mod}^{r,\phi}_{/S_{\infty}}$ is an abelian category and every object is of the form $\bigoplus_{i=1}^nS/{p^{a_i}}$. For any morphism $f: (M_1, {\rm Fil^r}M_1, \phi_r)\to (M_2, {\rm Fil^r}M_2, \phi_r)$ in ${\rm Mod}^{r,\phi}_{/S_{\infty}}$, the underlying module of ${\rm Ker}(f)$ is the kernel of the morphism $f: M_1\to M_2$ in the category of $S$-modules and the underlying module of ${\rm Fil^r}{\rm Ker}(f)$ is the kernel of the morphism $f: {\rm Fil^r}M_1\to {\rm Fil^r}M_2$ in the category of $S$-modules. A Similar statement is true for ${\rm Coker}(f)$.
\end{theorem}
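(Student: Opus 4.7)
The plan is to transport the problem from Breuil's ring $S$ to the Breuil--Kisin ring $\frak S$ via the equivalence of categories $M_{\frak S_\infty}: {\rm Mod}^{r,\phi}_{/\frak S_\infty} \isoto {\rm Mod}^{r,\phi}_{/S_\infty}$ from Theorem \ref{classic}. Since both this equivalence and its quasi-inverse are exact, abelian-ness and the formation of kernels, cokernels and images are preserved, so it suffices to establish the analogous statements for ${\rm Mod}^{r,\phi}_{/\frak S_\infty}$: the category is abelian, every object is isomorphic as an $\frak S$-module to $\bigoplus_{i=1}^n \frak S/p^{a_i}$, and categorical kernels and cokernels coincide with the $\frak S$-module ones. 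The claim about filtrations on the $S$-side then follows by tracing through the construction of $M_{\frak S_\infty}$, which sends a short exact sequence on the $\frak S$-side to one on the $S$-side in which the middle filtration induces the outer ones.

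The main technical input is the structure theorem. By Lemma \ref{tongliulemma}, every $\frak M \in {\rm Mod}^{r,\phi}_{/\frak S_\infty}$ is finitely generated over $\frak S$, $u$-torsion-free, and killed by some power of $p$. Corollary \ref{key2} therefore reduces the decomposition $\frak M \cong \bigoplus_{i} \frak S/p^{a_i}$ to showing that $(p^s\frak M)/p$ is $u$-torsion-free for every $s \geq 0$. I would prove this using the Frobenius structure together with the hypothesis $er < p - 1$. The Frobenius $\phi: \frak S \to \frak S$ is flat, since $\frak S$ is free over $W(k)[[u^p]]$ on the basis $1, u, \ldots, u^{p-1}$, so $\phi^*$ is exact; reducing modulo $p$, where $E \equiv u^e$, the cokernel of $id \otimes \phi: \phi^*(\frak M/p) \to \frak M/p$ is killed by $u^{er}$. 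Assuming for contradiction that a nonzero $u$-torsion class $\bar x \in (p^s\frak M)/p$ exists, one can iteratively apply Frobenius pullback and the surjectivity-up-to-$u^{er}$ of $id \otimes \phi$: each iteration raises the $u$-divisibility of the relevant lift by a factor of $p$ while costing only a factor of $u^{er}$. When $er < p - 1$, the net gain forces the $u$-valuation of a lift of $\bar x$ to grow without bound, contradicting finite generation of $\frak M$ over the Noetherian ring $\frak S$.

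Once the structure theorem is in hand, the abelian structure is obtained by direct inspection. Given a morphism $f: \frak M_1 \to \frak M_2$, its $\frak S$-module kernel $\frak K \subseteq \frak M_1$ is $u$-torsion-free and inherits a Frobenius; flatness of $\phi$ together with a short diagram chase via the snake lemma shows $\coker(\phi^*\frak K \to \frak K)$ injects into $\coker(\phi^*\frak M_1 \to \frak M_1)$, so is killed by $E^r$. For the cokernel $\frak C$ of $f$, the analogous snake argument gives the $E^r$-cokernel condition, and re-running the Frobenius-iteration from the previous paragraph applied to $\frak C$ in place of $\frak M$ yields $u$-torsion-freeness of $(p^s\frak C)/p$, hence $\frak C \cong \bigoplus \frak S/p^{c_j}$ by Corollary \ref{key2}. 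Transferring back along $M_{\frak S_\infty}$ then gives the full statement for ${\rm Mod}^{r,\phi}_{/S_\infty}$, with the compatibility of the filtrations following from the exactness of $M_{\frak S_\infty}$ applied to the short exact sequences $0 \to \frak K \to \frak M_1 \to \text{im}(f) \to 0$ and $0 \to \text{im}(f) \to \frak M_2 \to \frak C \to 0$.

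The main obstacle is the $u$-torsion-freeness of $(p^s\frak M)/p$: the Frobenius-iteration requires careful bookkeeping of $p$-adic levels against the $u$-divisibilities of successive Frobenius pullbacks, and uses the numerical slack $er < p - 1$ sharply to defeat the $u^{er}$-loss coming from the $E^r$-cokernel condition.
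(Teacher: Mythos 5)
The paper itself does not prove Theorem~\ref{fil}: it cites \cite[Section 3]{caruso2006representations} and notes that Caruso's category is equivalent to the one here. Your proposal therefore attempts to supply a self-contained proof that the paper declines to give. This is a legitimate and potentially interesting route, but as written it has a genuine gap and a logical inversion that you should be aware of.

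The logical inversion: in the paper, the decomposition of torsion Breuil--Kisin modules as $\bigoplus_i\frak S/p^{a_i}$ (Theorem~\ref{newproof}) and the identification of categorical (co)kernels with underlying $\frak S$-module (co)kernels are \emph{consequences} of Theorem~\ref{fil}, obtained by transporting through $M_{\frak S_\infty}$. You are proposing to run this in the opposite direction: first establish the $\frak S$-side structure theorem and abelianness without invoking Theorem~\ref{fil}, then push back to $S$. For this to work you need an independent proof of the $\frak S$-side structure, and that is where the gap lies.

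The gap is the Frobenius-iteration argument for $u$-torsion-freeness of $(p^s\frak M)/p$. This is the essential content of the structure theorem, and your paragraph gives only an outline of its shape (``each iteration raises the $u$-divisibility by a factor of $p$ while costing only $u^{er}$''), not a verifiable argument. Several points need attention. First, to run the iteration on $\frak M/p$ one needs to know that $id\otimes\phi:\phi^*(\frak M/p)\to\frak M/p$ is injective; Lemma~\ref{tongliulemma}(i) gives injectivity for $\frak M$ itself but not a priori for $\frak M/p$ (and the paper's Lemma~\ref{modp}, which would give $\frak M/p\in{\rm Mod}^{r,\phi}_{/\frak S_1}$, is downstream of the abelianness you are trying to prove). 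Second, the ``valuation bookkeeping'' needs to produce a contradiction with finite generation, but because the putative $u$-torsion class $\bar x$ already satisfies $u\bar x=0$, the relation $u^{er}\bar x=0$ obtained from the height condition is vacuous rather than constraining; you need to trace the class back through $\phi^*$, and it is not clear how to do so without further injectivity input. Third, and more seriously, you also want to run the same argument on the cokernel $\frak C$ of a morphism, but $\frak C$ is not known to be $u$-torsion-free (that is exactly what the lemma following Theorem~\ref{fil} establishes, using Theorem~\ref{fil}), so $\frak C\notin{\rm Mod}^{r,\phi}_{/\frak S_\infty}$ a priori and Lemma~\ref{tongliulemma} does not apply to it. You would need to prove the stronger statement that any finitely generated $p$-power-torsion $\frak S$-module with a height-$\le r$ Frobenius is automatically $u$-torsion-free when $er<p-1$; that statement is essentially equivalent to the theorem you are trying to prove.

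Finally, the assertion about ${\rm Fil}^r{\rm Ker}(f)$ and ${\rm Fil}^r{\rm Coker}(f)$ being computed by the kernel and cokernel of $f$ on the filtered pieces is not an automatic consequence of ``exactness of $M_{\frak S_\infty}$.'' Exactness of $M_{\frak S_\infty}$ as a functor between exact categories means short exact sequences go to short exact sequences (and for the $S$-side this includes exactness on ${\rm Fil}^r$), but the identification of the categorical kernel's filtration with the $S$-module kernel of the map on ${\rm Fil}^r$ requires you to know, in addition, that the categorical kernel on the $\frak S$-side is the underlying-module kernel and that $M_{\frak S_\infty}$ of an inclusion preserves the formula defining ${\rm Fil}^r$. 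This is a small point compared to the structure theorem, but it should be spelled out rather than waved at.

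In short: the scaffolding of your plan is sound, but its load-bearing beam, the Frobenius-iteration proof of $u$-torsion-freeness, is not supplied, and the version applied to cokernels does not have the hypotheses it needs. As it stands you have not given an independent proof of Theorem~\ref{fil}; you have reduced it to a statement that is of comparable difficulty and whose proof in the literature (Caruso) goes through the filtered $S$-module category precisely because the direct $\frak S$-side argument is delicate.
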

\begin{proof}
See \cite[Section 3]{caruso2006representations}. We remark that the category which Caruso used is different from ours but they can be proved to be equivalent by using a generalization of \cite[Proposition 2.3.1.2]{breuil1998cohomologie}, as mentioned in the proof of \cite[Theorem 4.2.1]{caruso2008conjecture}.
\end{proof}
\begin{remark}
This theorem is false without the restriction $er<p-1$.
\end{remark}

From now on, we fix a non-negative integer $r$ such that $er<p-1$. Then ${\rm Mod}^{r,\phi}_{/\frak S_{\infty}}$ is an abelian category. 

\begin{lemma}
For any morphism $f: \frak M_1\to \frak M_2$ in ${\rm Mod}^{r,\phi}_{/\frak S_{\infty}}$, the underlying module of ${\rm Ker}(f)$ is the kernel of the morphism $f: \frak M_1\to \frak M_2$ in the category of $\frak S$-modules. A Similar statement is true for ${\rm Coker}(f)$.
\end{lemma}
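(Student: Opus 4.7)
The plan is to demonstrate that the $\frak S$-module kernel $K := \ker_{\frak S}(f)$ and the $\frak S$-module cokernel $C := \coker_{\frak S}(f)$ both lie in ${\rm Mod}^{r,\phi}_{/\frak S_{\infty}}$; once this is established, the universal properties of the categorical kernel and cokernel in the ambient abelian category (provided by the preceding theorem) automatically force them to coincide with $K$ and $C$, since morphisms in our category are $\frak S$-linear by definition.

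For $K$, the verification is largely direct. $K$ inherits a Frobenius from $\frak M_1$ because $f$ commutes with $\phi$; it is finitely generated by the Noetherianness of $\frak S$, $u$-torsion-free as a submodule of the $u$-torsion-free $\frak M_1$, and killed by a power of $p$. The only remaining point is that the cokernel of $id\otimes\phi\colon \phi^*K\to K$ is killed by $E^r$: given $x\in K$, the corresponding property of $\frak M_1$ produces $y\in \phi^*\frak M_1$ with $(id\otimes\phi)(y)=E^r x$; then $(id\otimes\phi)((\phi^* f)(y)) = f(E^r x)=0$, and the injectivity of $id\otimes\phi$ on $\phi^*\frak M_2$ provided by Lemma \ref{tongliulemma}(1) forces $(\phi^* f)(y)=0$, so that by flatness of the Frobenius $\phi\colon \frak S\to \frak S$ (which identifies $\phi^* K$ with $\ker(\phi^* f)$) we obtain $y\in \phi^* K$. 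Lemma \ref{tongliulemma}(2) then places $K$ in ${\rm Mod}^{r,\phi}_{/\frak S_{\infty}}$.

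For $C$ the only serious obstacle is the $u$-torsion-freeness, and I would extract it by transporting the problem through the exact equivalence $M_{\frak S_{\infty}}=S\otimes_{\phi,\frak S}(-)\colon {\rm Mod}^{r,\phi}_{/\frak S_{\infty}}\simeq {\rm Mod}^{r,\phi}_{/S_{\infty}}$ of Theorem \ref{classic}. Right exactness of tensor gives $\coker_{S}(M_{\frak S_{\infty}}(f))=S\otimes_{\phi,\frak S} C$, which by Theorem \ref{fil} is the underlying $S$-module of the categorical cokernel of $M_{\frak S_{\infty}}(f)$ in ${\rm Mod}^{r,\phi}_{/S_{\infty}}$; applying the exact quasi-inverse $M_{S_{\infty}}$ produces the categorical cokernel $C^*$ of $f$ in ${\rm Mod}^{r,\phi}_{/\frak S_{\infty}}$, whose underlying $\frak S$-module $|C^*|$ satisfies $S\otimes_{\phi,\frak S}|C^*|=S\otimes_{\phi,\frak S} C$. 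The natural $\frak S$-linear map $C\to |C^*|$ arising from the canonical projection $\frak M_2\to C^*$ and the universal property of $C$ becomes the identification above after $S\otimes_{\phi,\frak S}$, and the faithful flatness of $\phi\colon \frak S\to S$ (which factors through the finite-free Frobenius $\phi_{\frak S}\colon \frak S\to \frak S$ followed by the faithfully flat inclusion $\frak S\hookrightarrow S$) descends this to an isomorphism $C\cong |C^*|$, whence $C$ itself lies in ${\rm Mod}^{r,\phi}_{/\frak S_{\infty}}$. The main obstacle is thus precisely this $u$-torsion-freeness of $\coker_{\frak S}(f)$, which cannot be read off from the $\frak S$-module side alone and requires the full strength of the Breuil-module equivalence together with faithful flatness.
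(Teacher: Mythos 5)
Your treatment of the kernel is correct, and in fact supplies a verification that the paper glosses over: Lemma~\ref{tongliulemma}(2) only applies to objects already known to lie in $'{\rm Mod}^{r,\phi}_{/\frak S}$, so the height-$r$ condition on $K$ must be checked, and your argument via injectivity of $id\otimes\phi$ on $\phi^*\frak M_2$ together with flatness of the Frobenius on $\frak S$ does this cleanly.

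The cokernel argument, however, has a genuine gap: $\frak S\hookrightarrow S$ is \emph{not} flat, so $\phi\colon\frak S\to S$ is not faithfully flat, and the descent step fails. To see this, note that $E\equiv u^e\pmod p$ and $E^p=p!\,\gamma_p(E)\in pS$, so $u^{ep}=0$ in $S/p$; thus $u^{ep}$ is a zero-divisor in $S/p$ while it is a non-zero-divisor in $\frak S/p=k[[u]]$, and flatness would be violated after base change along $\frak S\to\frak S/p$. The same phenomenon defeats the conclusion you want to draw: taking $g\colon\frak S/p\twoheadrightarrow\frak S/(p,u^e)$, one computes $S\otimes_{\phi,\frak S}\frak S/(p,u^e)=S/(p,u^{ep})=S/p$, so $S\otimes_{\phi,\frak S}g$ is an isomorphism even though $g$ is not. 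In other words, $S\otimes_{\phi,\frak S}(-)$ does not reflect isomorphisms of arbitrary $\frak S/p^N$-modules, and since $C=\coker_{\frak S}(f)$ is not known a priori to lie in ${\rm Mod}^{r,\phi}_{/\frak S_\infty}$, you cannot appeal to the equivalence of categories either. The paper circumvents this by first showing $M_{\frak S_\infty}(f)$ is injective (using faithfulness of the quasi-inverse), then invoking the exactness of the equivalence in the \emph{exact-category} sense (Theorems~\ref{classic} and~\ref{fil}) to produce an exact sequence $0\to\frak M_1\to\frak M_2\to M_{S_\infty}(M)\to 0$ of $\frak S$-modules, from which the identification $M_{S_\infty}(M)\cong\frak M_2/\frak M_1$ and hence the $u$-torsion-freeness of $\frak M_2/\frak M_1$ follow directly — no flatness of $\frak S\to S$ is needed.
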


\begin{proof}
By Lemma \ref{tongliulemma}, the kernel and the image of the underlying morphism $f: \frak M_1\to \frak M_2$ in the category of $\frak S$-modules together with the induced Frobenius maps are objects of ${\rm Mod}^{r,\phi}_{/\frak S_{\infty}}$. It is easy to see that the kernel equipped with the induced Frobenius map is indeed ${\rm Ker}(f)$ in the category ${\rm Mod}^{r,\phi}_{/\frak S_{\infty}}$. So we can assume $f: \frak M_1\to \frak M_2$ is injective. Then $M_{\frak S_{\infty}}(f)$ is also injective. In fact, let $L$ be the kernel of $M_{\frak S_{\infty}}(f)$ and we choose a quasi-inverse functor $M_{S_{\infty}}$ of $M_{\frak S_{\infty}}$. Let $h: \frak L\to \frak M_1$ be the image of the inclusion $L\to M_{\frak S_{\infty}}(\frak M_1)$ under $M_{S_{\infty}}$. Then $f\circ h=0$, which implies $h=0$. In consequence, we have $L=0$. Put $M={\rm Coker}(f)$. By Theorem \ref{classic} and Theorem \ref{fil}, we get an exact sequence $0\to\frak M_1\to \frak M_2\to M_{S_{\infty}}(M)\to 0$ in the exact category ${\rm Mod}^{r,\phi}_{/\frak S_{\infty}}$ (where the class of the exact sequences is as defined in Definition \ref{exactcategory}). So we have $M_{S_{\infty}}(M)$ is isomorphic to $\frak M_2/\frak M_1$ as $\frak S$-modules. In particular $\frak M_2/\frak M_1$ has no $u$-torsion. By Lemma \ref{tongliulemma}, the module $\frak M_2/\frak M_1$ equipped with the induced Frobenius map is an object of ${\rm Mod}^{r,\phi}_{/\frak S_{\infty}}$. It is easy to check that ${\rm Coker}(f)$ is isomorphic to $\frak M_2/\frak M_1$ equipped with the induced Frobenius map.
\end{proof}

\begin{corollary}\label{rrrrr}
The full subcategory ${\rm Mod}^{r,\phi}_{/\frak S_1}$ of ${\rm Mod}^{r,\phi}_{/\frak S_{\infty}}$ is an abelian category.
\end{corollary}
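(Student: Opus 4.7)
The plan is to leverage the preceding lemma, which identifies kernels and cokernels in ${\rm Mod}^{r,\phi}_{/\frak S_\infty}$ with the underlying $\frak S$-module kernels and cokernels equipped with the induced Frobenius. Since ${\rm Mod}^{r,\phi}_{/\frak S_\infty}$ is already known to be abelian, it suffices to verify that the full subcategory ${\rm Mod}^{r,\phi}_{/\frak S_1}$ is closed under these kernels and cokernels, i.e., that if $f\colon \frak M_1\to\frak M_2$ is a morphism between objects that are finite free over $\frak S_1=k[[u]]$, then both $\ker(f)$ and $\mathrm{coker}(f)$ are again finite free over $\frak S_1$.

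For the kernel, note that $\ker(f)$ is an $\frak S$-submodule of $\frak M_1$ and is therefore annihilated by $p$, so it is naturally an $\frak S_1$-module. Since $\frak S_1=k[[u]]$ is a discrete valuation ring and $\frak M_1$ is finite free over it, every submodule is finite free, so $\ker(f)\in{\rm Mod}^{r,\phi}_{/\frak S_1}$.

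For the cokernel, $\mathrm{coker}(f)=\frak M_2/f(\frak M_1)$ is also killed by $p$ and finitely generated, hence a finitely generated $\frak S_1$-module. The key input is Lemma \ref{tongliulemma}(2): since $\mathrm{coker}(f)$ is an object of ${\rm Mod}^{r,\phi}_{/\frak S_\infty}$, it has no $u$-torsion. Being a finitely generated torsion-free module over the discrete valuation ring $\frak S_1$, it is finite free, so $\mathrm{coker}(f)\in{\rm Mod}^{r,\phi}_{/\frak S_1}$ as well.

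There is no real obstacle here once the preceding lemma and the explicit description of objects of ${\rm Mod}^{r,\phi}_{/\frak S_\infty}$ from Lemma \ref{tongliulemma} are in hand; the only point meriting attention is the $u$-torsion-freeness of the cokernel, which is precisely what allows us to pass from being merely a quotient to being free over the DVR $k[[u]]$.
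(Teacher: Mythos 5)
Your proof is correct and follows essentially the same route as the paper: both use the preceding lemma to identify kernels and cokernels with the underlying $\frak S$-module kernels and cokernels, observe that these are killed by $p$, and then conclude finite-freeness over $\frak S_1=k[[u]]$ by $u$-torsion-freeness (the paper invokes Lemma \ref{tongliulemma}(2) for both, while you use the submodule-of-free-module-over-a-PID fact for the kernel and Lemma \ref{tongliulemma}(2) for the cokernel, a trivial variation).
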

\begin{proof}
For any morphism $f: \frak M_1\to \frak M_2$ in ${\rm Mod}^{r,\phi}_{/\frak S_1}$, ${\rm Ker}(f)$ and ${\rm Coker}(f)$ are then both killed by $p$. By Lemma \ref{tongliulemma}, they are $u$-torsion free. So ${\rm Ker}(f)$ and ${\rm Coker}(f)$ are in the category ${\rm Mod}^{r,\phi}_{/\frak S_1}$.
\end{proof}

Let ${\rm ModFI}^{r,\phi}_{/\frak S_{\infty}}$ denote the full subcategory of ${\rm Mod}^{r,\phi}_{/\frak S_{\infty}}$ spanned by the objects that are isomorphic to $\bigoplus_{i=1}^{n} \frak S/p^{a_i}$ as $\frak S$-modules. In particular, ${\rm ModFI}^{r,\phi}_{/\frak S_{\infty}}$ contains ${\rm Mod}^{r,\phi}_{/\frak S_1}$.

\begin{lemma}\label{modp}
For any $\frak M\in {\rm Mod}^{r,\phi}_{/\frak S_{\infty}}$, the quotient $\frak M/p$ is in ${\rm Mod}^{r,\phi}_{/\frak S_1}$.
\end{lemma}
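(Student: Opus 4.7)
The plan is to exhibit $\frak M/p$ as a cokernel in the category ${\rm Mod}^{r,\phi}_{/\frak S_{\infty}}$, and then to promote it automatically to the smaller subcategory ${\rm Mod}^{r,\phi}_{/\frak S_1}$ using Lemma~\ref{tongliulemma}. First, multiplication by $p$ on $\frak M$ is $\frak S$-linear and commutes with $\phi$ (since $\phi(p)=p$ in $\frak S$), so it defines a morphism $[p]\colon \frak M \to \frak M$ in ${\rm Mod}^{r,\phi}_{/\frak S_{\infty}}$. By the lemma immediately preceding Corollary~\ref{rrrrr}, the underlying $\frak S$-module of ${\rm Coker}([p])$ computed in ${\rm Mod}^{r,\phi}_{/\frak S_{\infty}}$ coincides with the cokernel of $[p]$ computed in the category of $\frak S$-modules, i.e.\ with $\frak M/p\frak M$. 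In particular, $\frak M/p$, endowed with the Frobenius descended from $\frak M$, is an object of ${\rm Mod}^{r,\phi}_{/\frak S_{\infty}}$.

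Next I would invoke Lemma~\ref{tongliulemma}(2), which says that an object of $'{\rm Mod}^{r,\phi}_{/\frak S}$ lies in ${\rm Mod}^{r,\phi}_{/\frak S_{\infty}}$ if and only if it is of finite type over $\frak S$, is $u$-torsion free, and is killed by some power of $p$. Applied to $\frak M/p$, the key consequence is $u$-torsion freeness; the other two properties are clear. Hence $\frak M/p$ is a finitely generated $u$-torsion free module over the discrete valuation ring $\frak S/p\cong k[[u]]$, and is therefore finite free over $k[[u]]$. Together with the descended Frobenius, whose cokernel is killed by $E^r$ since the same holds for the Frobenius on $\frak M$, this verifies the defining conditions for membership in the subcategory ${\rm Mod}^{r,\phi}_{/\frak S_1}$, concluding the proof.

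At this level of the argument there is no genuine obstacle left: the real content has been supplied upstream. The restriction $er<p-1$ enters only through Theorem~\ref{fil}, whose consequences are transported, via the equivalence of Theorem~\ref{classic}, into the cokernel-compatibility statement used above; this is what ensures that the $\frak S$-module quotient $\frak M/p\frak M$ remains $u$-torsion free. Once this is available, no further inspection of the $\phi$-structure on $\frak M/p$ is required.
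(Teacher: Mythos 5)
Your proof is correct and follows essentially the same route as the paper's: view multiplication by $p$ as a morphism in ${\rm Mod}^{r,\phi}_{/\frak S_{\infty}}$, use the cokernel-compatibility lemma (equivalently, that ${\rm Mod}^{r,\phi}_{/\frak S_{\infty}}$ is abelian for $er<p-1$) to conclude $\frak M/p$ with its descended Frobenius lies in ${\rm Mod}^{r,\phi}_{/\frak S_{\infty}}$, and then invoke Lemma~\ref{tongliulemma}(2) to get $u$-torsion freeness and hence finite freeness over $k[[u]]$. You are in fact slightly more explicit than the paper in spelling out the appeal to the cokernel-compatibility lemma and in noting the $E^r$ condition on the descended Frobenius.
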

\begin{proof}
Consider the morphism $\frak M\xrightarrow{\times p}\frak M$ in ${\rm Mod}^{r,\phi}_{/\frak S_{\infty}}$. Since ${\rm Mod}^{r,\phi}_{/\frak S_{\infty}}$ is an abelian category, we know that $\frak M/p$ is also in ${\rm Mod}^{r,\phi}_{/\frak S_{\infty}}$. It is killed by $p$ and has no $u$-torsion by Lemma \ref{tongliulemma}, therefore $\frak M/p$ is in ${\rm Mod}^{r,\phi}_{/\frak S_1}$.
\end{proof}

We now reformulate Lemma \ref{2mod} by using the categories we have defined.
\begin{lemma}\label{mod}
Let $\frak M$ be in ${\rm Mod}^{r,\phi}_{/\frak S_{\infty}}$. If $p\frak M$ is in ${\rm ModFI}^{r,\phi}_{/\frak S_{\infty}}$, so is $\frak M$.
\end{lemma}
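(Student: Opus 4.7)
The plan is to reduce Lemma \ref{mod} directly to Lemma \ref{2mod}, since the categorical setup here is designed precisely so that the module-theoretic input for Lemma \ref{2mod} becomes available for free. First I would record that any $\frak M\in{\rm Mod}^{r,\phi}_{/\frak S_\infty}$ is, by Lemma \ref{tongliulemma}(2), a finitely generated $u$-torsion-free $\frak S$-module killed by some power of $p$; since $\frak S$ is Noetherian this makes $\frak M$ a finitely presented torsion $\frak S$-module, which is the ambient hypothesis in Lemma \ref{2mod}.

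Next I would produce the two structural ingredients that Lemma \ref{2mod} asks for. For the quotient mod $p$: by Lemma \ref{modp}, $\frak M/p$ belongs to ${\rm Mod}^{r,\phi}_{/\frak S_1}$, so by definition it is finite free over $\frak S_1=k[[u]]=\frak S/p$, i.e. $\frak M/p\cong(\frak S/p)^n$ for some $n\ge 0$. For $p\frak M$: the hypothesis of the lemma gives precisely $p\frak M\cong\bigoplus_{i=1}^{m}\frak S/p^{b_i}$ as $\frak S$-modules, since $p\frak M\in{\rm ModFI}^{r,\phi}_{/\frak S_\infty}$.

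With these two data in hand, Lemma \ref{2mod} applies verbatim to the underlying $\frak S$-module of $\frak M$ and yields $\frak M\cong\bigoplus_{i=1}^{n}\frak S/p^{m_i}$ for some exponents $m_i\ge 1$. This is exactly the condition for $\frak M$ to lie in ${\rm ModFI}^{r,\phi}_{/\frak S_\infty}$ (recall that objects of this subcategory are only required to have the prescribed shape as $\frak S$-modules, not in a way compatible with the Frobenius).

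There is essentially no obstacle: the work has all been done in Lemma \ref{tongliulemma}, Lemma \ref{modp}, and Lemma \ref{2mod}, and the present lemma is the statement that stitches them together. The only thing to be careful about is that the decomposition $\frak M\cong\bigoplus_i\frak S/p^{m_i}$ need not respect $\phi$, but since the definition of ${\rm ModFI}^{r,\phi}_{/\frak S_\infty}$ only demands an $\frak S$-module isomorphism of this form, this is not an issue. This lemma will then feed into the inductive argument (as in Corollary \ref{key2}) showing that all torsion objects of ${\rm Mod}^{r,\phi}_{/\frak S_\infty}$ lie in ${\rm ModFI}^{r,\phi}_{/\frak S_\infty}$ when $er<p-1$.
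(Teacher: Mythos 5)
Your argument is correct and follows the same route as the paper: apply Lemma \ref{modp} to see that $\frak M/p$ is finite free over $\frak S/p$, combine this with the hypothesis on $p\frak M$, and invoke Lemma \ref{2mod}. You spell out the details the paper leaves implicit (the finite-presentation hypothesis via Lemma \ref{tongliulemma}, and that ${\rm ModFI}^{r,\phi}_{/\frak S_\infty}$ only asks for the decomposition as $\frak S$-modules, not $\phi$-equivariantly), but the substance is identical.
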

\begin{proof}
By Lemma \ref{modp}, we have $\frak M/p\in {\rm Mod}^{r,\phi}_{/\frak S_{\infty}}$. Then this lemma follows from Lemma \ref{2mod}.
\end{proof}
\begin{lemma}\label{inj}
Let $\frak L\hookrightarrow \frak M$ be an injection in ${\rm Mod}^{r,\phi}_{/\frak S_{\infty}}$. If $\frak M$ is in ${\rm ModFI}^{r,\phi}_{/\frak S_{\infty}}$, so is $\frak L$.
\end{lemma}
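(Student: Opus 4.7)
The plan is to induct on the smallest non-negative integer $n$ such that $p^n\frak L = 0$; such an $n$ exists because every object of ${\rm Mod}^{r,\phi}_{/\frak S_\infty}$ is killed by some power of $p$ by Lemma \ref{tongliulemma}. The base case $n=1$ is immediate: $\frak L$ is killed by $p$ and $u$-torsion-free by Lemma \ref{tongliulemma}, hence finite free over $\frak S/p = k[[u]]$, which places it in ${\rm Mod}^{r,\phi}_{/\frak S_1} \subset {\rm ModFI}^{r,\phi}_{/\frak S_\infty}$. The hypothesis on $\frak M$ plays no role here.

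For the inductive step, suppose $n \geq 2$ and the statement is known for $p$-exponents below $n$. Consider the restriction of the given injection to $p\frak L \hookrightarrow p\frak M$. Since ${\rm Mod}^{r,\phi}_{/\frak S_\infty}$ is an abelian category (established earlier in the section), the submodules $p\frak L$ and $p\frak M$ arise as images of the multiplication-by-$p$ endomorphisms of $\frak L$ and $\frak M$, so they are genuine objects of ${\rm Mod}^{r,\phi}_{/\frak S_\infty}$ with the induced Frobenius, and $p\frak L \hookrightarrow p\frak M$ is an injective morphism there. Writing $\frak M \cong \bigoplus_{i=1}^{m}\frak S/p^{a_i}$ as $\frak S$-modules gives $p\frak M \cong \bigoplus_{a_i \geq 1}\frak S/p^{a_i - 1}$, so $p\frak M \in {\rm ModFI}^{r,\phi}_{/\frak S_\infty}$. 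Since $p^{n-1}(p\frak L) = p^n\frak L = 0$, the induction hypothesis applied to $p\frak L \hookrightarrow p\frak M$ yields $p\frak L \in {\rm ModFI}^{r,\phi}_{/\frak S_\infty}$. Finally, Lemma \ref{mod} promotes this to $\frak L \in {\rm ModFI}^{r,\phi}_{/\frak S_\infty}$.

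The heart of the proof is identifying the right inductive scheme: passing to $p\frak L \hookrightarrow p\frak M$ decreases the $p$-exponent, while Lemma \ref{mod} is precisely the tool that climbs back up from $p\frak L$ to $\frak L$. The only point that needs care is that $p\frak L$ and $p\frak M$ retain their category-theoretic meaning, which rests on the abelian structure of ${\rm Mod}^{r,\phi}_{/\frak S_\infty}$ developed earlier. I do not anticipate any genuine obstacle beyond this bookkeeping.
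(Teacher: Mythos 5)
Your proof is correct and follows essentially the same strategy as the paper's: pass to $p\frak L\hookrightarrow p\frak M$, invoke the induction hypothesis there, and climb back up via Lemma \ref{mod}. The only (cosmetic) difference is that you induct on the $p$-exponent of $\frak L$ while the paper inducts on the $p$-exponent of $\frak M$; both work because the relevant exponent strictly drops under $\frak L\mapsto p\frak L$ (resp.\ $\frak M\mapsto p\frak M$) and $p\frak M$ visibly stays in ${\rm ModFI}^{r,\phi}_{/\frak S_\infty}$.
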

\begin{proof}
We show that $p\frak L$ is in ${\rm ModFI}^{r,\phi}_{/\frak S_{\infty}}$, then this lemma follows from Lemma \ref{mod}. Consider the map $p\frak L\hookrightarrow p\frak M$. We proceed by induction on the minimal integer such that $p^n\frak M=0$. If $n=1$, this is easy. Assume that when $n<m$ this lemma is true. Then when $n=m$, $p\frak L$ is also in ${\rm ModFI}^{r,\phi}_{/\frak S_{\infty}}$ as $p^{m-1}(p\frak M)=0$. We are done.
\end{proof}

\begin{theorem}
The category ${\rm ModFI}^{r,\phi}_{/\frak S_{\infty}}$ is an abelian category.
\end{theorem}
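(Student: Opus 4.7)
The plan is to verify that for every morphism $f\colon \frak M_1\to \frak M_2$ in ${\rm ModFI}^{r,\phi}_{/\frak S_{\infty}}$, both $\ker(f)$ and $\coker(f)$ computed in the ambient abelian category ${\rm Mod}^{r,\phi}_{/\frak S_{\infty}}$ still lie in ${\rm ModFI}^{r,\phi}_{/\frak S_{\infty}}$. Since ${\rm ModFI}$ is visibly additive (it contains $0$ and is closed under finite direct sums because $(\bigoplus \frak S/p^{a_i})\oplus(\bigoplus \frak S/p^{b_j})$ is again of the required form), this closure would force the natural coimage-to-image comparison to be an isomorphism in ${\rm ModFI}$ (isomorphisms being reflected by full subcategories), and the abelian structure of ${\rm Mod}^{r,\phi}_{/\frak S_{\infty}}$ would descend.

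Closure under kernels is immediate from Lemma \ref{inj}: $\ker(f)\hookrightarrow \frak M_1$ is an injection with target in ${\rm ModFI}$. For cokernels, Lemma \ref{inj} applied to ${\rm Im}(f)\hookrightarrow \frak M_2$ already puts the image in ${\rm ModFI}$, so the problem reduces to the following auxiliary statement: if $0\to \frak L\to \frak M\to \frak N\to 0$ is exact in ${\rm Mod}^{r,\phi}_{/\frak S_{\infty}}$ with $\frak L$ and $\frak M$ in ${\rm ModFI}$, then $\frak N\in {\rm ModFI}$.

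I will prove this auxiliary claim by induction on the smallest integer $n\geq 0$ with $p^n\frak N=0$. The base case $n\leq 1$ is clean: $\frak N$ is killed by $p$ and $u$-torsion-free by Lemma \ref{tongliulemma}, hence finite free over $\frak S/p=k[[u]]$, i.e., of the form $(\frak S/p)^m$. For the inductive step with $n\geq 2$, consider the induced short exact sequence
\[
0\to p\frak M\cap \frak L \to p\frak M \to p\frak N \to 0
\]
in ${\rm Mod}^{r,\phi}_{/\frak S_{\infty}}$. Here $p\frak M\in {\rm ModFI}$ as a subobject of $\frak M$ (Lemma \ref{inj}), and $p\frak M\cap \frak L\in {\rm ModFI}$ as a subobject of $\frak L$ (Lemma \ref{inj} again). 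Since $p\frak N$ is killed by $p^{n-1}$, the inductive hypothesis applied to this sequence yields $p\frak N\in {\rm ModFI}$, and then Lemma \ref{mod} upgrades this conclusion to $\frak N\in {\rm ModFI}$, closing the induction.

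The only delicate point I expect to confront is ensuring that $p\frak M\cap \frak L$ is a bona fide subobject inside the abelian category ${\rm Mod}^{r,\phi}_{/\frak S_{\infty}}$, i.e., that it inherits the Frobenius structure. This is automatic because $\phi$ is Frobenius-semilinear, so $\phi(pm)=p\phi(m)\in p\frak M$, while $\phi(\frak L)\subseteq \frak L$ by assumption, so $\phi$ stabilizes the intersection. Once this verification is in place, the entire argument is a clean chain of reductions combining Lemmas \ref{inj}, \ref{mod}, and \ref{tongliulemma}.
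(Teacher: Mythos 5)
Your argument is correct, and it takes a genuinely different (and slightly leaner) inductive route than the paper does. Both proofs reduce immediately, via Lemma \ref{inj}, to showing that in a short exact sequence $0\to \frak L\to \frak M\to \frak N\to 0$ in ${\rm Mod}^{r,\phi}_{/\frak S_{\infty}}$ with $\frak L,\frak M\in {\rm ModFI}^{r,\phi}_{/\frak S_{\infty}}$, the quotient $\frak N$ also lies in ${\rm ModFI}^{r,\phi}_{/\frak S_{\infty}}$; but the paper inducts on the minimal $n$ with $p^n\frak M=0$, whereas you induct on the minimal $n$ with $p^n\frak N=0$. The paper's inductive step passes to $p\frak M_1\into p\frak M_2$, produces the intermediate short exact sequence $0\to \frak L'\to p\frak M_2/p\frak M_1\to p\frak C\to 0$, and needs the inductive hypothesis twice (once to get $p\frak M_2/p\frak M_1\in {\rm ModFI}$, once again to get $p\frak C\in {\rm ModFI}$), with the base case handled by Corollary \ref{rrrrr}. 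Your inductive step instead uses the single exact sequence $0\to p\frak M\cap\frak L\to p\frak M\to p\frak N\to 0$, where $p\frak M$ and $p\frak M\cap\frak L$ lie in ${\rm ModFI}$ by one application of Lemma \ref{inj} each, and $p\frak N$ has strictly smaller $p$-exponent; a single invocation of the inductive hypothesis followed by Lemma \ref{mod} finishes the step, and your base case uses Lemma \ref{tongliulemma}(ii) directly rather than Corollary \ref{rrrrr}. Both approaches hinge on the same two key facts, Lemma \ref{inj} (subobjects of ${\rm ModFI}$-objects are ${\rm ModFI}$) and Lemma \ref{mod} ($p\frak M\in {\rm ModFI}\Rightarrow\frak M\in {\rm ModFI}$). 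One small remark: you need not check by hand that $\phi$ stabilizes $p\frak M\cap\frak L$; this is automatic, since $p\frak M\cap\frak L$ is the kernel of $p\frak M\to p\frak N$ computed in the abelian category ${\rm Mod}^{r,\phi}_{/\frak S_{\infty}}$, hence carries the induced Frobenius by construction. The explicit check is harmless but redundant.
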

\begin{proof}
For any morphism $f:\frak M_1\to \frak M_2$ in ${\rm ModFI}^{r,\phi}_{/\frak S_{\infty}}$, we need to show $\frak L={\rm Ker}(f)$ and $\frak C={\rm Coker}(f)$ are also in the category ${\rm ModFI}^{r,\phi}_{/\frak S_{\infty}}$. For the kernel $\frak L$, this follows from Lemma \ref{inj}. For the cokernel $\frak C$, we proceed by induction on the minimal integer $n$ such that $p^n\frak M_2=0$. Without loss of generality, we can assume $f$ is an injection. 

When $n=1$, we have $\frak M_1, \frak M_2$ are both in ${\rm Mod}^{r,\phi}_{/\frak S_1}$. Then by Corollary \ref{rrrrr}, we see that $\frak C$ is also in ${\rm Mod}^{r,\phi}_{/\frak S_1}\subset {\rm ModFI}^{r,\phi}_{/\frak S_{\infty}}$. Now suppose the statement is true when $n<m$. When $n=m$, consider the sequence $p\frak M_1\to p\frak M_2\to p\frak C$. Then there is a short exact sequence $0\to \frak L^{'}\to p\frak M_2/{p\frak M_1}\to p\frak C\to 0$. Since $p^{m-1} (p\frak M_2/{p\frak M_1})=0$, by the assumption, we get $p\frak C$ is in ${\rm ModFI}^{r,\phi}_{/\frak S_{\infty}}$. Then by Lemma \ref{mod}, we see that $\frak C$ is also in ${\rm ModFI}^{r,\phi}_{/\frak S_{\infty}}$. This finishes the proof.
\end{proof}

\begin{theorem}\label{newproof}
There is an equivalence of categories: ${\rm ModFI}^{r,\phi}_{/\frak S_{\infty}}$=${\rm Mod}^{r,\phi}_{/\frak S_{\infty}}$. 
\end{theorem}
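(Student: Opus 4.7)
The plan is to prove the non-trivial inclusion ${\rm Mod}^{r,\phi}_{/\frak S_\infty}\subseteq {\rm ModFI}^{r,\phi}_{/\frak S_\infty}$ by induction on the smallest integer $n\geq 0$ with $p^n\frak M=0$; existence of such an $n$ is guaranteed by Lemma \ref{tongliulemma}(2). The base case $n\leq 1$ will be essentially immediate: Lemma \ref{tongliulemma}(2) says that $\frak M$ is finitely generated over $\frak S$ and $u$-torsion-free, so once it is also killed by $p$ it becomes a finitely generated torsion-free module over $\frak S/p=k[[u]]$, hence finite free over $\frak S_1$. In particular $\frak M$ lies in ${\rm Mod}^{r,\phi}_{/\frak S_1}$, which is already a subcategory of ${\rm ModFI}^{r,\phi}_{/\frak S_\infty}$.

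For the inductive step $n\geq 2$, I will pass to the submodule $p\frak M$. Multiplication by $p$ is a morphism in ${\rm Mod}^{r,\phi}_{/\frak S_\infty}$, and because this category is abelian with kernels and cokernels computed on underlying $\frak S$-modules (established in the lemma preceding Corollary \ref{rrrrr}), the image $p\frak M$, equipped with the induced Frobenius, is again an object of ${\rm Mod}^{r,\phi}_{/\frak S_\infty}$, and it is killed by $p^{n-1}$. The induction hypothesis will then give $p\frak M\in {\rm ModFI}^{r,\phi}_{/\frak S_\infty}$, and Lemma \ref{mod} immediately upgrades this conclusion to $\frak M\in {\rm ModFI}^{r,\phi}_{/\frak S_\infty}$, completing the induction.

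The only point that needs care is verifying that $p\frak M$ is genuinely an object of the Frobenius category ${\rm Mod}^{r,\phi}_{/\frak S_\infty}$ rather than merely a Frobenius-stable $\frak S$-submodule of $\frak M$; once the abelian structure established earlier in the section is invoked this is automatic, since $p\frak M$ is the image of an endomorphism of $\frak M$. All the substantial commutative-algebra content is already packaged in Lemmas \ref{2mod} and \ref{mod}, so I do not anticipate any serious obstacle beyond their correct application.
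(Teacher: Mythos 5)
Your proof is correct and follows essentially the same inductive strategy as the paper: induct on the smallest $n$ with $p^n\frak M=0$, pass to $p\frak M$ as the image of multiplication by $p$ in the abelian category, and invoke Lemma \ref{mod} to climb back up to $\frak M$. The only cosmetic difference is in the base case, where you argue directly via Lemma \ref{tongliulemma}(2) that a $p$-torsion object lies in ${\rm Mod}^{r,\phi}_{/\frak S_1}$, whereas the paper applies Lemma \ref{mod} with $p\frak M=0$; both are fine.
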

\begin{proof}
We just need to prove that every object $\frak M$ in ${\rm Mod}^{r,\phi}_{/\frak S_{\infty}}$ is also in ${\rm ModFI}^{r,\phi}_{/\frak S_{\infty}}$. To see this, we proceed by induction on the minimal integer $n$ such that $p^n\frak M=0$.

When $n=1$, this follows from Lemma \ref{mod}. Now suppose the statement is true when $n<m$. Then when $n=m$, we know that $p\frak M$ is killed by $p^{m-1}$. So by the assumption, we have $p\frak M\in {\rm ModFI}^{r,\phi}_{/\frak S_{\infty}}$. By Lemma \ref{mod}, we can obtain that $\frak M\in {\rm ModFI}^{r,\phi}_{/\frak S_{\infty}}$. We are done.
\end{proof}

So Theorem \ref{newproof} provides another proof of Theorem \ref{comp}.
\begin{theorem}\label{decom}
For any $i\leq r<\frac{p-1}{e}$, we have $H^i_{\frak S-{\rm tor}}$, the torsion submodule of the Breuil--Kisin cohomology group of a proper smooth formal scheme over $\cal O_K$, is in the category ${\rm{Mod FI}}^{r,\phi}_{/{\frak S_{\infty}}}$, i.e. $H^i_{\frak S-{\rm tor}}\cong \bigoplus_{i=1}^n\frak S/p^{a_i}$.
\end{theorem}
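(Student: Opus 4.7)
The plan is to deduce Theorem \ref{decom} essentially for free by combining two results already established in the section. First, the corollary immediately following Lemma \ref{tongliulemma} asserts that for $i \le r < \frac{p-1}{e}$, the torsion submodule $H^i_{\frak S\text{-}\mathrm{tor}}$, equipped with its induced Frobenius, lies in the category ${\rm Mod}^{r,\phi}_{/\frak S_\infty}$. That corollary itself rests on Corollary \ref{bktor} (giving $u$-torsion-freeness of $H^{i+1}_{\frak S}(\frak X)$ so that passing to the quotient $H^i_{\frak S}(\frak X) \onto H^i_{\frak S}(\frak X)/H^i_{\frak S\text{-}\mathrm{tor}}$ behaves well) together with the prismatic input \cite[Theorem 1.8(6)]{bhatt2019prisms} supplying the $E^r$-condition on the cokernel of $\phi$. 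So the first step of my plan is simply to cite that corollary.

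Second, I would invoke Theorem \ref{newproof}, which under the hypothesis $er < p-1$ identifies ${\rm Mod}^{r,\phi}_{/\frak S_\infty}$ with its full subcategory ${\rm ModFI}^{r,\phi}_{/\frak S_\infty}$. Thus every object of ${\rm Mod}^{r,\phi}_{/\frak S_\infty}$ is, as an $\frak S$-module, a direct sum of cyclic modules of the form $\frak S/p^{a_i}$. Applying this to $M = H^i_{\frak S\text{-}\mathrm{tor}}$ produces exactly the claimed decomposition $H^i_{\frak S\text{-}\mathrm{tor}} \cong \bigoplus_{j=1}^n \frak S/p^{a_j}$.

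In short, the proof will consist of two one-line appeals: one to the prismatic-input corollary that places $H^i_{\frak S\text{-}\mathrm{tor}}$ inside ${\rm Mod}^{r,\phi}_{/\frak S_\infty}$, and one to the categorical equivalence of Theorem \ref{newproof} that transports it into ${\rm ModFI}^{r,\phi}_{/\frak S_\infty}$. There is no genuine obstacle left at this stage; all the real work has been done earlier in the section — specifically in establishing $E$- and $u$-torsion-freeness in Corollary \ref{bktor}, in verifying that ${\rm Mod}^{r,\phi}_{/\frak S_\infty}$ is abelian under the hypothesis $er<p-1$, and in the inductive argument on the exponent of $p$ in Theorem \ref{newproof} that reduces the structural statement to the base case treated by Lemma \ref{2mod}. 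Consequently the proof of Theorem \ref{decom} should be just a couple of lines.
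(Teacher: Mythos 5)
Your proposal is correct and matches the paper's approach exactly: the paper states Theorem \ref{decom} as an immediate consequence of the corollary following Lemma \ref{tongliulemma} (which places $H^i_{\frak S\text{-}\mathrm{tor}}$ in ${\rm Mod}^{r,\phi}_{/\frak S_\infty}$ via Corollary \ref{bktor} and the prismatic input) combined with the structural equivalence ${\rm ModFI}^{r,\phi}_{/\frak S_\infty}={\rm Mod}^{r,\phi}_{/\frak S_\infty}$ of Theorem \ref{newproof}. One small imprecision in your parenthetical: the relevant consequence of Corollary \ref{bktor} is $u$-torsion-freeness of $H^{i}_{\frak S}(\frak X)$ itself (applying the corollary with $i-1$, which is allowed since $(i-1)e<re<p-1$), so that its torsion submodule has no $u$-torsion as required by Lemma \ref{tongliulemma}(ii) — not the quotient behaving well.
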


\bibliographystyle{alpha} 

\bibliography{padichodge.bib}

\end{document}